\newcommand{\paren}[1]{\left(#1\right)}
\newcommand{\ecklam}[1]{\left[#1\right]}
\renewcommand{\equiv}{\ensuremath{:=}}
\newcommand{\act}[1]{\left\langle {#1} \right\rangle}
\newcommand{\mymap}[3]{#1:\,#2 \to #3\,}
\newcommand{\mmap}[3]{#1:\,#2 \rightrightarrows #3\,}
\newcommand{\mysetc}[2]{\left\{#1\,\middle|\,#2\right\}}
\newcommand{\set}[2]{\left\{#1\,\middle|\,#2\right\}}
\renewcommand{\ip}[2]{\left\langle #1,\, #2\right\rangle}
\newcommand{\cex}[2]{\ensuremath{\mathbb{E}\left[#1\,\middle|\,#2\right]}}
\newcommand{\cpr}[2]{\ensuremath{\mathbb{P}\left(#1\,\middle|\,#2\right)}}
\newcommand{\1}{\ensuremath{\mathds{1}} }
\newcommand{\icol}[1]{% inline column vector
  \left(\begin{smallmatrix}#1\end{smallmatrix}\right)%
}
\newcommand{\cb}{\ensuremath{\overline{\mathbb{B}}}}
\newcommand{\Nbb}{\mathbb{N}}
\newcommand{\Rn}{\mathbb{R}^n}
\newcommand{\Pcal}{\mathcal{P}}
\newcommand{\Jcal}{\mathcal{J}}
\newcommand{\Fcal}{\mathcal{F}}
\newcommand{\xbar}{{\overline{x}}}
\newcommand{\hhbar}{\overline{h}}
\newcommand{\Fbar}{\overline{F}}
\newcommand{\ibar}{\overline{i}}
\DeclareMathOperator{\Supp}{supp}
\DeclareMathOperator{\dist}{dist}
\DeclareMathOperator*{\argmin}{\arg\!\min}
\DeclareMathOperator{\prox}{prox}
\DeclareMathOperator{\id}{Id}
\DeclareMathOperator{\Id}{Id}
\DeclareMathOperator{\Fix}{Fix}
\DeclareMathOperator{\gph}{gph}
\DeclareMathOperator{\diam}{diam}
\DeclareMathOperator{\inv}{inv}
\DeclareMathOperator{\indep}{\perp \!\!\! \perp\,}
\DeclareMathOperator{\Card}{Card}
\def\@endtheorem{\endtrivlist\@endpefalse }% OLD
\newtheorem{thm}{Theorem}[section]
\newtheorem{cor}[thm]{Corollary}
\newtheorem{lemma}[thm]{Lemma}
\newtheorem{prop}[thm]{Proposition}
\theoremstyle{definition}
\newtheorem{example}[thm]{Example}
\newtheorem{definition}[thm]{Definition}
\newtheorem{assumption}[thm]{Assumption}
\newtheoremstyle{note}% name
{3pt}%Space above
{3pt}%Space below
{}%Body font
{}%Indent amount 1
{\bfseries}% Theorem head font
{\bfseries :}%Punctuation after theorem head
{.5em}%Space after theorem head 2
{}%Theorem head spec (can be left empty, meaning ‘normal’)
\theoremstyle{note}
\newtheorem{rem}[thm]{Remark}
\title{Random Function Iterations for Stochastic Fixed Point Problems}
\author{Neal Hermer\thanks{Institute for Numerical and Applied Mathematics,
    University of Goettingen,
    37083 Goettingen, Germany. NH was supported by 
    Deutsche Forschungsgemeinschaft Research Training Grant 2088 TP-B5.
    E-mail:  \texttt{n.hermer@math.uni-goettingen.de}}, 
  D. Russell Luke\thanks{Institute for Numerical and Applied Mathematics,
    University of Goettingen,
    37083 Goettingen, Germany. DRL was supported in part by 
    Deutsche Forschungsgemeinschaft Research Training Grant 2088 TP-B5.
    E-mail:  \texttt{r.luke@math.uni-goettingen.de}}  
  and  Anja Sturm\thanks{Institute for Mathematical Stochastic,
    University of Goettingen,
    37077 Goettingen, Germany. AS was supported in part by Deutsche 
Forschungsgemeinschaft 
    Research Training Grant 2088 TP-B5.
    E-mail:  \texttt{asturm@math.uni-goettingen.de}}}
\date{\today}
\begin{document}
% \pagenumbering{roman}
 \maketitle

 \begin{abstract}
   We study the convergence of random function iterations for finding
   an invariant measure of the corresponding Markov operator.  We call
   the problem of finding such an invariant measure the {\em
     stochastic fixed point problem}.  This generalizes earlier work
   studying the {\em stochastic feasibility problem}, namely, to find
   points that are, with probability 1, fixed points of the random
   functions \cite{HerLukStu19a}.  When no such points exist, the
   stochastic feasibility problem is called {\em inconsistent}, but
   still under certain assumptions, the more general stochastic fixed
   point problem has a solution and the random function iterations
   converge to an invariant measure for the corresponding Markov
   operator.  There are two major types of convergence: almost sure
   convergence of the iterates to a fixed point in the case of
   stochastic feasibility, and convergence in distribution more
   generally.  We show how common structures in deterministic fixed
   point theory can be exploited to establish existence of invariant
   measures and convergence of the Markov chain.  We show that weaker
   assumptions than are usually encountered in the analysis of Markov
   chains guarantee linear/geometric convergence.  This framework
   specializes to many applications of current interest including, for
   instance, stochastic algorithms for large-scale distributed
   computation, and deterministic iterative procedures with
   computational error.  The theory developed in 
this study provides a solid basis for describing the convergence
of simple computational methods without the assumption of infinite precision
arithmetic or vanishing computational errors.

\end{abstract}

{\small \noindent {\bfseries 2010 Mathematics Subject Classification:}
  Primary 60J05, %Discrete-time Markov processes on general state spaces
  46N10, %  	Applications of functional analysis in optimization, convex analysis, 
               % mathematical programming, economics
  46N30, % Applications of functional analysis in probability theory and statistics
  % 52A22, %Random convex sets and integral geometry
  65C40, % Numerical analysis or methods applied to Markov chains
  49J55 %CALCULUS OF VARIATIONS AND OPTIMAL CONTROL;OPTIMIZATION ->
  % Existence theories -> Problems involving randomness
  % 90C40, % Mathematical Programming -> Markov and semi-Markov decision
  % processes
    Secondary  49J53,   %Set-valued and variational analysis
    % 45B05,  %Fredholm integral equations
    % 49M27, % Decomposition methods??
    65K05.\\ %Numerical Analysis -> Mathematical programming methods
  }

\noindent {\bfseries Keywords:}
Averaged mappings, nonexpansive mappings, stochastic feasibility,
inconsistent stochastic fixed point problem, iterated random
functions, convergence of Markov chain

\section{Introduction}
\label{sec:introduction}
Random function iterations (RFI) \cite{Diaconis1999} generalize
deterministic fixed point iterations, and are a useful framework for
studying a number of important applications.  An RFI is a stochastic
process of the form $ X_{k+1} := T_{\xi_{k}} X_{k} $ ($k=0,1,2,\dots$)
initialized by a random variable $X_0$ with distribution $\mu_0$ and
values on some set $G$.  This of course includes initialization from
a deterministic point $x_0\in G$ via the $\delta$-distribution.  
Here $\xi_{k}$ ($k=0,1,2,\dots$) is an
element of a sequence of i.i.d.\ random variables that map from a
probability space into a measurable space of indices $I$ (not
necessarily countable) and $T_{i}$ $(i \in I)$ are self-mappings on
$G$.  The iterates $X_{k}$ form a Markov chain of random variables on
the space $G$, which is, for our purposes, a \emph{Polish space}.  
Deterministic fixed point iterations are included when the index set
$I$ is just a singleton.

Our main motivation for the fundamental and abstract study pursued here is 
the very concrete application of X-ray free electron laser (X-FEL) imaging experiments 
\cite{Chap11,Bou12,ArdGru}. 
We return to this specific application in Section \ref{sec:incFeas}.  There are many more 
applications than one could reasonably list, but to reach the broadest possible audience, 
a simple example from first
semester numerical analysis is illustrative.  Consider the
underdetermined linear system of equations
\[
 Ax=b, \quad A\in\mathbb{R}^{m\times n}, b\in \mathbb{R}^m, ~m<n.  
\]
Equivalent to this problem is the problem of finding the intersection of 
the hyperplanes defined by the single equations $\ip{a_j}{x}=b_j$ ($j=1,2,\dots. m$)
where $a_j$ is the $j$th row of the matrix $A$:
\begin{equation}\label{eq:ifeas}
 \mbox{Find }\quad \bar x\in \cap_{j=1}^m \{x~|~\ip{a_j}{x}=b_j\}.
\end{equation}
An intuitive technique to solve this problem is 
the method of cyclic projections:  Given an initial guess $x_0$, 
construct the sequence $(x_k)$ via
\begin{equation}\label{eq:CP}
 x_{k+1} = P_mP_{m-1}\cdots P_1 x_k,
\end{equation}
where $P_j$ is the orthogonal projection onto the $j$th hyperplane above. 
This method was proposed by von Neumann 
in \cite{Neumann50} where he also proved that, without numerical error, the iterates
converge to the projection of the initial point $x_0$ onto the 
itersection;  Aronszajn 
showed that the rate of convergence is linear \cite{Aronszajn} 
(referred to as geometric or exponential
in other communities).

The projectors have a closed form representation, and the algorithm is 
easily implemented.  The results of one implementation for a randomly 
generated matrix $A$ and vector $b$ with $m=50$
and $n=60$ yields the following graph shown in Figure \ref{fig:Axb}(a). 
\begin{figure}[h]
  (a) \includegraphics[width=6cm,
  height=4cm]{./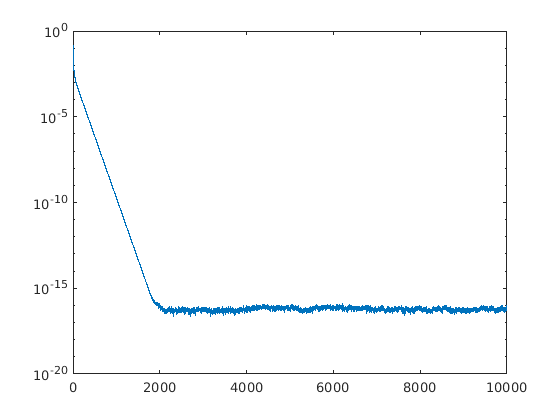} \hfill (b)
  \includegraphics[width=6cm,
  height=4cm]{./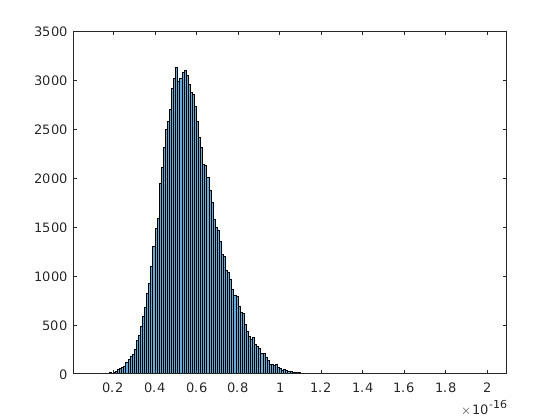}
\caption{(a) The residual $x_{k+1}-x_{k}$ of iterates of the cyclic projections
algorithm for solving the linear system $Ax=b$ for $A\in \mathbb{R}^{50\times 60}$,
and $b\in \mathbb{R}^{50}$ randomly generated.  (b) A histogram of the residual 
sizes over the last $8000$ iterations. }\label{fig:Axb}
\end{figure}
As the figure shows, the method performs as predicted by the theory, up 
to the numerical precision of the implementation.  After that point, the 
iterates behave more like random variables with distribution indicated by the 
histogram shown in Figure \ref{fig:Axb}(b).  The theory developed in 
this study provides a solid basis for describing the convergence
of simple computational methods without the assumption of infinite precision
arithmetic or vanishing computational errors \cite{Rockafellar76, SolodovSvaiter99}.    
This particular 
situation could be analyzed in the stability framework of perturbed convergent 
fixed point iterations with unique fixed points developed in \cite{ButReiZas07};
our approach captures their results and opens the way to a much broader range of 
applications.  
An analysis of nonmonotone fixed point iterations with error can be found already in 
\cite{IusPenSva03}, though the precision is assumed to increase quickly to exact evaluation. 

One of the main goals of the present study is to extend the approach established in \cite{LukNguTam18} 
to the tomographic problem associated with X-FEL measurements in particular, and to noncontractive random 
function iterations more generally, accounting for randomness not only in the model and the algorithm, but also in  
the computers we use for implementations.  The main object of interest is the Markov operator on 
a space of probability measures with the appropriate metric.  We take for granted much of the basic theory of 
Markov chains, which interested readers can find, for instance, in \cite{HerLerLas} or \cite{MeyTwe}. 
We are indebted to the work of Butnariu and collaborators 
who studied stochastic iterative procedures for solving infinite dimensional linear operator 
equations
in \cite{ButnariuCensorReich97, ButnariuIusemBurachik00, ButnariuFlam95,
ButnariuCensorReich97}. 
Another important application motivating our analytical strategy involves stochastic 
implementations of deterministic algorithms for large-scale optimization problems
\cite{BoydParikhChuPeleatoEckstein11, Combettes2018, Villa2019,
  Richtarik15, Nedic2011}.  Such stochastic algorithms are popular for
distributed computation with applications in machine learning
\cite{BaldassiE7655, combettes2017a,DieuDurBac20, HardtMaRecht18, Richtarik16}.  
Here each $T_{\xi_{k}}$ represents a randomly selected, low-dimensional update
mechanism in an iterative procedure.  Our approach to the analysis of
such algorithms allows for the first time {\em expansive} mappings and, in some cases, the 
analysis is simpler than
current approaches.

We are concerned in this paper with (i) {\em existence of invariant
  distributions} of the Markov operators associated with the random
function iterations, (ii) {\em convergence} of the Markov chain to an
invariant distribution, and (iii) {\em rates of convergence}.  As with
classical fixed point iterations, the limit -- or more accurately,
{\em limiting  distribution} -- of the Markov chain, if it exists, will
in general depend on the initialization.  Uniqueness of invariant
measures of the Markov operator is not a particular concern for feasibility
problems where {\em any} feasible point will do.  
The notation and necessary 
background is developed in Section \ref{sec:consistent_feas_prob}, which 
we conclude with the main statements of this study (Section \ref{sec:mainres}).
Section \ref{sec:theory} contains the technical details, starting with 
existence theory in Section \ref{sec:existence}, general ergodic theory in  
Section \ref{sec:suppCvg} with gradually increasing regularity assumptions on the 
Markov operators, 
equicontinuity in Section \ref{sec:ergodicNonexp} and finally Markov operators
generated by nonexpansive mappings in Section \ref{sec:nexp}.  
The assumptions on the mappings generating the Markov operators are commonly 
employed in the analysis of deterministic algorithms in continuous optimization.   
Our first main 
result, Theorem \ref{cor:cesaroConvergenceRn},  establishes convergence for 
Markov chains that are generated from 
nonexpansive mappings in $\Rn$ and follows easily in Section \ref{sec:setCvgNE} 
upon establishing tightness of the sequence of measures. 
Section \ref{sec:furtherprops} collects further facts needed for the 
second main result of this study, Theorem \ref{thm:a-firm convergence Rn}, 
which establishes convergence in the Prokhorov-L\`evy metric 
of Markov chains to an invariant measure (assuming this exists) when 
the Markov operators are constructed from 
{\em $\alpha$-firmly nonexpansive mappings} in $\Rn$ (Definition \ref{d:a-fne}).  
We conclude 
Section \ref{sec:theory} with the  proof in Section \ref{sec:rates} of the last main result, 
Theorem \ref{t:msr convergence}, 
which provides for a quantification of convergence of the RFI 
when the underlying mappings are only {\em almost}
$\alpha$-firmly nonexpansive {\em in expectation} (Definition \ref{d:afne oa}) 
and when the {\em discrepancy} between a given measure and the set of invariant 
measures of the Markov operator, \eqref{eq:Psi},  
is {\em metrically subregular} (Definition \ref{d:(str)metric (sub)reg}). 
We conclude this study with Section \ref{sec:incFeas} where 
we focus on applications to optimization on measure spaces and 
(inconsistent) feasibility.  

\section{RFI and the Stochastic Fixed Point Problem}
\label{sec:consistent_feas_prob}

In this section we give a rigorous formulation of the RFI, 
then interpret this as a Markov chain and define the
corresponding Markov operator. We then formulate modes of convergence
of these Markov chains to invariant measures for the Markov operators
and formulate the stochastic feasibility and stochastic fixed point problems.
At the end of this section we present the  main
results of this article.  The proofs of these results are developed in 
Section \ref{sec:theory}.  

Our notation is standard.  As usual, $\mathbb{N}$ denotes the natural
numbers {\em including} $0$.  
For $G$, an abstract topological space,  
$\mathcal{B}(G)$ denotes the Borel $\sigma$-algebra and 
$(G,\mathcal{B}(G))$ is the 
corresponding measure space.  We denote by $\mathscr{P}(G)$ the set of all
probability measures on $G$.  The \emph{support of the probability measure}
$\mu$ is the smallest closed set $A$, for which $\mu(A)=1$  
and is denoted by $\Supp \mu$. 
% (see
% \cref{thm:supp_measure} for more properties).

There is a lot of overlapping notation in probability theory.  Where possible
we will try to stick to the simplest conventions, but the 
context will make certain notation preferable. 
% \mathcal{L}(X) := \mathbb{P}^{X} :=
% \mathbb{P} \circ X^{-1} = 
The notation $X \sim \mu\in \mathscr{P}(G)$ means that the law
 of $X$, denoted $\mathcal{L}(X)$, satisfies 
 $\mathcal{L}(X)\equiv\mathbb{P}^{X} :=\mathbb{P}(X \in \cdot) = \mu$, 
 where $\mathbb{P}$ is the probability
 measure on some underlying probability space.  All of these different ways of 
 indicating a measure $\mu$ will be used.  

Throughout, the pair $(G,d)$ denotes a  separable metric space with metric
$d$ and $\mathbb{B}(x,r)$ is the open ball centered at $x\in G$ with
radius $r>0$; the closure of the ball is denoted
$\overline{\mathbb{B}}(x,r)$.  All of our results concerning existence
of invariant measures, tightness of sequences and convergence will assume
that $(G,d)$ is Polish (i.e. also complete);  in characterizing the regularity
of the building blocks, completeness of the metric space is not required.  

The distance of a point $x\in G$ to a set 
$A\subset G$ is denoted by $d(x,A)\equiv \inf_{w\in A}d(x,w)$. 
For the ball of radius $r$ around a
subset of points $A\subset G$, we write
$\mathbb{B}(A,r)\equiv \bigcup_{x\in A} \mathbb{B}(x,r)$.  The
$0$-$1$-indicator function of a set $A$ is given by
\[
\mathds{1}_A(x)=\begin{cases}1&\mbox{ if }x\in A,\\
                 0&\mbox{ else.}
                \end{cases}
\]

Continuing with the development initiated in the introduction, we will consider 
a collection of mappings $\mymap{T_{i}}{G}{G}$, $i \in I$, on $(G,d)$
(a separable complete metric space), where $I$ is an arbitrary
index set.    
The measure space of indexes is denoted by
$(I,\mathcal{I})$, and $\xi$ is an $I$-valued random variable on the
probability space $(\Omega,\mathcal{F},\mathbb{P})$. 
The pairwise independence of two random variables $\xi$ and $\eta$ is
denoted $\xi\indep\eta$.  The random variables $\xi_k$ in the sequence
$(\xi_{k})_{k\in\mathbb{N}}$ (abbreviated $(\xi_{k})$) 
are independent and identically
distributed (i.i.d.) \ with $\xi_{k}$ distributed as
$\xi$ ($\xi_k\sim \xi$).  
The method of random
function iterations is formally presented in Algorithm \ref{algo:RFI}.
\begin{algorithm}    
\SetKwInOut{Output}{Initialization}
  \Output{Set $X_{0} \sim \mu_0 \in \mathscr{P}(G)$, $\xi_k\sim\xi\quad \forall k\in\Nbb$.}
    \For{$k=0,1,2,\ldots$}{
            {$ X_{k+1} = T_{\xi_{k}} X_{k}$}\\
    }
  \caption{Random Function Iterations (RFI)}\label{algo:RFI}
\end{algorithm}
% \vspace*{-\baselineskip} \medskip 

\noindent We will use the notation
\begin{equation}\label{eq:X_RFI}
  X_{k}^{X_0} := T_{\xi_{k-1}} \ldots T_{\xi_{0}} X_{0}
\end{equation}
to denote the sequence of the RFI initialized with $X_0\sim \mu_0$.
When characterizing sequences initialized
with the delta distribution of a point we use the notation $X_{k}^{x}$.
The following assumptions will be employed
throughout.

\begin{assumption}\label{ass:1}
  \begin{enumerate}[(a)]
  \item\label{item:ass1:indep} $\xi_{0},\xi_{1}, \ldots, \xi_{k}$
    are i.i.d with values on $I$ and $\xi_k\sim \xi$.  $X_0$ is an 
    random variable with values on $G$, independent from $\xi_k$. 
  \item\label{item:ass1:Phi} The function $\mymap{\Phi}{G\times
      I}{G}$, $(x,i)\mapsto T_{i}x$ is measurable.
  \end{enumerate}
\end{assumption}

\subsection{RFI as a Markov chain}
\label{sec:SPMasMC}
Markov chains are conveniently defined in terms of {\em transition kernels}.
A  transition kernel is a mapping 
$\mymap{p}{G\times \mathcal{B}(G)}{[0,1]}$ that is measurable in the first
argument and is a probability measure in the second argument; 
that is, \ $p(\cdot,A)$ is measurable for all $A \in
\mathcal{B}(G)$  and
$p(x,\cdot)$ is a probability measure for all $x \in G$.
\begin{definition}[Markov chain]
  A sequence of random variables $(X_{k})$,
  $\mymap{X_{k}}{(\Omega,\mathcal{F},\mathbb{P})}{(G,\mathcal{B}(G))}$
  is called Markov chain with transition kernel $p$ if for all $k \in
  \mathbb{N}$ and $A \in \mathcal{B}(G)$
  $\mathbb{P}$-a.s.\ the following hold:
  \begin{enumerate}[(i)]
  \item $\cpr{X_{k+1} \in A}{X_{0}, X_{1}, \ldots, X_{k}} =
    \cpr{X_{k+1} \in A}{X_{k}}$;
  \item $\cpr{X_{k+1} \in A}{X_{k}} = p(X_{k},A)$.
  \end{enumerate}
\end{definition}
% \begin{rem}
%   Since $(G,\mathcal{B}(G))$ is a Borel
%   space and $X_{k}$ is random a variable in $G$, the 
%   conditional expectations above are natural (see \cite[Theorem
%   5.3]{kallenberg1997}).
% \end{rem}

\begin{prop}\label{thm:RFIasMC}
  Under Assumption \ref{ass:1},
  the sequence of random variables $(X_{k})$
  generated by Algorithm \ref{algo:RFI} is a Markov chain with transition
  kernel $p$ given by 
\begin{equation}\label{eq:trans kernel}
  (x\in G) (A\in
  \mathcal{B}(G)) \qquad p(x,A) \equiv \mathbb{P}(\Phi(x,\xi) \in A) =
  \mathbb{P}(T_{\xi}x \in A)
\end{equation}
for the measurable \emph{update function} 
$\mymap{\Phi}{G\times  I}{G}$, $(x,i)\mapsto T_{i}x$.
\end{prop}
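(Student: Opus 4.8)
The plan is to verify directly the two defining properties of a Markov chain, using the independence structure built into Assumption \ref{ass:1} together with a standard substitution (or ``freezing'') lemma for conditional expectations of functions of independent random variables. First, however, one must check that $p$ in \eqref{eq:trans kernel} is genuinely a transition kernel. For fixed $x$ the map $i\mapsto \Phi(x,i)$ is measurable by Assumption \ref{ass:1}\eqref{item:ass1:Phi}, so $p(x,\cdot)$ is the pushforward of the law of $\xi$ under $\Phi(x,\cdot)$ and hence a probability measure. For fixed $A\in\mathcal{B}(G)$, writing $p(x,A)=\int_I \mathds{1}_A(\Phi(x,i))\,\mathbb{P}^\xi(\mathrm{d}i)$ and using measurability of $(x,i)\mapsto \mathds{1}_A(\Phi(x,i))$, the Fubini--Tonelli theorem yields measurability of $x\mapsto p(x,A)$.

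Next I would set $\mathcal{F}_k \equiv \sigma(X_0,\xi_0,\dots,\xi_{k-1})$. By \eqref{eq:X_RFI} each $X_j$ with $j\le k$ is a measurable function of $(X_0,\xi_0,\dots,\xi_{k-1})$, so $\sigma(X_0,\dots,X_k)\subseteq \mathcal{F}_k$, and in particular $X_k$ is $\mathcal{F}_k$-measurable; by Assumption \ref{ass:1}\eqref{item:ass1:indep}, $\xi_k$ is independent of $\mathcal{F}_k$. Since $X_{k+1}=\Phi(X_k,\xi_k)$ with $X_k$ measurable with respect to $\mathcal{F}_k$ and $\xi_k\indep\mathcal{F}_k$, the substitution lemma gives
\[
\mathbb{P}(X_{k+1}\in A\mid \mathcal{F}_k) = h(X_k), \qquad h(x)\equiv \mathbb{P}(\Phi(x,\xi_k)\in A)=p(x,A),
\]
where in the last equality we used $\xi_k\sim\xi$. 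Thus $\mathbb{P}(X_{k+1}\in A\mid \mathcal{F}_k)=p(X_k,A)$, which is $\sigma(X_k)$-measurable.

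Finally I would descend to the coarser filtration to obtain both (i) and (ii). Because $p(X_k,A)$ is $\sigma(X_k)\subseteq\sigma(X_0,\dots,X_k)$-measurable, the tower property applied along $\sigma(X_0,\dots,X_k)\subseteq\mathcal{F}_k$ gives
\[
\mathbb{P}(X_{k+1}\in A\mid X_0,\dots,X_k)=\mathbb{E}\big[p(X_k,A)\mid X_0,\dots,X_k\big]=p(X_k,A),
\]
while the same substitution lemma applied to $\sigma(X_k)$ alone, using $\xi_k\indep X_k$, gives $\mathbb{P}(X_{k+1}\in A\mid X_k)=p(X_k,A)$. Combining these two identities yields both the Markov property (i) and the kernel identification (ii).

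The proof is essentially measure-theoretic bookkeeping; the only delicate points are the parameter-measurability of $x\mapsto p(x,A)$ and the precise form of the substitution lemma. I expect the latter---establishing that conditioning $\Phi(X_k,\xi_k)$ on $\mathcal{F}_k$ amounts to freezing the $\mathcal{F}_k$-measurable argument $X_k$ and integrating out the independent $\xi_k$---to be the main technical step, though it becomes routine once the independence $\xi_k\indep\mathcal{F}_k$ is correctly identified from Assumption \ref{ass:1}\eqref{item:ass1:indep}.
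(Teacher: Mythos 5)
Your proof is correct, and it reaches the conclusion by a more self-contained route than the paper. The paper's own proof consists of two citations: measurability of $x\mapsto p(x,A)$ is taken from Kallenberg's Lemma 1.26, and the Markov property is declared an ``immediate consequence of the disintegration theorem.'' You instead prove both ingredients by hand: the parameter-measurability via Fubini--Tonelli applied to $x\mapsto\int_I \mathds{1}_A(\Phi(x,i))\,\mathbb{P}^{\xi}(\mathrm{d}i)$ (which is essentially the content of the cited Kallenberg lemma), and the Markov property via the filtration $\mathcal{F}_k=\sigma(X_0,\xi_0,\dots,\xi_{k-1})$, the freezing lemma for $\Phi(X_k,\xi_k)$ with $X_k$ being $\mathcal{F}_k$-measurable and $\xi_k\indep\mathcal{F}_k$, and the tower property to descend to $\sigma(X_0,\dots,X_k)$ and $\sigma(X_k)$. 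What your version buys is transparency: it isolates exactly which independence is used and where, and it verifies properties (i) and (ii) of the definition separately rather than appealing to disintegration as a black box; what the paper's version buys is brevity. One point deserves emphasis, since it is implicit in both arguments: Assumption \ref{ass:1}\eqref{item:ass1:indep} literally states only that the $\xi_k$ are i.i.d.\ and that $X_0$ is independent of each $\xi_k$, whereas your step ``$\xi_k\indep\mathcal{F}_k$'' requires $\xi_k$ to be independent of the \emph{joint} vector $(X_0,\xi_0,\dots,\xi_{k-1})$. This joint independence is the intended reading (the paper itself uses it, e.g.\ in the proof of Proposition \ref{thm:Hermer}, where independence of $\xi_k$ and $X_k^{X_0}$ is inferred), so this is not a gap in your argument relative to the paper's, but it is worth stating explicitly when you invoke the substitution lemma.
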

\begin{proof}
It follows from \cite[Lemma 1.26]{kallenberg1997} that the 
mapping  $p(\cdot,A)$ defined by  \eqref{eq:trans kernel} is measurable for all 
$A \in\mathcal{B}(G)$,  and it is immediate from the definition that 
$p(x,\cdot)$ is
a probability measure for all $x \in G$.  So $p$ defined by 
\eqref{eq:trans kernel} 
is a transition kernel.  The remainder of the 
statement is an immediate consequence of the disintegration theorem 
(see, for example, \cite{stroock2010probability}). 
% \cref{thm:disinteg}.
\end{proof}

The Markov operator $\mathcal{P}$ is defined pointwise for a measurable 
function 
$\mymap{f}{G}{\mathbb{R}}$ via
\begin{align*}
  (x\in G)\qquad \mathcal{P}f(x):= \int_{G} f(y) p(x,\dd{y}),
\end{align*}
when the integral exists. Note that
\begin{align*}
  \mathcal{P}f(x) = \int_{G} f(y)
  \mathbb{P}^{\Phi(x,\xi)}(\dd{y}) = \int_{\Omega}
  f(T_{\xi(\omega)}x) \mathbb{P}(\dd{\omega})= \int_{I}
  f(T_{i}x) \mathbb{P}^{\xi}(\dd{i}).
\end{align*}

The Markov operator $\mathcal{P}$ 
is \emph{Feller} if $\mathcal{P}f \in C_{b}(G)$ whenever $f \in
C_{b}(G)$, where $C_{b}(G)$ is the set of bounded and continuous
functions from $G$ to $\mathbb{R}$. 
This property is central to the theory of existence of invariant measures 
introduced below. 
The next fundamental result establishes the relation of the Feller property 
of the Markov operator to the generating mappings $T_{i}$.  
\begin{prop}[Theorem 4.22 in \cite{Bellet2006}]
\label{thm:Feller}
  Under Assumption \ref{ass:1}, if $T_{i}$ is continuous for all $i\in
  I$, then the Markov operator $\mathcal{P}$ is Feller.
\end{prop}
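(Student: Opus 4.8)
The plan is to verify directly, for an arbitrary $f \in C_b(G)$, that $\mathcal{P}f$ is again bounded and continuous, working from the representation
\begin{equation*}
  \mathcal{P}f(x) = \int_{I} f(T_{i}x)\,\mathbb{P}^{\xi}(\dd{i}).
\end{equation*}
First I would record that the integrand $i \mapsto f(T_{i}x) = f(\Phi(x,i))$ is measurable: by Assumption \ref{ass:1}(\ref{item:ass1:Phi}) the map $\Phi$ is measurable, and since $f$ is continuous (hence Borel measurable) the composition $f\circ\Phi(x,\cdot)$ is measurable for each fixed $x$, so the integral is well defined. Boundedness of $\mathcal{P}f$ is then immediate: since $\mathbb{P}^{\xi}$ is a probability measure and $|f(T_{i}x)| \le \norm{f}_{\infty}$ for all $i$, we obtain $|\mathcal{P}f(x)| \le \norm{f}_{\infty}$ uniformly in $x$.

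The substance of the argument is continuity of $\mathcal{P}f$. Because $(G,d)$ is a metric space it is first countable, so it suffices to establish sequential continuity. I would therefore fix $x \in G$ together with a sequence $x_n \to x$, and examine the integrands $g_n(i) \equiv f(T_{i}x_n)$. For each fixed $i \in I$, continuity of $T_{i}$ gives $T_{i}x_n \to T_{i}x$, and continuity of $f$ then gives $g_n(i) = f(T_{i}x_n) \to f(T_{i}x)$; that is, $g_n \to g$ pointwise on $I$, where $g(i) \equiv f(T_{i}x)$. Moreover every $g_n$ is dominated by the constant $\norm{f}_{\infty}$, which is integrable with respect to the probability measure $\mathbb{P}^{\xi}$.

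With pointwise convergence and a uniform integrable dominating function in hand, the dominated convergence theorem yields
\begin{equation*}
  \mathcal{P}f(x_n) = \int_{I} g_n(i)\,\mathbb{P}^{\xi}(\dd{i}) \longrightarrow \int_{I} g(i)\,\mathbb{P}^{\xi}(\dd{i}) = \mathcal{P}f(x),
\end{equation*}
so that $\mathcal{P}f \in C_b(G)$ and hence $\mathcal{P}$ is Feller. I do not anticipate a serious obstacle: the only points requiring care are the measurability of the integrand, which is exactly what Assumption \ref{ass:1}(\ref{item:ass1:Phi}) supplies, and the reduction of continuity to sequential continuity, which is valid precisely because $G$ is metric. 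Note that completeness of $G$ is nowhere used, consistent with the remark in the text that regularity of the building blocks can be characterized without assuming a Polish space.
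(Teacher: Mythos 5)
Your proof is correct and is essentially the same argument that stands behind the cited result: the paper gives no proof of its own, deferring to Theorem 4.22 of \cite{Bellet2006}, which proceeds by exactly this dominated-convergence verification. All the delicate points are handled properly — measurability of the section $i \mapsto \Phi(x,i)$ follows from joint measurability in Assumption \ref{ass:1}(\ref{item:ass1:Phi}), the constant bound $\norm{f}_{\infty}$ is integrable against the probability measure $\mathbb{P}^{\xi}$, and sequential continuity suffices since $G$ is metric — and your closing observation that completeness of $G$ is never used is also accurate.
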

% \begin{proof}
%   By continuity of $T_{i}, \, i \in I$, it follows for $f \in C_{b}(G)$
%   and $x_{n} \to x$ as $n\to \infty$ by Lebesgue's Dominated
%   Convergence Theorem
%   \begin{align*}
%     \mathcal{P}f(x_{n}) = \int_{I} f(T_{i}x_{n})
%     \mathbb{P}^{\xi}(\dd{i}) \to \int_{I} f(T_{i}x)
%     \mathbb{P}^{\xi}(\dd{u}) = \mathcal{P}f(x).
%   \end{align*}
%   Note that $\mathcal{P}f$ is bounded, whenever $f$ is a bounded
%   function.
% \end{proof}

Let $\mu\in \mathscr{P}(G)$.  In a slight abuse of notation we denote the
dual Markov operator $\mymap{\mathcal{P}^{*}}{\mathscr{P}(G)}{\mathscr{P}(G)}$ 
acting on a measure $\mu$ by action on 
the right by $\mathcal{P}$ via
\begin{align*}
  (A \in \mathcal{B}(G))\qquad (\mathcal{P}^{*}\mu) (A):=
  (\mu\mathcal{P}) (A) := \int_{G} p(x,A) \mu(\dd{x}).
\end{align*}
This notation allows easy identification of the distribution of the $k$-th 
iterate of the Markov chain generated
by Algorithm \ref{algo:RFI}: $\mathcal{L}(X_{k}) =
\mu_0 \mathcal{P}^{k}$.

\subsection{The Stochastic Fixed Point Problem}\label{sec:consist RFI}
As studied in \cite{HerLukStu19a}, the {\em stochastic feasibility}
problem is stated as follows:
\begin{align}
  \label{eq:stoch_feas_probl}
\mbox{ Find }  x^{*} \in C := \mysetc{x \in G}{\mathbb{P}(x = T_{\xi}x) = 
1}.
\end{align}
A point $x$ such that $x = T_{i}x$ is a {\em fixed point} of the operator $T_{i}$;
the set of all such points is denoted by 
\begin{align*}
  \Fix T_{i} = \mysetc{x \in G}{x = T_{i}x}.
\end{align*}
In \cite{HerLukStu19a} it was assumed that $C\neq\emptyset$.  If
$C=\emptyset$ we call this the {\em inconsistent stochastic
  feasibility} problem.  
  
Inconsistent stochastic feasibility is far from exotic. 
Take, for example, the not unusual assumption of additive noise:
define $T_\xi(x)\equiv f(x) + \xi$ where $\mymap{f}{\Rn}{\Rn}$ and 
$\xi$ is a measure without point masses.  Then 
$\mathbb{P}(T_\xi(x) = x) = \mathbb{P}(\xi(x) = x-f(x)) = 0$.  More concretely,
let $f=\Id - t\nabla F$ where $\mymap{F}{\Rn}{\mathbb{R}}$ is a differentiable, strongly
convex function and $t$ is some appropriately small stepsize.  This yields the noisy
gradient descent method 
\[
 T_\xi(x) = x - t\nabla F(x)+\xi. 
\]
Though this has no fixed point, the additive noise $\xi$ can be constructed so that the 
resulting Markov chain is ergodic and its (unique!) invariant distribution concentrates around
the unique global minimum of $F$.  See Example  \ref{ex:spg ncvx} for 
more discussion. 
% \footnote{Thanks to an anonymous referee for suggesting this example.}. 

The inconsistency of the problem formulation is an artifact of asking
the wrong question.  
A fixed point of the (dual) Markov operator $\mathcal{P}$
is called an \emph{invariant} measure, i.e.\ $\pi \in \mathscr{P}(G)$
is invariant whenever $\pi \mathcal{P} = \pi$.  The
set of all invariant probability measures is denoted by 
$\inv \mathcal{P}$.
We are interested in the
following generalization of \eqref{eq:stoch_feas_probl}:
\begin{align}
  \label{eq:stoch_fix_probl}
  \mbox{Find}\qquad \pi\in\inv\mathcal{P}.
\end{align}
We refer to this as the {\em stochastic fixed
  point problem.}  
  % In terms of random variables, invariance means that if $Y$ has an
% invariant distribution, the distribution of the next iterate does not
% change, as 

% \begin{example}[inconsistent stochastic feasibility]\label{example:inconsist}
%   Consider the (trivially convex, nonempty and closed) sets
%   $C_{-1}\equiv \{-1\}$ and $C_1\equiv \{1\}$ together with a random
%   variable $\xi$ such that $\mathbb{P}(\xi=1)=\mathbb{P}(\xi=-1)=1/2$.
%   The mappings $T_i x=P_{C_i}x=i$ for $x \in \mathbb{R}$ and $i\in
%   I=\{-1,1\}$ are the projections onto the sets $C_{-1}$ and $C_{1}$.
%   The RFI iteration then amounts to just random jumps between the
%   values $-1$ and $1$. So it holds that $\mathbb{P}(T_{\xi}x = i) =
%   1/2$ for all $x \in \mathbb{R}$ and hence there is clearly no
%   feasible fixed point to this iteration, that is,
%   $C=\emptyset$. Nevertheless, by \cref{thm:disinteg} we have
%   \begin{align*}
%     \mathbb{P}(X_{k+1} = i) = \mathbb{E}[\cpr{T_{\xi_{k}}X_{k} =
%       i}{X_{k}}] = \frac{1}{2}
%   \end{align*}
%   for all $k \in \mathbb{N}$. That means the unique invariant
%   distribution to which the distributions of the iterates of the RFI
%   (i.e.\ $(\mathbb{P}(X_{k} \in \cdot))_{k}$) converges is
%   $\pi=\tfrac{1}{2}(\delta_{-1}+\delta_{1})$, and this is attained
%   after one iteration.
% \end{example}
% As the above example demonstrates, inconsistent stochastic feasibility
% problems are neither exotic nor devoid of meaningful notions of
% convergence.  

\subsection{Modes of convergence}
\label{sec:modesofcvg}
In \cite{HerLukStu19a}, we considered
almost sure convergence of the sequence $(X_k)$ to a random variable
$X$:
\[  
X_k\to X \text{ a.s.} \quad \text{as } k \to \infty.
\]
Almost sure convergence is commonly encountered
in the studies of stochastic algorithms in optimization, 
and can be guaranteed for consistent stochastic feasibility 
under most of the regularity assumptions on $T_i$ considered 
here (see \cite[Theorem 3.8 and 3.9]{HerLukStu19a}) though this
does not require the full power of the theory of general Markov
processes.   In fact, the 
next result shows that almost sure convergence is only possible
for consistent stochastic feasibility.
The following statement first appeared in Lemma 3.2.1 of \cite{Hermer2019}.  
\begin{prop}[a.s. convergence implies consistency]\label{thm:Hermer} 
Let $\mymap{T_i}{G}{G}$ 
be continuous for all $i\in I$ with respect to the metric $d$.  
Let $\pi\in \inv\mathcal{P}\neq\emptyset$ 
and $X_0\sim\pi$.  Generate the sequence $(X_k^{X_0})_{k\in\Nbb}$
via Algorithm \ref{algo:RFI} where $X_0\indep \xi_k$ for all $k$.  
If the sequence $(X_k^{X_0})$ converges almost surely, 
then the stochastic feasibility problem is consistent.   Moreover, 
$\Supp\pi\subset C$.   
\end{prop}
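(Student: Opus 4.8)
The plan is to exploit the stationarity that is forced by initializing at an invariant measure, and to measure the one-step displacement of the chain in a bounded metric. Since $\pi\in\inv\mathcal{P}$ and $X_0\sim\pi$, the identity $\mathcal{L}(X_k)=\pi\mathcal{P}^k=\pi$ shows that the chain is stationary, so $X_k\sim\pi$ for every $k\in\Nbb$. I would replace $d$ by the truncated metric $\rho:=\min\{d,1\}$, which is bounded, metrizes the same topology, and satisfies $\rho(a,b)=0\iff a=b$; in particular almost sure convergence of $(X_k^{X_0})$ is unaffected by this replacement.

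The core of the argument is a displacement identity. Set
\[
  g(x):=\int_I \rho(T_i x,x)\,\mathbb{P}^{\xi}(\dd{i})=\mathbb{E}\ecklam{\rho(T_\xi x,x)},
\]
which is nonnegative and, by Assumption \ref{ass:1}, measurable. Because $X_k$ is a function of $(X_0,\xi_0,\dots,\xi_{k-1})$ while $\xi_k$ is independent of these (so $\xi_k\indep X_k$), conditioning on $X_k$ together with $X_k\sim\pi$ gives
\[
  \mathbb{E}\ecklam{\rho(X_{k+1},X_k)}=\mathbb{E}\ecklam{\rho(T_{\xi_k}X_k,X_k)}=\int_G g(x)\,\pi(\dd{x}),
\]
a quantity that does not depend on $k$. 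On the other hand, if $(X_k^{X_0})$ converges almost surely to some $X$, then $\rho(X_{k+1},X_k)\le\rho(X_{k+1},X)+\rho(X,X_k)\to0$ almost surely, and since $\rho\le1$ the bounded convergence theorem yields $\mathbb{E}\ecklam{\rho(X_{k+1},X_k)}\to0$. Comparing the two displays forces $\int_G g(x)\,\pi(\dd{x})=0$, hence $g=0$ $\pi$-almost everywhere. As $g(x)=0$ is equivalent to $\rho(T_\xi x,x)=0$ almost surely, i.e.\ to $\mathbb{P}(T_\xi x=x)=1$, the set $C$ of \eqref{eq:stoch_feas_probl} equals $\{x: g(x)=0\}$ and satisfies $\pi(C)=1$; in particular $C\neq\emptyset$, so the stochastic feasibility problem is consistent.

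To upgrade $\pi(C)=1$ to $\Supp\pi\subset C$ I would show that $C$ is closed. This is where continuity of the $T_i$ is used: for each fixed $i$ the map $x\mapsto\rho(T_i x,x)$ is continuous, so Fatou's lemma applied along any convergent sequence $x_n\to x$ shows that $g$ is lower semicontinuous. Consequently $C=\{x:g(x)\le0\}$ is a closed sublevel set, and since $\Supp\pi$ is by definition the smallest closed set of full measure, $\pi(C)=1$ yields $\Supp\pi\subset C$.

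The individual steps are short; the point that demands care -- the main obstacle -- is the interchange of limit and expectation. A priori $d$ is unbounded and there is no integrability of $d(X_{k+1},X_k)$, so passing to the bounded metric $\rho$ before taking expectations is essential: the truncation makes the bounded convergence theorem applicable while changing neither the almost sure convergence nor the description of $C$. The only other bookkeeping worth stating explicitly is the independence $\xi_k\indep X_k$, which is exactly what makes the displacement expectation constant in $k$ and is where stationarity is invoked.
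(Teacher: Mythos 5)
Your proof is correct, but it takes a genuinely different route from the paper's. The paper argues by contradiction: fixing $x\in\Supp\pi$ with $\mathbb{P}(T_\xi x\neq x)>0$, it constructs measurable index sets $K^{\epsilon_0}_\delta$ on which $T_i$ moves $x$ by at least $\epsilon_0$ while being stable on $\overline{\mathbb{B}}(x,\delta)$, and then uses stationarity to show that at every step the chain occupies $\mathbb{B}(x,\delta)$ with probability $\pi(\mathbb{B}(x,\delta))>0$ yet exits it with probability at least $\pi(\mathbb{B}(x,\delta))\,\mathbb{P}^\xi(K^{\epsilon_0}_\delta)>0$, contradicting almost sure convergence. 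You instead make the argument direct: stationarity forces the expected one-step displacement $\mathbb{E}\ecklam{\rho(X_{k+1},X_k)}=\int_G g\,\dd{\pi}$ (in the truncated metric) to be constant in $k$, while a.s.\ convergence plus bounded convergence forces it to vanish, so $g=0$ $\pi$-a.e., i.e.\ $\pi(C)=1$; continuity of the $T_i$ enters only through Fatou's lemma to make $g$ lower semicontinuous, hence $C$ closed, which upgrades $\pi(C)=1$ to $\Supp\pi\subset C$. Your route is shorter, avoids the delicate set constructions, and cleanly separates the hypotheses: consistency ($C\neq\emptyset$, indeed $\pi(C)=1$) needs only the measurability in Assumption \ref{ass:1}, whereas continuity is used solely for the support inclusion --- a refinement not visible in the paper's contradiction argument. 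Two minor remarks: since $\rho\le 1$ and $x\mapsto\rho(T_ix,x)$ is continuous for each $i$, dominated convergence actually gives continuity of $g$, not merely lower semicontinuity (either suffices); and the independence $\xi_k\indep X_k$ you invoke is exactly the same step the paper uses, resting on $X_0$ being independent of the whole i.i.d.\ sequence $(\xi_k)$ rather than only pairwise independence.
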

Before proceeding to the proof, note that the {\em measure} remains 
the same for each iterate $X_k^{X_0}$ -  the issue here is when the 
{\em iterates} converge (almost surely).  
\begin{proof}
In preparation for our argument, which is by contradiction, choose any 
$x\in\Supp\pi$ where 
$\pi\in\inv\mathcal{P}$,  and define 
 \[
 I^\epsilon\equiv \set{i\in I}{d(T_ix,x)>\epsilon}
 \]
 for $\epsilon\geq 0$.   Note that $I^\epsilon\supseteq I^{\epsilon_0}$ 
whenever 
 $\epsilon\leq \epsilon_0$.  Define the set
\[
 J^\epsilon_\delta\equiv \set{i\in I}{d(T_ix, T_iy)\leq \epsilon, \quad 
 \forall y\in \overline{\mathbb{B}}\paren{x, \delta}}.
\]
These sets 
satisfy $ J^\epsilon_{\delta_1}\subset J^\epsilon_{\delta_2}$ whenever 
$\delta_1\geq \delta_2$
and, since $T_i$ is continuous for all $i\in I$, we have that for each $\epsilon>0$,  
$ J^\epsilon_\delta\uparrow I$ as $\delta\to 0$.
A short argument shows 
that $I^\epsilon$ and $J^\epsilon_\delta$ are measurable for each $\delta$ and  $\epsilon>0$.  
 
%Proceeding with the argument,  
Suppose now, to the contrary, that $C=\emptyset$.  
Then $\mathbb{P}(T_\xi x=x)<1$ and hence 
$\mathbb{P}(d(T_\xi x, x)>0)>0$.    Since $I^\epsilon\supseteq I^{\epsilon_0}$ 
for 
 $\epsilon\leq \epsilon_0$ 
we have $\mathbb{P}^\xi(I^{\epsilon_0})\leq \mathbb{P}^\xi(I^{\epsilon})$
 whenever $\epsilon\leq \epsilon_0$.  
In particular, there must exist an $\epsilon_0$ such that 
$0<\mathbb{P}^\xi(I^{\epsilon_0})$.  On the other hand, 
there is a constant $\delta>0$ such that $\delta< \epsilon_0/2$ and 
$\mathbb{P}^\xi(K^{\epsilon_0}_\delta)>0$
where $K^{\epsilon_0}_\delta\equiv I^{\epsilon_0}\cap J^{\epsilon_0/2}_\delta$. 
 This construction then yields
\[
(\forall i\in K^{\epsilon_0}_\delta)\qquad d(T_iy,x)\geq  d(T_ix,x) -  
d(T_iy,T_ix)\geq \frac{\epsilon_0}{2}>\delta
 \quad\forall y\in \overline{\mathbb{B}}(x,\delta).   
\]

Next, we claim that $X_k^{X_0}\sim\pi$ for all 
$k\in\Nbb$.  Indeed, for any $Y \sim \pi$, if 
  $\xi$ is independent of $Y$, then $T_{\xi} Y \sim \pi$.
To see this, note that 
For $A \in \mathcal{B}(G)$ 
Fubini's Theorem and disintegration % \cref{thm:disinteg} 
yield   
  \begin{align*}
    \mathbb{P}(T_{\xi}Y \in A) &= \mathbb{E}[
    \cex{\1_{A}(T_{\xi}Y)}{\xi}] = \mathbb{E} \int \1_{A}(T_{\xi}y)
    \pi(\dd{y}) = \int \int \1_{A}(z) \mathbb{P}^{T_{\xi}y}(\dd{z})
    \pi(\dd{y}) \\ &= \int \int \1_{A}(z) p(y,\dd{z}) \pi(\dd{y}) =
    \pi\mathcal{P}(A) = \pi(A) = \mathbb{P}(Y \in A).
  \end{align*}
It follows that $\Supp \mathcal{L}(Y) =
\Supp \mathcal{L}(T_{\xi} Y)$, and since $\xi_k$ are i.i.d, 
 $X_{k}^{Y} \sim \pi$ for all $k \in \mathbb{N}$. This establishes
 the claim. 

The 
independence of $\xi_k$ and $X_0$ for all $k$ implies the independence of 
$\xi_k$ and $X_k^{X_0}$ for all $k$.  Moreover,  
$\mathbb{P}\paren{X_k^{X_0} \in  B_\delta} = \pi(B_\delta) > 0$
for all $k\in \Nbb$, where to avoid clutter we denote $B_\delta\equiv 
\mathbb{B}(x,\delta)$.  
This yields
\[
(\forall k\in \Nbb)\quad  \mathbb{P}\paren{X^{X_0}_k\in B_\delta, 
X^{X_0}_{k+1}\notin B_\delta} \geq 
 \mathbb{P}\paren{X^{X_0}_k\in B_\delta, \xi_k\in K^{\epsilon_0}_\delta} = 
 \pi(B_\delta)\mathbb{P}^\xi(K^{\epsilon_0}_\delta)>0.
\]
Thus, we also have 
\begin{eqnarray*}
(\forall k\in \Nbb)\quad  \mathbb{P}\paren{X^{X_0}_k\in B_\delta, 
X^{X_0}_{k+1}\in B_\delta} &=& 
\mathbb{P}\paren{X^{X_0}_k\in B_\delta}-\mathbb{P}\paren{X^{X_0}_k\in B_\delta, 
X^{X_0}_{k+1}\notin B_\delta} \\
&\leq&  \pi(B_\delta) -  \pi(B_\delta) \mathbb{P}^\xi(K^{\epsilon_0}_\delta)\\
&=&  \pi(B_\delta) (1-  \mathbb{P}^\xi(K^{\epsilon_0}_\delta)) < \pi(B_\delta).
\end{eqnarray*}
However, by assumption, $X_k^{X_0}\to X_*$ a.s. for some random variable $X_*$ with 
$\mathbb{P}\paren{X_* \in  B_{\delta}} = \pi(B_{\delta}) > 0.$ If  $X_* \in  B_{\delta}$
then due to the a.s. convergence there exists a (random) $k_*$ such that $X^{X_0}_{k}\in B_\delta$
for all $k\geq k_*.$ This implies that 
\[
  \mathbb{P}\paren{X^{X_0}_k\in B_\delta, X^{X_0}_{k+1}\in B_\delta, X_* \in  B_{\delta} }\to 
   \mathbb{P}\paren{X_* \in  B_{\delta}} = \pi(B_{\delta}).
\]
which is a contradiction since by the above
\[
  \mathbb{P}\paren{X^{X_0}_k\in B_\delta, X^{X_0}_{k+1}\in B_\delta, X_* \in  B_{\delta} }\leq 
   \mathbb{P}\paren{X^{X_0}_k\in B_\delta, X^{X_0}_{k+1}\in B_\delta}< \pi(B_\delta).
\]
So it must be true that $\mathbb{P}\paren{d(T_\xi 
x,x)>0}=0$.  In other 
words, $\mathbb{P}\paren{T_\xi x=x}=1$, hence $C\neq\emptyset$.    
Moreover, since the point $x$ was any arbitrary point in 
$\Supp\pi$, we conclude that 
$\Supp\pi\subset C$. 
\end{proof}

For inconsistent feasibility, or more general 
stochastic fixed point problems that are the aim of the RFI,  Algorithm \ref{algo:RFI}, 
we focus on {\em convergence in distribution}.
Let $(\nu_{k})$ be a sequence of probability measures on $G$.  The sequence 
$(\nu_{k})$ is said to converge in distribution to $\nu$ whenever $\nu \in
\mathscr{P}(G)$ and for all $f \in C_{b}(G)$ it holds that $\nu_{k} f
\to \nu f$ as $k \to \infty$, where $\nu f := \int f(x) \nu(\dd{x})$. 
Equivalently a sequence of random variables $(X_{k})$ is said to converge in 
distribution if their laws $(\mathcal{L}(X_{k}))$ do.

We now consider two modes of 
convergence in distribution for the corresponding sequence of measures 
$(\mathcal{L}(X_{k})))_{k \in \mathbb{N}}$ 
on 
$\mathscr{P}(G)$:
\begin{enumerate}
\item convergence in distribution of the Ces\`{a}ro averages of 
%the distributions of
 the  sequence $(\mathcal{L}(X_{k}))$ to a probability measure 
  $\pi \in \mathscr{P}(G)$, i.e.\ for any $f \in C_{b}(G)$
  \begin{align*}
    \nu_{k}f:= \frac{1}{k} \sum_{j=1}^{k} \mathcal{L}(X_{j}) f =
    \mathbb{E}\left[\frac{1}{k}\sum_{j=1}^{k} f(X_{j})\right] \to \pi
    f, \qquad \text{as } k \to \infty; 
  \end{align*}
\item convergence in distribution % of the probability distributions 
of the sequence %random variables
  $(\mathcal{L}(X_{k}))$ to a probability measure $\pi \in \mathscr{P}(G)$, i.e.\
  for any $f \in C_{b}(G)$
  \begin{align*}
    \mathcal{L}(X_{k})f = \mathbb{E}[f(X_{k})] \to \pi f, 
    \qquad \text{as } k \to \infty.
  \end{align*}
\end{enumerate}
Clearly, the second mode of convergence implies the first.  
This is used in Section \ref{sec:setCvgNE} and Section \ref{sec:cvgAverages}.

An elementary fact from the theory of Markov chains
(Proposition \ref{thm:construction_inv_meas}) is that, if the Markov operator 
$\mathcal{P}$ is Feller and $\pi$ is a cluster
point of $(\nu_k)$ with respect to convergence in distribution 
then $\pi$ is an invariant probability measure.  
Existence of invariant 
measures for a Markov operator then amounts to verifying that the 
operator is Feller (by Proposition \ref{thm:Feller}, automatic if the $T_i$ are 
continuous) and that cluster points exist (guaranteed by 
{\em tightness}  -- or compactness with respect to the topology of convergence in 
distribution -- of the sequence, see \cite[Section 5]{Billingsley}.
In particular, this means that 
there exists a convergent subsequence $(\nu_{k_{j}})$ with
\begin{align*}
  (\forall f\in C_b(G))\qquad \nu_{k_{j}} f=
  \mathbb{E}\left[\frac{1}{k_{j}}\sum_{i=1}^{k_{j}} f(X_{i})\right] \to \pi f,
  \qquad \text{as } j \to \infty.
\end{align*}
Convergence of the whole sequence, i.e.\ $\nu_{k} \to \pi$, amounts
then to showing that $\pi$ is the unique cluster point of $(\nu_{k})$
(see Proposition \ref{thm:cvg_subsequences}).  

Quantifying convergence is essential for establishing estimates for the distance 
of the iterates to the limit point, when this exists.  
\begin{definition}[R- and Q-linear convergence to points, Chapter 9 of 
\cite{OrtegaRheinboldt70}]\label{d:q-r-lc}
Let $(x_k)$ be a sequence in a metric space $(G,d)$.
\begin{enumerate}
\item $(x_k)_{k\in\Nbb}$ is said to \emph{converge R-linearly} to $\tilde{x}$ with rate $c\in [0,1)$
if there is a constant $\beta>0$ such that
\begin{align}\label{LinCon_Seq}
d(x_k,\tilde{x}) \le \beta c^k\quad \forall k\in \Nbb.
\end{align}
\item $(x_k)_{k\in\Nbb}$ is said to \emph{converge Q-linearly} to $\tilde{x}$ with rate $c\in [0,1)$ if
\[
d(x_{k+1},\tilde{x}) \le c d(x_{k},\tilde{x})\quad \forall k\in \Nbb.
\]
\end{enumerate}
\end{definition}
By definition, Q-linear convergence implies R-linear convergence with the same rate; 
the converse implication does not hold in general.  Q-linear convergence is 
encountered with contractive fixed point mappings, and this leads to 
a priori and a posteriori error estimates on the sequence.  This type of convergence is 
referred to as {\em geometric} or {\em exponential} convergence
in different communities.  The crucial distinction between R-linear and Q-linear  
convergence is that R-linear convergence  
permits neither a priori nor a posteriori error estimates.  

Common metrics for spaces of measures are the 
{\em Prokhorov-L\`evy distance} and the {\em Wasserstein metric}.  
  \begin{definition}[Prokhorov-L\`evy\&Wasserstein distance]\label{d:PL}
  Let $(G,d)$ be a separable complete metric space and let $\mu, \nu \in 
\mathscr{P}(G)$.
\begin{enumerate}[(i)]
\item The Prokhorov-L\`evy distance, denoted by $d_P$, is defined by
  \begin{equation}\label{eq:PL}
    d_{P}(\mu,\nu) = \inf\mysetc{\epsilon >0}{\mu(A) \le
      \nu(\mathbb{B}(A,\epsilon)) + \epsilon,\, \nu(A)\le
      \mu(\mathbb{B}(A,\epsilon))+\epsilon \quad \forall A \in
      \mathcal{B}(G)}.
  \end{equation} 
  \item  For $p \geq 1$ let 
  %  (see \cref{lemma:WassersteinMetric_prop})
  \begin{equation}\label{eq:p-probabiliy measures}
       \mathscr{P}_{p}(G) = \mysetc{\mu \in \mathscr{P}(G)}{ \exists\, x
      \in G \,:\, \int d^{p}(x,y) \mu(\dd{y}) < \infty}.
  \end{equation}
  The Wasserstein $p$-metric on $\mathscr{P}_{p}(G)$, denoted $W_p$, is defined by 
 \begin{equation}\label{eq:Wasserstein}
  W_p(\mu, \nu)\equiv \paren{\inf_{\gamma\in C(\mu, \nu)}\int_{G\times G} 
d^p(x,y)\gamma(dx, dy)}^{1/p}\quad (p\geq 1)
 \end{equation}
 where $C(\mu, \nu)$ is the set of couplings of $\mu$ and $\nu$ 
 (measures on the product space $G\times G$ whose marginals
 are $\mu$ and $\nu$ respectively - see \eqref{eq:couplingsDef}).  
 \end{enumerate}
  \end{definition}

\subsection{Regularity}
Our main results concern convergence of Markov chains under increasingly 
restrictive regularity assumptions on the mappings $\{T_{i}\}$.  The regularity
of $T_{i}$ is dictated by the application, and our primary interest is to follow
this through to the regularity of the corresponding Markov operator.  In 
\cite{LukNguTam18} a framework was developed for a quantitative 
convergence analysis of set-valued mappings $T_{i}$ that are {\em calm} 
(one-sided Lipschitz continuous -- in the sense of set-valued-mappings -- 
with Lipschitz constant greater than 1).  A set-valued self-mapping on a metric 
space $(G, d)$ is denoted $\mmap{T}{G}{G}$.  
 This setting includes, for instance, 
applications involving feasibility -- consistent and inconsistent -- as 
well as many randomized algorithms for large-scale optimization, convex and nonconvex.  
Studies concurrent with the present one define the regularity of fixed point mappings
in {\em $p$-uniformly convex spaces ($p\in(1,\infty)$) with parameter $c>0$} \cite{BLL, LauLuk21}.  
These are uniquely geodesic metric spaces $(G, d)$ 
for which  the following inequality holds \cite{NaoSil11}:
\begin{equation}\label{e:p-ucvx}
(\forall t\in [0,1])(\forall x,y,z\in G) \quad
d(z, (1-t)x\oplus ty)^p\leq (1-t)d(z,x)^p+td(z,y)^p - \tfrac{c}{2}t(1-t)d(x,y)^p
\end{equation}
where $w=(1-t)x\oplus ty$ for $t\in (0,1)$ denotes the point $w$ on the geodesic 
connecting $x$ and $y$ such that $d(w,x)=td(x,y)$.
The constant $c$ is tied to the curvature and the diameter of the space.  
When $p=c=2$, this inequality
defines a CAT($0$) space (Alexandrov \cite{Alexandrov} 
and Gromov \cite{Gromov}), the completion of which defines a Hadamard space. 
More generally, a CAT($\kappa$) space is a geodesic metric space with sufficiently small 
triangles possessing comparison triangles with sides the same length as the geodescic 
triangle but for which the distance between points on the geodesic triangle are less than 
or equal to the distance between corresponding points on the comparison triangle.
CAT($\kappa$) spaces are separable, but not complete, and locally $2$-uniformly convex 
with parameter 
$c$ approaching the value $2$ from below as the 
diameter of the local neighborhood vanishes \cite[Proposition 3.1]{Ohta07}.  
% In particular, for any CAT($\kappa$) metric space
% $(G, d)$ and any point $\xbar\in G$, for all $\delta\in (0, \pi/(4\sqrt{\kappa}))$ the subspace 
% $(\mathbb{B}(\xbar,\delta), \left.d\right|_{\mathbb{B}(\xbar,\delta)})$ is a $2$-uniformly 
% convex space with constant $c_\delta = 4\delta\sqrt{\kappa}\tan\paren{\pi/2 - 2\delta\sqrt{\kappa}}$
% \cite[Proposition 3.1]{Ohta07}.  

To keep the notation simple, we will restrict ourselves to CAT($\kappa$) spaces (that is, the case $p=2$), though 
we note that the exponents  in the definition below are a consequence of this choice of $p$.  
\begin{definition}[pointwise almost ($\alpha$-firmly) nonexpansive mappings in CAT($\kappa$) metric spaces]
\label{d:a-fne} 
Let $(G,d)$ be a CAT($\kappa$) metric space and $D\subset G$ and let 
$\mmap{F}{D}{G}$.
  \begin{enumerate}[(i)]
  \item The mapping $F$  is said to be \emph{pointwise  
  almost nonexpansive at $x_0\in D$ on $D$}, abbreviated {\em pointwise ane},
whenever  
\begin{equation}\label{eq:pane}
\exists \epsilon\in[0,1):\quad  d(x^+, x_0^+) \le \sqrt{1+\epsilon}\,d(x,x_0), 
\qquad \forall x \in D, \forall x^+\in Fx, x_0^+\in Fx_0.
\end{equation}
The {\em violation} is a value of $\epsilon$ for which \eqref{eq:pane} holds.
    When the above inequality holds for all $x_0\in D$ then $F$ is said to be 
    {\em almost nonexpansive on $D$} ({\em ane}).  When $\epsilon=0$ the 
    mapping $F$ is said to be 
    {\em (pointwise) nonexpansive}.  
  \item The mapping $F$ is said to be {\em pointwise almost $\alpha$-firmly 
nonexpansive  at $x_0\in D$ on $D$}, abbreviated {\em pointwise a$\alpha$-fne} 
  whenever
\begin{eqnarray}
&&
	\exists \epsilon\in[0,1)\mbox{ and }\alpha\in (0,1):\nonumber\\
	  \label{eq:paafne}&&	\quad	d^2(x^+,x_0^+) \le (1+\epsilon) d^2(x,x_0) - 
      \tfrac{1-\alpha}{\alpha}\psi_c(x,x_0, x^+, x_0^+)\\
&&      \qquad\qquad\qquad\qquad\qquad\quad \forall x \in D, \forall x^+\in Fx, \forall x_0^+\in Fx_0,
		\nonumber
\end{eqnarray}
where the {\em transport discrepancy} $\psi_c$ of  $F$ at $x, x_0$, $x^+\in Fx$ and $x_0^+\in Fx_0$
is defined by  
\begin{eqnarray}
&&\!\!\!\!\!\!\!\!\psi_c(x,x_0, x^+, x_0^+)\equiv \nonumber\\
\label{eq:delta}
&&\!\!\!\!\!\!\!\! \tfrac{c}{2}\left(d^2(x^+, x)+d^2(x_0^+, x_0) + d^2(x^+, x_0^+) + 
d^2(x, x_0)  - d^2(x^+, x_0)   - d^2(x,x_0^+)\right).
\end{eqnarray}
When the above inequality holds for all $x_0\in D$ then $F$ is said to be 
{\em almost $\alpha$-firmly nonexpansive on $D$}, ({\em a$\alpha$-fne}). 
The {\em violation} is the constant  
$\epsilon$ for which \eqref{eq:paafne} holds.
When $\epsilon=0$ the mapping $F$ is said to be 
    {\em (pointwise) $\alpha$-firmly nonexpansive}, abbreviated {\em (pointwise) $\alpha$-fne}.  
  \end{enumerate}
\end{definition}

The transport discrepancy $\psi_c$ is a central object for characterizing the regularity of mappings 
in metric spaces and ties the regularity of the mapping to the geometry of the space.  The parameter
$c$ is determined by the curvature and the diameter of the space, $(G,d)$.
The following lemma is derived from  
\cite[pp. 94]{Berdellima20}.
\begin{lemma}[$\psi_2$ is nonnegative in CAT($0$) spaces]\label{lem:psi_f nonneg}
Let $(G,d)$ be a CAT($0$) metric space and $\mmap{F}{D}{G}$ for $D\subset G$.  
Then the transport discrepancy defined by \eqref{eq:delta} is nonnegative for all $x,y\in D$, 
$x^+\in Fx$, $y^+\in Fy$. 
Moreover, if $F$ is pointwise a$\alpha$-fne at $x_0\in D$ 
with violation $\epsilon$ on $D$, then $F$ is pointwise ane at $x_0$ on $D$ with 
violation at most $\epsilon$.  
\end{lemma}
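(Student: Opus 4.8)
The plan is to reduce the nonnegativity of $\psi_2$ to the standard \emph{quadrilateral (Euler) inequality} valid in every CAT($0$) space, after which the second assertion drops out for free. To fix intuition, I would first note that when $(G,d)$ is a Hilbert space the transport discrepancy collapses to a perfect square: expanding the six squared distances in \eqref{eq:delta} with $c=2$ and $d(u,v)=\left\|u-v\right\|$ gives
\begin{equation*}
\psi_2(x,x_0,x^+,x_0^+) = \left\|(x^+-x_0^+)-(x-x_0)\right\|^2 \ge 0,
\end{equation*}
which exhibits the quantity as the discrepancy between the two displacement vectors $x^+-x$ and $x_0^+-x_0$. The real task is to recover this nonnegativity using only the synthetic curvature bound \eqref{e:p-ucvx} with $p=c=2$, since a general CAT($0$) space carries no inner product.

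First I would record the CN (Bruhat--Tits) inequality, i.e.\ the case $t=\tfrac12$ of \eqref{e:p-ucvx}: for any apex $z$ and any midpoint $m$ of $u,v$, $d^2(z,m)\le\tfrac12 d^2(z,u)+\tfrac12 d^2(z,v)-\tfrac14 d^2(u,v)$. From this I would derive the quadrilateral inequality
\begin{equation*}
d^2(p,r) + d^2(q,s) \le d^2(p,q) + d^2(q,r) + d^2(r,s) + d^2(s,p)
\end{equation*}
by three applications of CN: letting $m$ be the midpoint of one diagonal $p,r$, apply CN with apex $q$ and with apex $s$ to bound $d^2(q,m)$ and $d^2(s,m)$; then apply CN a third time with apex $m$ and midpoint $m'$ of the other diagonal $q,s$, discarding the nonnegative term $d^2(m,m')$, to obtain $\tfrac14 d^2(q,s)\le\tfrac12 d^2(m,q)+\tfrac12 d^2(m,s)$. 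Substituting the first two bounds into the third and clearing the $-\tfrac14 d^2(p,r)$ terms yields the displayed inequality. This quadrilateral inequality is the single substantive step, and I expect it to be the main obstacle; everything else is bookkeeping. (If preferred, one may cite this Euler inequality directly from the CAT($0$) literature or from \cite{Berdellima20} rather than reprove it.)

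With the quadrilateral inequality available, the nonnegativity of $\psi_2$ is a term-matching rearrangement: applying it to the cyclically ordered quadrilateral $(p,q,r,s)=(x^+,x,x_0,x_0^+)$, whose diagonals are $x^+x_0$ and $x\,x_0^+$, gives exactly
\begin{equation*}
d^2(x^+,x_0)+d^2(x,x_0^+) \le d^2(x^+,x)+d^2(x,x_0)+d^2(x_0,x_0^+)+d^2(x_0^+,x^+),
\end{equation*}
and transposing the two diagonal terms is precisely the statement $\psi_2(x,x_0,x^+,x_0^+)\ge 0$ in \eqref{eq:delta} (note $\tfrac{c}{2}=1$ for $c=2$). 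Since the four points $x,x_0,x^+,x_0^+$ were arbitrary (subject to $x^+\in Fx$, $x_0^+\in Fx_0$), this settles the first claim.

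Finally, the second claim needs no further geometry. Assuming $F$ is pointwise a$\alpha$-fne at $x_0$ on $D$ with violation $\epsilon$, inequality \eqref{eq:paafne} holds with $c=2$; because $\alpha\in(0,1)$ we have $\tfrac{1-\alpha}{\alpha}>0$, and because $\psi_2\ge 0$ by the first part the subtracted term is nonnegative. Dropping it gives
\begin{equation*}
d^2(x^+,x_0^+) \le (1+\epsilon)\,d^2(x,x_0)\qquad \forall x\in D,\ x^+\in Fx,\ x_0^+\in Fx_0,
\end{equation*}
and taking square roots recovers \eqref{eq:pane} with the same constant $\epsilon$, so $F$ is pointwise ane at $x_0$ on $D$ with violation at most $\epsilon$.
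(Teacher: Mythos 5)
Your proof is correct, and it takes a genuinely different route through the key nonnegativity step. The paper's proof invokes Jost's four-point inequality for CAT($0$) spaces, $d^2(u,y)+d^2(x,v)-d^2(u,x)-d^2(v,y)\le 2d(u,v)\,d(x,y)$ (cited as Theorem 2.3.1 of the Jost reference), and then nonnegativity of $\psi_2$ follows by combining this with Young's inequality $2ab\le a^2+b^2$; you instead derive the weaker quadrilateral (Euler) inequality
\begin{equation*}
d^2(p,r)+d^2(q,s)\le d^2(p,q)+d^2(q,r)+d^2(r,s)+d^2(s,p)
\end{equation*}
from scratch, by three applications of the CN inequality, i.e.\ the case $t=\tfrac12$, $p=c=2$ of the paper's own defining inequality \eqref{e:p-ucvx}, and this weaker inequality already suffices after the term-matching you describe. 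What your route buys is self-containedness: everything is deduced from the inequality the paper uses to \emph{define} the class of spaces, with no appeal to an external comparison-geometry result (your use of midpoints is legitimate since CAT($0$) spaces are uniquely geodesic). What the paper's route buys is brevity and a stronger intermediate estimate -- the product bound $2d(u,v)d(x,y)$ implies your quadrilateral inequality via Young, but not conversely -- which is the form needed elsewhere when sharper control is wanted. Your handling of the second assertion (dropping the nonnegative term $\tfrac{1-\alpha}{\alpha}\psi_2\ge 0$ from \eqref{eq:paafne} and taking square roots) coincides with the paper's argument.
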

\begin{proof}
For any self-mapping on a CAT($0$) space, 
the following four-point inequality holds at {\em any} points $x, y, u, v$, 
\cite[Theorem 2.3.1]{Jost97}:
\begin{equation}\label{eq:Cauchy space}
    d^2(u, y)+d^2(x,v)-d^2(u, x)-d^2(v, y)
   \leq 2d(u,v)d(x,y).
\end{equation}
Thus $\psi_2(x, x_0, x^+, x_0^+)$ defined by \eqref{eq:delta} (with $c=2)$ is 
nonnegative in this setting.  It follows immediately 
from the definition \eqref{eq:paafne}, then, that in CAT($0$) spaces 
pointwise a$\alpha$-fne mappings are pointwise ane with at most the same violation.  
\end{proof}
In CAT($\kappa$) spaces the above statement does not hold.  It is well known, for example,  
that  in a CAT($\kappa$) metric space the projector onto a convex set is 
$\alpha$-fne with $\alpha=1/2$, but it is not nonexpansive \cite{AriLeuLop14}.  

The definition of pointwise a$\alpha$-fne
mappings generalizes the same notions developed in \cite{LukNguTam18}
for Euclidean spaces.  The notion of 
{\em averaged} mappings dates back to Mann, Krasnoselskii, and others
\cite{mann1953mean, krasnoselski1955, edelstein1966, Gubin67, BruckReich77}, 
while the name ``averaged'' seemed to stick from \cite{BaiBruRei78}.  We depart from this 
tradition because it does not fit with nonlinear spaces.    

In normed linear spaces, Baillon and Bruck \cite{Baillon1996} showed that 
nonexpansive mappings whose orbits are bounded under 
convex relaxations are {\em asymptotically regular} with a universal rate constant.  
Precisely: let $\mymap{T}{D}{D}$ be nonexpansive, where $D$ is a convex subset 
of a normed linear space, and define $x_m$ recursively by 
$x_m= T_\lambda x_{m-1}\equiv \left((1-\lambda)\Id + \lambda T\right)$
for $\lambda \in (0,1)$ and $x_0\in D$.   
If $\|x - TT_\lambda^k x\|\leq 1$ for any $\lambda\in(0,1)$ and for all $0\leq k\leq m$, then    
\cite[Main Result]{Baillon1996}
\begin{equation}\label{eq:universal 1}
\|x_m - Tx_m\|<\frac{\diam{D}}{\sqrt{\pi m\lambda(1-\lambda)}}. 
\end{equation}

Cominetti, Soto and Vaisman \cite{CominettiSotoVaisman} recently confirmed
a conjecture of Baillon and Bruck that a universal rate constant also holds for 
nonexpansive mappings with arbitrary relaxation in $(0,1)$ chosen at each iteration; 
in particular, that 
\[
\|x_m - T x_{m}\|\leq\frac{\diam{D}}{\sqrt{\pi\sum_{k=1}^m\lambda_k(1-\lambda_k)}},
\]
where $x_m$ is defined recursively by 
$x_m= T_{\lambda_m}x_{m-1}\equiv \left((1-\lambda_m)\Id + \lambda_m T\right)x_{m-1}$
for $\lambda_m\in (0,1)$ ($m=1,2, \dots$).  
The operators 
$T_{\lambda_m}$ all have the same set of fixed points (namely $\Fix T$), so these results are 
complementary to \cite{HerLukStu19a} where it was shown \cite[Theorem 3.5]{HerLukStu19a} 
that sequences of random variables on compact metric spaces  generated by Algorithm 
\ref{algo:RFI} with  {\em paracontractions} such as 
$T_{\lambda_m}$ above converge {\em almost surely} to a random variable in $\Fix T$, 
assuming that this is nonempty (see Proposition \ref{thm:Hermer} in this context). 
Necessary and sufficient conditions for linear 
convergence
of the {\em iterates} were also determined in a more limited setting in 
\cite[Theorems 3.11 and 3.15]{HerLukStu19a}.  
The results of \cite{Baillon1996,CominettiSotoVaisman,HerLukStu19a}, however, do not apply to 
inconsistent
stochastic feasibility considered here. 

Nonexpansive mappings have been explored in nonlinear 
metric spaces for instance in \cite{GoeRei84} and in Hadamard spaces recently in 
\cite{ReiSal16}. Our definition for $\alpha=1/2$ is equivalent (after some algebra) to the 
definition of {\em firmly contractive} mappings given in \cite[Definition 6]{Browder67} for 
Hilbert spaces (see \eqref{eq:paafne2} below).  The notion 
of $\lambda$-firmly nonexpansive mappings was defined in \cite{AriLeuLop14} in the context 
of $W$-hyperbolic spaces.  This nomenclature was appropriated in \cite{BLL} where 
it was shown that $\lambda$-firmly nonexpansive 
mappings are $\alpha$-fne, though the converse does not hold in general 
\cite[Proposition 4]{BLL}.   Rates of asymptotic regularity of compositions of 
firmly nonexpansive mappings on $p$-uniformly convex spaces have been established in 
\cite[Theorem 3.2]{RuiLopNic15}. 

The violation $\epsilon$ in \eqref{eq:pane} and \eqref{eq:paafne} is a recently introduced feature in the 
analysis of fixed point mappings, first appearing in this form in \cite{LukNguTam18}.  
It is interesting to note
that in the same article \cite{Baillon1996} where Baillon and Bruck showed \eqref{eq:universal 1} for 
nonexpansive mappings, 
they also observed that mappings with Lipschitz constant greater than one also 
behave nicely, and conjectured that something similar was also possible for this case.  Indeed, the analysis of 
\cite{LukNguTam18} shows how this works, though something like a universal constant has not been explored. 
Many  are familiar with mappings for which \eqref{eq:pane}  holds with $\epsilon<0$ at all $x_0\in G$ , 
i.e. contraction mappings.  In this case, 
the whole technology of pointwise a$\alpha$-fne mappings is not required since an 
appropriate application of Banach's fixed point theorem delivers existence of fixed points and convergence of 
fixed point iterations at a linear rate.  
We will have more to say about this later;  for the moment it suffices to note that the mappings associated 
with our target applications are expansive on all neighborhoods of fixed points and we will therefore require 
another property to guarantee convergence.    

When $\|\cdot\|$ is the norm induced by the inner product and 
$d(x,y)=\|x-y\|$, the transport discrepancy $\psi_2$ defined by \eqref{eq:delta} has 
the representation 
\begin{equation} \label{eq:nice ineq}
\psi_2(x,x_0, x^+, x_0^+) =  \|(x-x^+)-(x_0-x_0^+)\|^2.
 % d^2(Fx, x)+d^2(Fx_0, x_0)+d^2(Fx, Fx_0)+d^2(x, x_0)\nonumber\\
 % &&\qquad\qquad\qquad\qquad\qquad\qquad - d^2(Fx, x_0)-d^2(x,Fx_0),
\end{equation}
This representation 
shows the connection between our definition and more classical notions. 
Indeed, in a Hilbert space setting $(G, \|\cdot\|)$, 
a set-valued mapping $\mmap{F}{D}{G}$ $(D\subset G)$
is pointwise a$\alpha$-fne  at $x_0$ 
with constant $\alpha$ and violation at most $\epsilon$ on $D$ if and only 
if \cite[Proposition 2.1]{LukNguTam18}
 \begin{eqnarray}
&&\|x^+ - x_0^+\|^2 \le (1+\epsilon)\|x - x_0\|^2 -  
\tfrac{1-\alpha}{\alpha}\left\|((x-x^+) - (x_0-x_0^+) \right\|^2
\label{eq:paafne2}\\
 &&  \qquad\qquad\qquad\qquad\qquad\qquad\qquad\quad 
 \forall x \in D, \forall x^+\in Fx, \forall x_0^+\in Fx_0.\nonumber
 \end{eqnarray}

In the stochastic setting, to ease the notation and avoid certain technicalities,  
we will consider single-valued mappings $T_i$
that are only almost $\alpha$-firmly nonexpansive {\em in expectation}.
We can therefore write $x^+ = T_i x$ instead
of always taking some selection $x^+\in T_i x$ and verifying the desired properties
over all (measurable) selections, assuming these exist \cite{Wagner}.
The next definition uses the update function $\Phi$ defined 
in Assumption \ref{ass:1}(b).   
\begin{definition}[pointwise almost ($\alpha$-firmly) nonexpansive in
  expectation]\label{d:afne oa}
  Let $(G,d)$ be a $p$-uniformly convex metric space with constant $c$, let $\mymap{T_i}{G}{G}$ for
  $i\in I$, and let 
   $\mymap{\Phi}{G\times I}{G}$ be given by $\Phi(x,i) = T_{i}x$.
  Let $\psi_c$ be defined by \eqref{eq:delta} and   
let $\xi$ be an $I$-valued random variable.  
  \begin{enumerate}[(i)]
  \item   The mapping $\Phi$ is said to be \emph{pointwise
      almost nonexpansive in expectation at $x_0\in G$} on $G$, abbreviated 
      {\em pointwise ane in expectation}, whenever 
    \begin{align}\label{eq:panee}
		\exists \epsilon\in [0,1):\quad 
		\mathbb{E}\ecklam{d(\Phi(x,\xi), \Phi(x_{0},\xi))} \le \sqrt{1+\epsilon}\,
		d(x,x_0), \qquad \forall x \in G.
    \end{align}
    When the above inequality holds for all $x_0\in G$ then
    $\Phi$ is said to be {\em almost nonexpansive -- ane -- in expectation on
      $G$}.  As before, the violation is a value of  $\epsilon$ for which 
  \eqref{eq:panee} holds. When the violation is $0$, the qualifier ``almost'' is dropped. 
  \item The mapping $\Phi$ is said to be {\em pointwise almost $\alpha$-firmly 
nonexpansive  in expectation at $x_0\in G$} on $G$, abbreviated {\em pointwise a$\alpha$-fne
in expectation}, 
  whenever
  \begin{eqnarray}\label{eq:paafne i.e.}
&&\exists \epsilon\in [0,1), \alpha\in (0,1):\quad \forall x \in G,\\
&&\quad\mathbb{E}\ecklam{d^2(\Phi(x,\xi),\Phi(x_0,\xi))}\leq 
(1+\epsilon)d^2(x,x_0) - 
\tfrac{1-\alpha}{\alpha}\mathbb{E}\left[\psi_c(x,x_0, \Phi(x,\xi), \Phi(x_0,\xi))\right].
\nonumber
\end{eqnarray}
When the above inequality holds for all $x_0\in G$ then $\Phi$ is said to be 
{\em almost $\alpha$-firmly nonexpansive (a$\alpha$-fne) in expectation on $G$}.  The 
violation is a value of $\epsilon$ for which \eqref{eq:paafne i.e.} holds. 
When the violation is $0$, the qualifier ``almost'' is dropped and the abbreviation 
{\em $\alpha$-fne in expectation} is used. 
  \end{enumerate}
\end{definition}
\begin{prop}\label{r:nonneg psi_Phi}
Let $(G,d)$ be a CAT($0$) space.  The mapping 
$\mymap{\Phi}{G\times I}{G}$ given by $\Phi(x,i) = T_{i}x$
is pointwise a$\alpha$-fne in 
expectation  at $y$ on $G$ with constant $\alpha$ and violation at most $\epsilon$
and pointwise ane in expectation  at $y$ on $G$ 
with violation at most $\epsilon$ 
whenever $T_i$ is pointwise a$\alpha$-fne at $y$ on $G$ with constant $\alpha$ and 
violation no greater than $\epsilon$ for all $i$. 
\end{prop}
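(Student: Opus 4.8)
The plan is to integrate the pointwise estimates for the deterministic mappings $T_i$ against the law $\mathbb{P}^{\xi}$ of $\xi$, and then to use the nonnegativity of the transport discrepancy in CAT($0$) spaces to pass from the second-moment estimate to the first-moment one. First I would establish the a$\alpha$-fne-in-expectation claim \eqref{eq:paafne i.e.}. By hypothesis, for every $i\in I$ the mapping $T_i$ is pointwise a$\alpha$-fne at $y$ with constant $\alpha$ and violation at most $\epsilon$, so taking $x_0=y$, $x^+=\Phi(x,i)=T_ix$ and $x_0^+=\Phi(y,i)=T_iy$ in \eqref{eq:paafne} gives, for each fixed $x\in G$ and every $i$,
\[
d^2(\Phi(x,i),\Phi(y,i)) \le (1+\epsilon)\,d^2(x,y) - \tfrac{1-\alpha}{\alpha}\,\psi_c(x,y,\Phi(x,i),\Phi(y,i)).
\]
Since $\Phi$ is measurable (Assumption \ref{ass:1}(b)) and $d$ is continuous, both sides are measurable functions of $i$; integrating against $\mathbb{P}^{\xi}$ and using linearity of the expectation together with the fact that $(1+\epsilon)d^2(x,y)$ is deterministic yields exactly \eqref{eq:paafne i.e.} at $y$, with the same constant $\alpha$ and violation $\epsilon$.

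Second I would deduce the ane-in-expectation claim \eqref{eq:panee}. In a CAT($0$) space Lemma \ref{lem:psi_f nonneg} guarantees that the transport discrepancy $\psi_c$ (here $c=2$) is nonnegative, so discarding the last term of the estimate just obtained gives $\mathbb{E}\left[d^2(\Phi(x,\xi),\Phi(y,\xi))\right] \le (1+\epsilon)\,d^2(x,y)$. Jensen's inequality applied to the convex function $t\mapsto t^2$ then gives $\left(\mathbb{E}\left[d(\Phi(x,\xi),\Phi(y,\xi))\right]\right)^2 \le \mathbb{E}\left[d^2(\Phi(x,\xi),\Phi(y,\xi))\right]$, and combining the two bounds and taking square roots produces \eqref{eq:panee} with violation at most $\epsilon$. (Alternatively, one could integrate directly the pointwise ane estimate \eqref{eq:pane} furnished for each $T_i$ by the second part of Lemma \ref{lem:psi_f nonneg}, since its right-hand side $\sqrt{1+\epsilon}\,d(x,y)$ is already deterministic; the Jensen route is cleaner because it reuses the first claim.)

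The argument is essentially a matter of integrating fixed-$i$ inequalities, so there is no deep obstacle; the only points requiring care are the measurability of $i\mapsto \psi_c(x,y,\Phi(x,i),\Phi(y,i))$ and $i\mapsto d(\Phi(x,i),\Phi(y,i))$, which follow from measurability of $\Phi$ and continuity of $d$, and the fact that the expectations are well defined in $[0,\infty]$ as integrals of nonnegative measurable functions so that the integrated inequality is meaningful even before any integrability is assumed. The genuinely load-bearing step is the appeal to Lemma \ref{lem:psi_f nonneg}: it is precisely the nonnegativity of $\psi_c$ that forces the CAT($0$) hypothesis, since without it the passage from the second-moment bound to the first-moment bound in the ane-in-expectation claim would fail.
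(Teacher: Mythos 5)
Your proof is correct and follows essentially the same route as the paper: integrate the pointwise a$\alpha$-fne inequalities for the $T_i$ (with $x_0=y$) against $\mathbb{P}^{\xi}$, and invoke Lemma \ref{lem:psi_f nonneg} for nonnegativity of the transport discrepancy in CAT($0$) spaces. Your Jensen-inequality step for the ane-in-expectation claim is a harmless local variation on the paper's argument, which instead integrates the per-index ane estimates directly (the alternative you yourself point out), so the two proofs coincide in substance.
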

\begin{proof}
 By Lemma \ref{lem:psi_f nonneg},  whenever 
$(G,d)$ is a CAT($0$) space ($p$-uniformly convex with $p=2$ and $c=2$)
$\psi_2(x,y, \Phi(x,i), \Phi(y,i))\geq 0$ for all $i$ and for all $x,y\in G$, so 
the expectation $\mathbb{E}\left[\psi_2(x,y, \Phi(x,\xi), \Phi(y,\xi))\right]$ 
is well-defined and nonnegative for all $x,y\in G$ (the value $+\infty$ can be attained).  
This implies that, for all $i$, $T_i$ is pointwise ane at $y$ on $G$ with violation at most 
$\epsilon$ on $G$ whenever it is pointwise a$\alpha$-fne at $y$ with constant 
$\alpha$ on $G$ with violation at most 
$\epsilon$ on $G$ for all $i$.  It follows immediately from the definition, then, that  
$\Phi$ is pointwise a$\alpha$-fne in expectation at $y$ with constant 
$\alpha$ on $G$ with violation at most 
$\epsilon$ on $G$, and also pointwise ane in expectation at $y$ on $G$ with violation at most 
$\epsilon$ on $G$.  
\end{proof}

Lifting these notions of regularity to Markov operators yields an analogous definition on the 
space of measures which hinges on the update function  $\Phi$.  
To facilitate the discussion we denote 
the set of couplings where the distance 
$W_2(\mu_1, \mu_2)$ is attained by 
\begin{equation}\label{eq:Gamma opt}
 C_*(\mu_1,\mu_2)\equiv \left\{\gamma\in C(\mu_1, \mu_2)~\big{|}~ 
\int_{G\times G}d^2(x,y)\ \gamma(dx, dy) = W_2^2(\mu_1,\mu_2)\right\}.
\end{equation}
Note that by 
Lemma \ref{lemma:WassersteinMetric_prop}\eqref{lemma:WassersteinMetric_prop ii}
this set is nonempty when $W_2(\mu_1,\mu_2)$ is finite.

\begin{definition}[pointwise almost ($\alpha$-firmly) nonexpansive Markov operators]\label{d:afne rw}
  Let $(G,d)$ be a CAT($\kappa$) metric space, and let 
$\mathcal{P}$ be a Markov operator with transition kernel
\[
  (x\in G) (A\in
  \mathcal{B}(G)) \qquad p(x,A) \equiv \mathbb{P}(\Phi(x,\xi) \in A)
\]
where $\xi$ is an $I$-valued random variable and 
$\mymap{\Phi}{G\times  I}{G}$ is a measurable update function. 
  Let $\psi_c$ be defined by \eqref{eq:delta}.
  \begin{enumerate}[(i)]
  \item   The Markov operator is said to be \emph{pointwise
	  almost nonexpansive in measure at $\mu_0\in \mathscr{P}(G)$} on $\mathscr{P}(G)$, 
	  abbreviated {\em pointwise ane in measure}, whenever 
    \begin{align}\label{eq:paneim}
		\exists \epsilon\in [0,1):\quad W_2(\mu\Pcal, \mu_0\Pcal) \le \sqrt{1+\epsilon}\,
		W_2(\mu, \mu_0), \qquad \forall \mu\in \mathscr{P}(G).
    \end{align}
    When the above inequality holds for all $\mu_0\in \mathscr{P}(G)$ then
    $\Pcal$ is said to be {\em almost nonexpansive (ane)  in measure on
      $\mathscr{P}(G)$}.  As before, the violation is a value of  $\epsilon$ for which 
  \eqref{eq:paneim} holds. When the violation is $0$, the qualifier ``almost'' is dropped. 
  \item The Markov operator $\Pcal$ is said to be {\em pointwise almost $\alpha$-firmly 
nonexpansive  in measure at $\mu_0\in G$} on $\mathscr{P}(G)$, 
abbreviated {\em pointwise a$\alpha$-fne in measure}, 
  whenever
  \begin{eqnarray}
&&\exists \epsilon\in [0,1), \alpha\in (0,1): 
\qquad \forall\mu\in \mathscr{P}(G),\forall \gamma\in C_*(\mu, \mu_0)
\nonumber\\
&& W_2(\mu\Pcal, \mu_0\Pcal)^2\leq 
(1+\epsilon)W_2(\mu, \mu_0)^2 - \nonumber\\
&&\qquad \qquad\qquad \qquad
\tfrac{1-\alpha}{\alpha}\int_{G\times G}\mathbb{E}\left[\psi_c(x,y, \Phi(x,\xi), \Phi(y,\xi))\right] \gamma(dx, dy).
	  \label{eq:paafne i.m.}
\end{eqnarray}
When the above inequality holds for all $\mu_0\in \mathscr{P}(G)$ then $\Pcal$ is said to be 
{\em a$\alpha$-fne in measure on $\mathscr{P}(G)$}.  The 
violation is a value of $\epsilon$ for which \eqref{eq:paafne i.m.} holds. 
When the violation is $0$, the qualifier ``almost'' is dropped and the abbreviation 
{\em $\alpha$-fne in measure} is employed. 
  \end{enumerate}
\end{definition}

\begin{rem}\label{r:nonneg psi_Phi2}
By Lemma \ref{lem:psi_f nonneg},  when $(G, d)$ is a CAT($0$) space 
 the expectation on the right hand side of \eqref{eq:paafne i.m.} is nonnegative, and 
 the corresponding Markov operator is pointwise ane in measure 
 at $\mu_0$ whenever it is pointwise a$\alpha$-fne in measure at $\mu_0$
 (Proposition \ref{r:nonneg psi_Phi}). 
 In particular, when  
$\mu=\mu_0\in \inv\mathcal{P}$ the left hand side is zero and   
\[
\int_{G\times G} 
\mathbb{E}\left[\psi_2(x,y, T_\xi x, T_\xi y)\right]\ \gamma(dx, dy) = 0.
\]
Here the optimal coupling is the diagonal of the product space $G\times G$
and $\psi_2(x,x, T_\xi x, T_\xi x)=0$ for all $x\in G$.  
\end{rem}

\begin{prop}\label{thm:Tafne in exp 2 pafne of P}
  Let $(G,d)$ be a separable complete CAT($\kappa$) metric space and 
  $\mymap{T_i}{G}{G}$ for $i\in I$, let 
   $\mymap{\Phi}{G\times I}{G}$ be given by $\Phi(x,i) = T_{i}x$
   and let $\psi_c$ be defined by \eqref{eq:delta}. 
   Denote  by $\mathcal{P}$  the Markov operator with update function $\Phi$ and 
   transition kernel $p$ defined by
  \eqref{eq:trans kernel}.
  If $~\Phi$  is a$\alpha$-fne in expectation on $G$ with
  constant $\alpha\in (0,1)$ and violation $\epsilon\in [0,1)$, then the Markov operator 
  $\mathcal{P}$ is a$\alpha$-fne 
  in measure on $\mathscr{P}_2(G)$ 
  with constant $\alpha$ and violation at most $\epsilon$, that is, $\Pcal$ 
  satisfies 
  \begin{eqnarray}
W_2^{2}(\mu_1\mathcal{P}, \mu_2\mathcal{P})&\le&    
   (1+\epsilon)W_2^{2}(\mu_1, \mu_2) -  
   \tfrac{1-\alpha}{\alpha}  \int_{G\times G}
   \mathbb{E}\ecklam{\psi_c(x,y, \Phi(x,\xi), \Phi(y,\xi))}\ \gamma(dx, dy)
   \nonumber\\
\label{eq:alphfne meas}&&   
\qquad\qquad\qquad\qquad   
 \forall \mu_2, \mu_1\in \mathscr{P}_2(G), \ \forall \gamma\in C_*(\mu_1,\mu_2).  
  \end{eqnarray}
 %  \todo{Can this be generalized to any (reasonable) metric on $\mathscr{P}(G)$?}
\end{prop}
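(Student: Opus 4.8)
The plan is to lift the defining inequality \eqref{eq:paafne i.e.} for the update function $\Phi$ from the level of points to the level of measures by integrating against an optimal coupling and using the variational characterization of the Wasserstein distance. Concretely, fix $\mu_1,\mu_2\in\mathscr{P}_2(G)$ and an optimal coupling $\gamma\in C_*(\mu_1,\mu_2)$, so that $\int_{G\times G}d^2(x,y)\,\gamma(dx,dy)=W_2^2(\mu_1,\mu_2)$. Since $\Phi$ is a$\alpha$-fne in expectation on $G$, for $\gamma$-almost every $(x,y)$ the pointwise inequality
\[
\mathbb{E}\ecklam{d^2(\Phi(x,\xi),\Phi(y,\xi))}\leq (1+\epsilon)d^2(x,y)-\tfrac{1-\alpha}{\alpha}\mathbb{E}\ecklam{\psi_c(x,y,\Phi(x,\xi),\Phi(y,\xi))}
\]
holds. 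First I would integrate this inequality over $(x,y)$ with respect to $\gamma$, which immediately produces the right-hand side of \eqref{eq:alphfne meas}, namely $(1+\epsilon)W_2^2(\mu_1,\mu_2)-\tfrac{1-\alpha}{\alpha}\int_{G\times G}\mathbb{E}\ecklam{\psi_c(x,y,\Phi(x,\xi),\Phi(y,\xi))}\,\gamma(dx,dy)$.

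The remaining task is to bound $W_2^2(\mu_1\mathcal{P},\mu_2\mathcal{P})$ from above by the integrated left-hand side $\int_{G\times G}\mathbb{E}\ecklam{d^2(\Phi(x,\xi),\Phi(y,\xi))}\,\gamma(dx,dy)$. For this I would \emph{construct an explicit coupling} of the push-forward measures $\mu_1\mathcal{P}$ and $\mu_2\mathcal{P}$ and invoke the fact that $W_2^2$ is the infimum over couplings, so any admissible coupling yields an upper bound. The natural candidate is the law of the pair $(\Phi(x,\xi),\Phi(y,\xi))$ where $(x,y)\sim\gamma$ and $\xi$ is the common (shared) random index, independent of $(x,y)$; that is, push $\gamma\otimes\mathbb{P}^\xi$ forward under the map $((x,y),i)\mapsto(T_ix,T_iy)$. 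The key verification is that the two marginals of this coupling are exactly $\mu_1\mathcal{P}$ and $\mu_2\mathcal{P}$: the first marginal is the law of $\Phi(x,\xi)=T_\xi x$ with $x\sim\mu_1$, which by definition of the transition kernel \eqref{eq:trans kernel} and the dual operator is $\mu_1\mathcal{P}$, and symmetrically for the second. Granting this, the definition of $W_2$ gives $W_2^2(\mu_1\mathcal{P},\mu_2\mathcal{P})\leq\int_{G\times G}\mathbb{E}\ecklam{d^2(T_\xi x,T_\xi y)}\,\gamma(dx,dy)$, and chaining this with the integrated inequality above closes the argument.

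The main obstacle I anticipate is the careful measure-theoretic justification of the constructed coupling, specifically verifying that $((x,y),i)\mapsto(T_ix,T_iy)$ is jointly measurable (which follows from Assumption \ref{ass:1}\eqref{item:ass1:Phi} on $\Phi$) and that its marginals are correctly identified, which requires a Fubini-type disintegration argument of precisely the kind already carried out in the proof of Proposition \ref{thm:Hermer}. One must also confirm integrability, i.e. that the coupling lies in $\mathscr{P}_2(G\times G)$ so that $W_2$ is finite and the infimum is genuinely attained; this is where the restriction to $\mathscr{P}_2(G)$ and the finiteness of $W_2(\mu_1,\mu_2)$ enters, guaranteeing via Lemma \ref{lemma:WassersteinMetric_prop} that $C_*(\mu_1,\mu_2)$ is nonempty in the first place. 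A minor technical point is handling the case where the transport discrepancy integral is infinite, but since $\psi_c$ is nonnegative in the CAT($0$) setting (Lemma \ref{lem:psi_f nonneg}) the inequality remains valid as an inequality between extended reals, so no separate case analysis is needed there.
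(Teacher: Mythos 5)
Your proposal is correct and follows essentially the same route as the paper's proof: integrate the pointwise a$\alpha$-fne-in-expectation inequality against an optimal coupling $\gamma\in C_*(\mu_1,\mu_2)$, and bound $W_2^2(\mu_1\mathcal{P},\mu_2\mathcal{P})$ from above by observing that the law of $(\Phi(X,\xi),\Phi(Y,\xi))$ with $(X,Y)\sim\gamma$ is an admissible coupling of $\mu_1\mathcal{P}$ and $\mu_2\mathcal{P}$ (the paper states this step more tersely via $\Phi(X,\xi)\sim\mu_1\mathcal{P}$, $\Phi(Y,\xi)\sim\mu_2\mathcal{P}$, while you make the pushforward construction and marginal verification explicit). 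The only cosmetic difference is that the paper also dispenses with the case $W_2(\mu_1,\mu_2)=\infty$ up front, which is vacuous here since both measures lie in $\mathscr{P}_2(G)$.
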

\begin{proof}
If $W_2(\mu_1,\mu_2)=\infty$ the inequality holds trivially 
with the convention 
$+\infty-(+\infty)= +\infty$.    
So consider the case where $W_2(\mu_1,\mu_2)$ is finite.  Since $(G, d)$ is a separable, 
complete metric 
space, by 
Lemma \ref{lemma:WassersteinMetric_prop}\eqref{lemma:WassersteinMetric_prop ii}, 
the set of  optimal couplings $C_*(\mu_1, \mu_2)$ is nonempty.  
% Since a Hadamard space is a complete CAT($0$) space, by Lemma \ref{lem:psi_f nonneg} 
% $\psi_2$ is nonnegative for all $i$, so 
% the expectation 
% $\mathbb{E}\left[\psi_2(x,y, T_\xi x, T_\xi y)\right]$ is nonnegative and $\Phi$ is therefore 
% almost nonexpansive in expectation on $G$  with violation at 
% most $\epsilon$ (Remark \ref{r:nonneg phi}),
% \begin{align*}
% \mathbb{E}\left[d^2(\Phi(x, \xi),\Phi(y, \xi))\right]&\leq 
% (1+\epsilon) d^2(x,y),\quad \forall x, y\in G,
% \end{align*}
% hence 
% \begin{eqnarray*}
% \int_{G\times G}\mathbb{E}\ecklam{d^2(\Phi(x, \xi), \Phi(y, \xi))}\ \gamma(dx, dy)&\leq& 
%  \int_{G\times G}(1+\epsilon)d^2(x, y)\ \gamma(dx, dy)\nonumber\\
% &=& (1+\epsilon) W_2^{2}(\mu_1 ,\mu_2 )<\infty,\quad\forall \gamma\in C_*(\mu_1,\mu_2). 
% \end{eqnarray*}
% Similarly, 
Since $\Phi$ is a$\alpha$-fne in expectation on $G$ with 
constant $\alpha$ and violation $\epsilon$, we have
\begin{eqnarray*}
&&\int_{G\times G}\mathbb{E}\left[d^2(\Phi(x, \xi),\Phi(y, \xi))\right]\ \hat{\gamma}(dx, dy)
\leq \\
&&\qquad\qquad\qquad\qquad 
\int_{G\times G} \paren{(1+\epsilon) d^2(x,y)    - \tfrac{1-\alpha}{\alpha} 
\mathbb{E}\left[\psi_c(x,y, \Phi(x, \xi), \Phi(y, \xi))\right]}
\ \hat{\gamma}(dx, dy),
  \end{eqnarray*}
  where $\hat{\gamma}$ is any coupling in $ C(\mu_1,\mu_2)$, not 
  necessarily optimal.  In particular, 
  since, for a random variable $X\sim \mu_1$, we have $\Phi(X, \xi)\sim \mu_1\mathcal{P}$,
  and for a random variable $Y\sim \mu_2$, %\in \inv \mathcal{P}
  we have $\Phi(Y, \xi)\sim \mu_2\mathcal{P}$, 
  then, again for any optimal coupling $\gamma\in C_*(\mu_1,\mu_2)$,
   \begin{eqnarray*}
   W_2^2(\mu_1\mathcal{P},\mu_2\mathcal{P})&\leq& 
   \int_{G\times G}\mathbb{E}\left[d^2(\Phi(x, \xi),\Phi(y, \xi))\right]\ \gamma(dx, dy)
  \nonumber\\
  &\leq& 
\int_{G\times G} \paren{(1+\epsilon) d^2(x,y)    - \tfrac{1-\alpha}{\alpha} 
\mathbb{E}\left[\psi_c(x,y, \Phi(x, \xi), \Phi(y, \xi))\right]}
\ \gamma(dx, dy)\nonumber \\
&=&
(1+\epsilon) W_2^2(\mu_1,\mu_2) - \int_{G\times G}\tfrac{1-\alpha}{\alpha} 
\mathbb{E}\left[\psi_c(x,y, \Phi(x, \xi), \Phi(y, \xi))\right]\ \gamma(dx, dy).
  \end{eqnarray*}
  Since the measures $\mu_2, \mu_1\in\mathscr{P}_2(G)$ were arbitrary, as was the 
  optimal coupling $\gamma\in C_*(\mu_1,\mu_2)$, this completes the proof.  
\end{proof}

Contraction Markov operators have been studied in \cite{Oll09, JouOll10} using the 
parallel notion of the {\em coarse Ricci curvature} $\kappa(x, y)$ of the Markov operator $\mathcal{P}$ between two points 
$x$ and $y$:
\[
\kappa(x, y)\equiv 1 - \frac{W_1(\delta_x\Pcal, \delta_y\Pcal)}{d(x,y)}.
\]
Generalizing this definition to $W_p$ yields the coarse Ricci curvature with respect to $W_p$:
\[
\kappa_p(x, y)\equiv 1 - \frac{W_p^p(\delta_x\Pcal, \delta_y\Pcal)}{d(x,y)^p}.
\]
A few steps lead from this object for the 
Markov operator $\Pcal$ with  update function $\Phi(\cdot, \xi)=T_\xi$ 
and transition kernel defined by \eqref{eq:trans kernel}
to the violation $\epsilon$  in Proposition \ref{thm:Tafne in exp 2 pafne of P}.  
Indeed,  a formal adjustment of the proof of \cite[Proposition 2]{Oll09} establishes that the property
$\kappa_2(x,y)\geq \kappa \in \mathbb{R}$ for all $x, y\in G$ is equivalent to 
\[
W_2(\mu\Pcal, \mu'\Pcal)\leq \sqrt{1-\kappa}\,W_2(\mu, \mu')\quad\forall \mu, \mu'\in \mathscr{P}_2(G).
\]
When $\kappa>0$, i.e. when the coarse Ricci curvature is bounded below by a positive number, this 
characterizes contractivity of the Markov operator.  The negative of the violation in \eqref{eq:paneim}
is just a lower bound on the coarse Ricci curvature in $W_2$: 
$-\epsilon = \kappa\leq \kappa_2(x,y)$ for all $x,y\in G$.  
The consequences of Markov operators with Ricci curvature bounded below by a positive number 
have been extensively investigated.  Our approach extends this to {\em expansive} mappings, which 
allows one to treat our target application 
of electron density reconstructions from X-FEL experiments 
(see Section \ref{sec:incFeas}).  

In \cite{LukNguTam18} a general quantitative analysis for iterations of 
expansive
fixed point mappings is proposed consisting of two principle components: 
the constituent mappings are pointwise a$\alpha$-fne, and the transport 
discrepancy of the fixed point operator is 
{\em metrically subregular}.  
% Our notion of metric subregularity borrows mainly from  
% \cite[Definition 2.1 (b)]{Ioffe11} and  
% \cite[Definition 1 (b)]{Ioffe13}.
Recall that $\rho:[0,\infty) \to [0,\infty)$ is a \textit{gauge function} if 
$\rho$ is continuous, strictly increasing 
with $\rho(0)=0$, and $\lim_{t\to \infty}\rho(t)=\infty$. 
Recall also that, for any mapping $\mymap{\Psi}{A}{B}$,
the inverse mapping $\Psi^{-1}(y)\equiv \set{z\in A}{\Psi(z)=y}$, which 
clearly can be set-valued. 
\begin{definition}[metric subregularity]\label{d:(str)metric (sub)reg}
$~$ Let $(A, d_A)$ and $(B,d_B)$ be metric spaces and let $\mymap{\Psi}{A}{B}$.
The mapping $\Psi$ is called \emph{metrically subregular with respect to the metric $d_B$ 
for $y\in B$ relative 
to $\Lambda\subset A$ on $U\subset A$ with gauge $\rho$} whenever
\begin{equation}\label{e:metricregularity}
\inf_{z\in \Psi^{-1}(y)\cap \Lambda} d_A\paren{x, z}\leq \rho( d_B\paren{y, \Psi(x)})
\quad \forall x\in U\cap \Lambda.
\end{equation}
\end{definition}

Our definition is modelled after \cite{DonRoc14}, where the case where the 
gauge is just a linear function -- $\rho(t)=\kappa t$ -- is developed.  
In this case, metric subregularity
is one-sided Lipschitz continuity of the (set-valued) inverse mapping $\Psi^{-1}$.  
We will refer to the case when the gauge is linear to {\em linear metric subregularity}.
For connections of this notion to the concept of {\em transversality} in differential 
geometry and its use in variational analysis see \cite{Ioffe17}.
The main advantage of including the more general gauge function is 
to characterize sub-linear convergence rates of numerical 
methods.  We apply metric regularity to the Markov operator on $\mathscr{P}(G)$ 
with the Wasserstein metric.  In particular, 
the gauge of metric subregularity $\rho$  is constructed 
implicitly from another 
nonnegative function $\mymap{\theta}{[0,\infty)}{[0,\infty)}$ satisfying 
\begin{eqnarray}\label{eq:theta}
(i)~ \theta(0)=0; \quad (ii)~ 0<\theta(t)<t ~\forall t>0; 
\quad (iii)~\sum_{j=1}^\infty\theta^{(j)}(t)<\infty~\forall t\geq 0.
\end{eqnarray}
For a CAT(0) space the operative gauge of metric subregularity satisfies 
\begin{equation}\label{eq:gauge}
 \rho\paren{\paren{\frac{(1+\epsilon)t^2-\paren{\theta(t)}^2}{\tau}}^{1/2}}=
 t\quad\iff\quad
 \theta(t) = \paren{(1+\epsilon)t^2 - \tau\paren{\rho^{-1}(t)}^2}^{1/2}
\end{equation}
for $\tau>0$ fixed and $\theta$ satisfying \eqref{eq:theta}.  

In the case of linear metric subregularity on a CAT(0) space 
this becomes 
\[
\rho(t)=\kappa t\quad\iff\quad  
\theta(t)=\paren{(1+\epsilon)-\frac{\tau}{\kappa^2}}^{1/2}t\quad 
(\kappa\geq \sqrt{\tfrac{\tau}{(1+\epsilon)}}).
\]  
The condition $\kappa\geq \sqrt{\tfrac{\tau}{(1+\epsilon)}}$ is not a real restriction since, if 
\eqref{e:metricregularity} is satisfied for some $\kappa'>0$, then it is satisfied
for all $\kappa\geq \kappa'$. The conditions in \eqref{eq:theta} in this 
case simplify to $\theta(t)=\gamma t$ where 
\begin{equation}\label{eq:theta linear}
 0< \gamma\equiv 1+\epsilon-\frac{\tau}{\kappa^2}<1\quad\iff\quad 
\sqrt{\tfrac{\tau}{(1+\epsilon)}}\leq  \kappa\leq \sqrt{\tfrac{\tau}{\epsilon}}.
\end{equation}

Metric subregularity plays a central role in the implicit function paradigm for solution 
mappings \cite{CANO2, DonRoc14}.  
Linear metric subregularity was shown in \cite[Theorem 3.15]{HerLukStu19a} to be necessary 
 and sufficient for R-linear convergence in expectation of random function 
 iterations for consistent stochastic feasibility.  This result is a stochastic
 analog of the result \cite[Theorem 2]{LukTebTha18} in the deterministic setting.   
 
 We apply this to the Markov operator $\mathcal{P}$ on the metric space 
 $(\mathscr{P}_2(G), W_2)$ in the following manner.  Recall the transport discrepancy $\psi_c$ 
defined in \eqref{eq:delta}. 
 We construct the surrogate mapping 
$\mymap{\Psi}{\mathscr{P}(G)}{\mathbb{R}_+}\cup\{+\infty\}$ 
 defined by 
 \begin{equation}\label{eq:Psi}
\Psi(\mu)\equiv \inf_{\pi\in\inv\mathcal{P}}\inf_{\gamma\in C_*(\mu,\pi)}
\left(\int_{G\times G}
\mathbb{E}\left[\psi_c(x,y, T_\xi x, T_\xi y)\right]\ \gamma(dx, dy)\right)^{1/2}.
 \end{equation}
 We call this the {\em Markov transport discrepancy}.
 It is not guaranteed  that both $\inv\mathcal{P}$ and $C_*(\mu,\pi)$ are nonempty;  
 when at least one of these is empty, we define $\Psi(\mu)\equiv +\infty$. 
% Defining
% \[
% \Psihat(\pi;\mu)\equiv \inf_{\gamma\in C_*(\mu,\pi)}
% \left(\int_{G\times G}
% \mathbb{E}\left[\psi_c(x,y, T_\xi x, T_\xi y)\right]\ \gamma(dx, dy)\right)^{1/2}
% \]
%  we denote the set $[\inv\Pcal(\mu)]\equiv\set{\pi\in\inv\Pcal}{\Psi(\mu)=\Psihat(\pi;\mu)}$ 
% to characterize the {\em equivalence class} of invariant measures at which
% the value of $\Psi(\mu)$ is attained.  
It is clear that $\Psi(\pi)=0$ for any 
$\pi \in \inv\Pcal$.  Whether $\Psi(\mu)=0$ only when $\mu\in \inv\Pcal$ is a property
of the {\em space} $(G, d)$.  Indeed, as noted in the discusson after 
Lemma \ref{lem:psi_f nonneg}, in CAT($\kappa$) spaces with $\kappa>0$
the transport discrepancy $\psi_c$ can be negative, and so 
by cancellation it could happen on such spaces that the Markov transport discrepancy
$\Psi(\mu)=0$ for $\mu\notin \inv\Pcal$. 
% The Markov operator $\Pcal$ generated by $T_i$ is {\em amenable to the metric space $(G, d)$}
% whenever $\Psi(\mu)=0$ if and only if $\mu\in \inv\Pcal$.  
The regularity we require of $\Pcal$ is that 
the Markov transport discrepancy $\Psi$ takes the value $0$ at $\mu$ if and only if 
$\mu\in\inv\Pcal$, and is metrically subregular for $0$ relative to $\mathscr{P}_2(G)$ 
on $\mathscr{P}_2(G)$ 
defined in \eqref{eq:p-probabiliy measures}.  
% Note, however, that 
% when $\Psi(\mu)=0$, the outer infimum in \eqref{eq:Psi} need not be attained 
% {\em only} at $\pi=\mu$.  
% Defining
% \[
% \Psihat(\pi;\mu)\equiv \inf_{\gamma\in C_*(\mu,\pi)}
% \left(\int_{G\times G}
% \mathbb{E}\left[\psi_c(x,y, T_\xi x, T_\xi y)\right]\ \gamma(dx, dy)\right)^{1/2}
% \]
%  we denote the set $[\inv\Pcal(\mu)]\equiv\set{\pi\in\inv\Pcal}{\Psi(\mu)=\Psihat(\pi;\mu)}$ 
% to characterize the {\em equivalence class} of invariant measures at which
% the value of $\Psi(\mu)$ is attained.  

Before moving to our main results, we put the more familiar contractive 
mappings into the present context.  
A survey of random function iterations for contractive mappings 
  in expectation can be found in \cite{Stenflo2012}.     An immediate
consequence of \cite[Theorem 1]{Stenflo2012} is the existence of a
unique invariant measure and linear convergence in the Wasserstein
metric from any initial distribution to the invariant measure.  See also 
% Example \ref{exmaple:ar1}, and
Example \ref{ex:2linearSpaces_withInvMeas}.   Error estimates for 
Markov chain Monte Carlo methods under the assumption of positive Ricci curvature in $W_1$ 
(i.e. {\em negative} violation) are explored in \cite{JouOll10}.  Applications
to waiting queues, the Ornstein–Uhlenbeck process on $\Rn$ and Brownian
motion on positively curved manifolds, as well as demonstrations of how to verify the 
assumptions on the Ricci curvature are developed in \cite{Oll09}.  
The next result shows that update functions $\Phi$ that are contractions in expectation 
generate $\alpha$-fne Markov operators with metrically 
subregular Markov transport discrepancy.
\begin{thm}
\label{thm:contraInExpec}
   Let $(G,\|\cdot\|)$ be a Hilbert space, let 
   $\mymap{T_i}{G}{G}$ for $i\in I$ and let 
  $\mymap{\Phi}{G \times I }{G}$ be given by
  $\Phi(x,i)\equiv T_i(x)$.  
  Denote by  $\mathcal{P}$ the Markov
  operator with  update function $\Phi$ and transition kernel $p$ defined by 
  \eqref{eq:trans kernel}.  
  Suppose that $\Phi$ is a 
  \emph{contraction in expectation} with 
  constant $r<1$, i.e.\  
  $\mathbb{E}[\|\Phi(x,\xi) - \Phi(y,\xi)\|^2] \le r^2 \|x-y\|^2$ for all $x,y \in
  G$. 
  Suppose in addition  that  
  there exists $y \in G$ with
  $\mathbb{E}[\|\Phi(y,\xi) - y\|^2] < \infty$.  
Then the following hold. 
\begin{enumerate}[(i)]
 \item\label{thm:contraInExpec i} There exists a unique
  invariant measure $\pi \in \mathscr{P}_{2}(G)$ for $\mathcal{P}$ and 
  \begin{align*}
    W_2(\mu_0 \mathcal{P}^{n} , \pi) \le r^{n} W_2(\mu_0,\pi)
  \end{align*}
  for all $\mu_0 \in \mathscr{P}_{2}(G)$; that is, the sequence $(\mu_k)$ defined 
  by $\mu_{k+1}=\mu_k\mathcal{P}$ converges  to $\pi$ Q-linearly (geometrically) from any initial 
  measure $\mu_0\in \mathscr{P}_{2}(G)$.
\item\label{thm:contraInExpec ii}   $\Phi$ is $\alpha$-fne 
in expectation with constant $\alpha = (1+r)/2$, and the Markov operator $\mathcal{P}$ 
is $\alpha$-fne on $\mathscr{P}_{2}(G)$; that is,  $\mathcal{P}$
satisfies \eqref{eq:alphfne meas} 
with $\epsilon=0$ and constant $\alpha = (1+r)/2$ on $\mathscr{P}_{2}(G)$.
\item\label{thm:contraInExpec iii} If $\Psi$  defined by \eqref{eq:Psi} satisfies 
\begin{equation}\label{e:Hood}
\exists q>0:\quad \Psi(\mu) \geq qW_2(\mu\Pcal, \mu)\quad\forall \mu\in\mathscr{P}_{2}(G),
\end{equation}
then 
$\Psi$ is linearly metrically subregular for $0$ relative to 
$\mathscr{P}_{2}(G)$ on $\mathscr{P}_{2}(G)$ with gauge $\rho(t) = (q(1-r))^{-1} t$.
\end{enumerate}
\end{thm}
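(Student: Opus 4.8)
The plan is to treat the three assertions in turn, extracting from the single hypothesis that $\Phi$ contracts in expectation both the $W_2$-contraction of $\mathcal{P}$ needed for part (i) and the firmness constant recorded in part (ii). For part (i), I would first check that $\mathcal{P}$ maps $\mathscr{P}_2(G)$ into itself: for $X\sim\mu_0\in\mathscr{P}_2(G)$ the law of $\Phi(X,\xi)$ is $\mu_0\mathcal{P}$, and the triangle inequality together with the contraction bound and the moment hypothesis $\mathbb{E}[\|\Phi(y,\xi)-y\|^2]<\infty$ give $\mathbb{E}[\|\Phi(X,\xi)-y\|^2]<\infty$. Next I would show $\mathcal{P}$ is an $r$-contraction in $W_2$: given $\mu,\nu\in\mathscr{P}_2(G)$ and an optimal coupling $\gamma\in C_*(\mu,\nu)$ with $(X,Y)\sim\gamma$ and $\xi$ independent of $(X,Y)$, the pair $(\Phi(X,\xi),\Phi(Y,\xi))$ is a coupling of $\mu\mathcal{P}$ and $\nu\mathcal{P}$, so conditioning on $(X,Y)$ and invoking the contraction-in-expectation hypothesis yields $W_2^2(\mu\mathcal{P},\nu\mathcal{P})\le\mathbb{E}[\|\Phi(X,\xi)-\Phi(Y,\xi)\|^2]\le r^2\mathbb{E}[\|X-Y\|^2]=r^2W_2^2(\mu,\nu)$. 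Using the completeness of $(\mathscr{P}_2(G),W_2)$ (Lemma \ref{lemma:WassersteinMetric_prop}), Banach's fixed point theorem then delivers a unique fixed point $\pi=\pi\mathcal{P}\in\mathscr{P}_2(G)$ and the estimate $W_2(\mu_0\mathcal{P}^n,\pi)=W_2(\mu_0\mathcal{P}^n,\pi\mathcal{P}^n)\le r^nW_2(\mu_0,\pi)$; this is the statement one also obtains from \cite[Theorem 1]{Stenflo2012}.

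For part (ii), using the Hilbert-space representation \eqref{eq:nice ineq} of the transport discrepancy, the claim that $\Phi$ is $\alpha$-fne in expectation with $\alpha=(1+r)/2$ and violation $0$ unwinds, after setting $u=x-x_0$ and $V=\Phi(x,\xi)-\Phi(x_0,\xi)$ and using $\tfrac{1-\alpha}{\alpha}=\tfrac{1-r}{1+r}$, to the single scalar inequality $\mathbb{E}[\|V\|^2]-(1-r)\langle u,\mathbb{E}[V]\rangle\le r\|u\|^2$. This I would establish from the two consequences of the hypothesis, namely $\mathbb{E}[\|V\|^2]\le r^2\|u\|^2$ and $|\langle u,\mathbb{E}[V]\rangle|\le\|u\|\,\|\mathbb{E}[V]\|\le\|u\|\sqrt{\mathbb{E}[\|V\|^2]}\le r\|u\|^2$ (Jensen followed by Cauchy--Schwarz), since then $\mathbb{E}[\|V\|^2]-(1-r)\langle u,\mathbb{E}[V]\rangle\le r^2\|u\|^2+(1-r)r\|u\|^2=r\|u\|^2$. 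With $\Phi$ shown to be $\alpha$-fne in expectation with violation $0$, Proposition \ref{thm:Tafne in exp 2 pafne of P} (applicable since a Hilbert space is CAT($0$)) immediately promotes this to $\mathcal{P}$ being $\alpha$-fne in measure on $\mathscr{P}_2(G)$, i.e.\ \eqref{eq:alphfne meas} with $\epsilon=0$ and $\alpha=(1+r)/2$.

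For part (iii), I observe that $\pi\in\inv\mathcal{P}$ forces $\Psi(\pi)=0$, so $\pi\in\Psi^{-1}(0)\cap\mathscr{P}_2(G)$ and the left-hand side of the subregularity inequality \eqref{e:metricregularity} is bounded above by $W_2(\mu,\pi)$; it therefore suffices to prove $W_2(\mu,\pi)\le(q(1-r))^{-1}\Psi(\mu)$. Combining the hypothesis \eqref{e:Hood}, $\Psi(\mu)\ge qW_2(\mu\mathcal{P},\mu)$, with the lower bound $W_2(\mu\mathcal{P},\mu)\ge W_2(\mu,\pi)-W_2(\mu\mathcal{P},\pi)\ge W_2(\mu,\pi)-rW_2(\mu,\pi)=(1-r)W_2(\mu,\pi)$ --- the middle step using the triangle inequality and $W_2(\mu\mathcal{P},\pi)=W_2(\mu\mathcal{P},\pi\mathcal{P})\le rW_2(\mu,\pi)$ from part (i) --- yields $\Psi(\mu)\ge q(1-r)W_2(\mu,\pi)$, which is precisely metric subregularity with gauge $\rho(t)=(q(1-r))^{-1}t$.

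The routine content is the two measure-theoretic bookkeeping facts (stability of $\mathscr{P}_2(G)$ under $\mathcal{P}$, and the observation that coupling the driving noise independently transports an optimal coupling of the laws into a coupling of the pushed-forward laws). The one genuinely delicate point is the verification in part (ii) that a mere contraction-in-expectation forces the sharp firmness constant $\alpha=(1+r)/2$, where the cross term $\langle u,\mathbb{E}[V]\rangle$ carries no \emph{a priori} sign and must be controlled via Jensen and Cauchy--Schwarz rather than simply discarded.
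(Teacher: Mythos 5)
Your proposal is correct and follows essentially the same route as the paper's proof: Banach's fixed point theorem on $(\mathscr{P}_2(G),W_2)$ after the self-mapping and $r$-contraction checks for part (i), Jensen plus Cauchy--Schwarz to control the cross term $\langle u,\mathbb{E}[V]\rangle$ by $r\|u\|^2$ followed by Proposition \ref{thm:Tafne in exp 2 pafne of P} for part (ii), and the triangle-inequality bound $W_2(\mu\mathcal{P},\mu)\ge(1-r)W_2(\mu,\pi)$ combined with \eqref{e:Hood} for part (iii). The only differences are cosmetic: in (ii) you verify an equivalent scalar inequality directly where the paper first bounds $\mathbb{E}[\psi_2]\le(1+r)^2\|u\|^2$ and substitutes, and in (iii) you use only the inclusion $\pi\in\Psi^{-1}(0)$ rather than the identification $\Psi^{-1}(0)\cap\mathscr{P}_2(G)=\{\pi\}$.
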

\begin{proof}
  Note that for any pair of distributions $\mu_1,\mu_2 \in
  \mathscr{P}_{2}(G)$ and an 
  optimal coupling $\gamma\in C_*(\mu_1,\mu_2)$ (possible by
  Lemma \ref{lemma:WassersteinMetric_prop}) it holds that 
  \begin{eqnarray*}
    W_2^2(\mu_1 \mathcal{P}, \mu_2 \mathcal{P}) &\le& 
    \int_{G\times G}\mathbb{E}[d^2(\Phi(x,\xi),\Phi(y,\xi))]\ \gamma(d x, d y) 
    \\
    &\le& r ^2\int_{G\times G}d^2(x,y)\ \gamma(d x, d y) = r^2 W_2^2(\mu_1,\mu_2),
  \end{eqnarray*}
where $\xi$ is independent of $\gamma$.
  %  (possible due to
  % \cref{thm:ExistIndep_twoRVs}) and by application of
  % \cref{rem:disinteg_indep}
Moreover, $\mathcal{P}$ is a self-mapping on  ${\mathscr{P}_{2}(G)}$.  
To see this   
let  $\mu \in \mathscr{P}_{2}(G)$  independent of $\xi$
and let $y$ be a point in $G$ where $\mathbb{E}[\|\Phi(y,\xi) - y\|^2] < \infty$.   
Then by the triangle inequality and the contraction property
  \begin{eqnarray*}
    &&\int_G\mathbb{E}[\|\Phi(x,\xi)-y\|^2]\ \mu(dx) 
    \nonumber \\
    &&\qquad \le 4\paren{ \int_G\mathbb{E}[\|\Phi(x,\xi) - \Phi(y,\xi)\|^2]\ \mu(dx) +
   \mathbb{E}[\|\Phi(y,\xi) - y\|^2]}\\
    &&\qquad \le 4\paren{\int_G r^2 \|x-y\|^2\ \mu(dx) +
   \mathbb{E}[\|\Phi(y,\xi) - y\|^2]}<\infty.
%    \\
%    &&\qquad < \infty,
%     &&\qquad \int_G\mathbb{E}[\|\Phi(x,\xi) - \Phi(y,\xi)\|^2]\ \mu(dx) +
%   \mathbb{E}[\|(\Phi(y,\xi) - y\|^2] \\
%    &&\qquad\qquad + 
%    2 \paren{\int_G\mathbb{E}[\|\Phi(x,\xi) - \Phi(y,\xi)\|^2]\ \mu(dx)}^{1/2}
%    \paren{\mathbb{E}[\|\Phi(y,\xi) - y\|^2] }^{1/2} < \infty,
  \end{eqnarray*}
Therefore $\mu \mathcal{P} \in \mathscr{P}_{2}(G)$.  Altogether, this 
 establishes  that $\mathcal{P}$ is a contraction on the separable complete 
metric space $(\mathscr{P}_{2}(G), W_2)$ and hence Banach's Fixed Point Theorem
  yields existence and uniqueness of $\inv\mathcal{P}$ and Q-linear convergence 
  of the fixed point sequence.  
  
  To see \eqref{thm:contraInExpec ii}, note that, by \eqref{eq:nice ineq},
  \begin{eqnarray}
  \mathbb{E}[\psi_2(x,y, T_\xi x, T_\xi y)]&=& 
  \mathbb{E}\left [\|(x - \Phi(x,\xi)) - (y - \Phi(y,\xi))\|^2\right]\nonumber\\
  &=&\|x-y\|^2+
  \mathbb{E}\left[\|\Phi(x,\xi)-\Phi(y,\xi)\|^2 - 2\langle x-y, \Phi(x,\xi)-\Phi(y,\xi)\rangle\right] 
  \nonumber\\
\label{eq:corona home}   &\leq& (1+ r)^2\|x-y\|^2,
  \end{eqnarray}
where the last inequality follows from the Cauchy-Schwarz inequality and 
the fact that $\Phi(\cdot,\xi)$ is a contraction in expectation.  Again using the 
contraction property and \eqref{eq:corona home} we have
\begin{eqnarray*}
 \mathbb{E}\left[\|\Phi(x,\xi)-\Phi(y,\xi)\|^2\right]&\leq& \|x-y\|^2 - (1-r^2)\|x-y\|^2 \\
 &\leq&\|x-y\|^2 - \tfrac{1-r^2}{(1+r)^2}\mathbb{E}[\psi_2(x,y, T_\xi x, T_\xi y)]. 
\end{eqnarray*}
The right hand side of this inequality is just the characterization 
\eqref{eq:paafne i.e.} of mappings that are $\alpha$-fne  in expectation 
with $\alpha = (1+r)/2$.
The rest of the statement follows from Proposition \ref{thm:Tafne in exp 2 pafne of P}.

\eqref{thm:contraInExpec iii}  The proof is modeled after the proof of \cite[Theorem 32]{BLL}. 
By the triangle inequality and part \eqref{thm:contraInExpec i} we have 
\begin{eqnarray}
W_2(\mu_{k+1},\mu_k) &\geq& W_2(\mu_k, \pi) - W_2(\mu_{k+1}, \pi) \nonumber\\
&\geq& (1-r)W_2(\mu_k, \pi) \quad  \forall k\in \Nbb.
\label{e:Robin}
\end{eqnarray}
On the other hand, \eqref{e:Hood} implies that $\Psi$ takes the value zero only at invariant 
measures 
% (i.e. the Markov operator $\Pcal$ is amenable to a Hilbert space), 
so that by    
the uniqueness of invariant measures established in part \eqref{thm:contraInExpec i}
\[
\Psi^{-1}(0)\cap \mathscr{P}_{2}(G) = \inv \Pcal\cap \mathscr{P}_{2}(G) = \{\pi\}.
\]
Combining this with \eqref{e:Robin} and \eqref{e:Hood} then yields for all $k\in \Nbb$
\begin{eqnarray*}
|\Psi(\mu_k)-0| = \Psi(\mu_k)&\geq& qW_2(\mu_{k+1},\mu_k)\nonumber\\
&\geq& q(1-r)W_2(\mu_k, \Psi^{-1}(0)\cap \mathscr{P}_{2}(G)).  
 \end{eqnarray*}
In other words, 
 \begin{equation}\label{e:dumber}
(q(1 - r))^{-1}|\Psi(\mu_k)-0|\geq W_2(\mu_k, \Psi^{-1}(0)\cap \mathscr{P}_{2}(G)) 
\quad  \forall k\in \Nbb.
 \end{equation}
Since this holds for {\em any} sequence $(\mu_k)_{k\in\Nbb}$ initialized with 
$\mu_0\in \mathscr{P}_{2}(G)$,  
we conclude that $\Psi$ is metrically subregular for $0$ relative to $\mathscr{P}_{2}(G)$ with 
gauge $\rho(t)=(q(1 - r))^{-1}t$ on $\mathscr{P}_{2}(G)$, as claimed.  
\end{proof}

The simple example of a single Euclidean projector onto an affine subspace ($I=\{1\}$, and 
$T_1$ the orthogonal projection onto an affine subspace) shows that the statement of 
Theorem \ref{thm:contraInExpec} fails without the assumption of contractivity.

\subsection{Main Results}
\label{sec:mainres}

All of our main results concern Markov operators $\Pcal$ with
  update function $\Phi(x,i)=T_i(x)$ and transition kernel 
  $p$ given by \eqref{eq:trans kernel} for self mappings 
  $\mymap{T_i}{G}{G}$.
For any $\mu_0\in \mathscr{P}_2(G)$, we denote 
the distributions of the iterates of Algorithm \ref{algo:RFI} by 
$\mu_{k} =  \mu_0 \mathcal{P}^{k} = \mathcal{L}(X_{k})$, 
and we denote  $d_{W_2}\paren{\mu_k,\inv\mathcal{P}}\equiv 
\inf_{\pi'\in\inv\mathcal{P}}W_2\paren{\mu_{k},\, \pi'}$.

In most of our main results, it will be assumed that $\inv\mathcal{P}\neq\emptyset$.  
The existence theory is already well developed and is surveyed in Section \ref{sec:existence} 
below.  We show how existence is guaranteed when, for instance, the image is 
compact for some non-negligible
collection of operators $T_i$ (Proposition \ref{cor:finite_selection_existence}) or when 
the expectation of the 
random variables $X_k$ is finite (Proposition \ref{thm:ex_inRn}).

The main convergence result for nonexpansive mappings follows from a fundamental result 
of Worm \cite[Theorem 7.3.13]{WormPhd2010}. 
\begin{thm}[convergence of Ces\`{a}ro average in
  $\mathbb{R}^{n}$] \label{cor:cesaroConvergenceRn}
  Let   $\mymap{T_{i}}{\mathbb{R}^{n}}{\mathbb{R}^{n}}$ be nonexpansive 
  ($i \in I$) and assume
  $\inv\mathcal{P}\neq\emptyset$.  Let $\mu \in
  \mathscr{P}(\mathbb{R}^{n})$ and $\nu_{k} = \tfrac{1}{k}
  \sum_{j=1}^{k} \mu \mathcal{P}^{j}$, then this sequence converges in the 
  Prokhorov-L\`evy metric to 
  an invariant probability measure for $\mathcal{P}$, i.e.\ 
  for each $x \in \Supp \mu \subset \mathbb{R}^n$ the limit of the sequence
  $(\nu_{k}^{x})$, denoted $\pi^{x}$, exists and, more generally, 
  $\nu_{k}
  \to \pi^{\mu}$, an invariant measure, where 
  \begin{equation}\label{eq:rep Cesaro limit}
    \pi^{\mu} = \int_{\Supp \mu} \pi^{x} \mu(\dd{x}).
  \end{equation}
\end{thm}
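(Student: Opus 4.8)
The plan is to verify the hypotheses of Worm's theorem \cite[Theorem 7.3.13]{WormPhd2010}, which for an equicontinuous Feller Markov operator guarantees that tightness of the Ces\`aro averages issued from a point already forces their convergence to an invariant measure. Two structural facts flow directly from nonexpansiveness. First, each $T_i$ is continuous, so by Proposition \ref{thm:Feller} the operator $\mathcal{P}$ is Feller. Second, $\mathcal{P}$ enjoys the equicontinuity (e-)property: fixing the driving sequence $(\xi_k)$ and writing $X_j^x = T_{\xi_{j-1}}\cdots T_{\xi_0}x$ as in \eqref{eq:X_RFI}, nonexpansiveness gives the pathwise bound $\norm{X_j^x - X_j^{x'}} \le \norm{x-x'}$ for all $j$ and all $x,x'$, obtained by synchronous coupling and induction on $j$. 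Consequently, for any $L$-Lipschitz $f$ one has $\abs{\mathcal{P}^j f(x) - \mathcal{P}^j f(x')} \le L\,\mathbb{E}\norm{X_j^x - X_j^{x'}} \le L\norm{x-x'}$, so the family $\{\mathcal{P}^j f\}_{j}$ is uniformly $L$-Lipschitz.

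The second step is tightness of $(\nu_k^x)$ for each fixed $x$. Here I would use the assumed invariant measure: pick $\pi \in \inv\mathcal{P}$ and let $Y_0 \sim \pi$ be independent of $(\xi_k)$, driving $Y_j := T_{\xi_{j-1}}\cdots T_{\xi_0} Y_0$ by the same noise as $X_j^x$. As in the computation in the proof of Proposition \ref{thm:Hermer}, invariance of $\pi$ propagates along the chain, so $Y_j \sim \pi$ for every $j$; meanwhile the synchronous bound gives $\norm{X_j^x - Y_j} \le \norm{x - Y_0}$ almost surely. Hence for any radius $R$,
\begin{equation*}
\mathbb{P}\paren{\norm{X_j^x} > R} \le \mathbb{P}\paren{\norm{Y_j} + \norm{Y_0} > R - \norm{x}} \le 2\,\pi\paren{\overline{\mathbb{B}}\paren{0, \tfrac{R - \norm{x}}{2}}^{c}},
\end{equation*}
which tends to $0$ as $R\to\infty$ uniformly in $j$, since the single measure $\pi$ is tight on $\mathbb{R}^n$. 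Thus $\{\mathcal{L}(X_j^x)\}_j$ is tight, and averaging over $j$ preserves the same compact exhaustion, so $(\nu_k^x)$ is tight.

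With the Feller property, the e-property, and tightness of $(\nu_k^x)$ in hand, Worm's theorem \cite[Theorem 7.3.13]{WormPhd2010} yields a limit $\pi^x \in \inv\mathcal{P}$ with $\nu_k^x \to \pi^x$; on the separable space $\mathbb{R}^n$ this narrow convergence is exactly convergence in the Prokhorov-L\`evy metric $d_P$. It remains to pass from points to the general initial law $\mu$. Since $\mu\mathcal{P}^j = \int \delta_x \mathcal{P}^j\,\mu(\dd{x})$, one has $\nu_k^\mu = \int \nu_k^x\,\mu(\dd{x})$ and therefore, for every $f \in C_b(\mathbb{R}^n)$, $\nu_k^\mu f = \int \nu_k^x f\,\mu(\dd{x})$ by Fubini. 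Because $\abs{\nu_k^x f} \le \norm{f}_\infty$ and $\nu_k^x f \to \pi^x f$ pointwise in $x$, dominated convergence gives $\nu_k^\mu f \to \int_{\Supp\mu} \pi^x f\,\mu(\dd{x}) = \pi^\mu f$, which is \eqref{eq:rep Cesaro limit}; invariance of $\pi^\mu$ follows from $\pi^\mu \mathcal{P} = \int \pi^x \mathcal{P}\,\mu(\dd{x}) = \int \pi^x\,\mu(\dd{x}) = \pi^\mu$. The main obstacle is the per-point convergence itself, i.e. upgrading the relatively compact (tight) sequence $(\nu_k^x)$ to a genuinely convergent one with a unique cluster point; this is precisely what equicontinuity buys in Worm's theorem and is the reason nonexpansiveness, rather than mere continuity, is needed. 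The remaining measurability of $x \mapsto \pi^x f$ is automatic as a pointwise limit of continuous functions, and the interchange of limit and integral is the routine dominated-convergence step described above.
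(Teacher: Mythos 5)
Your proposal is correct, and it reaches the conclusion by a genuinely different route in the one step where real work is needed. Your Feller and equicontinuity arguments coincide with Lemma \ref{lemma:e.c.}, and your final appeal to Worm's theorem (Proposition \ref{thm:wormCesaro}) matches the paper's. The difference is how tightness is obtained. The paper (Lemma \ref{lemma:tightnessOfNu_n}) first gets tightness of $(\delta_{s}\mathcal{P}^{k})$ for points $s$ in the support of an \emph{ergodic} measure via Proposition \ref{thm:tighnessOfIterates}, which rests on Szarek's criterion together with Birkhoff's ergodic theorem (Proposition \ref{cor:Birkhoff_conditional}) and, implicitly, the ergodic decomposition (Proposition \ref{thm:decomp_ergodic_stat_measures}); it then propagates tightness from such $s$ to arbitrary initial points and measures using nonexpansiveness. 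You instead run a synchronous coupling of $X_{j}^{x}$ with a stationary chain $Y_{j}$ started in $\pi\in\inv\mathcal{P}$, so that $Y_{j}\sim\pi$ for all $j$ (the same propagation-of-invariance computation as in Proposition \ref{thm:Hermer}) and $\norm{X_{j}^{x}}\le\norm{x}+\norm{Y_{0}}+\norm{Y_{j}}$ almost surely, whence $\mathbb{P}(\norm{X_{j}^{x}}>R)\le 2\,\pi\bigl(\cb(0,(R-\norm{x})/2)^{c}\bigr)\to 0$ uniformly in $j$; since closed balls in $\mathbb{R}^{n}$ are compact, this gives tightness of $(\delta_{x}\mathcal{P}^{k})$ directly. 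This is more elementary and self-contained — it bypasses Birkhoff, Szarek, and the ergodic decomposition entirely, and makes the role of the hypothesis $\inv\mathcal{P}\neq\emptyset$ transparent — at the cost of leaning on the linear structure of $\mathbb{R}^{n}$; the paper's heavier machinery is developed because those intermediate tightness results on supports of ergodic measures are reused later (e.g. in the proof of Theorem \ref{thm:a-firm convergence Rn}). Your hand-made extension to general $\mu$ via dominated convergence is also sound, though slightly redundant: once you have tightness of $(\delta_{x}\mathcal{P}^{k})$ for every $x$ (and hence of $(\mu\mathcal{P}^{k})$ by the same coupling bound, cf. Remark \ref{rem:tightnessNu_nInRn}), Proposition \ref{thm:wormCesaro} already delivers the convergence and the representation \eqref{eq:rep Cesaro limit} for arbitrary initial measures in one stroke.
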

\noindent When the mappings are $\alpha$-fne, we obtain the 
following stronger result. It is worth pointing interested readers to an
analogous metric space result of \cite[Theorem 27]{BLL} in which 
it is shown that, on $p$-uniformly convex spaces, sequences 
generated by fixed point iterations of compositions of pointwise 
$\alpha$-fne mappings $T_i$  
converge in a weak sense whenever
$\cap_i \Fix T_i$ is nonempty.  When the composition is boundedly compact, then 
the fixed point iterations converge strongly to a fixed point.
\begin{thm}[convergence for $\alpha$-firmly nonexpansive
  mappings on $\mathbb{R}^{n}$]%
  \label{thm:a-firm convergence Rn} Let
  $\mymap{T_{i}}{\mathbb{R}^{n}}{\mathbb{R}^{n}}$ be
  $\alpha$-fne with constant 
  $\alpha_{i}\le \alpha<1$ ($i\in I$).
  Assume $\inv\mathcal{P}\neq\emptyset$. For any initial distribution $\mu_0 \in
  \mathscr{P}(\mathbb{R}^{n})$ the distributions $\mu_k$ of the iterates
   generated by Algorithm \ref{algo:RFI} converge in the 
  Prokhorov-L\`evy metric to an invariant probability measure for
  $\mathcal{P}$.
  % , i.e.\ $\mathcal{L}(X_{k}) \to \pi\in \inv\mathcal{P}$.
\end{thm}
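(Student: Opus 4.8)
The plan is to mimic, at the level of laws, the classical convergence proof for deterministic averaged operators (Fej\'er monotonicity, asymptotic regularity, demiclosedness, unique cluster point), transporting each ingredient to the Wasserstein space $(\mathscr{P}_2(\Rn),W_2)$ by means of the lifting results already available. First I would record the two structural facts I will lean on. Since each $T_i$ is $\alpha$-fne on the CAT($0$) space $\Rn$, Lemma~\ref{lem:psi_f nonneg} makes it nonexpansive, so the hypotheses of Theorem~\ref{cor:cesaroConvergenceRn} hold and the Ces\`aro averages $\nu_k$ already converge in the Prokhorov--L\`evy metric to an invariant measure; this will serve both to locate the eventual limit and to identify cluster points. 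Simultaneously, Proposition~\ref{r:nonneg psi_Phi} shows $\Phi$ is $\alpha$-fne in expectation and Proposition~\ref{thm:Tafne in exp 2 pafne of P} lifts this to the Markov operator, giving the key inequality \eqref{eq:alphfne meas} with $\epsilon=0$. A preliminary reduction handles the fact that $\mu_0$ need not lie in $\mathscr{P}_2$: writing $\mu_k=\int_{\Rn}(\delta_x\mathcal{P}^k)\,\mu_0(\D x)$ and applying bounded convergence exactly as in \eqref{eq:rep Cesaro limit}, it suffices to prove convergence for $\mu_0=\delta_x$, which does lie in $\mathscr{P}_2(\Rn)$.

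Next I would fix an invariant $\pi\in\inv\mathcal{P}\cap\mathscr{P}_2(\Rn)$ and specialize \eqref{eq:alphfne meas} to $\mu_1=\mu_k$, $\mu_2=\pi$ (so $\mu_2\mathcal{P}=\pi$). Dropping the nonnegative discrepancy term (Remark~\ref{r:nonneg psi_Phi2}) yields Fej\'er monotonicity, $W_2(\mu_{k+1},\pi)\le W_2(\mu_k,\pi)$, whence the second moments of $(\mu_k)$ are uniformly bounded and the sequence is tight; keeping the discrepancy term and telescoping yields $\sum_k D_k<\infty$ with $D_k:=\int_{\Rn\times\Rn}\mathbb{E}\ecklam{\psi_2(x,y,T_\xi x,T_\xi y)}\gamma_k(\D x,\D y)$ over optimal couplings $\gamma_k\in C_*(\mu_k,\pi)$ (nonempty by Lemma~\ref{lemma:WassersteinMetric_prop}), so in particular $D_k\to 0$.

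The heart of the argument is to show that every distributional cluster point of $(\mu_k)$ is invariant. Given $\mu_{k_j}\to\nu$, I would use tightness and stability of optimal transport to pass to a subsequence along which $\gamma_{k_j}$ converges weakly to some $\gamma^*\in C_*(\nu,\pi)$; lower semicontinuity of the (continuous, nonnegative) integrand together with $D_{k_j}\to0$ forces $\int\mathbb{E}\ecklam{\psi_2(x,y,T_\xi x,T_\xi y)}\gamma^*=0$. By Lemma~\ref{lem:psi_f nonneg} and the representation \eqref{eq:nice ineq} this means, for $(X^*,Y^*)\sim\gamma^*$, that $X^*-T_\xi X^*=Y^*-T_\xi Y^*$ almost surely; feeding this displacement-matching identity, the invariance of $\pi$, and the Feller property (Proposition~\ref{thm:Feller}, via $\mu_{k_j}\mathcal{P}=\mu_{k_j+1}\to\nu\mathcal{P}$) into the analysis should identify $\nu\mathcal{P}=\nu$. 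I expect this to be the main obstacle: unlike the deterministic case, where a fixed \emph{point} has zero displacement and one gets asymptotic regularity $W_2(\mu_{k+1},\mu_k)\to0$ for free by telescoping, here the invariant \emph{measure} $\pi$ carries a nonzero stationary displacement $Y^*-T_\xi Y^*$, so the naive telescoping bounds $\limsup_k W_2(\mu_{k+1},\mu_k)$ only by the stationary root-mean-square displacement rather than by $0$; the vanishing of $D_k$ must therefore be converted into invariance of cluster points through the optimal-coupling and lower-semicontinuity route above rather than through a direct asymptotic-regularity estimate.

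Finally I would upgrade ``some cluster point is invariant'' to convergence of the whole sequence. Since the Prokhorov--L\`evy metric metrizes convergence in distribution, by Proposition~\ref{thm:cvg_subsequences} it remains to prove the cluster point is unique. Here I would combine the Fej\'er monotonicity of $k\mapsto W_2(\mu_k,\pi)$ for each invariant $\pi\in\mathscr{P}_2(\Rn)$ with a Wasserstein Opial-type inequality: if $\nu_1\ne\nu_2$ were two invariant cluster points, the two monotone sequences $W_2(\mu_k,\nu_1)$ and $W_2(\mu_k,\nu_2)$ would both converge, say to $\ell_1$ and $\ell_2$, and testing along subsequences converging to $\nu_1$ and to $\nu_2$ respectively would yield the contradiction $\ell_1<\ell_2$ and $\ell_2<\ell_1$. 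The uniform second-moment bound from Fej\'er monotonicity supplies the integrability needed to move between weak and $W_2$ statements in this last step, after which $\mu_k\to\pi^x$, and hence $\mu_k\to\pi^\mu$ for general $\mu_0$, in the Prokhorov--L\`evy metric follows.
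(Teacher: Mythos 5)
Your plan fails at its first step, and the failure is not cosmetic. The entire scaffolding you build --- Fej\'er monotonicity of $k\mapsto W_2(\mu_k,\pi)$, summability of the discrepancies $D_k$, invariance and uniqueness of cluster points --- requires fixing an invariant measure $\pi\in\inv\mathcal{P}\cap\mathscr{P}_2(\Rn)$, but the theorem only assumes $\inv\mathcal{P}\neq\emptyset$, and under its hypotheses no invariant measure with finite second moment need exist. Concretely, on $\mathbb{R}$ take $T_cx=x+c$ for $c\in\mathbb{R}$ and $T_*x=\lambda x$ with $\lambda\in(0,1)$; every translation is $\alpha$-fne with violation $0$ (the displacement term in \eqref{eq:paafne2} vanishes identically) and $\lambda\Id$ is $\tfrac12$-fne, so all maps are $\alpha$-fne with the common constant $\alpha=\tfrac12$. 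Let $\xi$ choose $T_*$ with probability $q>0$ and otherwise a translation $T_C$ with $C$ symmetric and heavy-tailed (say tails $\sim t^{-3/2}$). This is a random recursion $X_{k+1}=A_kX_k+B_k$ with $\mathbb{E}[\log A]<0$ and $\mathbb{E}[\log^+|B|]<\infty$, so a unique invariant measure exists, and its tails are at least as heavy as those of $C$, hence it has infinite second moment: $\inv\mathcal{P}\cap\mathscr{P}_2(\Rn)=\emptyset$. Your reduction to $\mu_0=\delta_x$ does not help, since it is the \emph{invariant} measure, not the initial one, that your argument needs in $\mathscr{P}_2$. This is precisely why the paper's proof never leaves the Prokhorov--L\`evy metric and instead works through ergodic supports.

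Even where the $W_2$ machinery applies, the two load-bearing steps of your sketch are missing rather than proved. First, the displacement-matching identity $X^*-T_\xi X^*=Y^*-T_\xi Y^*$ obtained from $D_{k_j}\to0$ does not by itself yield $\nu\mathcal{P}=\nu$: in the inconsistent case the stationary displacement $Y^*-T_\xi Y^*$ is nonzero, so matching displacements only says $T_\xi X^*$ is a random translate of $X^*$. The paper converts exactly this rigidity into the statement that the limiting invariant measures are \emph{shifts} of one another (Lemma~\ref{lemma:const_dist}), and then must invoke disjointness of ergodic supports (Corollary~\ref{th:singular measure char}) to force the shift to be zero for points in a common ergodic support, and --- for initial points outside $S$ --- the Besicovitch covering and differentiation argument (Proposition~\ref{prop:weak_besicovitch_Rn}, Lemma~\ref{lemma:equalityBallsimpliesEqOfMeasures}) to identify the cluster point with the Ces\`aro limit; your sketch replaces none of this, and you yourself flag it as ``the main obstacle.'' Second, the closing Opial-type argument is unsupported: your cluster points are cluster points \emph{in distribution}, and uniformly bounded second moments do not give uniform integrability of $|x|^2$, so $W_2(\mu_{k_j},\nu_1)\not\to0$ in general even when $\mu_{k_j}\to\nu_1$ in distribution; moreover, no Opial property of $(\mathscr{P}_2(\Rn),W_2)$ is established or available to you (this space is positively curved, not CAT($0$)), so the contradiction $\ell_1<\ell_2$ and $\ell_2<\ell_1$ does not follow. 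As it stands, the proposal is a plausible program, not a proof, and its first step is blocked by a counterexample.
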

The proof of this result is very different than the strategy applied to the 
metric space result of \cite[Theorem 27]{BLL}.

% \todo{I really like the result above.  It is possible to achieve the 
% same thing for the Wasserstein metric?}
Trading the weaker assumption that the mappings $T_i$ are only  
a$\alpha$-fne against the assumption of metric subregularity 
of the Markov transport discrepancy $\Psi$ yields rates of convergence 
by lifting the results of \cite[Corollary 2.3]{LukNguTam18} to the space of
probability measures.
\begin{thm}[convergence rates]\label{t:msr convergence} 
  Let $(H,d)$ be a separable Hadamard space and let $G\subset H$ be compact.  
  Let $\mymap{T_i}{G}{G}$ be continuous for all $i\in I$ and define
  $\mymap{\Psi}{\mathscr{P}_2(G)}{\mathbb{R}_+}\cup\{+\infty\}$ by
\eqref{eq:Psi}.
  Assume furthermore:
  \begin{enumerate}[(a)]
  \item \label{t:msr convergence c}  there is at least one 
  $\pi \in\inv\mathcal{P}\cap \mathscr{P}_2(G)$  where $\mathcal{P}$ is the Markov operator with
  update function $\Phi$ given by \eqref{eq:trans kernel};
%   and at least one $\mu\in \mathscr{P}_{2}(G)$ with 
%   $W_2(\mu,\pi)<\infty$ and  $\mu\mathcal{P}\in\mathscr{P}_{2}(G)$;
  \item\label{t:msr convergence a} $\Phi$ is
    a$\alpha$-fne in expectation with constant
    $\alpha\in (0,1)$ and violation at most $\epsilon$; and
  \item\label{t:msr convergence b} 
    $\Psi$ takes the value $0$ only at points $\pi\in \inv\mathcal{P}$ and 
    is metrically subregular (in the $W_2$ metric) for $0$ relative to $\mathscr{P}_2(G)$ on
    $\mathscr{P}_2(G)$ with gauge $\rho$ given by \eqref{eq:gauge} where 
    $\tau=(1-\alpha)/\alpha$.
\end{enumerate}
Then for any $\mu_0\in \mathscr{P}_2(G)$ 
the distributions $\mu_k$ of the iterates of Algorithm \ref{algo:RFI} 
converge in the $W_2$ metric to 
some $\pi^{\mu_0}\in\inv\mathcal{P}\cap\mathscr{P}_2(G)$ with rate characterized by 
\begin{equation}\label{eq:gauge convergence}
d_{W_2}\paren{\mu_{k+1},\inv\mathcal{P}}
% W_2\paren{\mu_{k+1},\, \pi_{k+1}}
\leq \theta\paren{d_{W_2}\paren{\mu_k,\inv\mathcal{P}}} 
% \leq \theta\paren{W_2^2\paren{\mu_{k},\, \pi_k}} 
% \theta\paren{\inf_{\pi\in\inv\mathcal{P}}W_2\paren{\mu^{k},\, \pi}}
% \sqrt{W^2_2\paren{\mu^{k},\, \pi_{\mu^k}} - 
% \frac{1-\alpha}{\alpha}\paren{\rho^{-1}\paren{W_2\paren{\mu^{k},\, \pi_{\mu^k}}}}^2}
\quad \forall k \in \mathbb{N},
\end{equation}%
where $\theta$ given implicitly by \eqref{eq:gauge} satisfies \eqref{eq:theta}. 
\end{thm}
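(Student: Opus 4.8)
The plan is to lift the deterministic convergence-rate argument of \cite[Corollary 2.3]{LukNguTam18} to the metric space $(\mathscr{P}_2(G), W_2)$: I would produce a one-step recursion for the scalar quantity $a_k \equiv d_{W_2}(\mu_k, \inv\mathcal{P})$ and then run the standard gauge-function argument on $(a_k)$. First I would assemble the topological scaffolding. Since $G$ is compact, $(G,d)$ is a compact, complete, separable metric space, so $\mathscr{P}_2(G) = \mathscr{P}(G)$ and $(\mathscr{P}_2(G), W_2)$ is itself compact and complete with $W_2$ metrizing weak convergence. Because each $T_i$ is continuous, $\mathcal{P}$ is Feller by Proposition \ref{thm:Feller}, hence $\inv\mathcal{P}$ is closed (Proposition \ref{thm:construction_inv_meas}) and therefore compact; in particular the infimum defining $a_k$ is attained at some $\pi_k \in \inv\mathcal{P}$, and optimal couplings exist throughout (Lemma \ref{lemma:WassersteinMetric_prop}).

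The engine of the proof is the recursion. Fixing $\pi_k \in \inv\mathcal{P}$ with $W_2(\mu_k,\pi_k) = a_k$ and an optimal coupling $\gamma \in C_*(\mu_k, \pi_k)$, and using $\pi_k \mathcal{P} = \pi_k$ together with $\mu_{k+1} = \mu_k\mathcal{P}$, Proposition \ref{thm:Tafne in exp 2 pafne of P} (assumption \eqref{t:msr convergence a}) gives
\[
a_{k+1}^2 \le W_2^2(\mu_{k+1}, \pi_k) \le (1+\epsilon)a_k^2 - \tfrac{1-\alpha}{\alpha} \int_{G\times G}\mathbb{E}\ecklam{\psi_c(x,y,T_\xi x, T_\xi y)}\,\gamma(dx,dy),
\]
with $\tau = (1-\alpha)/\alpha$. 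The pair $(\pi_k, \gamma)$ is admissible in the infimum \eqref{eq:Psi}, so the integral is bounded below by $\Psi(\mu_k)^2$, whence $a_{k+1}^2 \le (1+\epsilon)a_k^2 - \tau\Psi(\mu_k)^2$. Assumption \eqref{t:msr convergence b} identifies $\Psi^{-1}(0)\cap\mathscr{P}_2(G)$ with $\inv\mathcal{P}\cap\mathscr{P}_2(G)$ and supplies metric subregularity, i.e.\ $a_k \le \rho(\Psi(\mu_k))$, so $\Psi(\mu_k) \ge \rho^{-1}(a_k)$. Substituting and invoking the defining relation \eqref{eq:gauge} of $\theta$ (with this $\tau$) yields $a_{k+1}^2 \le (1+\epsilon)a_k^2 - \tau\,\rho^{-1}(a_k)^2 = \theta(a_k)^2$, which is exactly \eqref{eq:gauge convergence}. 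Nonnegativity of $\psi_c$ in the Hadamard (CAT($0$)) setting (Lemma \ref{lem:psi_f nonneg}) guarantees every term is well defined and, via Remark \ref{r:nonneg psi_Phi2}, that $\mathcal{P}$ is moreover ane in measure.

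It then remains to conclude convergence. By condition (ii) of \eqref{eq:theta}, $\theta(t)<t$ for $t>0$, so $(a_k)$ is nonincreasing and, passing to the limit in $a_{k+1}\le\theta(a_k)$ using continuity of $\theta$, its limit is a fixed point of $\theta$, i.e.\ $a_k \to 0$. The scalar recursion $a_{k+1}\le\theta(a_k)$ with $\theta$ as in \eqref{eq:theta} is precisely the hypothesis of the gauge-convergence lemma behind \cite[Corollary 2.3]{LukNguTam18}; via $a_k \le \theta^{(k)}(a_0)$ and condition (iii) of \eqref{eq:theta} it moreover yields $\sum_k a_k < \infty$. Finally, the ane-in-measure estimate controls the drift of consecutive iterates, namely $W_2(\mu_{k+1}, \mu_k) \le W_2(\mu_{k+1}, \pi_k) + W_2(\pi_k, \mu_k) \le (1+\sqrt{1+\epsilon})\,a_k$, so $\sum_k W_2(\mu_{k+1},\mu_k) < \infty$ and $(\mu_k)$ is Cauchy in the complete space $(\mathscr{P}_2(G), W_2)$. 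Its limit $\pi^{\mu_0}$ lies in the closed set $\inv\mathcal{P}$ because $a_k \to 0$, completing the proof.

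I expect the main obstacle to be the passage from \emph{the distance to $\inv\mathcal{P}$ vanishes} to \emph{convergence of the whole sequence to a single invariant measure}. Because the violation $\epsilon$ may be strictly positive, the iteration is not Fej\'er monotone with respect to $\inv\mathcal{P}$, so convergence of the full sequence cannot be read off from a quasi-Fej\'er argument; it must instead be extracted from summability of $(a_k)$ --- which is exactly what condition (iii) of \eqref{eq:theta} is engineered to deliver --- combined with the ane drift bound. The secondary delicate point is keeping the inequality directions correct while threading the two infima in \eqref{eq:Psi} through the one-step estimate and the metric-subregularity bound, though this is essentially bookkeeping once the ingredients are aligned.
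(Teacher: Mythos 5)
Your proposal is correct and follows essentially the same route as the paper's proof: the one-step recursion obtained from Proposition \ref{thm:Tafne in exp 2 pafne of P} together with metric subregularity of $\Psi$ and the gauge relation \eqref{eq:gauge} yields \eqref{eq:gauge convergence}, and then summability from condition (iii) of \eqref{eq:theta} makes $(\mu_k)$ Cauchy in the complete space $(\mathscr{P}_2(G),W_2)$ with invariant limit. The only cosmetic deviations are your drift bound $(1+\sqrt{1+\epsilon})\,d_{W_2}(\mu_k,\inv\mathcal{P})$ in place of the paper's $\theta(d_{W_2}(\mu_k,\inv\mathcal{P}))+d_{W_2}(\mu_k,\inv\mathcal{P})$, and identifying the limit as invariant via closedness of $\inv\mathcal{P}$ (which is Lemma \ref{lemma:invMeasuresClosed}, not Proposition \ref{thm:construction_inv_meas} as you cite) rather than via the paper's Feller-limit argument.
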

\begin{rem}\label{r:G compact}
The compactness assumption on $G$ can be dropped if 
$(H,d)$ is a Euclidean space.  
\end{rem}

An immediate corollary of this theorem is the following 
specialization to linear convergence. 
\begin{cor}[linear convergence rates]
\label{t:msr convergence - linear} 
Under the same assumptions as Theorem \ref{t:msr convergence},
if $\Psi$ is linearly metrically subregular (i.e. 
with gauge $\rho(t)=\kappa\cdot t$) for $0$  
with constant $\kappa$
satisfying $\sqrt{\frac{1-\alpha}{\alpha(1+\epsilon)}}\leq \kappa\
<\sqrt{\frac{1-\alpha}{\alpha\epsilon}}$, 
then the sequence of iterates $(\mu_k)$ converges 
R-linearly to 
some $\pi^{\mu_0}\in\inv\mathcal{P}\cap\mathscr{P}_2(G)$:  
\begin{equation}%
d_{W_2}\paren{\mu_{k+1},\inv\mathcal{P}}
\leq c\, d_{W_2}\paren{\mu_k,\inv\mathcal{P}}
% \inf_{\pi\in\inv\mathcal{P}}W_2\paren{\mu_{k+1},\, \pi}
% \leq c \inf_{\pi\in\inv\mathcal{P}}W_2\paren{\mu_{k},\, \pi}\quad\forall k \in \mathbb{N},
\end{equation}%
where $c\equiv \sqrt{1+\epsilon -\paren{\tfrac{1-\alpha}{\kappa^2\alpha}}}<1$
and $\kappa\geq \kappa'$ satisfies $\kappa\geq\sqrt{(1-\alpha)/\alpha(1+\epsilon)}$.
If $\inv\mathcal{P}$ consists of a single point then convergence is 
Q-linear.  
\end{cor}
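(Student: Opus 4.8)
The plan is to read the corollary off Theorem \ref{t:msr convergence} by specializing its implicit gauge \eqref{eq:gauge} to the linear case, and then to promote the resulting geometric decay of the set-distance into genuine R-linear convergence to the limit point $\pi^{\mu_0}$. First I would invoke Theorem \ref{t:msr convergence} verbatim: its hypotheses are inherited here, so the iterates converge in $W_2$ to some $\pi^{\mu_0}\in\inv\mathcal{P}\cap\mathscr{P}_2(G)$ and obey the recursion $d_{W_2}\paren{\mu_{k+1},\inv\mathcal{P}}\leq\theta\paren{d_{W_2}\paren{\mu_k,\inv\mathcal{P}}}$, with $\theta$ defined implicitly by \eqref{eq:gauge} and $\tau=(1-\alpha)/\alpha$. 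It then remains only to evaluate $\theta$ for the linear gauge $\rho(t)=\kappa t$. Substituting $\rho^{-1}(t)=t/\kappa$ into the explicit form of $\theta$ in \eqref{eq:gauge} gives $\theta(t)=\paren{(1+\epsilon)t^2-\tfrac{1-\alpha}{\alpha}\,t^2/\kappa^2}^{1/2}=c\,t$ with $c=\sqrt{1+\epsilon-\tfrac{1-\alpha}{\kappa^2\alpha}}$, exactly as claimed. The two-sided window on $\kappa$ is precisely what certifies $c$ as a rate: the lower bound $\kappa\geq\sqrt{(1-\alpha)/(\alpha(1+\epsilon))}$ keeps the radicand nonnegative (so $\theta$ is real-valued and nondecreasing, as \eqref{eq:theta} demands), while the strict upper bound $\kappa<\sqrt{(1-\alpha)/(\alpha\epsilon)}$ is equivalent to $c<1$; these are the conditions already recorded in \eqref{eq:theta linear}. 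Feeding $\theta(t)=ct$ back into the recursion produces the displayed estimate and hence $d_{W_2}\paren{\mu_k,\inv\mathcal{P}}\leq c^k\,d_{W_2}\paren{\mu_0,\inv\mathcal{P}}$.

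To turn this geometric decay of the set-distance into R-linear convergence of $(\mu_k)$ to the limit $\pi^{\mu_0}$, I would control the step lengths. Because $G$ is compact, $\mathscr{P}_2(G)=\mathscr{P}(G)$ is $W_2$-compact and $\inv\mathcal{P}$ is closed, so for each $k$ a nearest invariant measure $\pi_k$ realizing $d_{W_2}\paren{\mu_k,\inv\mathcal{P}}$ exists. Since $\mathcal{P}$ is a$\alpha$-fne in measure it is, on a Hadamard (hence CAT($0$)) space, ane in measure with violation $\epsilon$ (Proposition \ref{r:nonneg psi_Phi}, Remark \ref{r:nonneg psi_Phi2}), whence $W_2(\mu_{k+1},\pi_k)=W_2(\mu_k\mathcal{P},\pi_k\mathcal{P})\leq\sqrt{1+\epsilon}\,d_{W_2}\paren{\mu_k,\inv\mathcal{P}}$. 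The triangle inequality then bounds the step length by $W_2(\mu_k,\mu_{k+1})\leq(1+\sqrt{1+\epsilon})\,c^k\,d_{W_2}\paren{\mu_0,\inv\mathcal{P}}$, and summing the tail of this geometric series yields $W_2(\mu_k,\pi^{\mu_0})\leq\sum_{j\geq k}W_2(\mu_j,\mu_{j+1})\leq\tfrac{1+\sqrt{1+\epsilon}}{1-c}\,d_{W_2}\paren{\mu_0,\inv\mathcal{P}}\,c^k$, which is R-linear convergence at rate $c$ in the sense of Definition \ref{d:q-r-lc}. Finally, when $\inv\mathcal{P}=\{\pi\}$ is a singleton the set-distance coincides with $W_2(\mu_k,\pi)$, so the recursion is literally the Q-linear estimate $W_2(\mu_{k+1},\pi)\leq c\,W_2(\mu_k,\pi)$.

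The substantive content is already carried by Theorem \ref{t:msr convergence}; the two routine pieces here are the algebraic reduction of the implicit gauge to $\theta(t)=ct$ together with the $\kappa$-window that forces $0\leq c<1$, and the passage from decay of the set-distance to convergence to the specific limit point. I expect the latter to be the only delicate step: one must verify that the step lengths are summable, which relies on nonexpansivity (ane in measure with violation $\epsilon$) of $\mathcal{P}$ and on attainment of the nearest invariant measure via compactness of $G$. The singleton case bypasses this entirely, and is exactly where the stronger Q-linear conclusion is recovered.
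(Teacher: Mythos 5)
Your proposal is correct and follows essentially the same route as the paper: specialize the implicit gauge \eqref{eq:gauge} to $\theta(t)=ct$ with $c=\sqrt{1+\epsilon-\tfrac{1-\alpha}{\kappa^{2}\alpha}}$ (the two-sided $\kappa$-window being exactly what forces $0\le c<1$), iterate \eqref{eq:gauge convergence} to get geometric decay of $d_{W_2}\paren{\mu_k,\inv\mathcal{P}}$, bound the step lengths $W_2(\mu_k,\mu_{k+1})$ via the triangle inequality through a nearest invariant measure $\pi_k$ (attained by compactness), sum the geometric tail to obtain R-linear convergence to $\pi^{\mu_0}$, and observe that the singleton case upgrades this to Q-linear. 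The only deviation is cosmetic: the paper bounds $W_2(\mu_{k+1},\pi_k)$ by $\theta\paren{W_2(\mu_k,\pi_k)}$, re-using the combined estimate \eqref{eq:gauge convergence intermed} from the proof of Theorem \ref{t:msr convergence} and yielding the leading constant $\tfrac{1+c}{1-c}\,d_0$, whereas you use the ane-in-measure bound $\sqrt{1+\epsilon}\,W_2(\mu_k,\pi_k)$, which is equally valid and yields the slightly larger constant $\tfrac{1+\sqrt{1+\epsilon}}{1-c}\,d_0$ with the same rate $c$.
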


\section{Background Theory and Proofs}\label{sec:theory} 
In this section we prepare tools to prove the main results from
Section \ref{sec:mainres}. We start by establishing convergence results on the
supports of ergodic measures on a general Polish space $G$, and then,
for global convergence analysis, we restrict ourselves to
$\mathbb{R}^{n}$.  We begin with existence of invariant measures. 
We then analyze properties of (and convergence of the RFI on) subsets of 
$G$, called ergodic sets. Then we turn our attention to the
global convergence analysis.

\subsection{Existence of Invariant Measures}
\label{sec:existence}

% The first
% result is the basic building block for existence of invariant measures. 

% \begin{prop}[existence of invariant measure]\label{prop:existence_inv_meas}
%   Let $G$ be a Polish space and let $\mathcal{P}$ be a Feller
%   Markov operator. Suppose there exists a compact set $K \subset G$ and
%   $\mu \in \mathscr{P}(G)$ with
%   \begin{align*}
%     \limsup_{k \to \infty} \nu_{k}^{\mu}(K)  > 0,
%   \end{align*}
%   where $\nu_{k}^{\mu} := \tfrac{1}{k} \sum_{j=1}^{k}
%   \mu\mathcal{P}^{j}$. Then there exists an invariant measure for
%   $\mathcal{P}$.
% \end{prop}
% \begin{proof}
%   See \cite[Proposition 3.1]{LasSza06}.
% \end{proof}
% \begin{rem}\label{rem:Existence}
%   It can be shown that there exists an invariant probability
%   measure $\pi$ that has compact support if and only if $\lim \sup_{k
%     \to \infty} \nu_{k}^{\mu}(K) =1$ for some compact $K$ and some
%   $\mu \in \mathscr{P}(G)$.
% \end{rem}

A sequence of probability measures $(\nu_{k})$ 
is called \emph{tight}
if for any $\epsilon >0$ there exists a compact $K \subset G$ with
$\nu_{k} (K) > 1-\epsilon$ for all $k \in \mathbb{N}$.  By Prokhorov's theorem
(see, for instance, \cite{Billingsley}),  a sequence $(\nu_{k}) \subset
  \mathscr{P}(G)$, for $G$ a Polish space, 
  is tight if and only if
  $(\nu_{k})$ is compact in $\mathscr{P}(G)$, i.e.\ any
  subsequence of $(\nu_{k})$ has a  subsequence that converges in distribution
  (see, for instance \cite{Billingsley}). 
%   
% The proof of 
% the next fundamental fact can be found in \cite{Billingsley}.
% \begin{thm}[Prokhorov's Theorem]\label{thm:prokhorov}
%   Let $G$ be a Polish space and $(\nu_{k}) \subset
%   \mathscr{P}(G)$. Then $(\nu_{k})$ is tight if and only if
%   $(\nu_{k})$ is compact in $\mathscr{P}(G)$, i.e.\ any
%   subsequence of $(\nu_{k})$ has a  subsequence that converges in distribution. 
%   \end{thm}
% 
%Let $(\nu_{k})$ be a sequence of probability measures on $G$.  The sequence 
%$(\nu_{k})$ is said to converge to $\nu$ whenever $\nu \in
%\mathscr{P}(G)$ and for all $f \in C_{b}(G)$ it holds that $\nu_{k} f
%\to \nu f$ as $k \to \infty$, where $\nu f := \int f(x)
%\nu(\dd{x})$.

A basic building block is the existence of invariant measures proved by 
{Lasota} and T. {Szarek} \cite[Proposition 3.1]{LasSza06}.  Based on this, 
we show how existence  can be verified easily.  But first, we show how
to obtain existence constructively. 
% However, \cref{ex:noInvMeas} serves as a reminder of the 
% delicacy of existence.  
\begin{prop}[construction of an invariant
  measure] \label{thm:construction_inv_meas} Let $\mu \in
  \mathscr{P}(G)$ and $\mathcal{P}$ be a Feller Markov operator.  Let
  $(\mu \mathcal{P}^{k})_{k \in \mathbb{N}}$ be a tight sequence of
  probability measures on a Polish space $G$, and let 
  $\nu_{k} = \tfrac{1}{k}\sum_{j=1}^{k} \mu\mathcal{P}^{j}$.  
  Any cluster point of the sequence $(\nu_{k})_{k\in\Nbb}$ is an invariant measure for
  $\mathcal{P}$.
\end{prop}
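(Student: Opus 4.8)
The plan is to run the classical Krylov--Bogolyubov argument, whose only structural ingredient is the Feller property. First I would observe that tightness of $(\mu\mathcal{P}^k)_{k\in\Nbb}$ transfers to the Ces\`aro averages: given $\epsilon>0$, choose a compact $K\subset G$ with $(\mu\mathcal{P}^j)(K)>1-\epsilon$ for all $j$; then $\nu_k(K)=\tfrac1k\sum_{j=1}^k(\mu\mathcal{P}^j)(K)>1-\epsilon$ for every $k$, so $(\nu_k)$ is tight as well. By Prokhorov's theorem on the Polish space $G$, the sequence $(\nu_k)$ is then relatively compact for convergence in distribution, so cluster points exist; let $\pi$ be one of them, realized along a subsequence $\nu_{k_j}\to\pi$. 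The goal is to show $\pi\mathcal{P}=\pi$, i.e. $\pi\in\inv\mathcal{P}$.

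The heart of the argument is a telescoping identity. Fix $f\in C_b(G)$ and recall the adjoint relation $(\nu\mathcal{P})f=\nu(\mathcal{P}f)$, which is just Fubini's theorem applied to $\int\int f(y)\,p(x,\dd{y})\,\nu(\dd{x})$. Using $(\mu\mathcal{P}^j)(\mathcal{P}f)=(\mu\mathcal{P}^{j+1})f$, I would compute
\[
\nu_k(\mathcal{P}f)-\nu_k f
=\frac{1}{k}\sum_{j=1}^{k}\ecklam{(\mu\mathcal{P}^{j+1})f-(\mu\mathcal{P}^{j})f}
=\frac{1}{k}\ecklam{(\mu\mathcal{P}^{k+1})f-(\mu\mathcal{P})f}.
\]
Since $\abs{(\mu\mathcal{P}^m)f}\le\norm{f}_{\infty}$ for every $m$, the right-hand side is bounded in absolute value by $2\norm{f}_{\infty}/k$, and hence tends to $0$ as $k\to\infty$.

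It remains to pass to the limit along $(k_j)$, and this is exactly where the Feller hypothesis enters --- the main (and essentially only) obstacle. Because $\mathcal{P}$ is Feller, $\mathcal{P}f\in C_b(G)$, so both $f$ and $\mathcal{P}f$ are admissible test functions for convergence in distribution; thus $\nu_{k_j}f\to\pi f$ and $\nu_{k_j}(\mathcal{P}f)\to\pi(\mathcal{P}f)$. Combining this with the estimate above yields $\pi(\mathcal{P}f)-\pi f=0$, that is $(\pi\mathcal{P})f=\pi f$, for every $f\in C_b(G)$. Since $G$ is a metric space, $C_b(G)$ is measure-determining, so $\pi\mathcal{P}=\pi$ and $\pi$ is invariant, as claimed. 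I would emphasize that without the Feller property one could still extract $\nu_{k_j}f\to\pi f$, but would have no control over $\nu_{k_j}(\mathcal{P}f)$ in the limit; it is precisely the continuity (and boundedness) of $\mathcal{P}f$ that makes the limit go through.
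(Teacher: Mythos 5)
Your proposal is correct and follows essentially the same route as the paper's own proof: the classical Krylov--Bogolyubov argument, with tightness passed to the Ces\`aro averages, Prokhorov's theorem producing a cluster point, the telescoping bound $\abs{\nu_k(\mathcal{P}f)-\nu_k f}\le 2\norm{f}_{\infty}/k$, and the Feller property used to pass both $\nu_{k_j}f$ and $\nu_{k_j}(\mathcal{P}f)$ to the limit. Your writeup just makes the telescoping identity and the tightness transfer slightly more explicit than the paper does.
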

\begin{proof}
  Our proof follows \cite[Theorem 1.10]{Hairer2016}. 
  Tightness of the sequence
  $(\mu \mathcal{P}^{k})$ implies tightness of the sequence $(\nu_{k})$ and
  therefore  by Prokhorov's Theorem there exists a convergent subsequence $(\nu_{k_{j}})$ with
  limit $\pi \in \mathscr{P}(G)$. By the Feller
  property of $\mathcal{P}$ one has for any continuous and bounded
  $\mymap{f}{G}{\mathbb{R}}$ that also $\mathcal{P}f$ is continuous
  and bounded, and hence
  \begin{align*}
    \abs{(\pi \mathcal{P})f - \pi  f} &= \abs{\pi (\mathcal{P}f) - \pi
      f}  \\ &= \lim_{j} \abs{
      \nu_{k_{j}}(\mathcal{P} f)- \nu_{k_{j}} f} \\ &= \lim_{j}
    \frac{1}{k_{j}} \abs{ \mu\mathcal{P}^{k_{j}+1}f -
      \mu\mathcal{P}f} \\ &\le \lim_{j}
    \frac{2\norm{f}_{\infty}}{k_{j}} \\ &=0.
  \end{align*}
  Now, $\pi f = (\pi \mathcal{P})f$ for all $f \in C_{b}(G)$ implies
  that $\pi = \pi \mathcal{P}$.
\end{proof}

When a Feller Markov chain converges in distribution (i.e.\
$\mu\mathcal{P}^{k} \to \pi$), it does so to an invariant measure
(since $\mu\mathcal{P}^{k+1} \to \pi \mathcal{P}$).  A Markov operator
need not possess a unique invariant probability measure or any
invariant measure at all.  Indeed, consider the normed space 
$(\mathbb{R}^{n}, \norm{\cdot})$ for the
case that $T_{i}=P_{i}$, $i \in I$ is a projector onto a nonempty
closed and convex set $C_{i} \subset \mathbb{R}^{n}$. A sufficient
condition for the deterministic Alternating Projections Method to
converge in the inconsistent case to a limit cycle for convex sets is
that one of the sets is compact 
(this is an easy consequence of \cite[Theorem 4]{CheneyGoldstein59}).  
% (this is an easy consequence of \cite[Theorem 2]{Browder66}).  
Translating this into the present 
setting,  a sufficient condition for the existence of an invariant
measure for $\mathcal{P}$ is the existence of a compact set 
$K\subset \mathbb{R}^{n}$ and  $\epsilon>0$ such that 
$p(x,K)\ge \epsilon$ 
for all $x \in\mathbb{R}^{n}$. This holds, for instance, when 
there are only finitely many sets with one of them, say
$C_{\ibar}$, compact and $\mathbb{P}(\xi = \ibar) = \epsilon$, since
$p(x,C_{\ibar}) = \mathbb{P}(P_{\xi}x \in C_{\ibar}) \ge \mathbb{P}(P_{\xi}x \in 
C_{\ibar}, \xi= \ibar) = \mathbb{P}(\xi = \ibar) = \epsilon$ for all $ x \in
\mathbb{R}^{n}$.  More generally, we have the following result.
\begin{prop}[existence of invariant measures for finite collections of
  continuous mappings]\label{cor:finite_selection_existence} Let
  $G$ be a Polish space and let $\mymap{T_{i}}{G}{G}$ be
  continuous for $i \in I$, where $I$ is a finite index set. If for one
  index $i \in I$ it holds that $\mathbb{P}(\xi = i)>0$ and $T_{i}(G)
  \subset K$, where $K \subset G$ is compact, then there exists an
  invariant measure for $\mathcal{P}$.
\end{prop}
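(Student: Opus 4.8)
The plan is to reduce the existence of an invariant measure to the tightness of a single orbit of the chain, and then to invoke the machinery already assembled. Since every $T_i$ is continuous, the Markov operator $\mathcal{P}$ is Feller by Proposition~\ref{thm:Feller}. By Proposition~\ref{thm:construction_inv_meas} it then suffices to exhibit one initial measure $\mu\in\mathscr{P}(G)$ for which the sequence $(\mu\mathcal{P}^{k})_{k\in\Nbb}$ is tight: tightness of $(\mu\mathcal{P}^{k})$ passes to the Ces\`{a}ro averages $\nu_k=\tfrac1k\sum_{j=1}^k\mu\mathcal{P}^j$, so Prokhorov's theorem furnishes a convergent subsequence whose limit is invariant by that proposition.

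To produce such a $\mu$, I would exploit the distinguished index as a \emph{reset}. Write $i$ for the special index, set $\epsilon:=\mathbb{P}(\xi=i)>0$ and recall $T_i(G)\subset K$ with $K$ compact; choose $x_0\in T_i(G)$ and take $\mu=\delta_{x_0}$, so that $\mu\mathcal{P}^{k}=\mathcal{L}(X_k^{x_0})$. The point is that whenever $\xi_m=i$ the chain lands in $K$, namely $X_{m+1}^{x_0}=T_iX_m^{x_0}\in K$, irrespective of $X_m^{x_0}$. I would then build compact sets recursively by $L_0:=K$ and $L_\ell:=\bigcup_{j\in I}T_j(L_{\ell-1})$; because $I$ is finite and the $T_j$ are continuous, each $L_\ell$ is a finite union of continuous images of a compact set, hence compact, and so is $K_N:=\bigcup_{\ell=0}^N L_\ell$.

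With these sets in hand, tracking the chain backwards to the last occurrence of $i$ shows that $X_k^{x_0}\in L_\ell$, where $\ell$ is the number of steps elapsed since the most recent $\xi_m=i$ (and $X_k^{x_0}\in L_k$ if no such $m$ exists, using $x_0\in K$). The i.i.d.\ structure makes this elapsed time geometric: with $q:=1-\epsilon$ one gets $\mathbb{P}(X_k^{x_0}\notin K_N)\le\sum_{j>N}q^j\epsilon+q^k\le 2q^{N+1}$ for $k\ge N+1$, while $X_k^{x_0}\in K_N$ automatically for $k\le N$. Given $\eta>0$, choosing $N$ with $2q^{N+1}<\eta$ yields $\mu\mathcal{P}^{k}(K_N)>1-\eta$ for all $k$, which is the required tightness.

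I expect the tightness estimate of the third paragraph to be the only real obstacle; once it is in place, the Feller property and Proposition~\ref{thm:construction_inv_meas} close the argument immediately. The delicate points to get right are the compactness of the recursively defined $L_\ell$ (where finiteness of $I$ is essential, since an infinite family could destroy compactness of the union) and the bookkeeping of the initial transient, which I sidestep by starting the chain inside $K$ so that $K_N$ controls every iterate uniformly rather than only for large $k$.
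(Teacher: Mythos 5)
Your proof is correct, but it takes a genuinely different route from the paper's. The paper's argument is two lines: from $T_i(G)\subset K$ it gets the uniform lower bound $\mathbb{P}(X_{k+1}\in K)\ge \mathbb{P}(\xi=i)>0$ for every $k$ and every initial measure, notes that $\mathcal{P}$ is Feller, and then invokes the Lasota--Szarek lower bound technique \cite[Proposition 3.1]{LasSza06}, which delivers an invariant measure from exactly these two ingredients. You instead prove the stronger statement that a whole orbit is tight: starting the chain at $x_0\in K$, you exploit the renewal structure (every occurrence of $\xi_m=i$ resets the chain into $K$), build the compact sets $L_\ell=\bigcup_{j\in I}T_j(L_{\ell-1})$ and $K_N=\bigcup_{\ell=0}^N L_\ell$, and bound the tail by the geometric distribution of the time since the last reset, $\mathbb{P}(X_k\notin K_N)\le 2(1-\epsilon)^{N+1}$; then Prokhorov plus Proposition~\ref{thm:construction_inv_meas} closes the argument. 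What each approach buys: the paper's proof is shorter, stays agnostic about where the chain spends the rest of its mass, and --- notably --- never uses finiteness of $I$ (only continuity of all $T_j$ and the one good index), so it covers the infinite-index situation alluded to before the proposition; your proof is self-contained within the paper's own toolkit (no appeal to \cite{LasSza06}), yields the quantitatively stronger conclusion of tightness of $(\delta_{x_0}\mathcal{P}^k)$ with an explicit geometric rate, but relies essentially on $I$ being finite, since otherwise the sets $L_\ell$ need not be compact --- a limitation you correctly flag yourself.
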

\begin{proof}
  We have from $T_{i}(G)\subset K$ that $    \mathbb{P}(T_{\xi}x \in K ) \ge
  \mathbb{P}(\xi = i)$ and hence for the sequence $(X_{k})$ generated
  by Algorithm \ref{algo:RFI} for an arbitrary initial probability measure
  \begin{align*}
    \mathbb{P}(X_{k+1} \in K) = \mathbb{E}[\cpr{T_{\xi_{k}}X_{k}
      \in K}{X_{k}}] \ge \mathbb{P}(\xi = i) \qquad \forall k \in \mathbb{N}.
  \end{align*}
  The assertion follows now immediately from % \cref{prop:existence_inv_meas}, 
  \cite[Proposition 3.1]{LasSza06}
  since $\mathbb{P}(\xi = i)>0$ and
  $\mathcal{P}$ is Feller by continuity of $T_{j}$ for all $j \in I$.
\end{proof}

Next we mention an existence result which requires that the RFI sequence
$(X_{k})$ possess a uniformly bounded expectation.

\begin{prop}[existence in $\mathbb{R}^{n}$,
  RFI] \label{thm:ex_inRn} Let
  $\mymap{T_{i}}{\mathbb{R}^{n}}{\mathbb{R}^{n}}$ ($i \in I$) be
  continuous. Let $(X_{k})$ be the RFI sequence (generated by
  Algorithm \ref{algo:RFI}) for some initial measure. Suppose that for all $k
  \in \mathbb{N}$ it holds that $\mathbb{E}\left[ \norm{X_{k}}\right]
  \le M$ for some $M \ge 0$.  Then there exists an invariant measure
  for the RFI Markov operator $\mathcal{P}$ given by \eqref{eq:trans
    kernel}.
\end{prop}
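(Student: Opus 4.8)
The plan is to reduce the existence statement to the constructive machinery already in place, namely Proposition \ref{thm:construction_inv_meas}, by verifying its two hypotheses: that $\mathcal{P}$ is Feller, and that the sequence $(\mu_0\mathcal{P}^k)$ (equivalently $(\mathcal{L}(X_k))$) is tight. The Feller property is immediate from Proposition \ref{thm:Feller}, since each $T_i$ is assumed continuous. The entire substance of the proof is therefore to extract tightness from the uniform first-moment bound $\mathbb{E}[\norm{X_k}]\le M$.

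First I would recall that on $\mathbb{R}^n$ tightness of a sequence of measures $(\nu_k)$ is equivalent to the existence, for each $\epsilon>0$, of a compact set (here a closed ball $\overline{\mathbb{B}}(0,R)$) carrying mass at least $1-\epsilon$ uniformly in $k$. The natural tool is Markov's inequality: for any radius $R>0$,
\[
\mathbb{P}\paren{\norm{X_k}> R}\le \frac{\mathbb{E}[\norm{X_k}]}{R}\le \frac{M}{R}.
\]
Given $\epsilon>0$, choosing $R = M/\epsilon$ yields $\mathbb{P}(X_k\in \overline{\mathbb{B}}(0,R))\ge 1-\epsilon$ for every $k$, and since $\overline{\mathbb{B}}(0,R)\subset\mathbb{R}^n$ is compact, the sequence $(\mathcal{L}(X_k))$ is tight. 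This is the key step, and it is short precisely because we are in $\mathbb{R}^n$, where closed bounded sets are compact.

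With Feller-ness and tightness established, I would invoke Proposition \ref{thm:construction_inv_meas}: the Ces\`aro averages $\nu_k=\tfrac1k\sum_{j=1}^k\mu_0\mathcal{P}^j$ inherit tightness from $(\mu_0\mathcal{P}^k)$, hence by Prokhorov's theorem admit a convergent subsequence whose limit $\pi$ is, by that proposition, an invariant measure for $\mathcal{P}$. This delivers the conclusion. I do not anticipate a genuine obstacle here; the only thing to be careful about is that the uniform moment bound is stated for the first moment $\mathbb{E}[\norm{X_k}]$, so Markov's inequality in the linear form is exactly what is needed, and no appeal to the finiteness of higher moments or to $\mathscr{P}_2(G)$ is required for mere existence. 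If one wanted the invariant measure to lie in $\mathscr{P}_2(\mathbb{R}^n)$ one would need a second-moment bound, but the statement only claims existence of an invariant measure, so the first-moment bound suffices.
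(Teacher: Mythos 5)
Your proof is correct, but it reaches the conclusion by a different route than the paper. You establish \emph{uniform tightness} of $(\mu_0\mathcal{P}^k)$ via Markov's inequality (choosing $R=M/\epsilon$ so that every law puts mass $\ge 1-\epsilon$ on $\overline{\mathbb{B}}(0,R)$), and then feed this, together with the Feller property from Proposition \ref{thm:Feller}, into the Krylov--Bogolyubov-type construction of Proposition \ref{thm:construction_inv_meas}: the Ces\`aro averages inherit tightness, Prokhorov gives a convergent subsequence, and the cluster point is invariant. The paper instead extracts from Markov's inequality only the weaker conclusion $\limsup_{k}\mathbb{P}(\norm{X_k}\le\epsilon)\ge 1-M/\epsilon>0$ for $\epsilon>M$, and then invokes the external existence result of Lasota and Szarek \cite[Proposition 3.1]{LasSza06}, which requires only that the chain charges some compact set with non-vanishing probability in the limsup sense, plus Feller. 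Both arguments are sound. Yours buys self-containedness: it uses only results already proved in the paper, and the uniform first-moment bound does in fact deliver the stronger tightness property at no extra cost. The paper's route buys uniformity of method: the Lasota--Szarek criterion is the same tool used in Proposition \ref{cor:finite_selection_existence}, and it demonstrates that far less than full tightness suffices for existence, which matters in settings where only a limsup bound is available.
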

\begin{proof}
  For any $\epsilon>M$ Markov's inequality implies that
  \begin{align*}
    \mathbb{P}(\norm{X_{k}} \ge \epsilon) \le
    \frac{\mathbb{E}\left[\norm{X_{k}}\right]}{\epsilon} \le
    \frac{M}{\epsilon} <1
  \end{align*}
  Hence,
  \begin{align*}
    \limsup_{k \to \infty} \mathbb{P}(\norm{X_{k}} \le \epsilon) \ge
    \limsup_{k \to \infty} \mathbb{P}(\norm{X_{k}} < \epsilon) \ge
    1-\frac{M}{\epsilon} >0.
  \end{align*}
  Existence of an invariant measure then follows from \cite[Proposition 3.1]{LasSza06}
  % \cref{prop:existence_inv_meas} 
  since closed balls in
  $\mathbb{R}^{n}$ with finite radius are compact, $\mathbb{P}(X_{k}
  \in \cdot) = \mu\mathcal{P}^{k}$ and continuity of $T_i$ yields the
  Feller property for $\mathcal{P}$.
\end{proof}

To conclude this section, we also establish that, for the setting considered 
here, the set of invariant measures is closed.
\begin{lemma}\label{lemma:invMeasuresClosed}
 Let $G$ be a Polish space and let $\mathcal{P}$ be a Feller
  Markov operator, which is in particular the case under Assumption \ref{ass:1}, 
if $T_{i}$ is continuous for all $i\in
  I$. Then the set of associated invariant measures $\inv\mathcal{P}$ is 
closed with respect to the topology of convergence in distribution.
\end{lemma}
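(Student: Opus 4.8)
The plan is to exhibit $\inv\mathcal{P}$ as an intersection of closed sets, each one the zero set of a functional on $\mathscr{P}(G)$ that is continuous for the topology of convergence in distribution; the Feller property is exactly what makes these functionals continuous.

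First I would reformulate invariance in test-function form. Writing $\mu f := \int_G f\,\dd{\mu}$ as in Section \ref{sec:modesofcvg}, the definition of the dual operator together with Fubini's theorem gives $(\mu\mathcal{P})f = \mu(\mathcal{P}f)$ for every bounded measurable $f$. Since $G$ is Polish, a finite Borel measure is determined by its integrals against $C_b(G)$; hence $\pi\mathcal{P}=\pi$ as measures if and only if $\pi(\mathcal{P}f) = \pi f$ for all $f\in C_b(G)$, equivalently $\pi(f-\mathcal{P}f)=0$ for all $f\in C_b(G)$.

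The key step invokes the Feller property. Fix $f\in C_b(G)$; because $\mathcal{P}$ is Feller, $\mathcal{P}f\in C_b(G)$, and therefore $g_f := f - \mathcal{P}f\in C_b(G)$ as well. By the very definition of convergence in distribution, the evaluation map $\mathscr{P}(G)\ni\mu\mapsto \mu g_f$ is continuous for this topology (it is continuous against every fixed $C_b(G)$ test function). Consequently the level set $L_f := \set{\mu\in\mathscr{P}(G)}{\mu g_f = 0}$, being the preimage of the closed set $\{0\}$ under a continuous map, is closed.

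Finally, the reformulation of the first step says precisely that $\inv\mathcal{P} = \bigcap_{f\in C_b(G)} L_f$, which is closed as an intersection of closed sets, giving the claim. I do not expect a genuine obstacle; the only point demanding care is that invariance be phrased against bounded continuous test functions rather than arbitrary Borel sets, so that Feller continuity (guaranteed here by Proposition \ref{thm:Feller} when the $T_i$ are continuous) renders $\mu\mapsto\mu(\mathcal{P}f)$ continuous and hence each $L_f$ closed. One could equivalently argue sequentially, using that the Prokhorov-L\`evy metric of Definition \ref{d:PL} metrizes this topology on the Polish space $G$, but the intersection argument needs no metrizability.
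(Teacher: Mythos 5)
Your proof is correct and rests on the same two ingredients as the paper's own argument: the Feller property guarantees $\mathcal{P}f\in C_{b}(G)$, and invariance is tested against $C_{b}(G)$ functions (legitimate since Borel measures on a Polish space are determined by such integrals), so that convergence in distribution can be applied simultaneously to $f$ and $\mathcal{P}f$. The only difference is packaging: the paper argues sequentially, taking $\pi_{n}\to\pi$ in distribution and passing to the limit in $\pi_{n}f=\pi_{n}(\mathcal{P}f)$, whereas you exhibit $\inv\mathcal{P}=\bigcap_{f\in C_{b}(G)}L_{f}$ as an intersection of closed level sets, which has the marginal benefit of yielding topological closedness directly, without the (implicit) appeal to metrizability of $\mathscr{P}(G)$ that a purely sequential argument requires.
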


\begin{proof}
Let $(\pi_n)_{n \in  \mathbb{N}}$ be a sequence of measures in $\inv\mathcal{P}$
that converges in distribution  to $\pi.$ Thus we have 
$\pi_n \mathcal{P}= \pi_n$ for all 
$n \in  \mathbb{N}$ and we need to establish this also for the limiting 
measure $\pi$.
For this it suffices to show 
%because $C_{b}(G)$ is distribution determining 
that for all $f \in C_{b}(G)$,
  \begin{align*}
  \int f(x)  \pi(\dd{x}) =  \int f(x)  \pi \mathcal{P}(\dd{x}) 
   =   \int f(x) \int  p(y,\dd{x})  \pi(\dd{y}) 
   =  \int  \mathcal{P}f(y)   \pi(\dd{y}).
  \end{align*}
This means 
that for  $f \in C_{b}(G)$ we also have that $\mathcal{P}f \in C_{b}(G)$. 
(By Proposition \ref{thm:Feller} we have that 
$\mathcal{P}$ is Feller if Assumption \ref{ass:1} holds and $T_{i}$ is 
continuous for all $i\in I$.)
Hence, for those $f$ we have that 
$  \int f(x)  \pi_n(\dd{x}) \rightarrow \int f(x)  \pi(\dd{x})$ 
as well as 
$  \int \mathcal{P}f(x)  \pi_n(\dd{x}) \rightarrow 
\int \mathcal{P}f(x)  \pi(\dd{x})$
which establishes the claim.
%$$x \mapsto \mathbb{E}[f(T_{\xi}x)]$ is in $C_{b}(G)$.
\end{proof}

\subsection{Ergodic theory of general Markov Operators}
\label{sec:suppCvg}

We understand here under ergodic theory the analysis of the properties
of the RFI Markov chain when it is initialized by a distribution 
in the support of any ergodic
measure for the Markov operator $\mathcal{P}$. The convergence
properties for these points can be much stronger than the convergence
properties of Markov chains initialized by measures with support
outside the support of the ergodic measures.

The consistent stochastic feasibility problem was analyzed in
\cite{HerLukStu19a} without the need of the notion of convergence of
measures since, as shown in Proposition \ref{thm:Hermer}, for consistent stochastic feasibility 
convergence of sequences defined by \eqref{eq:X_RFI} is almost sure, if they converge at all.  
More general convergence of measures is more challenging
as the next example illustrates.
\begin{example}[nonexpansive mappings, negative
  result]\label{eg:convergence of average}
  For non-expansive mappings in general, one cannot expect that the
  sequence $(\mathcal{L}(X_{k}))_{k\in\mathbb{N}}$ converges to an invariant
  probability measure.  Consider the nonexpansive operator $T:=
  T_{1}x:= -x$ on $\mathbb{R}$ and set, in the RFI setup, $\xi= 1$ and
  $I=\{1\}$.  Then $X_{2k} = x$ and $X_{2k+1} = -x$ for all $k \in
  \mathbb{N}$, if $X_{0}\sim \delta_{x}$. This implies for $x\neq 0$
  that $(\mathcal{L}(X_{k}))$ does not converge to the invariant 
  distribution $\pi_{x} = \tfrac{1}{2}(\delta_{x} + \delta_{-x})$
  (depending on $x$), since $\mathbb{P}(X_{2k} \in B) = \delta_{x}(B)$
  and $\mathbb{P}(X_{2k+1} \in B) = \delta_{-x}(B)$ for $B \in
  \mathcal{B}(\mathbb{R})$. Nevertheless the Ces\`{a}ro average
  $\nu_{k}:= \tfrac{1}{k} \sum_{j=1}^{k} \mathbb{P}^{X_{j}}$ converges
  to $\pi_{x}$.
\end{example}

As Example \ref{eg:convergence of average} shows, meaningful notions of
ergodic convergence are possible (in our case, convergence of the
Ces\`{a}ro average) even when convergence in distribution can not be
expected. We start by collecting several general results for Markov
chains on Polish spaces.  In the next section we restrict ourselves to 
equicontinuous and Feller Markov operators.

An invariant probability measure $\pi$ of $\mathcal{P}$ is called
\emph{ergodic}, if any $p$\emph{-invariant set}, i.e.\ $A \in
\mathcal{B}(G)$ with $p(x,A)=1$ for all $x \in A$, has $\pi$-measure
$0$ or $1$.  Two measures $\pi_{1},\pi_{2}$ are called {\em mutually
  singular} when there is $A \in \mathcal{B}(G)$ with $\pi_{1}(A^{c})
= \pi_{2}(A)=0$.
The following decomposition theorem on Polish spaces is key to our
development.  For more detail see, for instance, \cite{Walters82}.  

\begin{prop}%
\label{thm:decomp_ergodic_stat_measures}
% Theorem 1.7 in \cite{Hairer2016}, see also 
  Denote by $\mathcal{I}$ the set of all invariant probability
  measures for $\mathcal{P}$ and by $\mathcal{E} \subset \mathcal{I}$
  the set of all those that are ergodic. Then, $\mathcal{I}$ is convex
  and $\mathcal{E}$ is precisely the set of its extremal
  points. Furthermore, for every invariant measure $\pi \in
  \mathcal{I}$, there exists a probability measure $q_{\pi}$ on
  $\mathcal{E}$ such that
  \begin{align*}
    \pi(A) = \int_{\mathcal{E}} \nu(A) q_{\pi}(\dd{\nu}).
  \end{align*}
  In other words, every invariant measure is a convex combination of
  ergodic invariant measures. Finally, any two distinct elements of
  $\mathcal{E}$ are mutually singular.
\end{prop}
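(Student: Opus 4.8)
The plan is to establish the four assertions in sequence: convexity by linearity of the dual operator, the ergodicity--extremality equivalence by elementary measure theory, and then the integral representation together with mutual singularity by disintegrating over the invariant $\sigma$-algebra and invoking the Birkhoff ergodic theorem for the chain. Convexity of $\mathcal{I}$ is immediate: for $\pi_1,\pi_2\in\mathcal{I}$ and $t\in[0,1]$ one has $(t\pi_1+(1-t)\pi_2)\mathcal{P}=t(\pi_1\mathcal{P})+(1-t)(\pi_2\mathcal{P})=t\pi_1+(1-t)\pi_2$, so every convex combination of invariant measures is again invariant.

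For the equivalence of ergodicity and extremality I would argue both directions by contraposition. If $\pi\in\mathcal{I}$ is not ergodic, fix a $p$-invariant set $A$ with $0<\pi(A)<1$. Feeding $p(\cdot,A)$ into the invariance identity $\pi(A)=\int p(x,A)\,\pi(\mathrm{d}x)$ and using $p(x,A)=1$ on $A$ forces $\int_{A^c}p(x,A)\,\pi(\mathrm{d}x)=0$, so $A^c$ is $\pi$-essentially invariant as well; consequently the normalized restrictions $\pi_A:=\pi(\cdot\cap A)/\pi(A)$ and $\pi_{A^c}:=\pi(\cdot\cap A^c)/\pi(A^c)$ are both invariant, and $\pi=\pi(A)\pi_A+\pi(A^c)\pi_{A^c}$ is a nontrivial convex decomposition, so $\pi$ is not extremal. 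Conversely, if $\pi$ is ergodic and $\pi=t\pi_1+(1-t)\pi_2$ with $t\in(0,1)$ and $\pi_1\neq\pi_2$, then $\pi_1\ll\pi$ and the density $h=\mathrm{d}\pi_1/\mathrm{d}\pi$ is a bounded harmonic function for the chain; since ergodicity forces every bounded harmonic function to be $\pi$-a.e. constant, $\pi_1=\pi$, a contradiction. The only care needed here is in verifying that $A^c$ is $\pi$-essentially invariant and that the density is genuinely harmonic.

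For the representation I would deliberately avoid Choquet's theorem, since on a general Polish space $\mathcal{I}$ need not be compact. Instead, fix $\pi\in\mathcal{I}$ and pass to the $\sigma$-algebra $\mathcal{J}$ of essentially $p$-invariant sets. Because $G$ is Polish, a regular conditional probability $x\mapsto\pi_x:=\pi(\cdot\mid\mathcal{J})(x)$ exists; the Birkhoff ergodic theorem applied to the chain identifies each $\pi_x$, for $\pi$-a.e. $x$, as an invariant \emph{and} ergodic measure, and the tower property $\pi(A)=\int\pi_x(A)\,\pi(\mathrm{d}x)$ together with the pushforward $q_\pi:=\pi\circ(x\mapsto\pi_x)^{-1}$, supported on $\mathcal{E}$, yields $\pi(A)=\int_{\mathcal{E}}\nu(A)\,q_\pi(\mathrm{d}\nu)$. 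I expect this disintegration step---producing a measurably parametrized, $\pi$-a.e. ergodic family and verifying its invariance and ergodicity through the ergodic theorem---to be the main obstacle, as this is exactly where the Polish structure and the ergodic theorem are genuinely required.

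Finally, mutual singularity of two distinct ergodic measures $\pi_1\neq\pi_2$ follows from the same ergodic theorem. Since distinct probability measures on a metric space are separated by $C_b(G)$, choose $f\in C_b(G)$ with $\pi_1 f\neq\pi_2 f$ and set $B_i:=\{x\in G:\tfrac1n\sum_{k=0}^{n-1}f(X_k^x)\to\pi_i f\ \ \mathbb{P}\text{-a.s.}\}$. By ergodicity $\pi_i(B_i)=1$, while $B_1\cap B_2=\emptyset$ because $\pi_1 f\neq\pi_2 f$; hence $\pi_2(B_1)=0$, so $B_1$ is a separating set witnessing $\pi_1\perp\pi_2$.
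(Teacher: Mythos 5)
The paper never proves this proposition: it is invoked as a known result, with a pointer to Walters' ergodic theory text, so there is no internal argument to compare against. Your outline is the standard textbook proof of the ergodic decomposition theorem and is essentially correct. Convexity and the two contraposition arguments for the ergodicity--extremality equivalence are sound; for the record, the harmonicity of $h=\mathrm{d}\pi_1/\mathrm{d}\pi$ is most cleanly obtained from the $L^2(\pi)$ computation
\begin{equation*}
\int (\mathcal{P}h-h)^2\,\mathrm{d}\pi \;\le\; \int \mathcal{P}(h^2)\,\mathrm{d}\pi-2\int (\mathcal{P}h)\,h\,\mathrm{d}\pi+\int h^2\,\mathrm{d}\pi \;=\;0,
\end{equation*}
which uses Jensen's inequality together with invariance of $\pi$ and of $h\pi$; and the auxiliary fact that ergodicity forces bounded harmonic functions to be $\pi$-a.e.\ constant itself needs a short martingale argument ($\mathbb{E}_{\pi}[(h(X_1)-h(X_0))^2]=0$, so level sets of $h$ are essentially invariant). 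The mutual singularity argument via Birkhoff averages is correct. The one place where your proposal is a sketch rather than a proof is the one you flag yourself: showing that the conditional measures $\pi_x=\pi(\cdot\mid\mathcal{J})(x)$ are invariant \emph{and} ergodic for $\pi$-a.e.\ $x$, and that $x\mapsto\pi_x$ is measurable into $\mathscr{P}(G)$ with $\mathcal{E}$ a measurable subset, so that the pushforward $q_\pi$ is well defined. That step is exactly the content of the cited literature. A practical alternative, which avoids redoing the disintegration for transition kernels, is to lift the chain to the shift map on the path space $G^{\mathbb{N}}$ equipped with $\mathbb{P}_\pi$, apply the deterministic ergodic decomposition (as in Walters, which treats measure-preserving transformations, not Markov operators), and project back to the time-zero coordinate; some such transfer is in fact needed even to justify the paper's citation, so your direct approach and the paper's reference ultimately rest on the same machinery, with yours making the paper self-contained at the cost of carrying out the disintegration.
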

\begin{rem}
  If there exists only one invariant probability measure of
  $\mathcal{P}$, we know by Proposition \ref{thm:decomp_ergodic_stat_measures}
  that it is ergodic. If there exist more invariant probability
  measures, then there exist uncountably many invariant and at least
  two ergodic probability measures.
\end{rem}

% \begin{thm}[Birkhoff's ergodic theorem, Theorem 9.6 in 
% \cite{kallenberg1997}]\label{thm:birkhoff}
%   Let $\pi$ be an ergodic invariant probability measure for
%   $\mathcal{P}$, and let $(G,\mathcal{G})$ be a measurable space,
%   $\mymap{f}{G}{\mathbb{R}}$ be such that $\pi \abs{f}^{p} < \infty$
%   and $p \in [1,\infty]$, then
%   \begin{align*}
%     \frac{1}{k} \sum_{j=1}^{k} f(X_{j}) \to \pi f, \qquad \text{a.s.\
%       and in } L_{p}\text{ as }k \to \infty,
%   \end{align*}
%   where the sequence $(X_{k})$ is generated by Algorithm \ref{algo:RFI} with
%   $X_{0} \sim \pi$.
% \end{thm}

\begin{prop}\label{cor:Birkhoff_conditional}
  Let $\pi$ be an ergodic invariant probability measure for
  $\mathcal{P}$,  let $(G,\mathcal{G})$ be a measurable space, and let 
  $\mymap{f}{G}{\mathbb{R}}$ be measurable, bounded and satisfy $\pi \abs{f}^{p} < \infty$
  for $p \in [1,\infty]$.  
  % Under the same assumptions as in \cref{thm:birkhoff}, let
  % $\mymap{f}{G}{\mathbb{R}}$ be measurable and bounded, i.e.\
  % $\norm{f}_{\infty} := \sup_{x \in G} \abs{f} < \infty$.  
  Then
  \begin{align*}
    \nu_{k}^{x}f := \frac{1}{k} \sum_{j=1}^{k} p^{j}(x,f) \to \pi f
    \qquad \text{as } k \to \infty \quad\text{ for } \pi\text{-a.e. }
    x \in G,
  \end{align*}
  where $p^{j}(x,f) := \delta_{x}\mathcal{P}^{j}f = \cex{f(X_{j})}{X_{0}=x}$
  for the sequence $(X_{k})$ generated by Algorithm \ref{algo:RFI} with
  $X_{0} \sim \pi$.
\end{prop}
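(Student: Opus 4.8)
The plan is to recognise the conditional Ces\`{a}ro averages as Ces\`{a}ro averages of the Markov operator itself and then to apply a pointwise ergodic theorem for positive contractions. Since $p^{j}(x,f)=\delta_{x}\mathcal{P}^{j}f=\mathcal{P}^{j}f(x)$, we may rewrite
\[
\nu_{k}^{x}f=\frac{1}{k}\sum_{j=1}^{k}\mathcal{P}^{j}f(x),
\]
so the assertion is exactly that these averages converge $\pi$-a.e.\ to the constant $\pi f$. First I would verify that $\mathcal{P}$ is a Dunford--Schwartz operator on $L^{1}(\pi)$. It is positivity preserving with $\mathcal{P}\mathds{1}=\mathds{1}$, hence a contraction on $L^{\infty}(\pi)$; and for $g\in L^{1}(\pi)$ the invariance $\pi\mathcal{P}=\pi$ together with Fubini gives $\int|\mathcal{P}g|\,\dd{\pi}\le\int\mathcal{P}|g|\,\dd{\pi}=\int|g|\,\dd{\pi}$, so $\mathcal{P}$ is also an $L^{1}(\pi)$-contraction. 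Because $f$ is bounded and $\pi$ is a probability measure, $f\in L^{1}(\pi)$ and the integrability hypothesis $\pi|f|^{p}<\infty$ is automatic.

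Next I would invoke the pointwise (individual) ergodic theorem for positive $L^{1}$--$L^{\infty}$ contractions (the Dunford--Schwartz/Hopf theorem; see, e.g., \cite{Walters82} for the ergodic-theoretic background). It yields a function $\bar{f}\in L^{1}(\pi)$ with $\mathcal{P}\bar{f}=\bar{f}$ $\pi$-a.e.\ such that $\tfrac{1}{k}\sum_{j=0}^{k-1}\mathcal{P}^{j}f\to\bar{f}$ $\pi$-a.e.\ and in $L^{1}(\pi)$. Since $\tfrac{1}{k}(\mathcal{P}^{k}f-f)\to 0$ uniformly (both terms are bounded by $\|f\|_{\infty}$), the index-shifted average $\nu_{k}^{x}f=\tfrac{1}{k}\sum_{j=1}^{k}\mathcal{P}^{j}f(x)$ has the same $\pi$-a.e.\ limit $\bar{f}$.

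Finally I would identify $\bar{f}$ using ergodicity of $\pi$. The invariant-set definition of ergodicity underlying Proposition \ref{thm:decomp_ergodic_stat_measures} is equivalent to the statement that every $\mathcal{P}$-invariant function in $L^{1}(\pi)$ is $\pi$-a.e.\ constant, obtained by applying invariance to the sublevel sets $\{\bar{f}>t\}$. Hence $\bar{f}$ equals a constant $c$ $\pi$-a.e., and integrating against $\pi$ while using $\int\mathcal{P}^{j}f\,\dd{\pi}=\int f\,\dd{\pi}$ forces $c=\int\bar{f}\,\dd{\pi}=\pi f$. Therefore $\nu_{k}^{x}f\to\pi f$ for $\pi$-a.e.\ $x$, as claimed. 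I expect the one genuinely delicate step to be this identification: translating the paper's set-theoretic notion of ergodicity into the triviality of the $\mathcal{P}$-invariant functions requires handling the gap between the strict invariance $p(x,A)=1$ for all $x\in A$ and mere $\pi$-a.e.\ invariance of the relevant sublevel sets; once that is in place, pinning the limit to $\pi f$ is an immediate consequence of the invariance of $\pi$.
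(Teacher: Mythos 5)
Your argument is correct, but it takes a genuinely different route from the paper's. The paper's entire proof is a one-line appeal to Birkhoff's ergodic theorem \cite[Theorem 9.6]{kallenberg1997}: applied on path space to the stationary chain with $X_0\sim\pi$, ergodicity of $\pi$ makes the shift ergodic, so $\tfrac{1}{k}\sum_{j=1}^{k}f(X_j)\to\pi f$ a.s., and the statement about $\nu_k^x f=\cex{\tfrac{1}{k}\sum_{j=1}^{k}f(X_j)}{X_0=x}$ then follows by conditional dominated convergence (which is where boundedness of $f$ enters). You instead stay entirely on the operator side: $\nu_k^x f$ are Ces\`{a}ro averages of $\mathcal{P}^j f$, the operator $\mathcal{P}$ is a positive $L^1(\pi)$--$L^\infty(\pi)$ contraction by invariance of $\pi$, and the Dunford--Schwartz/Hopf theorem yields $\pi$-a.e.\ convergence to a harmonic limit, which ergodicity pins down to the constant $\pi f$. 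Your route addresses the stated quantity head-on --- no path space, no conditioning --- at the cost of heavier operator-theoretic input; the paper's route buys brevity because Kallenberg's theorem packages the hard analysis. The step you flag as delicate is genuine but standard, and in fact both routes need it: the paper's strict-set notion of ergodicity ($p(x,A)=1$ for \emph{all} $x\in A$) must be connected to the a.e.\ notion. To close it in your setting: if $\bar f$ is bounded and harmonic, then $\min(\bar f,c)$ is harmonic for every constant $c$ (since $\mathcal{P}\min(\bar f,c)\le\min(\mathcal{P}\bar f,c)=\min(\bar f,c)$ a.e., and both sides have equal $\pi$-integral), so $\mathds{1}_{\{\bar f>t\}}=\lim_{n} n\bigl(\min(\bar f,t+\tfrac{1}{n})-\min(\bar f,t)\bigr)$ is a.e.\ invariant; and any a.e.\ invariant set $A$ differs by a $\pi$-null set from $A^*:=\{x\in G:\, p^k(x,A)=1\ \forall k\ge 1\}$, which is strictly invariant because $1=p^{k+1}(x,A)=\int p(x,\dd{y})\,p^k(y,A)$ forces $p^k(\cdot,A)=1$ $p(x,\cdot)$-a.e.\ for each $k$, and the intersection over $k$ is countable. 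Hence $\pi(\{\bar f>t\})\in\{0,1\}$ for every $t$, so $\bar f$ is a.e.\ constant, and the constant is $\pi f$ exactly as you argue. With that inserted your proof is complete --- and arguably more explicit than the paper's about where the real work lies, since the one-line citation silently requires the analogous bridge between strict-set ergodicity of $\pi$ and ergodicity of the shift on path space.
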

\begin{proof}
 This is a direct consequence of Birkhoff's ergodic theorem, \cite[Theorem 9.6]{kallenberg1997}.
\end{proof}

For fixed $x$ in Proposition \ref{cor:Birkhoff_conditional}, we want the
assertion to be true for all $f \in C_{b}(G)$. This issue is addressed 
in the next section by restricting our attention to equicontinuous Markov operators.
The results above do not require any explicit structure on the
mappings $T_i$ that generate the transition kernel $p$ and hence the
Markov operator $\mathcal{P}$, however the assumption that the initial 
random variable $X_0$ has the same distribution as the invariant measure $\pi$
is very strong.  For Markov operators generated from discontinuous
mappings $T_i$, the support of an invariant measure 
may not be invariant under
$T_{\xi}$.  To see this, let
\begin{align*}
  Tx :=
  \begin{cases}
    x, & x \in \mathbb{R}\setminus \mathbb{Q}\\
    -1, & x \in \mathbb{Q}
  \end{cases}
\end{align*}
The transition kernel is then $p(x,A) = \1_{A}(T x)$ for $x \in
\mathbb{R}$ and $A \in \mathcal{B}(\mathbb{R})$. Let $\mu$ be the
uniform distribution on $[0,1]$, then, since $\lambda$-a.s.\ $T=\Id$
(where $\lambda$ is the Lebesgue measure on $\mathbb{R}$), we have
that $\mu \mathcal{P}^{k} = \mu$ for all $k \in
\mathbb{N}$. Consequently, $\pi = \mu$ is invariant and
$\Supp \pi=[0,1]$, but $T ([0,1]) = \{-1\} \cup [0,1]\cap
(\mathbb{R}\setminus \mathbb{Q})$, which is not contained in $[0,1]$.

The next result shows, however, that invariance of the the support of invariant 
measures under {\em continuous mappings} $T_i$ is guaranteed. 
\begin{lemma}[invariance of the support of invariant
  measures]\label{lemma:support_invariant_distr}
  Let $G$ be a Polish space and let $\mymap{T_{i}}{G}{G}$ be
  continuous for all $i \in I$. For any
  invariant probability measure $\pi \in \mathscr{P}(G)$ of
  $\mathcal{P}$ it holds that $T_{\xi} S_{\pi} \subset S_{\pi}$ a.s.
  where $S_{\pi} := \Supp \pi$.
\end{lemma}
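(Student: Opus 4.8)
The plan is to exploit the defining property of the support---that $S_\pi^c$ is the largest open set of $\pi$-measure zero---together with invariance and a density-plus-continuity argument. First I would observe that $S_\pi := \Supp\pi$ is closed, so $S_\pi^c$ is open and $\pi(S_\pi^c)=0$. Applying the invariance identity $\pi = \pi\mathcal{P}$ to the Borel set $A = S_\pi^c$ and using the transition kernel \eqref{eq:trans kernel} gives
\begin{align*}
0 = \pi(S_\pi^c) = \int_G p(x,S_\pi^c)\,\pi(\dd{x}) = \int_G \mathbb{P}(T_\xi x \notin S_\pi)\,\pi(\dd{x}).
\end{align*}
Since the integrand is nonnegative, the set $E := \{(x,i)\in G\times I : T_i x \notin S_\pi\}$, which is measurable by Assumption \ref{ass:1}\eqref{item:ass1:Phi} and closedness of $S_\pi$, satisfies $(\pi\otimes\mathbb{P}^{\xi})(E)=0$.

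Next I would interchange the order of integration. By Fubini's theorem the same null-set conclusion reads $\int_I \pi(\{x : T_i x \notin S_\pi\})\,\mathbb{P}^{\xi}(\dd{i}) = 0$, so there is a set $I_0\subset I$ with $\mathbb{P}^{\xi}(I_0)=1$ such that for every $i\in I_0$ one has $\pi(\{x : T_i x\notin S_\pi\})=0$; equivalently $T_i x\in S_\pi$ for $\pi$-almost every $x$. It then remains to upgrade this $\pi$-a.e. statement to a genuine inclusion $T_i(S_\pi)\subset S_\pi$ for each fixed $i\in I_0$. Here I would use that any $\pi$-full subset of $S_\pi$ is \emph{dense} in $S_\pi$: if $N_i$ denotes the $\pi$-null exceptional set, then $S_\pi\setminus N_i$ must be dense, because a relatively open subset of $S_\pi$ disjoint from it would be an open set meeting $S_\pi$ yet having $\pi$-measure zero, contradicting the characterization of the support. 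Given $x\in S_\pi$, I would pick $x_n\in S_\pi\setminus N_i$ with $x_n\to x$; then $T_i x_n\in S_\pi$, and continuity of $T_i$ together with closedness of $S_\pi$ forces $T_i x = \lim_n T_i x_n \in S_\pi$. Thus $T_i(S_\pi)\subset S_\pi$ for all $i\in I_0$, and since $\mathbb{P}^{\xi}(I_0)=1$ this is exactly $T_\xi S_\pi\subset S_\pi$ almost surely.

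The two places demanding care are the quantifier swap and the density argument. The Fubini step needs joint measurability of $(x,i)\mapsto \mathds{1}_{S_\pi^c}(T_i x)$, which is supplied by measurability of $\Phi$ and the fact that $S_\pi$ is Borel; this is routine. The genuine obstacle is the final upgrade from ``for $\pi$-a.e. $x$'' to ``for all $x\in S_\pi$'': without continuity of the $T_i$ this step fails outright, as the discontinuous example preceding the lemma (the map equal to $-1$ on $\mathbb{Q}$) already shows. The argument hinges precisely on continuity converting the dense, full-measure inclusion into a closed inclusion valid on all of $S_\pi$.
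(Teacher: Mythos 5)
Your proposal is correct and takes essentially the same route as the paper's own proof: both derive from invariance plus Fubini that for $\mathbb{P}^{\xi}$-a.e.\ $i$ one has $T_{i}x \in S_{\pi}$ for $\pi$-a.e.\ $x$, and both then upgrade this to all of $S_{\pi}$ using that a full-measure set must meet every ball around a support point, together with continuity of $T_{i}$ and closedness of $S_{\pi}$. The only cosmetic difference is that the paper packages the final step as ``the closed, full-measure set $T_{i}^{-1}S_{\pi}$ contains $S_{\pi}$,'' whereas you unroll the same sequential argument directly.
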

\begin{proof}
By Fubini's Theorem,  for any $A \in \mathcal{B}(G)$  it holds that
  \begin{align*}
    \pi(A) = \int_{S_{\pi}} p(x,A) \pi(\dd{x}) &= \int_{\Omega}
    \int_{S_{\pi}} \1_{A}(T_{\xi}x) \pi(\dd{x}) \dd{\mathbb{P}} \\
    &= \int_{\Omega} \int_{S_{\pi}} \1_{T_{\xi(\omega)}^{-1} A}(x)
    \pi(\dd{x}) \mathbb{P}(\dd{\omega}) \\ &= \mathbb{E}\left[
      \pi(T_{\xi}^{-1}A \cap S_{\pi})\right] = \mathbb{E}\left[ 
\pi(T_{\xi}^{-1}A)\right].
  \end{align*}
  From $1= \pi(S_{\pi})= \mathbb{E}\left[ \pi(T_{\xi}^{-1}
    S_{\pi})\right]$ and $\pi(\cdot)\le 1$, it follows that
  $\pi(T_{\xi}^{-1}S_{\pi}) = 1$ a.s. 
  
  Note that $T_{i}^{-1}S_{\pi}$
  is closed for all $i \in I$ due to continuity of $T_{i}$ and
  closedness of $S_{\pi}$.  We show that $S_{\pi} \subset
  T_{\xi}^{-1}S_{\pi}$ a.s. which then yields the claim.  
  To see this, 
  let $S\subset G$ 
  be any closed set with $\pi(A\cap S) = \pi(A)$ for all $A\in\mathcal{B}(G)$, and
  let $x\in S_{\pi}$.  Then $\pi(\mathbb{B}(x,\epsilon) \cap S) >0$ 
  for all $\epsilon >0$, 
  i.e.\ $\mathbb{B}(x,\epsilon)\cap S \neq\emptyset$ for all $\epsilon>0$. 
  Now consider $x_{k} \in\mathbb{B}(x,\epsilon_{k}) \cap S$, 
  where $\epsilon_{k} \to 0$ as $k \to \infty$. Then since $S$ is closed, 
  $x_{k} \to x \in S$, from which we conclude that $S_{\pi}\subset S$.  
  Specifically, let $S=T^{-1}S_{\pi}$ and note that $T^{-1}S_{\pi} = D \setminus G$ 
  for some $D$ with
    $\pi(D)=0$. For any $A \in \mathcal{B}(G)$ it holds that $\pi(A \cap S) = 
    \pi(A) - \pi(A \cap D) = \pi(A)$. From the argument above, we conclude that 
    $S_{\pi}\subset S = T^{-1}S_{\pi}$ as claimed.  
%     \ by
%   \cref{thm:supp_measure}\eqref{item:supp_measure5}, 
\end{proof}

The above result means that, if the random variable $X_{k}$ enters $S_{\pi}$
for some $k$, then it will stay in $S_{\pi}$ forever. This can be
interpreted as a mode of convergence, i.e.\ convergence to the set
$S_{\pi}$, which is closed under application of $T_{\xi}$ a.s.
Equality $T_{\xi} S_{\pi} = S_{\pi}$ a.s.\ cannot be expected in
general.  For example,  let $I=\{1,2\}$, $G = \mathbb{R}$ and $T_{1}x= -1$,
$T_{2}x = 1$, $x\in\mathbb{R}$ and $\mathbb{P}(\xi = 1) = 0.5 =
\mathbb{P}(\xi = 2)$, then $\pi = \tfrac{1}{2}(\delta_{-1} +
\delta_{1})$ and $S_{\pi} = \{-1,1\}$. So $T_{1} S_{\pi} = \{-1\}$ and
$T_{2} S_{\pi} = \{1\}$.

% 
% \begin{cor}[characterization of support] \label{cor:CharSupport}
%   Under the assumptions of \cref{lemma:support_invariant_distr} we
%   have that
%   \begin{align*}
%     S_{\pi} = \overline{\bigcup_{x \in S_{\pi}} \Supp \mathcal{L}(T_{\xi}x)}.
%   \end{align*}
% \end{cor}
% \begin{proof}
%   From \cref{lemma:support_invariant_distr} it is clear that there
%   exists a $\mathbb{P}$-null set $\mathcal{N} \subset \Omega$ such that
%   $T_{\xi(\Omega\setminus \mathcal{N})} x \subset S_{\pi}$ for all $x \in
%   S_{\pi}$. Hence by \cref{lemma:supp_of_LawX} $\Supp
%   \mathcal{L}(T_{\xi}x) \subset S_{\pi}$ for all $x \in S_{\pi}$.
%   
%   On the other hand, were there an $x \in S_{\pi} \setminus
%   \overline{\bigcup_{y \in S_{\pi}} \Supp \mathcal{L}(T_{\xi}y)}$,
%   then one could find $\epsilon_{0}>0$ such that for all $\epsilon \in
%   (0, \epsilon_{0})$ it holds that $\mathbb{P}(T_{\xi}y \in
%   \mathbb{B}(x,\epsilon)) =0$ for all $y \in S_{\pi}$.  This is a
%   contradiction to invariance, since $0 <\pi(\mathbb{B}(x,\epsilon)) =
%   \pi\mathcal{P}(\mathbb{B}(x,\epsilon)) = \int
%   p(y,\mathbb{B}(x,\epsilon)) \pi(\dd{y})=0$.
% \end{proof}
% 

\subsection{Ergodic convergence theory for equicontinuous Markov operators}
\label{sec:ergodicNonexp}

As shown by Szarek \cite{Szarek2006} and Worm \cite{WormPhd2010}, 
equicontinuity of Markov operators and
their generalizations give a nice structure to the set of ergodic
measures. We collect some results here which will be used heavily in the 
subsequent analysis.

\begin{definition}[equicontinuity]
  A Markov operator is called equicontinuous, if
  $(\mathcal{P}^{k}f)_{k \in \mathbb{N}}$ is equicontinuous for all
  bounded and Lipschitz continuous $\mymap{f}{G}{\mathbb{R}}$.
\end{definition}

In the following we consider the union of supports of all ergodic
measures defined by
\begin{align}\label{d:S}
  S := \bigcup_{\pi \in \mathcal{E}} \Supp \pi,
\end{align}
where $\mathcal{E} \subset \inv \mathcal{P}$ denotes the set of
ergodic measures.
% 
% The following important fact in \cite{Szarek2006} deals with tightness
% of the sequence of the iterated kernel $(p^{k}(s,\cdot))$, for 
% $s\in S$.

\begin{prop}[tightness of
  $(\delta_{s}\mathcal{P}^{k})$] \label{thm:tighnessOfIterates} Let
  $G$ be a Polish space.  Let $\mathcal{P}$ be equicontinuous. Suppose there
  exists an invariant measure for $\mathcal{P}$. Then
the sequence  $(\delta_{s}\mathcal{P}^{k})_{k\in\Nbb}$ 
is tight for all $s \in S$ defined by
  \eqref{d:S}.
\end{prop}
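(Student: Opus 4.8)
The plan is to reduce to a single ergodic measure and then turn the invariance of $\pi$, read through equicontinuity at $s$, into a uniform-in-$k$ bound on the mass that escapes to infinity. First I fix, using the definition \eqref{d:S} of $S$, an ergodic $\pi\in\mathcal{E}$ with $s\in\Supp\pi$. Being a Borel probability measure on a Polish space, $\pi$ is tight, so compact sets of $\pi$-measure close to $1$ are available. Since $G$ need not be locally compact---closed neighbourhoods of compacta need not be compact---I would not aim for a compact carrier directly but instead verify the ball-covering criterion for tightness on complete separable spaces (see, e.g., \cite{Billingsley}): it suffices that for every $\epsilon>0$ and every $r>0$ there is a finite union $U$ of $r$-balls with $\delta_s\mathcal{P}^k(G\setminus U)\le\epsilon$ for all $k$.

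The engine is a transfer from averaged smallness to pointwise smallness at $s$. Fix $r>0$, pick a compact $K$ with $\pi(G\setminus K)$ small, and set $g(x)=\min\{1,\tfrac{2}{r}d(x,K)\}$, a $[0,1]$-valued, $\tfrac{2}{r}$-Lipschitz function with $g\equiv0$ on $K$ and $g\ge\mathds{1}_{G\setminus\mathbb{B}(K,r/2)}$, so that $\pi g\le\pi(G\setminus K)$. Invariance gives $\int_G(\mathcal{P}^k g)(x)\,\pi(\dd{x})=\pi g$ for every $k$. By equicontinuity of $(\mathcal{P}^k g)_k$ at $s$, for any tolerance $\eta>0$ there is $\delta>0$ with $|\mathcal{P}^k g(x)-\mathcal{P}^k g(s)|\le\eta$ for all $x\in\overline{\mathbb{B}}(s,\delta)$ and all $k$. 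Integrating $\mathcal{P}^k g(s)\le\mathcal{P}^k g(x)+\eta$ over $\overline{\mathbb{B}}(s,\delta)$ against $\pi$ and discarding the nonnegative integrand outside the ball yields
\[
\mathcal{P}^k g(s)\,\pi\!\left(\overline{\mathbb{B}}(s,\delta)\right)\le\pi g+\eta\qquad\forall k\in\mathbb{N}.
\]
As $s\in\Supp\pi$ the factor $\pi(\overline{\mathbb{B}}(s,\delta))$ is strictly positive, and since $g\ge\mathds{1}_{G\setminus\mathbb{B}(K,r/2)}$ this bounds the escaping mass uniformly in $k$; covering $K$ by finitely many $r/2$-balls turns $\mathbb{B}(K,r/2)$ into a finite union $U$ of $r$-balls with $\sup_k\delta_s\mathcal{P}^k(G\setminus U)\le(\pi g+\eta)/\pi(\overline{\mathbb{B}}(s,\delta))$.

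The hard part is to push this quotient below a prescribed $\epsilon$. The equicontinuity radius $\delta$---hence the positive but possibly tiny number $\pi(\overline{\mathbb{B}}(s,\delta))$---is tied both to the test function $g$ and to the tolerance $\eta$, so shrinking $\pi g$ and $\eta$ may simultaneously shrink the denominator, and this coupling must be broken. The decoupling is exactly what the equicontinuity (e-chain) theory of Szarek \cite{Szarek2006} and Worm \cite{WormPhd2010} supplies: the family $\{\mathcal{P}^k g:k\in\mathbb{N}\}$ admits, at the support point $s$, a modulus of continuity uniform over the $\tfrac{2}{r}$-Lipschitz, $[0,1]$-valued functions $g$, allowing one to fix $\delta$ (and hence $\pi(\overline{\mathbb{B}}(s,\delta))$) first and only then drive $\pi(G\setminus K)$ and $\eta$ to zero. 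In the paper's intended applications, where the $T_i$ are nonexpansive, this is automatic, since $\mathcal{P}^k$ preserves the Lipschitz constant of $g$ and one may take $\delta=\tfrac{r}{2}\eta$ independently of $g$ and $K$.

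With the quotient controlled, for each pair $(\epsilon,r)$ I obtain a finite union of $r$-balls carrying $\delta_s\mathcal{P}^k$-mass at least $1-\epsilon$ uniformly in $k$. Intersecting the covers produced for $r=1/m$ and $\epsilon=\epsilon_0/2^{m}$ over $m\in\mathbb{N}$ yields a totally bounded set whose $\delta_s\mathcal{P}^k$-measure exceeds $1-\epsilon_0$ for all $k$; its closure is compact by completeness of $G$, which is precisely tightness of $(\delta_s\mathcal{P}^k)_{k\in\mathbb{N}}$.
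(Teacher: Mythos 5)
Your overall strategy (test functions vanishing on a large compact set, invariance of $\pi$, equicontinuity at the support point $s$) is sound in spirit, but the step you yourself flag as ``the hard part'' is a genuine gap, not something a citation closes. The paper's definition of equicontinuity is \emph{per function}: for each bounded Lipschitz $f$, the family $(\mathcal{P}^{k}f)_{k\in\Nbb}$ is equicontinuous. What your argument needs is a modulus at $s$ that is uniform over the whole class of $\tfrac{2}{r}$-Lipschitz, $[0,1]$-valued functions $g$ --- a strictly stronger statement, and the crux of the matter, since $g$ depends on the compact set $K$ which you want to choose only \emph{after} knowing $\delta$ and hence $\pi(\overline{\mathbb{B}}(s,\delta))$. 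You assert this uniformity is ``exactly what the equicontinuity (e-chain) theory of Szarek and Worm supplies,'' but neither the paper's definition nor any result it quotes gives it, and an attempt to derive it by Arzel\`a--Ascoli is circular: to compare $\mathcal{P}^{k}g_{m}$ with $\mathcal{P}^{k}g$ when $g_{m}\to g$ only uniformly on compacta, one must control the mass that $p^{k}(s,\cdot)$ puts outside compacta --- which is precisely the tightness being proved. (Your remark that uniformity is automatic when the $T_{i}$ are nonexpansive is correct, since then $\mathcal{P}^{k}$ preserves Lipschitz constants, as in Lemma \ref{lemma:e.c.}; but the proposition assumes only equicontinuity.)

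Even granting the uniform modulus, your concluding quantifier order is incoherent: you cannot ``fix $\delta$ first and only then drive $\eta$ to zero,'' because $\delta=\delta(\eta)$. With your (lossy) inequality $\mathcal{P}^{k}g(s)\,\pi(\overline{\mathbb{B}}(s,\delta))\le\pi g+\eta$, the best bound after letting $\pi(G\setminus K)\to 0$ is $\eta/\pi(\overline{\mathbb{B}}(s,\delta(\eta)))$, and this need not be small: numerator and denominator shrink together, and when $\pi(\{s\})=0$ with $\pi(\overline{\mathbb{B}}(s,t))$ decaying quickly in $t$ the quotient can stay bounded away from zero. The repair is to keep the sharper estimate $\mathcal{P}^{k}g(s)\le \pi g/\pi(\overline{\mathbb{B}}(s,\delta))+\eta$ (do not discard the factor $\pi(\overline{\mathbb{B}}(s,\delta))$ multiplying $\eta$), fix $\eta=\epsilon/2$ \emph{first} --- which pins down $\delta$ and the denominator, provided $\delta$ is independent of $g$ --- and only then choose $K$ with $\pi(G\setminus K)\le\tfrac{\epsilon}{2}\,\pi(\overline{\mathbb{B}}(s,\delta))$. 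Contrast this with the paper's proof, which sidesteps the analysis entirely: it verifies via Birkhoff's ergodic theorem (Proposition \ref{cor:Birkhoff_conditional}) that $\limsup_{k}\nu_{k}^{x}(\mathbb{B}(s,\epsilon))=\pi(\mathbb{B}(s,\epsilon))>0$ for $\pi$-a.e.\ $x$ and every $\epsilon>0$, and then invokes Szarek's Proposition 2.1, which contains the hard implication from this Ces\`aro positivity plus equicontinuity to tightness of $(\delta_{s}\mathcal{P}^{k})$. As it stands, your argument proves the proposition only under an additional hypothesis (such as nonexpansiveness of the $T_{i}$) that furnishes the uniform modulus; for merely equicontinuous $\mathcal{P}$ you would have to reproduce Szarek's argument rather than cite it in passing.
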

\begin{proof}
  In the proof of \cite[Proposition 2.1]{Szarek2006} it is  shown
  that, under the assumption that $\mathcal{P}$ is equicontinuous and
  \begin{align}\label{eq:Kurtz}
    (\exists s,x\in G) \quad \limsup_{k \to \infty}
    \nu_{k}^{x}(\mathbb{B}(s,\epsilon)) > 0 \quad \forall\epsilon>0,
  \end{align}
  then the sequence $(\delta_{s}\mathcal{P}^{k})$ is tight.  %   For us, equicontinuity
  It remains to demonstrate \eqref{eq:Kurtz}.  To see this, let $f =
  \1_{\mathbb{B}(s,\epsilon)}$ for some $s \in S_{\pi}$, where $\pi
  \in \mathcal{E}$ and $\epsilon>0$ in
  Proposition \ref{cor:Birkhoff_conditional}.  Then for $\pi\text{-a.e. } x \in
  G$ and $\nu_{k}^{x} := \tfrac{1}{k} \sum_{j=1}^{k} \delta_{x}
  \mathcal{P}^{j}$ we have
  \begin{align*}
    \limsup_{k \to \infty} \nu_{k}^{x}(\mathbb{B}(s,\epsilon)) =
    \lim_{k} \nu_{k}^{x}(\mathbb{B}(s,\epsilon)) =
    \pi(\mathbb{B}(s,\epsilon)) > 0.
  \end{align*}
This completes the proof.
\end{proof}
\begin{rem}[tightness of $(\nu_{k}^{s})$]\label{rem:tightnessMean}
  Note that the sequence $(\nu_{k}^{s})$ is tight for $s \in S$, since by
  Proposition \ref{thm:tighnessOfIterates}, for all $\epsilon>0$, there is a
  compact subset $K \subset G$ such that $p^{k}(s,K) > 1-\epsilon$ for
  all $k \in \mathbb{N}$, and hence also $\nu_{k}^{s}(K)>1-\epsilon$
  for all $k \in \mathbb{N}$.
\end{rem}

The next result due to Worm 
(Theorems 5.4.11 and 7.3.13 of \cite{WormPhd2010}))
concerns Ces\'{a}ro averages for
equicontinuous Markov operators.
\begin{prop}[convergence of Ces\`{a}ro averages 
{\cite{WormPhd2010}}]\label{thm:wormCesaro}
  Let $\mathcal{P}$ be Feller and equicontinuous, let $G$ be a Polish space
  and let $\mu \in \mathscr{P}(G)$.  Then the sequence 
  $(\nu_{k}^{\mu})$ is tight 
  ($\nu_{k}^{\mu} := \frac{1}{k} \sum_{j=1}^{k}\mu \mathcal{P}^{j}$)
  if and only if $(\nu_{k}^{\mu})$ converges to a 
  $\pi^{\mu} \in \inv \mathcal{P}$. In this case
  \begin{align*}
    \pi^{\mu} = \int_{\Supp \mu} \pi^{x} \mu(\dd{x}),
  \end{align*}
  where for each $x \in \Supp \mu \subset G$ there exists the limit of
  $(\nu_{n}^{x})$ and it is denoted by the invariant measure
  $\pi^{x}$.
\end{prop}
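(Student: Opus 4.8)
The plan is to prove the two implications separately and then extract the representation from a pointwise-in-$x$ analysis of the Ces\`aro averages. The backward implication is immediate: if $\nu_k^\mu \to \pi^\mu$ in distribution, then $(\nu_k^\mu)$ is relatively compact in $\mathscr{P}(G)$ and hence tight by Prokhorov's theorem. For the forward implication, assume $(\nu_k^\mu)$ is tight. Prokhorov's theorem then supplies convergent subsequences, and by Proposition \ref{thm:construction_inv_meas} (applicable since $\mathcal{P}$ is Feller) every subsequential limit lies in $\inv\mathcal{P}$. Thus the cluster set of $(\nu_k^\mu)$ is a nonempty subset of $\inv\mathcal{P}$, and it remains to show that it is a singleton and to identify its unique element.

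The heart of the argument is pointwise convergence of the averaged functions. Fix a bounded Lipschitz $f:G\to\mathbb{R}$ and set $g_k(x):=\nu_k^x f = \tfrac1k\sum_{j=1}^k \mathcal{P}^j f(x)$. Because $\mathcal{P}$ is equicontinuous, the family $(\mathcal{P}^j f)_j$, and therefore $(g_k)_k$, is uniformly bounded (by $\|f\|_\infty$) and equicontinuous. First I would treat points in the ergodic region $S=\bigcup_{\pi\in\mathcal{E}}\Supp\pi$ of \eqref{d:S}. For ergodic $\pi$, Birkhoff's theorem (Proposition \ref{cor:Birkhoff_conditional}) gives $g_k\to \pi f$ on a set of full $\pi$-measure, which is dense in $\Supp\pi$; equicontinuity of $(g_k)$ then upgrades this to $g_k(s)\to\pi f$ for \emph{every} $s\in\Supp\pi$. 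If $s$ lies in two ergodic supports, the two constant limits must agree, so those ergodic measures coincide; hence $\nu_k^s\to\pi^s$ for a well-defined ergodic $\pi^s$ and every $s\in S$ (consistent with the tightness recorded in Remark \ref{rem:tightnessMean}).

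To pass to a general $x$ and a general initial $\mu$, I would establish the individual dichotomy that $(\nu_k^x)$ is tight if and only if $(\nu_k^x)$ converges. Given tightness at $x$, Arzel\`a--Ascoli applied to the equicontinuous family $(g_k)$ shows every subsequential limit $\bar f=\lim_m g_{k_m}$ is continuous, and it is $\mathcal{P}$-invariant because $\mathcal{P}g_k-g_k=\tfrac1k(\mathcal{P}^{k+1}f-\mathcal{P}f)\to 0$ uniformly. Any two cluster measures $\pi_1,\pi_2$ of $(\nu_k^x)$ arise as $\pi_i f=\bar f_i(x)$ for continuous invariant functions $\bar f_i$ satisfying $\rho\bar f_i=\rho f$ for every $\rho\in\inv\mathcal{P}$ (by dominated convergence); to conclude $\pi_1=\pi_2$ one must show these limit functions agree at $x$, which is the delicate step supplied by the equicontinuous theory. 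Once $\nu_k^x\to\pi^x\in\inv\mathcal{P}$ is known for $\mu$-a.e.\ $x$, I would write $\nu_k^\mu f=\int g_k(x)\,\mu(\dd{x})$ and pass to the limit by dominated convergence (dominant $\|f\|_\infty$), obtaining $\nu_k^\mu f\to\int_{\Supp\mu}\pi^x f\,\mu(\dd{x})$ for all bounded Lipschitz $f$, hence for all $f\in C_b(G)$. This forces the cluster point of $(\nu_k^\mu)$ to be unique and equal to $\pi^\mu=\int_{\Supp\mu}\pi^x\,\mu(\dd{x})$, which is invariant by the first paragraph.

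The main obstacle is precisely this individual-point statement for $x$ outside the ergodic region, together with the transfer of tightness from the average $\nu_k^\mu$ to the individual orbits $\nu_k^x$. This is where equicontinuity, rather than merely the Feller property, becomes indispensable, and where I would rely on the machinery of Szarek \cite{Szarek2006} and Worm \cite{WormPhd2010}: the continuous invariant limit functions $\bar f$ resolve the cluster values finely enough that tightness pins down a single limit. Everything else, namely the two Prokhorov arguments, the Birkhoff step on ergodic supports, and the dominated-convergence passage to the representation, is routine once this individual convergence is in hand.
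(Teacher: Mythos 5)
This proposition is not proved in the paper at all: it is quoted from Worm's thesis (Theorems 5.4.11 and 7.3.13 of \cite{WormPhd2010}), so there is no internal argument to compare against. Judged as a proof in its own right, your proposal organizes the routine parts correctly: the backward implication via Prokhorov, the fact that every cluster point of $(\nu_k^{\mu})$ is invariant (Proposition \ref{thm:construction_inv_meas}, using the Feller property), the observation that $\rho\, g_k=\rho f$ for every $\rho\in\inv\mathcal{P}$, and the Birkhoff-plus-equicontinuity upgrade on ergodic supports, which is essentially the same device the paper itself uses in Proposition \ref{thm:tighnessOfIterates} and Remark \ref{cor:weakCvgMetricSpace}. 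All of that is sound.

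The genuine gap is that the two steps carrying the entire weight of the theorem are exactly the ones you defer to ``the machinery of Szarek \cite{Szarek2006} and Worm \cite{WormPhd2010}.'' Concretely: (a) the transfer of tightness from the averaged sequence $(\nu_k^{\mu})$ to the individual orbits $(\nu_k^{x})$ for every $x\in\Supp\mu$ is not established -- a Markov-inequality argument from $\nu_k^{\mu}=\int\nu_k^{x}\,\mu(\dd{x})$ gives, for each fixed $k$, a large-$\mu$-measure set of $x$ whose $\nu_k^{x}$ charges a fixed compact set, but not uniformly in $k$, so tightness of each $(\nu_k^{x})$ does not follow; and (b) the uniqueness of the cluster point at a point $x$ outside the ergodic region $S$ of \eqref{d:S} is not established -- knowing that two subsequential limit functions $\bar f_1,\bar f_2$ of $(g_k)$ are continuous, $\mathcal{P}$-invariant, and satisfy $\rho\bar f_1=\rho\bar f_2=\rho f$ for every invariant $\rho$ does not force $\bar f_1(x)=\bar f_2(x)$, since invariant measures need not separate points off their supports. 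Both (a) and (b) are precisely the content of Worm's Theorems 5.4.11 and 7.3.13, whose proofs require the structure theory of equicontinuous Markov operators (e-chains) that you invoke as a black box. As written, the proposal therefore reduces the statement to the very result being cited rather than proving it; it is an honest and well-organized roadmap, but not a proof.
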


For the case the initial measure $\mu$ is supported in $\bigcup_{\pi
  \in \inv \mathcal{P}} \Supp {\pi}$, we have the following.
\begin{prop}[ergodic decomposition]\label{prop:ergodic_decomp}
  Let $G$ be a Polish space and let $\mathcal{P}$ be Feller and
  equicontinuous. Then
  \begin{align*}
    S = \bigcup_{\pi \in \inv \mathcal{P}} \Supp {\pi},
  \end{align*}
  where $S$ is defined in \eqref{d:S}. Moreover $S$ is closed, and
  for any $\mu \in \mathscr{P}(S)$ it holds that $\nu_{k}^{\mu} \to
  \pi^{\mu}$ as $k\to\infty$ with
  \begin{align*}
    \pi^{\mu} = \int_{S} \pi^{x} \mu(\dd{x}),
  \end{align*}
  where $\pi^{x}$ is the unique ergodic measure with $x \in
  \Supp {\pi^{x}}$.
\end{prop}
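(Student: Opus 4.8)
The plan is to assemble the statement from Worm's Ces\`aro-convergence theorem (Proposition \ref{thm:wormCesaro}) together with the pointwise tightness already recorded in Proposition \ref{thm:tighnessOfIterates} and Remark \ref{rem:tightnessMean}, using equicontinuity throughout to upgrade the $\pi$-almost-everywhere Birkhoff convergence of Proposition \ref{cor:Birkhoff_conditional} to convergence at \emph{every} point of a support. I would proceed in three stages: first attach to each $s\in S$ a canonical ergodic limit $\pi^{s}$ of $(\nu_k^{s})$; then prove the set identity together with closedness of $S$; and finally read off the integral representation from Worm's theorem.

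First I would fix an ergodic $\pi\in\mathcal{E}$ and a countable family $\{f_m\}$ of bounded Lipschitz functions that is convergence-determining on the separable space $G$. Applying Proposition \ref{cor:Birkhoff_conditional} to each $f_m$ and discarding the countable union of exceptional null sets yields a set $N$ with $\pi(N)=0$ on whose complement $\nu_k^{x}f_m\to\pi f_m$ for every $m$; since $(\nu_k^{x})_k$ is tight for $x\in S$ by Remark \ref{rem:tightnessMean}, this gives $\nu_k^{x}\to\pi$ for every $x\in\Supp\pi\setminus N$. Because $\pi(\Supp\pi\setminus N)=1$, the set $\Supp\pi\setminus N$ is dense in $\Supp\pi$, so for arbitrary $s\in\Supp\pi$ I would pick $x_j\to s$ in this dense set and invoke equicontinuity of the family $(\nu_k^{\cdot}f)_k=(\tfrac1k\sum_{j\le k}\mathcal{P}^j f)_k$: the estimate $\limsup_k|\nu_k^{s}f-\pi f|\le\omega_f(d(s,x_j))\to 0$ forces $\nu_k^{s}\to\pi$. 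Hence $\nu_k^{s}$ converges to one and the same ergodic measure for every point of its support; in particular distinct ergodic measures have disjoint supports, and for each $s\in S$ there is a well-defined ergodic $\pi^{s}$ with $s\in\Supp\pi^{s}$ and $\nu_k^{s}\to\pi^{s}$.

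For the set identity, the inclusion $S\subset\bigcup_{\pi\in\inv\mathcal{P}}\Supp\pi$ is immediate from $\mathcal{E}\subset\inv\mathcal{P}$. For the reverse inclusion I would use the ergodic decomposition $\pi=\int_{\mathcal{E}}\nu\,q_\pi(\dd\nu)$ of Proposition \ref{thm:decomp_ergodic_stat_measures}: if $x\in\Supp\pi$ then $\int_{\mathcal{E}}\nu(\mathbb{B}(x,\epsilon))\,q_\pi(\dd\nu)=\pi(\mathbb{B}(x,\epsilon))>0$ for every $\epsilon>0$, so some ergodic $\nu$ charges $\mathbb{B}(x,\epsilon)$, whence $d(x,S)<\epsilon$ and $x\in\overline S$. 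Thus $S\subset\bigcup_{\pi\in\inv\mathcal{P}}\Supp\pi\subset\overline S$, and the whole proposition reduces to showing that $S$ is closed. This is the main obstacle. Given $s_n\in S$ with $s_n\to s$, the same equicontinuity estimate shows that $(\nu_k^{s}f)_k$ is Cauchy for every bounded Lipschitz $f$, with limit $\lim_n\pi^{s_n}f$. Two delicate points remain: (i) that $(\nu_k^{s})_k$ is tight, so the limiting functional is represented by a genuine probability measure $\pi^{s}\in\inv\mathcal{P}$ (invariant by the Feller property via Proposition \ref{thm:construction_inv_meas} and Lemma \ref{lemma:invMeasuresClosed}); and (ii) that $s\in\Supp\pi^{s}$ with $\pi^{s}$ ergodic, so that $s\in S$. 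Point (i) I would obtain by verifying the Kurtz-type hypothesis \eqref{eq:Kurtz} of Proposition \ref{thm:tighnessOfIterates} at $s$, transporting the tightness of the nearby $(\nu_k^{s_n})_k$ via equicontinuity over the compact set $\{s_n : n\in\mathbb{N}\}\cup\{s\}$. Point (ii) is the genuinely hard step, since weak convergence $\pi^{s_n}\to\pi^{s}$ alone does \emph{not} preserve membership in the support; here I would lean on the equicontinuous (e-chain) structure exploited by Szarek \cite{Szarek2006} and Worm \cite{WormPhd2010}, which is precisely what forces the ergodic classes to assemble into a closed set.

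Finally, for $\mu\in\mathscr{P}(S)$ I would establish tightness of $(\nu_k^{\mu})$ from the pointwise tightness of the $(\nu_k^{s})_k$, $s\in S$: writing $\nu_k^{\mu}=\int_S\nu_k^{s}\,\mu(\dd s)$, choosing a compact $S_0\subset S$ with $\mu(S_0)>1-\epsilon$, and using equicontinuity over the compact $S_0$ to make the escape probabilities $\sup_k\nu_k^{s}(K^c)$ uniformly small in $s\in S_0$ for a suitable compact $K$. Proposition \ref{thm:wormCesaro} then delivers $\nu_k^{\mu}\to\pi^{\mu}=\int_{S}\pi^{x}\mu(\dd x)$, and by the first stage the pointwise limit $\pi^{x}=\lim_k\nu_k^{x}$ appearing there is exactly the unique ergodic measure with $x\in\Supp\pi^{x}$, which is the asserted representation. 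The step I expect to be genuinely hard is the closedness of $S$, and within it the passage from convergence of the functionals $\nu_k^{s}f$ to a probability-measure limit whose support still contains $s$; this is exactly where equicontinuity, rather than the bare Feller property, is indispensable.
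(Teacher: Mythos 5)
Your proposal is correct in substance, but it takes a much more hands-on route than the paper, whose entire proof is a citation: the proposition is deduced directly from Theorems 7.3.4 and 5.4.11 of Worm's thesis \cite{WormPhd2010}, together with Remark \ref{rem:tightnessMean} and Proposition \ref{thm:wormCesaro}, so the set identity, the closedness of $S$, and the integral representation are all taken wholesale from the literature. What you do differently is reconstruct the intermediate machinery that the paper leaves implicit: your first stage (upgrading the $\pi$-a.e.\ Birkhoff convergence of Proposition \ref{cor:Birkhoff_conditional} to convergence of $(\nu_k^{s})$ at \emph{every} $s\in\Supp\pi$ via a countable separating family of bounded Lipschitz functions, tightness from Remark \ref{rem:tightnessMean}, and equicontinuity of the Ces\`aro averages) is in effect a proof of Remark \ref{cor:weakCvgMetricSpace}, which the paper states without argument; your reduction of the identity $S=\bigcup_{\pi\in\inv\mathcal{P}}\Supp\pi$ to the closedness of $S$, via the ergodic decomposition of Proposition \ref{thm:decomp_ergodic_stat_measures} and the observation that $\Supp\pi\subset\overline{S}$ for every invariant $\pi$, is correct and does not appear in the paper; and your uniform-tightness-over-compacts argument (Lipschitz approximation of indicators plus equicontinuity over a compact $S_0\subset S$ with $\mu(S_0)>1-\epsilon$) legitimately supplies the tightness of $(\nu_k^{\mu})$ needed to invoke Proposition \ref{thm:wormCesaro}. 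What your approach buys is transparency: it isolates exactly where equicontinuity, as opposed to the bare Feller property, enters, and it explains why the ergodic measure $\pi^{x}$ with $x\in\Supp\pi^{x}$ is unique (distinct ergodic measures acquire disjoint supports). What it does not buy is a proof of the one genuinely structural fact, namely that $S$ is closed --- equivalently, that a limit $s$ of points $s_n\in S$ again lies in the support of an \emph{ergodic} invariant measure; your point (i) (tightness of $(\nu_k^{s})$ at the limit point via the Kurtz-type condition \eqref{eq:Kurtz}) does go through, but point (ii) you defer to the e-chain theory of Szarek \cite{Szarek2006} and Worm \cite{WormPhd2010}, and you are right to flag it, since weak convergence of $\pi^{s_n}$ does not preserve support membership. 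Since the paper's proof consists of precisely that citation, this is not a gap relative to the paper's own standard: where it matters, you rest on the same foundation, and everywhere else your argument is more detailed than the original.
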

\begin{proof}
  This is a consequence of \cite[Theorem 7.3.4]{WormPhd2010} and
  \cite[Theorem 5.4.11]{WormPhd2010}, Remark \ref{rem:tightnessMean} and
  Proposition \ref{thm:wormCesaro}.
\end{proof}
  Proposition \ref{prop:ergodic_decomp} only establishes convergence of the
  Markov chain when it is initialized with a measure in the support of
  an invariant measure; moreover, it is only the average of the
  distributions of the iterates that converges.
\begin{rem}[convergence of $(\nu_{k}^{s})$ on Polish
  spaces] \label{cor:weakCvgMetricSpace} Let $\pi$
  be an ergodic invariant probability measure for $\mathcal{P}$. Then
  for all $s \in \Supp \pi$ the sequence $\nu_{k}^{s} \to
  \pi$ as $k \to \infty$, where $\nu_{k}^{s} = \tfrac{1}{k}
  \sum_{j=1}^{k} p^{j}(s,\cdot)$.
\end{rem}

By Proposition \ref{thm:decomp_ergodic_stat_measures} any invariant measure can
be decomposed into a convex combination of ergodic invariant measures;
in particular two ergodic measures $\pi_{1},\pi_{2}$ are mutually
singular. Note that it still could be the case that $\Supp \pi_{1}
\cap \Supp \pi_{2} \neq \emptyset$. But Remark \ref{cor:weakCvgMetricSpace}
above establishes that for a Feller and equicontinuous\ Markov operator
$\mathcal{P}$ this is not possible, so the singularity of ergodic
measures extends to their support. This leads to the following 
corollary. 
\begin{cor}\label{th:singular measure char}
  Under the assumptions of Proposition \ref{prop:ergodic_decomp} for two ergodic
  measures $\pi, \tilde \pi$, the intersection 
  $\paren{\Supp {\pi}} \cap \paren{\Supp {\tilde \pi}} = \emptyset$ 
  if and only if $\pi \neq \tilde \pi$.
\end{cor}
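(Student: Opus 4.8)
The plan is to prove the stated equivalence by treating its two implications separately, with essentially all the content residing in one of them. For the implication that disjoint supports force distinct measures, I would argue by contraposition: if $\pi=\tilde\pi$ then their supports coincide, and since $\pi(\Supp\pi)=1$ this common support is nonempty, so the intersection $\paren{\Supp\pi}\cap\paren{\Supp\tilde\pi}=\Supp\pi$ cannot be empty. This step is immediate and uses no structure on $\mathcal{P}$.

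The substantive direction is to show that distinct ergodic measures have disjoint supports. I would assume $\pi\neq\tilde\pi$ and suppose, for contradiction, that there exists a common point $s\in\paren{\Supp\pi}\cap\paren{\Supp\tilde\pi}$. The key observation is that the Ces\`aro sequence $\nu_k^s=\tfrac{1}{k}\sum_{j=1}^k p^j(s,\cdot)$ is determined by the single point $s$ alone. Under the standing hypotheses of Proposition \ref{prop:ergodic_decomp} (that $\mathcal{P}$ is Feller and equicontinuous), I would invoke Remark \ref{cor:weakCvgMetricSpace} twice for this one sequence: because $s\in\Supp\pi$ it yields $\nu_k^s\to\pi$ in distribution, while because $s\in\Supp\tilde\pi$ the same remark yields $\nu_k^s\to\tilde\pi$.

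To close the argument I would use uniqueness of limits under convergence in distribution: since $C_b(G)$ separates the elements of $\mathscr{P}(G)$, a convergent sequence of probability measures on the metric space $G$ has a unique limit. Applied to $(\nu_k^s)$ this forces $\pi=\tilde\pi$, contradicting $\pi\neq\tilde\pi$; hence no common point $s$ exists and the supports are disjoint. I do not anticipate a genuine obstacle here, since the real work has already been carried out in establishing the convergence $\nu_k^s\to\pi$ for $s\in\Supp\pi$ (Remark \ref{cor:weakCvgMetricSpace}); the only point to record carefully is that this convergence statement applies verbatim to each of the two ergodic measures at the shared point $s$, after which the conclusion is just the uniqueness of distributional limits.
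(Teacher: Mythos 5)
Your proposal is correct and is essentially the paper's own argument: the paper derives the corollary from the observation (Remark \ref{cor:weakCvgMetricSpace}) that for a Feller, equicontinuous Markov operator the Ces\`aro averages $\nu_k^s$ started at any $s\in\Supp\pi$ converge to $\pi$, so a point common to both supports would force $\pi=\tilde\pi$ by uniqueness of limits in distribution, exactly as you argue. The converse direction is, as you note, immediate from the nonemptiness of supports.
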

  
The next technical lemma implies that every point in the support of an
ergodic measure is reached infinitely often starting from any other
point in this support.
\begin{lemma}[positive transition probability for ergodic
  measures] \label{lemma:pos_transitionKernel_ergodic} Let $G$ be
  a Polish space and let $\mymap{T_{i}}{G}{G}$ be nonexpansive, $i \in
  I$. Let $\pi$ be an ergodic invariant probability measure for
  $\mathcal{P}$. Then for any $s,\tilde s \in \Supp {\pi}$ it holds that
  \begin{align*}
    \forall \epsilon>0 \, \exists \delta>0, \,\exists (k_{j})_{j \in \mathbb{N}} 
\subset
    \mathbb{N} \,:\, p^{k_{j}}(s, \mathbb{B}(\tilde s, \epsilon)) \ge
    \delta \quad \forall j \in \mathbb{N}.
  \end{align*}
\end{lemma}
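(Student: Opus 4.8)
The plan is to derive the statement from the Ces\`{a}ro convergence $\nu_k^s\to\pi$ recorded in Remark \ref{cor:weakCvgMetricSpace}, together with the defining property of the support and an elementary fact about Ces\`{a}ro means. Before invoking that remark I would first check that its standing hypotheses---that $\mathcal{P}$ be Feller and equicontinuous---are met here. Since each $T_i$ is nonexpansive it is in particular continuous, so $\mathcal{P}$ is Feller by Proposition \ref{thm:Feller}. For equicontinuity, fix a bounded $L$-Lipschitz $f$ and couple the chains started at $x$ and $y$ by driving both with the same indices $\xi_0,\dots,\xi_{k-1}$: a composition of nonexpansive maps is nonexpansive, so $d(X_k^x,X_k^y)\le d(x,y)$ pathwise, whence $\abs{\mathcal{P}^k f(x)-\mathcal{P}^k f(y)}\le L\,d(x,y)$ for every $k$. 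This bound is uniform in $k$, so $(\mathcal{P}^kf)_k$ is (uniformly) equicontinuous and Remark \ref{cor:weakCvgMetricSpace} applies, giving $\nu_k^s\to\pi$ in distribution for the ergodic measure $\pi$ and every $s\in\Supp\pi$.

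Now fix $\epsilon>0$. Because $\tilde s\in\Supp\pi$, every open neighbourhood of $\tilde s$ carries positive $\pi$-mass; in particular $a:=\pi(\mathbb{B}(\tilde s,\epsilon))>0$. Applying the Portmanteau theorem to the open set $\mathbb{B}(\tilde s,\epsilon)$ along $\nu_k^s\to\pi$ yields
\[
\liminf_{k\to\infty}\nu_k^s(\mathbb{B}(\tilde s,\epsilon))\ge \pi(\mathbb{B}(\tilde s,\epsilon))=a>0.
\]
Since $\nu_k^s(\mathbb{B}(\tilde s,\epsilon))=\tfrac1k\sum_{j=1}^k p^j(s,\mathbb{B}(\tilde s,\epsilon))$, this says precisely that the Ces\`{a}ro means of the scalar sequence $a_j:=p^j(s,\mathbb{B}(\tilde s,\epsilon))\in[0,1]$ are asymptotically bounded below by $a$.

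The last step extracts the times $(k_j)$ by contradiction. If $a_k\to 0$, then its Ces\`{a}ro means would also tend to $0$, contradicting the bound $\ge a>0$ just obtained. Hence $L:=\limsup_{k\to\infty}p^k(s,\mathbb{B}(\tilde s,\epsilon))>0$; setting $\delta:=L/2>0$ there are infinitely many indices, which I would enumerate as $(k_j)_{j\in\Nbb}$, with $p^{k_j}(s,\mathbb{B}(\tilde s,\epsilon))\ge\delta$ for all $j$, as claimed. The only point requiring any care is the verification of equicontinuity needed to legitimately invoke Remark \ref{cor:weakCvgMetricSpace}; the remaining passage from a Ces\`{a}ro lower bound to a subsequence is elementary, and no quantitative control on $\delta$ or on the growth of $(k_j)$ is needed.
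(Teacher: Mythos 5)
Your proof is correct and follows essentially the same route as the paper: both rest on the Ces\`{a}ro convergence $\nu_k^s \to \pi$ of Remark \ref{cor:weakCvgMetricSpace} together with positivity of $\pi$ on a ball around $\tilde s$, followed by extraction of a subsequence of times from the resulting Ces\`{a}ro lower bound. The only cosmetic differences are that the paper passes to the ball probability via a continuous bump function sandwiched between $\mathbb{B}(\tilde s,\epsilon/2)$ and $\mathbb{B}(\tilde s,\epsilon)$ rather than via the Portmanteau inequality, and your explicit verification of equicontinuity from nonexpansivity reproduces what the paper records separately as Lemma \ref{lemma:e.c.}.
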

\begin{proof}
  Given $\tilde s \in \Supp {\pi}$ and $\epsilon>0$, find a continuous and
  bounded function $\mymap{f=f_{\tilde s, \epsilon} }{G}{ [0,1]}$ with
  the property that $f = 1$ on $\mathbb{B}(\tilde s,
  \tfrac{\epsilon}{2})$ and $f =0$ outside $\mathbb{B}(\tilde s,
  \epsilon)$. For $s \in \Supp {\pi}$ let $X_{0} \sim \delta_{s}$ and
  $(X_{k})$ generated by Algorithm \ref{algo:RFI}. By
  Remark \ref{cor:weakCvgMetricSpace} the sequence $(\nu_{k})$ converges to $\pi$
  as $k\to\infty$, where
  $\nu_{k}:=\tfrac{1}{k} \sum_{j=1}^{k} p^{j}(s,\cdot)$. So in
  particular $\nu_{k}f \to \pi f \ge \pi(\mathbb{B}(\tilde s,
  \tfrac{\epsilon}{2})) >0$ as $k \to \infty$. Hence, for $k$ large
  enough there is $\delta >0$ with
  \begin{align*}
    \nu_{k}f = \tfrac{1}{k} \sum_{j=1}^{k} p^{j}(s,f) \ge \delta.
  \end{align*}
  Now, we can extract a sequence $(k_{j}) \subset \mathbb{N}$ with
  $p^{k_{j}}(s,f) \ge \delta$, $j\in \mathbb{N}$ and hence
  \begin{align*}
    p^{k_{j}}(s,\mathbb{B}(\tilde s, \epsilon)) \ge p^{k_{j}}(s,f) \ge
    \delta >0.&\qedhere
  \end{align*}
\end{proof}

\subsection{Ergodic theory for nonexpansive mappings}
\label{sec:nexp}

We now specialize to the case that the family of mappings
$\{T_{i}\}_{i \in I}$ are nonexpansive operators. 
\begin{lemma}\label{lemma:e.c.}
  Let $G$ be a Polish space. Let $\mymap{T_{i}}{G}{G}$ be
  nonexpansive, $i \in I$ and let $\Pcal $ denote the
  Markov operator that is induced by the transition
  kernel in \eqref{eq:trans kernel}. 
  \begin{enumerate}[(i)]
  \item $\mathcal{P}$ is Feller.
  \item $\mathcal{P}$ is equicontinuous.
  \end{enumerate}
\end{lemma}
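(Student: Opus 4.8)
The plan is to dispatch both claims directly from the structure already in place. For part (i), I would simply invoke Proposition~\ref{thm:Feller}: every nonexpansive mapping is continuous (indeed $1$-Lipschitz), so each $T_i$ is continuous, and under Assumption~\ref{ass:1} the Feller property of $\mathcal{P}$ is then immediate. No further work is needed.

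For part (ii), the key is the representation $\mathcal{P}^k f(x) = \mathbb{E}[f(X_k^x)]$, where $X_k^x = T_{\xi_{k-1}}\cdots T_{\xi_0}x$ is the RFI started at the deterministic point $x$; this follows by iterating the definition of $\mathcal{P}$ and using the independence of the $(\xi_j)$ guaranteed by Assumption~\ref{ass:1}. First I would fix a bounded $L$-Lipschitz function $f$ and two points $x,y\in G$, and realize both chains on a common probability space using the \emph{same} driving sequence $(\xi_0,\dots,\xi_{k-1})$ (a synchronous coupling). For each realization of this sequence the composition $T_{\xi_{k-1}}\cdots T_{\xi_0}$ is nonexpansive, being a composition of nonexpansive maps, so $d(X_k^x,X_k^y)\le d(x,y)$ almost surely. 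Combining the $L$-Lipschitz bound on $f$ with $|\mathbb{E}[Z]|\le\mathbb{E}[|Z|]$ gives
\[
|\mathcal{P}^k f(x)-\mathcal{P}^k f(y)| \le \mathbb{E}\bigl[|f(X_k^x)-f(X_k^y)|\bigr] \le L\,\mathbb{E}\bigl[d(X_k^x,X_k^y)\bigr] \le L\,d(x,y),
\]
uniformly in $k$. Hence every iterate $\mathcal{P}^k f$ is $L$-Lipschitz with the \emph{same} constant, which yields uniform equicontinuity of the family $(\mathcal{P}^k f)_{k\in\mathbb{N}}$ and therefore equicontinuity of $\mathcal{P}$.

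Since both arguments reduce to two elementary facts --- continuity of each $T_i$ and the stability of nonexpansiveness under composition --- I do not anticipate a genuine obstacle. The only point requiring a little care is the justification of the representation $\mathcal{P}^k f(x)=\mathbb{E}[f(X_k^x)]$, which rests on the disintegration/Fubini step already used in Proposition~\ref{thm:RFIasMC} together with the independence of the $\xi_j$; once this is in hand, the uniform Lipschitz estimate makes equicontinuity transparent, and in fact establishes the stronger statement that $\mathcal{P}$ maps the bounded Lipschitz functions into functions sharing a common modulus of continuity.
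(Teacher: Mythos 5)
Your proposal is correct and follows essentially the same route as the paper: part (i) is the identical appeal to Proposition~\ref{thm:Feller} via $1$-Lipschitz continuity, and part (ii) is the same synchronous-coupling argument (the paper's $X_k^x$, $X_k^y$ are already driven by a common sequence $(\xi_j)$), yielding the identical uniform bound $|\delta_x\mathcal{P}^k f - \delta_y\mathcal{P}^k f| \le \norm{f}_{\mathrm{Lip}}\, d(x,y)$ for all $k$. Your added remark that this in fact gives a uniform Lipschitz constant for all iterates is implicit in the paper's estimate as well.
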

\begin{proof}
  \begin{enumerate}[(i)]
  \item The mapping $T_{i}$ for $i \in I$ is 1-Lipschitz continuous,
    so in particular it is continuous. Proposition \ref{thm:Feller} yields the assertion.
  \item Let $\epsilon>0$ and $x,y \in G$ with $d(x,y)<
    \epsilon/\norm{f}_{\text{Lip}}$, then, using Jensen's inequality,
    Lipschitz continuity of $f$ and nonexpansivity of $T_{i}$, we get
    \begin{align*}
      \abs{\delta_{x}\mathcal{P}^{k}f-\delta_{y}\mathcal{P}^{k}f} &=
      \abs{\mathbb{E}[f(X_{k}^{x})] - \mathbb{E}[f(X_{k}^{y})]}\\ &\le
      \mathbb{E}[\abs{f(X_{k}^{x}) - f(X_{k}^{y})}] \\ &\le
      \norm{f}_{\text{Lip}} \mathbb{E}[d(X_{k}^{x},X_{k}^{y})] \\ &\le
      \norm{f}_{\text{Lip}} \mathbb{E}[d(x,y)] < \epsilon
    \end{align*}
    for all $k \in \mathbb{N}$. \qedhere
  \end{enumerate}
\end{proof}

A very helpful fact used later on is that the distance between the
supports of two ergodic measures is attained; moreover, any point in
the support of the one ergodic measure has a nearest neighbor in the
support of the other ergodic measure. 
\begin{lemma}[distance of supports is
  attained]\label{lemma:dist_supports_attained}
  Let $G$ be a Polish space and $\mymap{T_{i}}{G}{G}$ be
  nonexpansive, $i \in I$. Suppose $\pi,\tilde \pi$ are ergodic
  probability measures for $\mathcal{P}$. Denote the support of a 
  measure $\pi$ by $S_{\pi}\equiv \Supp {\pi}$.
  Then for all $s\in S_{\pi}$
  there exists $\tilde s \in S_{\tilde\pi}$ with $d(s,\tilde s) =
  \dist(s,S_{\tilde\pi}) = \dist(S_{\pi}, S_{\tilde\pi})$.
\end{lemma}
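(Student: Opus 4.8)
The plan is to reduce everything to the behaviour of the single function $g(x):=\dist(x,S_{\tilde\pi})$ on $S_\pi$, where I write $S_\pi:=\Supp\pi$, $S_{\tilde\pi}:=\Supp\tilde\pi$ and $D:=\dist(S_\pi,S_{\tilde\pi})$. Since $g$ is $1$-Lipschitz (as a distance to a fixed set) and $g(x)\ge D$ for every $x\in S_\pi$ with $\inf_{x\in S_\pi}g(x)=D$ by definition of the gap, the statement splits into two claims: (1) \emph{constancy}, $g\equiv D$ on $S_\pi$ (this gives both equalities $\dist(s,S_{\tilde\pi})=\dist(S_\pi,S_{\tilde\pi})$), and (2) \emph{attainment}, that $\dist(s,S_{\tilde\pi})$ is realized by some $\tilde s\in S_{\tilde\pi}$. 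The first claim is the conceptual heart and I expect it to go through cleanly; the second is the technical obstacle.

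For constancy I would exploit that the orbit cannot increase $g$, together with ergodicity. By Lemma~\ref{lemma:support_invariant_distr} (applied to both $\pi$ and $\tilde\pi$, using that nonexpansive maps are continuous) we have $T_\xi S_\pi\subset S_\pi$ and $T_\xi S_{\tilde\pi}\subset S_{\tilde\pi}$ almost surely. Hence for $x\in S_\pi$ and any $\tilde s\in S_{\tilde\pi}$, nonexpansiveness gives $g(T_\xi x)\le d(T_\xi x,T_\xi\tilde s)\le d(x,\tilde s)$ a.s.; taking the infimum over $\tilde s$ yields $g(T_\xi x)\le g(x)$ a.s., i.e.\ $\mathcal{P}g\le g$ pointwise on $S_\pi$. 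Integrating against the invariant measure $\pi$ (after truncating $g$ to $g_N:=\min(g,N)$, which is still superharmonic by concavity of $\min(\cdot,N)$, to avoid integrability issues) and using $\pi\mathcal{P}=\pi$ forces $\int(g_N-\mathcal{P}g_N)\,d\pi=0$ with a nonnegative integrand, so $\mathcal{P}g_N=g_N$ holds $\pi$-a.e.; letting $N$ exceed $g$ at one fixed point of $S_\pi$ recovers $\mathcal{P}g=g$ $\pi$-a.e. Thus $g$ is a $\mathcal{P}$-invariant function, and since $\pi$ is \emph{ergodic} it must be $\pi$-a.e.\ equal to a constant $c\ge D$. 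Continuity of $g$ together with $S_\pi=\Supp\pi$ upgrades this to $g\equiv c$ on all of $S_\pi$, and then $c=\inf_{S_\pi}g=D$. (The symmetric argument, using ergodicity of $\tilde\pi$, gives $\dist(\,\cdot\,,S_\pi)\equiv D$ on $S_{\tilde\pi}$, which I will use in the attainment step.)

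For attainment I would fix $s\in S_\pi$ and take a minimizing sequence $\tilde s_n\in S_{\tilde\pi}$ with $d(s,\tilde s_n)\to D$, so that all $\tilde s_n$ lie in the bounded set $\overline{\mathbb{B}}(s,D+1)\cap S_{\tilde\pi}$. When $G$ is (locally) compact — in particular in $\mathbb{R}^n$, which is the setting in which this lemma is actually invoked — this slice is compact, a convergent subsequence exists, and its limit $\tilde s\in S_{\tilde\pi}$ satisfies $d(s,\tilde s)=D$ by continuity, finishing the proof. For a general Polish space the plan is to manufacture the missing compactness dynamically: run the two chains from $s$ and from $\tilde s_n$ driven by the \emph{same} noise, so that $d(X_k^{s},X_k^{\tilde s_n})$ is nonincreasing and squeezed between $D$ (by the constancy just proved, since $X_k^{s}\in S_\pi$) and $d(s,\tilde s_n)$; then combine the recurrence of $s$ to itself provided by Lemma~\ref{lemma:pos_transitionKernel_ergodic} (a fixed return probability $\delta>0$ at infinitely many times, at a fixed scale) with the tightness of $(\delta_{\tilde s_n}\mathcal{P}^k)$ from Proposition~\ref{thm:tighnessOfIterates} to confine the transported points $T_{\xi_{k-1}}\cdots T_{\xi_0}\tilde s_n$ to a single compact set, whence a convergent subsequence delivers the minimizer. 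I expect this localization to be the main difficulty: tightness furnishes compacts uniformly in $k$ for a fixed starting law, but balancing the shrinking recurrence probabilities against the required compacts as the scale is refined is exactly the delicate point, and it is the reason the hypotheses and the applications live in $\mathbb{R}^n$, where proximinality of closed subsets of a bounded region is automatic.
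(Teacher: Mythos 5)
Your constancy step is correct, and it takes a genuinely different route from the paper's. The paper argues by contradiction: if $\dist(\hat s,S_{\tilde\pi})<\dist(s,S_{\tilde\pi})$ for some $s,\hat s\in S_\pi$, then the a.s.\ monotonicity of $k\mapsto \dist(X_k^{\hat s},S_{\tilde\pi})$ combined with Lemma~\ref{lemma:pos_transitionKernel_ergodic} (the chain started at $\hat s$ visits $\mathbb{B}(s,\epsilon)$ with positive probability) forces $\dist(s,S_{\tilde\pi})\le \epsilon+\dist(\hat s,S_{\tilde\pi})$ for every $\epsilon>0$. Your superharmonicity-plus-ergodicity argument replaces that recurrence lemma with abstract ergodic theory, and it is sound; in fact it can be streamlined so that it matches the paper's definition of ergodicity exactly: since $g(T_\xi x)\le g(x)$ holds a.s.\ for \emph{every} $x$ (Lemma~\ref{lemma:support_invariant_distr} plus nonexpansiveness), each sublevel set $\{g\le c\}$ satisfies $p(x,\{g\le c\})=1$ for all $x$ in it, hence is $p$-invariant and has $\pi$-measure $0$ or $1$; letting $c$ vary gives $g$ constant $\pi$-a.e., and your continuity argument finishes. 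The truncation/integration step and the harmonic-function characterization of ergodicity are not actually needed.

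The attainment step, however, is a genuine gap---one you flag yourself---and it cannot be sidestepped by retreating to $\mathbb{R}^n$: the lemma is stated for Polish $G$ and is used at that level of generality, e.g.\ in Lemma~\ref{lem:char_ballsEW} (separable complete metric spaces) and Proposition~\ref{prop:weak_besicovitch_Rn} (separable complete metric spaces with finite-dimensional metric), where ``for every $s\in S_\pi$ there is a closest point of $S_{\tilde\pi}$'' is invoked and local compactness is unavailable. Your dynamical localization also fails for exactly the reason you suspect: Proposition~\ref{thm:tighnessOfIterates} yields, for each start $\tilde s_n$ and tolerance $\eta$, a compact $K_{n,\eta}$ with $p^k(\tilde s_n,K_{n,\eta})>1-\eta$ for all $k$, and intersecting this with the return event of $X_k^{s}$ near $s$ works for fixed $(n,\epsilon)$; but the near-minimizers so produced live in compacts that vary with $(n,\epsilon)$, no single compact contains them all, and hence no convergent subsequence can be extracted. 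The paper's fix is to find compactness in the space of measures rather than in $G$: with $(\tilde s_m)$ a minimizing sequence, it forms the couplings $\gamma_k^m f:=\mathbb{E}\bigl[\tfrac{1}{k}\sum_{j=1}^{k}f(X_j^{s},X_j^{\tilde s_m})\bigr]$ of the Ces\`aro averages $\nu_k^{s}$ and $\nu_k^{\tilde s_m}$, driven by the same noise. Nonexpansiveness concentrates every $\gamma_k^m$ on the closed set $\{(x,y):d(x,y)\le d(s,\tilde s_m)\}$; by Remark~\ref{cor:weakCvgMetricSpace} the marginals converge to $\pi$ and $\tilde\pi$, so Lemma~\ref{lemma:weakCVG_productSpace} (Prokhorov) gives cluster points $\gamma^m\in C(\pi,\tilde\pi)$, and then a cluster point $\gamma\in C(\pi,\tilde\pi)$ of $(\gamma^m)$ satisfying $\gamma d\le\lim_m d(s,\tilde s_m)=\dist(s,S_{\tilde\pi})$, via the truncations $\min(M,d)$ and monotone convergence. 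Since $\Supp\gamma\subset S_\pi\times S_{\tilde\pi}$ by Lemma~\ref{lemma:couplings}, and $d\ge\dist(S_\pi,S_{\tilde\pi})$ there, the distance is identically $\dist(S_\pi,S_{\tilde\pi})$ on $\Supp\gamma$; Lemma~\ref{lemma:couplings}\eqref{item:couplings2}---density of the first-coordinate projection of $\Supp\gamma$ in $S_\pi$---is then what the paper uses to pass from $\gamma$-a.e.\ pairs at minimal distance to a closest point for the given $s$. This tightness-of-couplings construction is precisely the ingredient missing from your proposal.
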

\begin{proof}
  First we show, that $\dist(S_{\pi}, S_{\tilde \pi}) = \dist(s,
  S_{\tilde \pi})$ for all $s \in S_{\pi}$. Therefore, recall the
  notation $X_{k}^{x} = T_{\xi_{k-1}}\cdots T_{\xi_{0}} x$ and note
  that by nonexpansivity of $T_{i}$, $i \in I$ and
  Lemma \ref{lemma:support_invariant_distr} it holds a.s.\ that
  \begin{align*}
    \dist(X_{k+1}^{x},S_{\pi}) \le \dist(X_{k+1}^{x},T_{\xi_{k}}
    S_{\pi}) = \inf_{s \in S_{\pi}} d(T_{\xi_{k}}X_{k}^{x},
    T_{\xi_{k}}s) \le \dist(X_{k}^{x},S_{\pi})
  \end{align*}
  for all $x \in G$, $\pi \in \inv \mathcal{P}$ and $k \in
  \mathbb{N}$.  Suppose now there would exist an $\hat s \in S_{\pi}$
  with $\dist(\hat s, S_{\tilde\pi}) < \dist(s, S_{\tilde\pi})$. Then
  by Lemma \ref{lemma:pos_transitionKernel_ergodic} for all $\epsilon >0$
  there is a $k \in \mathbb{N}$ with $\mathbb{P}( X_{k}^{\hat s} \in
  \mathbb{B}(s,\epsilon)) > 0$ and hence
  \begin{align*}
    \dist(s, S_{\tilde\pi}) \le d(s,X_{k}^{\hat s}) +
    \dist(X_{k}^{\hat s}, S_{\tilde\pi}) \le \epsilon + \dist(\hat s,
    S_{\tilde\pi})
  \end{align*}
  with positive probability for all $\epsilon >0$, which is a
  contradiction. So, it holds that $\dist(\hat s, S_{\tilde\pi}) = \dist(s,
  S_{\tilde\pi})$ for all $s, \hat s \in S_{\pi}$.

  For $s \in S_{\pi}$ let $(\tilde s_{m}) \subset S_{\tilde \pi}$ be a
  minimizing sequence for $\dist(s,S_{\tilde \pi})$, i.e.\ $\lim_{m}
  d(s,\tilde s_{m}) = \dist(s,S_{\tilde \pi})$.  Now define a
  probability measure $\gamma_{k}^{m}$ on $G\times G$ via
  \begin{align*}
    \gamma_{k}^{m} f := \mathbb{E}\left[ \frac{1}{k} \sum_{j=1}^{k}
      f(X_{j}^{s}, X_{j}^{\tilde s_{m}}) \right]
  \end{align*}
  for measurable $\mymap{f}{G\times G}{\mathbb{R}}$. Then
  $\gamma_{k}^{m} \in C(\nu_{k}^{s}, \nu_{k}^{\tilde s_{m}})$
  where $C(\nu_{k}^{s}, \nu_{k}^{\tilde s_{m}})$ is 
the set of all couplings for $\nu_{k}^{s}$ and $\nu_{k}^{\tilde s_{m}}$ 
(see \eqref{eq:couplingsDef}).
Also,  by
  Lemma \ref{lemma:weakCVG_productSpace} and Remark \ref{cor:weakCvgMetricSpace}
  the sequence $(\gamma_{k}^{m})_{k\in \Nbb}$ is tight for fixed 
  $m \in \mathbb{N}$ and
  there exists a cluster point $\gamma^{m} \in C(\pi, \tilde \pi)$. The
  sequence $(\gamma^{m}) \subset C(\pi, \tilde \pi)$ is again tight by
  Lemma \ref{lemma:weakCVG_productSpace}.  Thus for any cluster point
  $\gamma \in C(\pi,\tilde \pi)$ and the bounded and continuous
  function $(x,y) \mapsto f^{M}(x,y)=\min( M, d(x,y) )$ this yields
  \begin{align*}
    \gamma_{k}^{m}d = \gamma_{k}^{m} f^{M} \searrow \gamma^{m} f^{M}
    \qquad \text{as } k \to \infty
  \end{align*}
  for all $M \ge d(s,\tilde s_{m})$, $m\in \mathbb{N}$. Since by the
  Monotone Convergence Theorem $\gamma^{m}f^{M} \nearrow \gamma^{m}d$
  as $m \to \infty$, it follows that $\gamma^{m}f^{M} = \gamma^{m} d$ for
  all $M \ge d(s,\tilde s_{m})$. The same argument holds for $M \ge
  d(s,\tilde s_{1})$ and a subsequence $(\gamma^{m_{j}})$ with limit
  $\gamma$ such that $\gamma d = \gamma f^{M}$.  Hence,
  \begin{align*}
    \gamma d = \gamma f^{M} = \lim_{j} \gamma^{m_{j}} f^{M} = \lim_{j}
    \gamma^{m_{j}} d \le \lim_{j} d(s,\tilde s_{m_{j}}) =
    \dist(s,S_{\tilde \pi}).
  \end{align*}
  In particular for $\gamma$-a.e.\ $(x,y) \in S_{\pi} \times S_{\tilde
    \pi}$ it holds that $d(x,y)= \dist(S_{\pi}, S_{\tilde \pi})$, because
  $d(x,y)\ge \dist(S_{\pi}, S_{\tilde \pi})$ on $S_{\pi} \times
  S_{\tilde \pi}$. Taking the closure of these $(x,y)$ in $G\times G$,
  we see that for any $s \in S_{\pi}$ there is $\tilde s \in S_{\tilde
    \pi}$ with $d(s,\tilde s) = \dist(S_{\pi}, S_{\tilde \pi})$ by
  Lemma \ref{lemma:couplings}.
\end{proof}

\subsection{Convergence for nonexpansive mappings in $\mathbb{R}^{n}$}
\label{sec:setCvgNE}
By Proposition \ref{thm:wormCesaro} tightness of a sequence of Ces\`{a}ro averages 
is equivalent to convergence of said sequence.  So our focus is on tightness in the 
Euclidean space setting.

\begin{lemma}[tightness of $(\mu\mathcal{P}^{k})$ in
  $\mathbb{R}^{n}$] \label{lemma:tightnessOfNu_n} On the
  Euclidean space $(\mathbb{R}^{n}, \norm{\cdot})$ let
  $\mymap{T_{i}}{\mathbb{R}^{n}}{\mathbb{R}^{n}}$ be nonexpansive 
  for all 
  $i \in I$, and let $\inv
  \mathcal{P} \neq \emptyset$ for the corresponding Markov operator.
  The sequence $(\mu\mathcal{P}^{k})_{k\in\Nbb}$ is tight for any $\mu \in
  \mathscr{P}(\mathbb{R}^{n})$.
\end{lemma}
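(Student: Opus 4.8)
The plan is to use the hypothesized invariant measure $\pi\in\inv\mathcal{P}$ as a tight reference chain and to transfer its tightness to $(\mu\mathcal{P}^k)$ by a \emph{synchronous coupling} together with nonexpansivity. Since every compact subset of $(\mathbb{R}^n,\norm{\cdot})$ can be enclosed in a closed ball $\overline{\mathbb{B}}(0,R)$, it suffices to show that for every $\epsilon>0$ there is a radius $\overline{R}>0$ with $\mathbb{P}(\norm{X_k}>\overline{R})<\epsilon$ for all $k\in\Nbb$, where $X_k\sim\mu\mathcal{P}^k$.

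First I would set up the coupling. On a common probability space take $X_0\sim\mu$ and $Y_0\sim\pi$ with $(X_0,Y_0)$ independent of the i.i.d.\ driving sequence $(\xi_k)$ (the joint law of $(X_0,Y_0)$ is immaterial), and run both chains with the \emph{same} noise, $X_k=T_{\xi_{k-1}}\cdots T_{\xi_0}X_0$ and $Y_k=T_{\xi_{k-1}}\cdots T_{\xi_0}Y_0$. By Assumption \ref{ass:1}\eqref{item:ass1:indep} and the identity $\mathcal{L}(X_k)=\mu_0\mathcal{P}^k$, the marginals are exactly $X_k\sim\mu\mathcal{P}^k$ and $Y_k\sim\pi\mathcal{P}^k=\pi$, the latter by invariance of $\pi$. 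Applying nonexpansivity of each $T_i$ successively along the shared orbit yields the key estimate
\[
\norm{X_k-Y_k}=\norm{T_{\xi_{k-1}}\cdots T_{\xi_0}X_0-T_{\xi_{k-1}}\cdots T_{\xi_0}Y_0}\le\norm{X_0-Y_0}\qquad\forall k\in\Nbb,
\]
so the gap between the two chains is bounded, \emph{uniformly in $k$}, by the single fixed random variable $\norm{X_0-Y_0}$.

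Then I would close the argument with the triangle inequality and a union bound: for any $R,M>0$,
\[
\mathbb{P}(\norm{X_k}>R+M)\le\mathbb{P}(\norm{Y_k}>R)+\mathbb{P}(\norm{X_k-Y_k}>M)\le\pi(\norm{\cdot}>R)+\mathbb{P}(\norm{X_0-Y_0}>M).
\]
Since $Y_k\sim\pi$ for every $k$, the first term does not depend on $k$ and tends to $0$ as $R\to\infty$ (continuity from above, as $\{\norm{\cdot}>R\}\downarrow\emptyset$); choose $R$ with $\pi(\norm{\cdot}>R)<\epsilon/2$. Since $\norm{X_0-Y_0}$ is an a.s.\ finite real random variable independent of $k$, choose $M$ with $\mathbb{P}(\norm{X_0-Y_0}>M)<\epsilon/2$. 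Setting $\overline{R}:=R+M$ gives $\mathbb{P}(\norm{X_k}>\overline{R})<\epsilon$ for all $k$, i.e.\ $\mu\mathcal{P}^k(\overline{\mathbb{B}}(0,\overline{R}))>1-\epsilon$, and $\overline{\mathbb{B}}(0,\overline{R})$ is compact; this is precisely tightness.

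The step requiring the most care — and where a slip is easiest — is the coupling bookkeeping: one must verify that driving \emph{both} chains by the common sequence $(\xi_k)$ leaves each marginal law unchanged (which holds exactly because $X_0$ and $Y_0$ are each independent of $(\xi_k)$), and that invariance is used in the iterated form $\pi\mathcal{P}^k=\pi$. I would emphasize that only genuine nonexpansivity is exploited, not contraction, so the argument delivers uniform boundedness in probability (tightness) but not convergence on its own; convergence is then obtained by feeding this tightness into Proposition \ref{thm:wormCesaro}.
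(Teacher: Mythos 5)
Your proof is correct, but it takes a genuinely different route from the paper's. The paper anchors tightness at points $s$ in the support $S$ of an \emph{ergodic} measure, invoking Proposition \ref{thm:tighnessOfIterates} — which in turn rests on equicontinuity of $\mathcal{P}$ (Lemma \ref{lemma:e.c.}), Szarek's criterion \cite{Szarek2006}, and Birkhoff's ergodic theorem (Proposition \ref{cor:Birkhoff_conditional}) — and only then transfers tightness from $\delta_s$ to $\delta_x$ and finally to a general $\mu$, in two stages, each using the synchronous-coupling bound $\norm{X_k^x-X_k^s}\le\norm{x-s}$. You instead anchor directly at the stationary chain: couple $X_0\sim\mu$ with $Y_0\sim\pi$, drive both by the same $(\xi_k)$, use invariance in the form $\pi\mathcal{P}^k=\pi$ (exactly the computation carried out in the proof of Proposition \ref{thm:Hermer}) so that $Y_k\sim\pi$ for all $k$, and then exploit the uniform bound $\norm{X_k-Y_k}\le\norm{X_0-Y_0}$ together with the tightness of the single measure $\pi$ and a union bound. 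This bypasses the ergodic machinery entirely: your only ingredients are invariance, nonexpansivity, and Heine–Borel, which makes the argument more elementary and self-contained. What the paper's heavier route buys is reusable infrastructure — Proposition \ref{thm:tighnessOfIterates} and its consequences (e.g.\ Remark \ref{rem:tightnessMean}) hold on general Polish spaces and are needed again later (Proposition \ref{prop:ergodic_decomp}, the proof of Theorem \ref{thm:a-firm convergence Rn}) — whereas your argument, like the paper's final step, is confined to settings where closed bounded sets are compact, since $\overline{\mathbb{B}}(K,M)$ around a compact set need not be compact in infinite dimensions. Your closing remarks are also accurate: both arguments use only nonexpansivity (no contraction), and convergence of the Ces\`aro averages then follows by feeding tightness into Proposition \ref{thm:wormCesaro}.
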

\begin{proof}
  First, let $\mu=\delta_{x}$ for $x \in \mathbb{R}^{n}$. We know that 
  the sequence  $(\delta_{s} \mathcal{P}^{k})$ is tight for $s \in S$ by
  Proposition \ref{thm:tighnessOfIterates}. So for $\epsilon>0$ there is a
  compact $K \subset \mathbb{R}^{n}$ with $p^{k}(s,K)\ge 1-\epsilon$
  for all $k \in \mathbb{N}$. Recall the definition of $X_{k}^{x}$ in
  \eqref{eq:X_RFI}.  Since a.s.\ $\norm{X_{k}^{x}-X_{k}^{s}} \le \norm{x-s}$,
  we have that $p^{k}(x,\cb(K,\norm{x-s})) = \mathbb{P}( X_{k}^{x} \in
  \cb(K,\norm{x-s})) \ge p^{k}(s,K)\ge 1-\epsilon$ for all $k \in
  \mathbb{N}$. Hence $(\delta_{x} \mathcal{P}^{k})$ is tight.

  Now consider  the initial random variable $X_0\sim \mu$ for any 
  $\mu \in \mathscr{P}(\mathbb{R}^{n})$. For given $\epsilon >0$
  there is a compact $K_{\epsilon}^{\mu} \subset \mathbb{R}^{n}$ with
  $\mu(K_{\epsilon}^{\mu}) > 1-\epsilon$. From the special case
  established above, there exists a compact $K_{\epsilon} \subset
  \mathbb{R}^{n}$ with $p^{k}(0, K_{\epsilon}) >1-\epsilon$ for all $k
  \in \mathbb{N}$. Let $M>0$ such that $K_{\epsilon}^{\mu} \subset
  \cb(0,M)$ and let $x \in \cb(0,M)$. We have that
  $p^{k}(x,\cb(K_{\epsilon},M)) > 1-\epsilon$ for all $x \in
  \cb(0,M)$, since $\norm{X_{k}^{x} - X_{k}^{X_0}} \le \norm{x} \le
  M$. Hence $\mu\mathcal{P}^{k}(\cb(K_{\epsilon},M)) >
  (1-\epsilon)^{2}$, which implies tightness of the sequence
  $(\mu\mathcal{P}^{k})$.
\end{proof}
\begin{rem}[tightness of $(\nu_{k}^{\mu})$ in
  $\mathbb{R}^{n}$] \label{rem:tightnessNu_nInRn} The tightness of
 the sequence $(\nu_{k}^{\mu})$ for any $\mu \in \mathscr{P}(\mathbb{R}^{n})$
  follows immediately from tightness of $(\mu\mathcal{P}^{k})$ as in
  Remark \ref{rem:tightnessMean}.
\end{rem}

We are now in a position to prove the first main result.\\
\textbf{ Proof of Theorem \ref{cor:cesaroConvergenceRn}}.  
% \begin{proof}
 By Lemma \ref{lemma:e.c.} the Markov operator $\mathcal{P}$ is Feller and 
 equicontinuous.  By Lemma \ref{lemma:tightnessOfNu_n}  the sequence 
 $\left(\mu\mathcal{P}^k\right)$ is tight, and so the sequence of 
 of Ces\'aro averages $(\nu_k^\mu)$ is also tight 
 (see Remark \ref{rem:tightnessNu_nInRn}).  
Hence by Proposition \ref{thm:wormCesaro} $\nu_k^\mu\to \pi^\mu$
with $\pi^\mu$ given by \eqref{eq:rep Cesaro limit}.\hfill$\Box$
% \end{proof}

\subsection{More properties of the RFI for nonexpansive mappings}
\label{sec:furtherprops}

This section is devoted to the preparation of some tools used in
Section \ref{sec:cvgAverages} to prove convergence of the distributions of
the iterates of the RFI. When the Markov chain is initialized with a
point not supported in $S$, i.e.\ when $\Supp \mu \setminus S \neq
\emptyset$, the convergence results on general Polish spaces are much
weaker than for the ergodic case in the previous section. One 
problem is that the sequences $(\nu_{k}^{x})_{k \in \mathbb{N}}$ for 
$x \in G \setminus S$ need not be tight anymore.  The right-shift operator 
$\mathcal{R}$ on
$l^{2}$, for example, with the initial distribution $\delta_{e_{1}}$,
generates the sequence $\mathcal{R}^{k} e_{1} = e_{k}$, $k=1,2,\dots$.  
Examples of spaces on which
we can always guarantee tightness are, of course, 
Euclidean spaces as seen in the
previous section, 
and compact metric spaces --
since then $(\mathscr{P}(G), d_{P})$ is compact.

For the case that the sequence of Ces\`{a}ro
averages does not necessarily converge, we have the following result.
\begin{lemma}[convergence of with  nonexpansive
  mappings] \label{thm:invMeas_nonexpans_map} Let $(G,d)$ be a 
  separable complete metric space and 
  let $\mymap{T_{i}}{G}{G}$ be nonexpansive for all $i \in I$. 
  Suppose $\inv \mathcal{P} \neq \emptyset$.  
  Let $X_{0} \sim \mu \in
  \mathscr{P}(G)$ and let $(X_{k})$ be the sequence generated by
  Algorithm \ref{algo:RFI}.  Denote the support of any measure $\mu$ by 
  $S_\mu$, and denote $\nu_{k} := \tfrac{1}{k}\sum_{j=1}^{k} \mu \mathcal{P}^{j}$.
  \begin{enumerate}[(i)]
  \item\label{item:nonExpProp_1}  
    \[\forall \pi \in \inv \mathcal{P}, \quad \dist(X_{k+1},S_{\pi}) \le \dist(X_{k},S_{\pi})~ a.s.
    \quad \forall k\in \mathbb{N}.\]
  \item\label{item:nonExpProp_2top} If the sequence $(\nu_{k})$ has a cluster point $\pi\in \inv\mathcal{P}$, then,
  \begin{enumerate}[(a)]
  \item\label{item:nonExpProp_2}$\dist(X_{k},S_{\pi}) \to 0$ a.s.\ as
    $k\to\infty $;
  \item\label{item:nonExpProp_3} all cluster points of the sequence $(\nu_{k})$ have the
    same support;
  \item \label{item:nonExpProp_4} cluster points of the sequence $(\mu
    \mathcal{P}^{k})$ have support in $S_{\pi}$ (if they exist).
  \end{enumerate}
\end{enumerate}
\end{lemma}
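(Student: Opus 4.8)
The plan is to establish (i) first and use its monotonicity as the engine for everything that follows. For (i), I would combine nonexpansivity with invariance of the support. Nonexpansive maps are continuous, so Lemma~\ref{lemma:support_invariant_distr} gives $T_{\xi_k}S_\pi\subseteq S_\pi$ almost surely for every invariant $\pi$. Since the distance to a subset dominates the distance to the superset and $T_{\xi_k}$ contracts distances, almost surely
\[
\dist(X_{k+1},S_\pi)\le\dist\!\big(T_{\xi_k}X_k,\,T_{\xi_k}S_\pi\big)=\inf_{s\in S_\pi}d(T_{\xi_k}X_k,T_{\xi_k}s)\le\dist(X_k,S_\pi),
\]
which is exactly the chain already used in the proof of Lemma~\ref{lemma:dist_supports_attained}.

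For (ii)(a), monotonicity from (i) makes $\dist(X_k,S_\pi)$ almost surely nonincreasing, hence convergent almost surely to some $L\ge 0$; the point is to show $L=0$, and this is where the cluster point enters. Fix $M>0$ and put $g_M(x):=\min\{M,\dist(x,S_\pi)\}$, which is bounded and continuous, with $\pi g_M=0$ since $\pi(S_\pi)=1$. Along the subsequence realizing the cluster point, weak convergence $\nu_{k_j}\to\pi$ forces $\nu_{k_j}g_M\to 0$. On the other hand $g_M(X_i)$ is almost surely nonincreasing with limit $\min\{M,L\}$, so its Ces\`aro averages converge almost surely to $\min\{M,L\}$, and bounded convergence yields $\nu_k g_M=\mathbb{E}\big[\tfrac1k\sum_{i=1}^k g_M(X_i)\big]\to\mathbb{E}[\min\{M,L\}]$ for the \emph{entire} sequence. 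Matching the two limits gives $\mathbb{E}[\min\{M,L\}]=0$, so $\min\{M,L\}=0$ almost surely, and since $M>0$ this forces $L=0$ almost surely.

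Parts (ii)(b) and (ii)(c) then reduce to the same truncation device. Since $\mathcal{P}$ is Feller (Lemma~\ref{lemma:e.c.}(i)), any cluster point of $(\nu_k)$ in distribution is invariant (Proposition~\ref{thm:construction_inv_meas}), so (a) applies to each of them. For (b), given cluster points $\pi,\tilde\pi$, apply (a) to $\tilde\pi$ to get $\dist(X_k,S_{\tilde\pi})\to 0$ almost surely; rerunning the argument of (a) with $g_M(x)=\min\{M,\dist(x,S_{\tilde\pi})\}$ shows $\nu_k g_M\to 0$ for the full sequence, while $\nu_{k_j}\to\pi$ gives $\pi g_M=0$, whence $\dist(\cdot,S_{\tilde\pi})=0$ $\pi$-almost everywhere, i.e.\ $\pi(S_{\tilde\pi})=1$ and $S_\pi\subseteq S_{\tilde\pi}$; symmetry gives equality. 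For (c), if $\mu\mathcal{P}^{k_j}\to\rho$ in distribution, then (a) gives $g_M(X_k)\to 0$ almost surely (now no averaging is needed), so bounded convergence yields $\mu\mathcal{P}^k g_M\to 0$, hence $\rho g_M=0$, i.e.\ $\rho(S_\pi)=1$ and $\Supp\rho\subseteq S_\pi$.

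The delicate step, which I would write out most carefully, is the transfer in (a): the cluster point lives in the Ces\`aro sequence $(\nu_k)$ rather than in $(\mu\mathcal{P}^k)$, so one must pass the almost sure monotone limit of $g_M(X_i)$ through to the Ces\`aro averages of the full sequence and only then match it against the subsequential weak limit $\pi g_M=0$. The truncation by $M$ is what keeps everything bounded when $d$ is unbounded, which is essential since no moment assumption on $\mu$ is imposed.
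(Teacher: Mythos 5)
Your proof is correct. For parts (i) and (ii)(a) it follows essentially the paper's route: support invariance (Lemma~\ref{lemma:support_invariant_distr}) plus nonexpansivity give the a.s.\ monotonicity, and the truncated distance $\min(M,\dist(\cdot,S_\pi))$ together with the Ces\`aro subsequence and dominated convergence forces the limit to vanish; the paper merely orders the steps differently (it first shows $\mu\mathcal{P}^n f \to 0$ using monotonicity of the expectations and the Ces\`aro subsequence, then kills the a.s.\ limit $Y$ by dominated convergence, whereas you pass to the pathwise limit $L$ first and then match expectations). Where you genuinely diverge is in (ii)(b) and (ii)(c). The paper proves both by contradiction: it picks a hypothetical point $y \in S_{1}\setminus S_{2}$ (resp.\ $s\in\Supp\nu\setminus S_\pi$), builds a continuous bump function supported near it, extracts a subsequence along which the chain charges the corresponding ball with probability at least $\delta$, and contradicts (ii)(a); in (b) it additionally invokes Corollary~\ref{th:singular measure char}. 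You instead argue directly: for any other cluster point $\tilde\pi$ (invariant by the Feller property via the argument of Proposition~\ref{thm:construction_inv_meas}), the full sequence $\nu_k g_M \to 0$ for $g_M = \min\left(M, \dist(\cdot, S_{\tilde\pi})\right)$, so matching against $\nu_{k_j}\to\pi$ gives $\pi g_M = 0$, hence $\pi(S_{\tilde\pi})=1$ and $S_\pi\subseteq S_{\tilde\pi}$, with equality by symmetry; the same device gives $\rho(S_\pi)=1$ in (c). Your route is cleaner: it avoids the bump-function constructions and, notably, avoids the paper's appeal to Corollary~\ref{th:singular measure char} (which is stated for \emph{ergodic} measures, so its use for general invariant cluster points requires extra justification); the only point you should spell out is the one-line observation that $\pi(S_{\tilde\pi})=1$ forces $S_\pi\subseteq S_{\tilde\pi}$ because the support is the smallest closed set of full measure.
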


\begin{proof}
\eqref{item:nonExpProp_1}. By Lemma \ref{lemma:support_invariant_distr}, the sets 
  % $N_{k}\subset \Omega$ 
  on which $T_{\xi_{k}} S_{\pi}$ is not a subset of
    $S_{\pi}$ are $\mathbb{P}$-null sets and their union
    is also a $\mathbb{P}$-null set.
    % , denoted by $N$. 
    This yields
    % for $\omega \in N$ we have
    \begin{align*}
(\forall s\in S_{\pi})\quad       
\dist(X_{k+1}, S_{\pi}) \le d(X_{k+1}, T_{\xi_{k}} s) =
      d(T_{\xi_{k}} X_{k},T_{\xi_{k}}s) \le d(X_{k}, s)
            \quad\text{a.s.},
    \end{align*}
    and hence
    \begin{align*}
      \dist(X_{k+1},S_{\pi}) \le \dist(X_{k},S_{\pi}) 
      \quad\text{a.s.}
    \end{align*}
    
\eqref{item:nonExpProp_2} Define the function $f = \min(M,\dist(\cdot,S_{\pi}))$ 
    for some $M>0$.  Since  this is bounded and continuous, we have for a subsequence
    $(\nu_{k_{j}})$  converging to $\pi$, that
    $\nu_{k_{j}} f = \tfrac{1}{k_{j}} \sum_{n=1}^{k_{j}}
    \mu\mathcal{P}^{n}f \to \pi f = 0$ as $j \to \infty$. Now
    part \eqref{item:nonExpProp_1} and the identity
    \[ \mu\mathcal{P}^{n+1}f =
    \mathbb{E}[\min(M,\dist(X_{n+1},S_{\pi}))] \le
    \mathbb{E}[\min(M,\dist(X_{n},S_{\pi}))] = \mu\mathcal{P}^{n}f
    \]
    yield $\mu\mathcal{P}^{n}f =
    \mathbb{E}[\min(M,\dist(X_{n},S_{\pi}))] \to 0$ as $n \to \infty$.
    Again by part \eqref{item:nonExpProp_1}
    \[
    Y:=\lim_{n\to\infty} \min(M,\dist(X_{n},S_{\pi}))
    \]
    exists and is nonnegative; so by Lebesgue's dominated convergence
    theorem it follows that $Y=0$ a.s., since otherwise $\mathbb{E}[Y]
    > 0 = \lim_{n\to\infty} \mu \mathcal{P}^{n} f$ would yield a
    contradiction.

\eqref{item:nonExpProp_3} Let $\pi_{1},\pi_{2}$ be two cluster points of $(\nu_{k})$ with
    support $S_{1}, S_{2}$ respectively, then these probability
    measures are invariant for $\mathcal{P}$ by
    Proposition \ref{thm:construction_inv_meas}. By Corollary \ref{th:singular measure
      char} the intersection $S_{1}\cap S_{2}$ must be nonempty. 
      Suppose now w.l.o.g. $\exists y
    \in S_{1}\setminus S_{2}$.  Then there is an $\epsilon>0$ with
    $\mathbb{B}(y,2\epsilon) \cap S_{2} = \emptyset$. Let
    $\mymap{f}{G}{[0,1]}$ be a continuous function that takes the
    value $1$ on $\mathbb{B}(y,\tfrac{\epsilon}{2})$ and $0$ outside
    of $\mathbb{B}(y,\epsilon)$. Then $\pi_{1}f >0$ and
    $\pi_{2}f=0$. But there are two subsequences of $(\nu_{k})$ with
    $\nu_{k_{j}} f \to \pi_{1}f$ and $\nu_{\tilde k_{j}}f \to
    \pi_{2}f$ as $j \to \infty$. For the former sequence we have, for
    $j$ large enough,
    \begin{align*}
      \exists \delta >0: \frac{1}{k_{j}} \sum_{n=1}^{k_{j}} \mu
      \mathcal{P}^{n} f \ge \delta >0.
    \end{align*}
    So, one can from this extract a sequence $(m_{k})_{k\in\Nbb} \subset
    \mathbb{N}$ with $\mu \mathcal{P}^{m_{k}} f \ge \delta$, $k\in
    \mathbb{N}$.  Note that $\mathbb{P}(X_{m_{k}} \in
    \mathbb{B}(y,\epsilon)) \ge \mu \mathcal{P}^{m_{k}}f \ge \delta
    >0$. This implies $\dist(X_{m_{k}},S_{2}) \ge \epsilon$ with
    $\mathbb{P}\ge \delta$ and hence $\mathbb{E}[\dist(X_{m_{k}},
    S_{2})] \ge \delta\epsilon$, in contradiction to
    \eqref{item:nonExpProp_2}. So there cannot be such $y$ which
    yields $S_{1} = S_{2}$, as claimed.

\eqref{item:nonExpProp_4} 
    Let $\nu$ be a cluster point of the sequence $(\mu\mathcal{P}^{k})$, which
    is assumed to exist, and assume there is $s \in \Supp \nu \setminus
    S_{\pi}$ and $\epsilon>0$ such that $\dist(s,S_{\pi})> 2
    \epsilon$. Let $\mymap{f}{G}{[0,1]}$ be a continuous function,
    that takes the value $1$ on $\mathbb{B}(s,\tfrac{\epsilon}{2})$
    and $0$ outside of $\mathbb{B}(s,\epsilon)$. With
    \eqref{item:nonExpProp_2} we find, that
    \begin{align*}
      0<\nu f = \lim_{j} \mathbb{P}^{X_{k_{j}}}f \le \lim_{j}
      \mathbb{P}(X_{k_{j}} \in \mathbb{B}(s,\epsilon)) = 0.
    \end{align*}
    Were $\mathbb{P}(X_{k_{j}} \in \mathbb{B}(s,\epsilon)) \ge
    \delta>0$ for $j$ large enough, then this would imply that
    \[
    \mathbb{E}[\dist(X_{k_{j}}, S_{\pi})] \ge \delta \epsilon
    \]
    for $j$ large enough, which is a contradiction.  We conclude that
    there is no such $s$, which completes the proof. \qedhere
\end{proof}

We now prepare some tools to handle convergence of the distributions
of the iterates of the RFI for $\alpha$-fne mappings in
Section \ref{sec:cvgAverages}. 
We restrict ourselves to Polish spaces with finite dimensional metric
(see Definition \ref{def:finiteDimMetric}) in order to apply a differentiation
theorem.  We begin with the next technical fact. 
% Recall the definition
% of the Prokhorov-L\`evy metric $d_{P}$ in
% \cref{lemma:prokhorovDist_properties}.
\begin{lemma}[characterization of balls in
  $(\mathcal{E},d_{P})$]\label{lem:char_ballsEW}
  Let $(G,d)$ be a separable complete metric space and 
  $\mymap{T_{i}}{G}{G}$ be 
  nonexpansive,
  $i \in I$. Let $\mathcal{E}$ denote the the (convex) set of ergodic
measures associated to the Markov operator $\Pcal$ , which is induced 
by the family of mappings $\{T_i\}$  $i∈I$ and the marginal probability 
law of the random variables $\xi_k$.
  Let $\pi, \tilde\pi \in \mathcal{E}$ and denote the support of 
  the measure $\pi$ by $S_\pi$ (and similarly for $\tilde\pi$). Then
  \begin{align*}
    \tilde \pi \in \cb(\pi,\epsilon) \qquad \Longleftrightarrow \qquad
    S_{\tilde \pi} \subset \cb(S_{\pi},\epsilon)
  \end{align*}
  for $\epsilon \in (0,1)$, where $\cb(\pi,\epsilon)$ is the closed $\epsilon$-ball 
  with respect to the Prokhorov-L\`evy metric $d_{P}$.
\end{lemma}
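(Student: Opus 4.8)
The plan is to show that, for two distinct ergodic measures, the Prokhorov-L\`evy distance $d_P(\pi,\tilde\pi)$ is governed entirely by the number $\rho \equiv \dist(S_\pi,S_{\tilde\pi})$, so that both sides of the claimed equivalence collapse to the single inequality $\rho\le\epsilon$. First I would dispose of the trivial case $\pi=\tilde\pi$ (then $S_\pi=S_{\tilde\pi}$, $\rho=0$, and both sides hold) and assume $\pi\neq\tilde\pi$; by Corollary \ref{th:singular measure char} the supports $S_\pi$ and $S_{\tilde\pi}$ are disjoint. The crucial structural input is Lemma \ref{lemma:dist_supports_attained}: every $s\in S_\pi$ satisfies $\dist(s,S_{\tilde\pi})=\rho$ with the infimum attained, and symmetrically every $\tilde s\in S_{\tilde\pi}$ satisfies $\dist(\tilde s,S_\pi)=\rho$, again attained. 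This \emph{uniform distance} between the supports is what makes the two conditions equivalent. The geometric side is then immediate: since $\dist(\tilde s,S_\pi)=\rho$ for every $\tilde s\in S_{\tilde\pi}$ and the distance is attained, $S_{\tilde\pi}\subset\cb(S_\pi,\epsilon)$ holds precisely when $\rho\le\epsilon$.

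For the measure side I would prove $\tilde\pi\in\cb(\pi,\epsilon)\iff\rho\le\epsilon$ (equivalently $d_P(\pi,\tilde\pi)\le\epsilon\iff\rho\le\epsilon$) by two separate bounds. For the lower bound I argue the contrapositive: assume $\rho>\epsilon$ and test the Prokhorov inequality on $A=S_\pi$. For any level $\epsilon'\in(\epsilon,\min\{\rho,1\})$ (a nonempty interval since $\rho>\epsilon$ and $\epsilon<1$), the open enlargement $\mathbb{B}(S_\pi,\epsilon')$ is disjoint from $S_{\tilde\pi}$, all of whose points lie at distance $\rho>\epsilon'$ from $S_\pi$, whence $\tilde\pi(\mathbb{B}(S_\pi,\epsilon'))=0$ while $\pi(S_\pi)=1$; the required inequality $1\le 0+\epsilon'$ fails because $\epsilon'<1$, so the Prokhorov condition fails at level $\epsilon'$ and therefore $d_P(\pi,\tilde\pi)\ge\epsilon'>\epsilon$. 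For the upper bound I would use the coupling produced inside the proof of Lemma \ref{lemma:dist_supports_attained}, namely a $\gamma\in C(\pi,\tilde\pi)$ concentrated on $\{(x,y):d(x,y)=\rho\}$. Given such $\gamma$, for any Borel $A$ and any $\epsilon'>\rho$ one has $(A\times G)\cap\{d=\rho\}\subseteq G\times\mathbb{B}(A,\epsilon')$, so
\[
\pi(A)=\gamma\big((A\times G)\cap\{d=\rho\}\big)\le\gamma\big(G\times\mathbb{B}(A,\epsilon')\big)=\tilde\pi(\mathbb{B}(A,\epsilon'))\le\tilde\pi(\mathbb{B}(A,\epsilon'))+\epsilon',
\]
and symmetrically with the roles of $\pi$ and $\tilde\pi$ interchanged; letting $\epsilon'\downarrow\rho$ shows every level $\epsilon'>\rho$ is valid, i.e.\ $d_P(\pi,\tilde\pi)\le\rho$. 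Hence $\rho\le\epsilon$ forces $d_P(\pi,\tilde\pi)\le\rho\le\epsilon$. Chaining the equivalences, $\tilde\pi\in\cb(\pi,\epsilon)\iff\rho\le\epsilon\iff S_{\tilde\pi}\subset\cb(S_\pi,\epsilon)$, which is the assertion.

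I expect the main obstacle to be the upper bound, specifically securing the coupling $\gamma$ concentrated on $\{d=\rho\}$: the bare existence of nearest points (the \emph{statement} of Lemma \ref{lemma:dist_supports_attained}) does not by itself furnish a measure-preserving transport, so I would lean on the Ces\`aro-average construction inside that lemma's proof, where the extracted cluster point $\gamma\in C(\pi,\tilde\pi)$ satisfies $\gamma d=\rho$ and, since $d\ge\rho$ on $S_\pi\times S_{\tilde\pi}$, is concentrated on $\{d=\rho\}$. A secondary point needing care is the open-versus-closed-ball bookkeeping in the definition \eqref{eq:PL} of $d_P$: this is handled uniformly by working at auxiliary levels $\epsilon'$ strictly between $\rho$ and $1$ and then passing to the limit, together with the attainment of distances from Lemma \ref{lemma:dist_supports_attained} when identifying $S_{\tilde\pi}\subset\cb(S_\pi,\epsilon)$ with $\rho\le\epsilon$.
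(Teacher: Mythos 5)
Your proposal is correct, and it reaches the paper's conclusion by a noticeably different route. Both arguments ultimately establish the same quantitative fact, $d_P(\pi,\tilde\pi)=\min\bigl(1,\dist(S_\pi,S_{\tilde\pi})\bigr)$, and both lean on the same structural inputs: disjointness of supports of distinct ergodic measures (Corollary \ref{th:singular measure char}) and the uniform, attained distance between supports (Lemma \ref{lemma:dist_supports_attained}). The paper, however, works \emph{dynamically}: it shows that the trajectories $X_k^{s}$ and $X_k^{\tilde s}$ keep constant separation $d(s,\tilde s)$ almost surely (nonexpansiveness gives $\le$, support invariance gives $\ge$), computes $d_P$ of the laws of the iterates exactly via the Strassen coupling representation (Lemma \ref{lemma:prokhorovDist_properties}\eqref{item:prokLeviRep}), transfers this to Ces\`aro averages through the convexity inequality (Lemma \ref{lemma:prokhorovDist_properties}\eqref{item:prokhorov5}), and then passes to the ergodic limit via Remark \ref{cor:weakCvgMetricSpace}. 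You instead work \emph{statically} with the limit measures themselves: your lower bound is a direct violation of the defining inequality \eqref{eq:PL} tested on $A=S_\pi$ (no couplings, no dynamics at all, just upward-closedness of the set of admissible levels), and your upper bound verifies \eqref{eq:PL} at every level $\epsilon'>\rho$ using a coupling concentrated on $\{d=\rho\}$. What each approach buys: your lower bound is shorter and more elementary than the paper's; the price is that your upper bound depends on the coupling produced \emph{inside} the proof of Lemma \ref{lemma:dist_supports_attained} rather than on its statement --- a dependency you correctly flag, and which is legitimate since that proof does exhibit a cluster point $\gamma\in C(\pi,\tilde\pi)$ with $\gamma d\le\rho$ and $d\ge\rho$ on $S_\pi\times S_{\tilde\pi}$, hence $d=\rho$ $\gamma$-a.e. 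The paper avoids invoking another proof's internals by reconstructing everything from the RFI dynamics, at the cost of needing the Strassen representation and the convexity property of $d_P$, both of which your argument bypasses.
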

\begin{proof}
  By Lemma \ref{lemma:dist_supports_attained} there exist $s \in S_{\pi}$
  and $\tilde s \in S_{\tilde \pi}$ such that $d(s,\tilde s ) =
  \dist(S_{\pi}, S_{\tilde \pi})$. First note that, if $\pi\neq \tilde
  \pi$, then $S_{\pi} \cap S_{\tilde\pi} = \emptyset$ by
  Corollary \ref{th:singular measure char}, and hence $d(s,\tilde s )
  = \dist(S_{\pi}, S_{\tilde \pi}) > 0$.\\
  Recall the notation $X_{k}^{x} := T_{\xi_{k-1}} \cdots T_{\xi_{0}}
  x$ for $x \in G$ and note that by
  Lemma \ref{lemma:couplings}\eqref{item:couplings1} and
  Lemma \ref{lemma:support_invariant_distr}, $\Supp \mathcal{L}(X_{k}^{s})
  \subset S_{\pi}$ and $\Supp \mathcal{L}(X_{k}^{\tilde s}) \subset
  S_{\tilde \pi}$. So it holds that $d(X_{k}^{s},X_{k}^{\tilde s}) \ge
  \dist(S_{\pi},S_{\tilde \pi})$ a.s.\ for all $k\in \mathbb{N}$.
  Since $T_{i}$ ($i \in I$) is nonexpansive  we have that
  $d(X_{k}^{s},X_{k}^{\tilde s}) \leq d(s,\tilde s)$ a.s.\ for all $k
  \in \mathbb{N}$. So, both inequalities together imply the equality
  \begin{align}\label{eq:const_dist1}
    d(X_{k}^{s},X_{k}^{\tilde s}) = d(s,\tilde s) \qquad \text{a.s. }
    \forall k \in \mathbb{N}.
  \end{align}
  Now, letting $c:= \min(1,d(s,\tilde s))$, we show that
  $d_{P}(\pi,\tilde \pi) = c$, where $d_{P}$ denotes the
  Prokhorov-L\`evy metric (see
  Lemma \ref{lemma:prokhorovDist_properties}). Indeed,  take $(X,Y) \in
  C(\mathcal{L}(X_{k}^{s}), \mathcal{L}(X_{k}^{\tilde s}))$. Again, by
  Lemma \ref{lemma:couplings}\eqref{item:couplings1} and
  Lemma \ref{lemma:support_invariant_distr} $\Supp \mathcal{L}(X) \subset
  S_{\pi}$ and $\Supp \mathcal{L}(Y) \subset S_{\tilde \pi}$ and hence
  $d(X,Y) \ge \dist(S_{\pi}, S_{\tilde \pi}) = d(s,\tilde s)$ a.s. We
  have, thus
  \begin{align*}
    \mathbb{P}(d(X,Y)> c - \delta) \ge \mathbb{P}(d(X,Y) > d(s,\tilde
    s) - \delta) = 1 \qquad \forall \delta >0,
  \end{align*}
  which implies $d_{P}(\mathcal{L}(X_{k}^{s}),
  \mathcal{L}(X_{k}^{\tilde s})) \ge c$ by
  Lemma \ref{lemma:prokhorovDist_properties}\eqref{item:prokLeviRep}. In
  particular, for $c=1$ it follows that $d_{P}(\mathcal{L}(X_{k}^{s}),
  \mathcal{L}(X_{k}^{\tilde s})) =1$, since $d_{P}$ is bounded by
  $1$. Now, let $c<1$, i.e.\ $c=d(s,\tilde s) < 1$. We have by
  \eqref{eq:const_dist1}
  \begin{align*}
    \inf_{(X,Y) \in C(\mathcal{L}(X_{k}^{s}),
      \mathcal{L}(X_{k}^{\tilde s}))} \mathbb{P}\left(d(X,Y) >
      c\right) \le \mathbb{P}\left(d(X_{k}^{s}, X_{k}^{\tilde s}) >
      c\right) = 0 \le c.
  \end{align*}
  Altogether we find that $d_{P}(\mathcal{L}(X_{k}^{s}),
  \mathcal{L}(X_{k}^{\tilde s})) = c$, again by
  Lemma \ref{lemma:prokhorovDist_properties}\eqref{item:prokLeviRep}. Since
  also $\Supp \nu_{k}^{s} \subset S_{\pi}$ and $\Supp \nu_{k}^{\tilde
    s} \subset S_{\tilde \pi}$, where $\nu_{k}^{x} = \tfrac{1}{k}
  \sum_{j=1}^{k}\mathcal{L}(X_{j}^{x})$ for any $x\in G$, it follows
  that
  \begin{align}\label{eq:charBalls}
    c \le d_{P}(\nu_{k}^{s},\nu_{k}^{\tilde s}) \le \max_{j=1,\ldots,k}
    d_{P}(\mathcal{L}(X_{j}^{s}), \mathcal{L}(X_{j}^{\tilde s})) =c
  \end{align}
  by Lemma \ref{lemma:prokhorovDist_properties}\eqref{item:prokhorov5}.
  Now taking the limit $k \to \infty$ of \eqref{eq:charBalls} and using
  Remark \ref{cor:weakCvgMetricSpace}, it follows that $d_{P}(\pi,\tilde\pi) = c$.
This proves the assertion.
\end{proof}

\begin{definition}[Besicovitch family]
  A family $\mathcal{B}$ of closed balls $B = \cb(x_{B},\epsilon_{B})$
  with $x_{B} \in G$ and $\epsilon_{B} > 0$ on the metric space
  $(G,d)$ is called a \emph{Besicovitch family} of balls if
  \begin{enumerate}[(i)]
  \item for every $B \in \mathcal{B}$ one has $x_{B} \not \in B' \in
    \mathcal{B}$ for all $B' \neq B$, and
  \item $\bigcap_{B \in \mathcal{B}} B \neq \emptyset$.
  \end{enumerate}
\end{definition}

\begin{definition}[$\sigma$-finite dimensional 
metric]\label{def:finiteDimMetric}
  Let $(G, d)$ be a metric space. We say that $d$ is \emph{finite
    dimensional} on a subset $D \subset G$ if there exist constants $K
  \ge 1$ and $0 < r \le \infty $ such that $\Card \mathcal{B} \le K$
  for every Besicovitch family $\mathcal{B}$ of balls in $(G, d)$
  centered on $D$ with radius $< r$.  We say that $d$ is
  \emph{$\sigma$-finite dimensional} if $G$ can be written as a
  countable union of subsets on which $d$ is finite dimensional.
\end{definition}

\begin{prop}[differentiation theorem, \cite{Preiss81}]
% , Theorem 5.12 of \cite{Rigot2018}]
\label{th:diff thm}
  Let $(G, d)$ be a  separable complete metric space. For every locally
  finite Borel regular measure $\lambda$ over $(G, d)$, it holds that
  \begin{equation}\label{eq:diff thm}
    \lim_{r \to 0} \frac{1}{\lambda(\cb(x,r))} \int_{\cb(x,r)} f(y)
    \lambda(\dd{y}) = f(x) \qquad \text{for } \lambda\text{-a.e. } x
    \in G,~\forall f \in L_{\mathrm{loc}}^{1}(G,\lambda)
  \end{equation}
  if and only if $d$
  is $\sigma$-finite dimensional.
\end{prop}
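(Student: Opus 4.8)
The plan is to prove the two implications separately, using a Besicovitch-type covering theorem as the bridge between the finite-dimensionality hypothesis and the differentiation conclusion.

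For sufficiency ($\sigma$-finite dimensionality $\Rightarrow$ \eqref{eq:diff thm}), I would first observe that the bound $\Card\mathcal{B}\le K$ on Besicovitch families of balls centered on a finite-dimensional set $D$ with small radii is exactly the combinatorial input needed for a Besicovitch covering theorem: from any cover of a set $A\subset D$ by closed balls centered at points of $A$ one can extract a countable subfamily of bounded multiplicity (at most $N=N(K)$) still covering $A$. With this in hand I would derive a weak-type $(1,1)$ estimate for the centered maximal operator $M_\lambda f(x)\equiv \sup_{r>0}\tfrac{1}{\lambda(\cb(x,r))}\int_{\cb(x,r)}\abs{f}\,\dd{\lambda}$, namely $\lambda(\klam{M_\lambda f>t})\le C t^{-1}\norm{f}_{L^1}$ for all $t>0$ and $f\in L^1(G,\lambda)$.

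From the maximal inequality, \eqref{eq:diff thm} follows by the standard density argument. For a continuous $g$ supported on a bounded set the ball-averages converge to $g(x)$ at every $x$ by continuity, so the limit holds trivially. For general $f\in L^1_{\mathrm{loc}}$ I would, on each bounded piece, split $f=g+h$ with $g$ continuous and $\norm{h}_{L^1}$ small, and bound the ``bad set'' on which the limsup of the averaging error exceeds $\alpha$ by $\lambda(\klam{M_\lambda h>\alpha/2})+\lambda(\klam{\abs{h}>\alpha/2})$; the weak-$(1,1)$ bound together with Chebyshev's inequality makes both terms $O(\norm{h}_{L^1}/\alpha)$, which tends to $0$. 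To pass from the finite-dimensional to the $\sigma$-finite case I would run this argument on each piece $D_n$ of the decomposition $G=\bigcup_n D_n$ (where the uniform bound $K_n$ is available) and note that the exceptional set is a countable union of $\lambda$-null sets.

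For necessity I would argue by contraposition: if $d$ is not $\sigma$-finite dimensional, then no countable decomposition of $G$ confines the cardinalities of Besicovitch families, so some subset carries arbitrarily large Besicovitch families at arbitrarily small scales. Following Preiss, the task is to convert this covering pathology into an analytic one, constructing a locally finite Borel regular measure $\lambda$ and an integrable $f$ whose ball-averages oscillate and fail to converge to $f$ on a set of positive $\lambda$-measure: the large Besicovitch families supply many nested balls about common points, and one distributes mass among their centers so that the averages of a suitable $f$ over balls of radius $r$ alternately overshoot and undershoot as $r\to 0$. I expect this necessity direction to be the main obstacle. The sufficiency proof is the textbook combination of a Besicovitch covering theorem with the maximal-function/density argument, whereas the construction of the counterexample measure when finite-dimensionality fails is the genuinely delicate part of the theorem, and it is there that I would rely most heavily on \cite{Preiss81}.
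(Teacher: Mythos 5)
First, a point of comparison: the paper itself offers no proof of this proposition --- it is imported wholesale from Preiss \cite{Preiss81}, exactly as the bracketed attribution indicates --- so there is no in-paper argument to measure your proposal against; the only meaningful question is whether your sketch would actually reconstruct Preiss's theorem.

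As it stands it would not, and the gap is not confined to the necessity direction that you yourself flag. The opening step of your sufficiency argument --- that the bound $\Card \mathcal{B}\le K$ on Besicovitch families is ``exactly the combinatorial input needed'' to extract, from any cover of $A$ by closed balls centered on $A$, a countable subcover of multiplicity at most $N(K)$ --- is precisely the implication from the \emph{weak} Besicovitch covering property (bounded cardinality of families of balls with a common point whose centers lie mutually outside one another, which is the paper's definition of finite dimensionality) to the Besicovitch covering property proper (bounded-multiplicity subcovers). In $\mathbb{R}^{n}$ that passage is carried by Euclidean geometry: the standard proofs, e.g.\ in \cite{mattila1995geometry}, run a greedy selection by radius and then bound the multiplicity using angle and volume comparisons, not merely the cardinality bound on Besicovitch families. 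In a general complete separable metric space no such argument is available. Concretely, the greedy selection only guarantees that \emph{later} centers avoid \emph{earlier} balls; a later ball may well contain an earlier center (its radius can exceed their distance even though the earlier radius does not), so the selected balls through a common point need not form a Besicovitch family in the mutual sense, and the hypothesis $\Card \mathcal{B}\le K$ cannot be applied to them. Hence the weak-$(1,1)$ maximal inequality does not follow the way you propose; closing or circumventing this step is the actual content of the sufficiency half of Preiss's theorem, and your sketch assumes it. Since the necessity direction is explicitly delegated to \cite{Preiss81}, the only genuinely routine portion of the proposal is the density argument (continuity on a dense class plus Chebyshev and the maximal bound), and the proposal as a whole reduces in substance to the same citation the paper makes.
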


\begin{prop}[Besicovitch covering property in 
$\mathcal{E}$]\label{prop:weak_besicovitch_Rn}
  Let $(G,d)$ be separable complete metric space with finite dimensional metric 
$d$ and
  let $\mymap{T_{i}}{G}{G}$ be nonexpansive, $i \in I$. The
  cardinality of any Besicovitch family of balls in
  $(\mathcal{E},d_{P})$ is bounded by the same constant that bounds
  the cardinality of Besicovitch families in $G$.
\end{prop}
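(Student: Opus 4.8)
The plan is to transfer a given Besicovitch family in $(\mathcal{E},d_P)$ to a Besicovitch family in $(G,d)$ of the same cardinality and then invoke finite dimensionality of $d$. Two earlier facts make this possible: the characterization of Prokhorov--L\`evy balls in Lemma \ref{lem:char_ballsEW}, namely $\tilde\pi\in\cb(\pi,\epsilon)\iff S_{\tilde\pi}\subset\cb(S_\pi,\epsilon)$ for $\epsilon\in(0,1)$; and Lemma \ref{lemma:dist_supports_attained}, which guarantees for ergodic $\pi,\tilde\pi$ that $\dist(s,S_{\tilde\pi})$ equals the constant $\dist(S_\pi,S_{\tilde\pi})$ for \emph{every} $s\in S_\pi$ and is attained in $S_{\tilde\pi}$.

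First I would reduce to small radii. Since $d_P\le 1$, any ball of radius $\ge 1$ equals $\mathcal{E}$, and such a ball cannot coexist with a second ball in a Besicovitch family (the second center would lie in it), so those families are singletons and the bound is trivial. I therefore fix a Besicovitch family $\{\cb(\pi_B,\epsilon_B)\}_{B\in\mathcal{B}}$ with $\epsilon_B\in(0,\min(1,r))$, choose a common point $\pi_\ast\in\bigcap_B\cb(\pi_B,\epsilon_B)$, and pick any $x_\ast\in S_{\pi_\ast}$.

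For each $B$, the inclusion $\pi_\ast\in\cb(\pi_B,\epsilon_B)$ gives $S_{\pi_\ast}\subset\cb(S_{\pi_B},\epsilon_B)$ by Lemma \ref{lem:char_ballsEW}, so $\dist(x_\ast,S_{\pi_B})\le\epsilon_B$; by Lemma \ref{lemma:dist_supports_attained} this distance is attained at some $x_B\in S_{\pi_B}$ with $d(x_\ast,x_B)=\dist(x_\ast,S_{\pi_B})\le\epsilon_B$. I then claim $\{\cb(x_B,\epsilon_B)\}_{B\in\mathcal{B}}$ is a Besicovitch family centered on $S\subset G$ with radius $<r$. Its common point is $x_\ast$, since $d(x_\ast,x_B)\le\epsilon_B$ for all $B$. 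For the center--exclusion property, for $B\ne B'$ the Besicovitch property in $\mathcal{E}$ gives $\pi_B\notin\cb(\pi_{B'},\epsilon_{B'})$, hence $S_{\pi_B}\not\subset\cb(S_{\pi_{B'}},\epsilon_{B'})$; but $\dist(s,S_{\pi_{B'}})$ is constant over $s\in S_{\pi_B}$ by Lemma \ref{lemma:dist_supports_attained}, so in fact $\dist(S_{\pi_B},S_{\pi_{B'}})>\epsilon_{B'}$, and therefore $d(x_B,x_{B'})\ge\dist(S_{\pi_B},S_{\pi_{B'}})>\epsilon_{B'}$, i.e. $x_B\notin\cb(x_{B'},\epsilon_{B'})$.

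Finally, since $d(x_B,x_{B'})>\epsilon_{B'}>0$ for $B\ne B'$, the centers are pairwise distinct, so $B\mapsto\cb(x_B,\epsilon_B)$ is injective and the transferred family has cardinality $\Card\mathcal{B}$. As $d$ is finite dimensional on $G$ with constant $K$ and radius $r$, this transferred family has cardinality at most $K$, whence $\Card\mathcal{B}\le K$. This establishes finite dimensionality of $(\mathcal{E},d_P)$ with the same constant $K$ and radius $\min(1,r)$, which coincides with \emph{every} Besicovitch family when $r\ge 1$ (in particular for $G=\mathbb{R}^n$). I expect the delicate step to be center--exclusion: a mere failure of a support inclusion only yields one far-away point, and it is precisely the constancy of $\dist(\cdot,S_{\pi_{B'}})$ over $S_{\pi_B}$ from Lemma \ref{lemma:dist_supports_attained} that upgrades this to the separation $\dist(S_{\pi_B},S_{\pi_{B'}})>\epsilon_{B'}$ usable for the chosen representatives $x_B,x_{B'}$.
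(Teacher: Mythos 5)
Your proof is correct and follows essentially the same route as the paper's: reduce to radii in $(0,1)$, transfer the family to $G$ via Lemma \ref{lem:char_ballsEW} together with the attainment and constancy of $\dist(\cdot,S_{\pi_{B'}})$ from Lemma \ref{lemma:dist_supports_attained}, take as centers the closest points in each $S_{\pi_B}$ to a point in the support of a measure in the common intersection, and invoke finite dimensionality of $d$ on $G$. Your version is in fact slightly more careful than the paper's on two minor points: the explicit restriction to radii $<\min(1,r)$ so that the transferred family falls under the definition of finite dimensionality, and the injectivity of $B\mapsto\cb(x_B,\epsilon_B)$ needed to preserve cardinality.
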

\begin{proof}
  Let $\mathcal{B}$ be a Besicovitch family of closed balls $B=
  \cb(\pi_{B},\epsilon_{B})$ in $(\mathcal{E},d_{P})$, where $\pi_{B}
  \in \mathcal{E}$ and $\epsilon_{B} > 0$. Note that if $\epsilon_{B}
  \ge 1$, then $\abs{\mathcal{B}} = 1$, since in that case $B=
  \mathcal{E}$ since $d_{P}$ is bounded by $1$. So let
  $\abs{\mathcal{B}} >1$, that implies $\epsilon_{B}<1$ for all $B \in
  \mathcal{B}$.

  The defining properties of a Besicovitch family translate then with
  help of Lemma \ref{lem:char_ballsEW} into
  \begin{align} \label{eq:cvg_average_besicovitch1} \pi_{B} \not \in
    B', \quad \forall B' \in \mathcal{B}\setminus \{B\}
    &&\Longleftrightarrow&& S_{\pi_{B}} \cap \cb(
    S_{\pi_{B'}},\epsilon_{B'}) = \emptyset, \qquad \forall B' \in
    \mathcal{B}\setminus \{B\}, 
  \end{align}
  and
  \begin{align}
    \label{eq:cvg_average_besicovitch2}
    \bigcap_{B \in \mathcal{B}} B \neq \emptyset
    &&\Longleftrightarrow&& \bigcap_{B \in \mathcal{B}}
    \cb(S_{\pi_{B}},\epsilon_{B}) \neq \emptyset.
  \end{align}
  Now fix $\pi$ in the latter intersection in
  \eqref{eq:cvg_average_besicovitch2} and let $s \in S_{\pi}$. Also
  fix for each $B \in \mathcal{B}$ a point $s_{B} \in S_{\pi_{B}}$
  with the property that $s_{B} \in \argmin_{\tilde s \in S_{\pi_{B}}}
  d(s,\tilde s)$ (possible by Lemma \ref{lemma:dist_supports_attained}).
  Then the family $\mathcal{C}$ of balls $\cb(s_{B},\epsilon_{B})
  \subset G$, $B \in \mathcal{B}$ is also a Besicovitch family: We
  have $s_{B} \not \in B'$ for $B \neq B'$ due to
  \eqref{eq:cvg_average_besicovitch1} and by the choice of $s_{B}$ one
  has $s \in \bigcap_{B \in \mathcal{C}} B$.
  Since the cardinality of any Besicovitch family in $G$ is bounded by
  a uniform constant, it follows, that also the cardinality of
  $\mathcal{B}$ is uniformly bounded.
\end{proof}
\begin{rem}[Euclidean metric on $\mathbb{R}^{n}$ is finite
  dimensional] \label{rem:norm_fin_dim} The cardinality of any
  Besicovitch family in $\mathbb{R}^{n}$ is uniformly bounded
  depending on $n$ \cite[Lemma 2.6]{mattila1995geometry}.
\end{rem}

\begin{lemma}[equality around support of ergodic measures implies
  equality of measures]\label{lemma:equalityBallsimpliesEqOfMeasures}
  Let $(G,d)$ be a separable complete metric space with the finite dimensional 
  metric $d$ and let $\mymap{T_{i}}{G}{G}$ be nonexpansive ($i \in I$).  If
  $\pi_{1},\pi_{2} \in \inv \mathcal{P}$ satisfy
  \begin{align}\label{eq:equalityBallsimpliesEqOfMeasures}
    \pi_{1}(\cb(S_{\pi}, \epsilon)) = \pi_{2}(\cb(S_{\pi}, \epsilon))
  \end{align}
  for all $\epsilon>0$ and all $\pi \in \mathcal{E}$, then $\pi_{1} =
  \pi_{2}$.
\end{lemma}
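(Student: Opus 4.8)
The plan is to pass to the ergodic decomposition, translate the hypothesis into a statement about the representing measures on $\mathcal{E}$, and then finish with the differentiation theorem (Proposition \ref{th:diff thm}) powered by the Besicovitch bound of Proposition \ref{prop:weak_besicovitch_Rn}. By Proposition \ref{thm:decomp_ergodic_stat_measures} there are probability measures $q_1,q_2$ on $\mathcal{E}$ with $\pi_i(A)=\int_{\mathcal{E}}\nu(A)\,q_i(\dd{\nu})$ for every Borel $A$ and $i\in\{1,2\}$. It suffices to prove $q_1=q_2$, since then $\pi_1(A)=\int_{\mathcal{E}}\nu(A)\,q_1(\dd{\nu})=\int_{\mathcal{E}}\nu(A)\,q_2(\dd{\nu})=\pi_2(A)$ for all Borel $A$, giving $\pi_1=\pi_2$.

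First I would evaluate $\nu(\cb(S_{\pi},\epsilon))$ for $\nu,\pi\in\mathcal{E}$ and $\epsilon\in(0,1)$, writing $S_{\nu}\equiv\Supp\nu$. Recall from the proof of Lemma \ref{lem:char_ballsEW} the identity $d_{P}(\nu,\pi)=\min(1,\dist(S_{\nu},S_{\pi}))$ together with the equivalence $\nu\in\cb(\pi,\epsilon)\iff S_{\nu}\subset\cb(S_{\pi},\epsilon)$. If $\nu\in\cb(\pi,\epsilon)$, then $S_{\nu}\subset\cb(S_{\pi},\epsilon)$ and hence $\nu(\cb(S_{\pi},\epsilon))\ge\nu(S_{\nu})=1$. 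If $\nu\notin\cb(\pi,\epsilon)$, then since $\epsilon<1$ the identity forces $\dist(S_{\nu},S_{\pi})>\epsilon$, so every point of $S_{\nu}$ lies at distance $>\epsilon$ from $S_{\pi}$, i.e.\ $S_{\nu}\cap\cb(S_{\pi},\epsilon)=\emptyset$ and $\nu(\cb(S_{\pi},\epsilon))=0$. Integrating this two-valued function against $q_i$ via Proposition \ref{thm:decomp_ergodic_stat_measures} yields, for every $\pi\in\mathcal{E}$ and $\epsilon\in(0,1)$,
\[
\pi_i(\cb(S_{\pi},\epsilon))=\int_{\mathcal{E}}\nu(\cb(S_{\pi},\epsilon))\,q_i(\dd{\nu})=q_i(\cb(\pi,\epsilon)).
\]
Thus the hypothesis \eqref{eq:equalityBallsimpliesEqOfMeasures} says precisely that $q_1$ and $q_2$ assign equal mass to every closed $d_{P}$-ball in $\mathcal{E}$ of radius less than $1$.

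It then remains to show that two finite Borel measures on $(\mathcal{E},d_{P})$ agreeing on all such balls must coincide. I would set $\lambda:=q_1+q_2$, a finite (hence locally finite) Borel regular measure with $q_i\ll\lambda$, and put $f_i:=\dd{q_i}/\dd{\lambda}$. By Proposition \ref{prop:weak_besicovitch_Rn} the space $(\mathcal{E},d_{P})$ inherits the finite Besicovitch bound of $G$, so its metric is $\sigma$-finite dimensional and Proposition \ref{th:diff thm} applies: for $\lambda$-a.e.\ $\pi$,
\[
f_i(\pi)=\lim_{\epsilon\to 0}\frac{1}{\lambda(\cb(\pi,\epsilon))}\int_{\cb(\pi,\epsilon)}f_i\,\dd{\lambda}=\lim_{\epsilon\to 0}\frac{q_i(\cb(\pi,\epsilon))}{\lambda(\cb(\pi,\epsilon))}.
\]
Because the numerators coincide for $i=1,2$, we get $f_1=f_2$ $\lambda$-a.e., hence $q_1=q_2$, and therefore $\pi_1=\pi_2$.

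The main obstacle will be the careful deployment of the differentiation theorem: Proposition \ref{th:diff thm} is phrased for a separable \emph{complete} space, whereas $\mathcal{E}$ need not be closed in the Polish space $(\mathscr{P}(G),d_{P})$. I would handle this by noting that differentiation is only needed at $\lambda$-a.e.\ point, and $\lambda(\mathcal{E})=\lambda(\mathscr{P}(G))$, so these points lie in $\mathcal{E}$; one may then run the argument on the complete space $(\mathscr{P}(G),d_{P})$ using balls centered at points of $\mathcal{E}$, where the Besicovitch bound of Proposition \ref{prop:weak_besicovitch_Rn} is in force. One also checks the harmless nondegeneracy $\lambda(\cb(\pi,\epsilon))>0$ for all small $\epsilon$ at $\lambda$-a.e.\ $\pi$, which holds throughout the support of $\lambda$.
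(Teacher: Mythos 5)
Your proposal is correct and follows essentially the same route as the paper's own proof: ergodic decomposition via Proposition \ref{thm:decomp_ergodic_stat_measures}, the two-valued evaluation $\nu(\cb(S_{\pi},\epsilon))\in\{0,1\}$ coming from Lemma \ref{lem:char_ballsEW}, Radon--Nikodym densities with respect to $q_1+q_2$, and the differentiation theorem (Proposition \ref{th:diff thm}) justified by the Besicovitch bound of Proposition \ref{prop:weak_besicovitch_Rn}. The only difference is that you explicitly flag and patch the fact that $(\mathcal{E},d_{P})$ need not be complete before invoking Proposition \ref{th:diff thm} — a subtlety the paper's proof passes over in silence — which is a welcome refinement rather than a different argument.
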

\begin{proof}
  From Proposition \ref{thm:decomp_ergodic_stat_measures} follows the existence
  of probability measures $q_{1},q_{2}$ on the set $\mathcal{E}$ of
  ergodic measures for $\mathcal{P}$ such that one has
  \begin{align*}
    \pi_{j}(A) = \int_{\mathcal{E}} \pi(A) q_{j}(\dd{\pi}), \qquad A
    \in \mathcal{B}(G),\, j=1,2.
  \end{align*}
  If we set $q = \tfrac{1}{2}(q_{1}+q_{2})$, then by the Radon-Nikodym 
  theorem, there are densities $f_{1},f_{2} \ge 0$ on $\mathcal{E}$
  with $q_{j} = f_{j} \cdot q$ and hence
  \begin{align*}
    \pi_{j}(A) = \int_{\mathcal{E}} \pi(A) f_{j}(\pi) q(\dd{\pi}),
    \qquad A \in \mathcal{B}(G),\, j=1,2.
  \end{align*}
  For $q$-measurable\ subsets $E \subset \mathcal{E}$, one can define a
  probability measure on $\mathcal{E}$ via
  \begin{align}\label{eq:pi char}
    \tilde \pi_{j} (E) := \int_{\mathcal{E}} \1_{E}(\pi) f_{j}(\pi)
    q(\dd{\pi}), \qquad j=1,2.
  \end{align}
  One then has for $\epsilon >0$ and $\pi \in \mathcal{E}$ that
  \begin{align}
    \label{eq:cvg_average_pi_equals_piTilde}
    \pi_{j}(\cb(S_{\pi},\epsilon)) = \tilde
    \pi_{j}(\cb(\pi,\epsilon)), \qquad j=1,2,
  \end{align}
  where $\cb(\pi,\epsilon) := \mysetc{\tilde \pi \in \mathcal{E}}{
    d_{P}(\tilde\pi,\pi) \le\epsilon}$. This is due to
  Lemma \ref{lem:char_ballsEW}, from which follows
  \begin{align*}
    \tilde\pi(\cb(S_{\pi},\epsilon)) =
    \begin{cases}
      1, & \tilde \pi \in \cb(\pi,\epsilon)\\
      0, & \text{else}
    \end{cases}.
  \end{align*}
  With the above characterizations of $\pi_j$ and $\tilde \pi_j$, we
  can use Proposition \ref{th:diff thm} to show that $f_{1}=f_{2}$ $q$-a.s.,
  which, together with \eqref{eq:pi char}, would imply that $\pi_{1} =
  \pi_{2}$, as claimed.  To apply Proposition \ref{th:diff thm} we require that
  $d_{P}$ is finite dimensional.  But this follows from
  Proposition \ref{prop:weak_besicovitch_Rn}.  So Proposition \ref{th:diff thm} applied to
  $\tilde \pi_{j}$ with respect to $q$ then gives $q$-a.s.
  \begin{equation}\label{eq:rapsberry}
    \lim_{\epsilon \to 0} \frac{\tilde \pi_{j}(\cb(\pi,
      \epsilon))}{q(\cb(\pi,\epsilon))} = f_{j}(\pi), \quad j=1,2.
  \end{equation}
  And since $\tilde\pi_{1}(\cb(\pi,\epsilon)) =
  \tilde\pi_{2}(\cb(\pi,\epsilon))$ by
  \eqref{eq:cvg_average_pi_equals_piTilde} and assumption
  \eqref{eq:equalityBallsimpliesEqOfMeasures}, we have $f_{1}=f_{2}$
  $q$-a.s., which completes the proof.
\end{proof}
\begin{rem}\label{rem:equality}
  In the assertion of Lemma \ref{lemma:equalityBallsimpliesEqOfMeasures},
  it is enough to claim the existence of a sequence
  $(\epsilon_{k}^{\pi})_{k \in \mathbb{N}} \subset \mathbb{R}_{+}$
  with $\epsilon_{k}^{\pi} \to 0$ as $k \to \infty$ satisfying
  \begin{align*}
    \pi_{1}(\cb(S_{\pi},\epsilon_{k}^{\pi})) =
    \pi_{2}(\cb(S_{\pi},\epsilon_{k}^{\pi})) \qquad \forall \pi\in
    \mathcal{E}, \, \forall k \in \mathbb{N},
  \end{align*}
  because from Proposition \ref{th:diff thm} one has the existence of the limit
  in \eqref{eq:rapsberry} $q$-a.s.
%   So by the definition of
%   the limit, any particular null-sequence needs to yield the same
%   limit in \cref{eq:rapsberry}.
\end{rem}

\subsection{Convergence theory for $\alpha$-firmly nonexpansive 
mappings}
\label{sec:cvgAverages}

Continuing the development of the convergence theory under
greater regularity assumptions on the mappings $T_{i}$ ($i \in I$),
in this section we examine what is achievable under the assumption
that the mappings $T_i$ are $\alpha$-fne (Definition \ref{d:a-fne}). 
We restrict ourselves to the Euclidean space
$(\mathbb{R}^{n}, \norm{\cdot})$, and begin with a technical lemma that describes properties of sequences
whose relative expected distances are invariant under $T_{\xi}$.
\begin{lemma}[constant expected separation]\label{lemma:const_dist}
  Let $\mymap{T_{i}}{\mathbb{R}^{n}}{\mathbb{R}^{n}}$ be $\alpha$-fne
with $\alpha_{i}\le \alpha<1$, $i\in I$. Let
  $\mu, \nu \in \mathscr{P}(\mathbb{R}^{n})$ and $X \sim \mu$, $Y \sim
  \nu$ independent of $(\xi_{k})$ satisfy
  \begin{align*}
    \mathbb{E}\left[\norm{X_{k}^{X} - X_{k}^{Y}}^{2}\right] =
    \mathbb{E}\left[\norm{X-Y}^{2}\right] \qquad \forall k\in
    \mathbb{N},
  \end{align*}
  where $X_{k}^{x}:= T_{\xi_{k-1}}\!\!\!\cdots T_{\xi_{0}}x$ for $x
  \in \mathbb{R}^{n}$ is the RFI sequence started at $x$. Then for
  $\mathbb{P}^{(X,Y)}$-a.e.\ $(x,y) \in \mathbb{R}^{n}\times
  \mathbb{R}^{n}$ we have $X_{k}^{x}-X_{k}^{y}= x-y$
  $\mathbb{P}$-a.s.\ for all $k \in \mathbb{N}$. Moreover, if there
  exists an invariant measure for $\mathcal{P}$, then
  \begin{align*}
    \pi^{x}(\cdot) = \pi^{y}(\cdot-(x-y)) \qquad
    \mathbb{P}^{(X,Y)}\mbox{-a.s.}
  \end{align*}
  for the limiting invariant measures $\pi^{x}$ of the Ces\`{a}ro
  average of $(\delta_{x}\mathcal{P}^{k})$ and $\pi^{y}$ of the
  Ces\`{a}ro average of $(\delta_{y}\mathcal{P}^{k})$.
\end{lemma}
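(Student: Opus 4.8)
The plan is to extract, from the hypothesis that the expected squared separation is constant in $k$, a rigidity statement via the defining inequality of $\alpha$-firmly nonexpansive mappings, and then to transport this rigidity through to the Ces\`{a}ro limits. In the Euclidean setting, Definition \ref{d:a-fne} with violation $\epsilon=0$ together with the representation \eqref{eq:nice ineq} of $\psi_2$ gives, for each $i\in I$,
\[
  \|T_i u - T_i v\|^2 \le \|u-v\|^2 - \tfrac{1-\alpha_i}{\alpha_i}\,\|(u - T_i u) - (v - T_i v)\|^2 ,
\]
and since $\alpha_i\le\alpha<1$ the coefficient satisfies $\tfrac{1-\alpha_i}{\alpha_i}\ge\tfrac{1-\alpha}{\alpha}>0$. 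First I would apply this with $u=X_k^X$, $v=X_k^Y$, $i=\xi_k$, which is legitimate because $X_{k+1}^{\,\cdot}=T_{\xi_k}X_k^{\,\cdot}$.

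Taking expectations and invoking the hypothesis $\mathbb{E}[\|X_k^X-X_k^Y\|^2]=\mathbb{E}[\|X-Y\|^2]$ for every $k$, the two consecutive separation terms coincide, so the nonnegative displacement term is squeezed to zero:
\[
  \tfrac{1-\alpha}{\alpha}\,\mathbb{E}\!\left[\big\|(X_k^X-X_{k+1}^X)-(X_k^Y-X_{k+1}^Y)\big\|^2\right]
  \le \mathbb{E}[\|X_k^X-X_k^Y\|^2]-\mathbb{E}[\|X_{k+1}^X-X_{k+1}^Y\|^2]=0 .
\]
Hence this expectation vanishes for every $k$. This telescoping subtraction requires the common value $\mathbb{E}[\|X-Y\|^2]$ to be finite, which I would note is implicit in the equality hypothesis.

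Next, since $(X,Y)\indep(\xi_k)$, I would disintegrate over $\mathbb{P}^{(X,Y)}$ by Fubini: the vanishing expectation means that for $\mathbb{P}^{(X,Y)}$-a.e.\ $(x,y)$ one has $(X_k^x-X_{k+1}^x)=(X_k^y-X_{k+1}^y)$ $\mathbb{P}$-a.s., and a countable intersection over $k\in\Nbb$ produces a single full-measure set of pairs $(x,y)$ on which this holds for all $k$ simultaneously. For such $(x,y)$, writing $X_{k+1}^x-X_{k+1}^y=(X_k^x-X_k^y)-[(X_k^x-X_{k+1}^x)-(X_k^y-X_{k+1}^y)]$ and telescoping down to $k=0$ yields $X_k^x-X_k^y=x-y$ $\mathbb{P}$-a.s.\ for all $k$, which is the first assertion. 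For the second, fix such a good pair; the identity $X_k^x=X_k^y+(x-y)$ a.s.\ shows $\mathcal{L}(X_k^x)$ is the translate of $\mathcal{L}(X_k^y)$ by $x-y$, whence $\nu_k^x(A)=\nu_k^y(A-(x-y))$ for all Borel $A$. Because $\alpha$-fne mappings are nonexpansive (Lemma \ref{lem:psi_f nonneg}) and an invariant measure exists, Theorem \ref{cor:cesaroConvergenceRn} gives $\nu_k^x\to\pi^x$ and $\nu_k^y\to\pi^y$ in distribution; passing to the limit, using continuity of the pushforward under the translation $z\mapsto z+(x-y)$ for convergence in distribution and uniqueness of weak limits, I would conclude $\pi^x(\cdot)=\pi^y(\cdot-(x-y))$ for $\mathbb{P}^{(X,Y)}$-a.e.\ $(x,y)$.

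I expect the main obstacle to be the measure-theoretic bookkeeping in the disintegration step, namely ensuring that the $\mathbb{P}$-a.s.\ equalities in the $\xi$-variables hold on a \emph{common} full-measure set of pairs $(x,y)$ and \emph{uniformly} over all $k$, rather than the analytic estimate, which is a direct telescoping of the $\alpha$-fne inequality. A secondary technical point is justifying finiteness of the common second moment so that the telescoping subtraction is valid.
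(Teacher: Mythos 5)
Your proposal is correct and follows essentially the same route as the paper's proof: the Hilbert-space characterization \eqref{eq:paafne2} of $\alpha$-fne mappings forces the displacement terms $\mathbb{E}\left[\norm{(X_{k}^{X}-X_{k+1}^{X})-(X_{k}^{Y}-X_{k+1}^{Y})}^{2}\right]$ to vanish, disintegration via $(X,Y)\indep(\xi_{k})$ yields $X_{k}^{x}-X_{k}^{y}=x-y$ a.s.\ for a.e.\ $(x,y)$, and the translation identity is passed to the Ces\`{a}ro limits via Theorem \ref{cor:cesaroConvergenceRn}. The only differences are cosmetic (per-step subtraction versus the paper's telescoped sum) plus your explicit handling of the finiteness of $\mathbb{E}\left[\norm{X-Y}^{2}\right]$ and the countable intersection over $k$, both of which the paper leaves implicit.
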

\begin{proof}
  By the Hilbert space characterization of $\alpha$-fne mappings 
  \eqref{eq:paafne2}, one has
  \begin{align*}
    \mathbb{E}\left[\norm{X-Y}^{2}\right] &\ge \mathbb{E}\left[ 
\norm{T_{\xi_{0}} X -
      T_{\xi_{0}} Y}^{2}\right] + \tfrac{1-\alpha}{\alpha}
    \mathbb{E}\left[\norm{(X-T_{\xi_{0}}X) - (Y-T_{\xi_{0}}Y)}^{2}\right] \\
    &\ge \dots \\
    & \ge \mathbb{E}\left[ \norm{T_{\xi_{k-1}}\cdots T_{\xi_{0}} X -
        T_{\xi_{k-1}}\cdots T_{\xi_{0}} Y}^{2}\right] \\ &  +
    \tfrac{1-\alpha}{\alpha} \sum_{j=0}^{k-1}\mathbb{E}\left[
    \norm{(T_{\xi_{j-1}}\cdots T_{\xi_{-1}} X-T_{\xi_{j}}\cdots
      T_{\xi_{0}}X) - (T_{\xi_{j-1}}\cdots
      T_{\xi_{-1}}Y-T_{\xi_{j}}\cdots T_{\xi_{0}}Y)}^{2} \right],
  \end{align*}
  where we used $T_{\xi_{-1}}:= \id$ for a simpler representation of
  the sum. We will denote $X_{k}^{x} = T_{\xi_{k-1}}\cdots T_{\xi_{0}}
  x$.  The assumption $\mathbb{E}\left[\norm{X_{k}^{X} - X_{k}^{Y}}^{2}\right] =
  \mathbb{E}\left[\norm{X-Y}^{2}\right]$ for all $k \in \mathbb{N}$ then 
implies,
  that for $j=1,\dots,k$ $\mathbb{P}$-a.s.\
  \begin{align*}
    X_{k}^{X}-X_{k-1}^{X} = X_{k}^{Y}-X_{k-1}^{Y} 
    \quad (k \in\mathbb{N}),
  \end{align*}
  and hence by induction
  \begin{align*}
    X_{k}^{X} - X_{k}^{Y} = X-Y.
  \end{align*}
  By disintegrating
  % the disintegration theorem \cref{thm:disinteg} 
  and using $(X,Y)
  \indep (\xi_{k})$ we have $\mathbb{P}$-a.s.\
  \begin{align*}
    0 &= \cex{\norm{(X-X_{k}^{X}) - (Y-X_{k}^{Y} )}^{2}}{X,Y} \\ &=
    \int_{I^{k+1}} \norm{(X-T_{i_{k}} \cdots T_{i_{0}}X) - (Y-T_{i_{k}}
      \cdots T_{i_{0}}Y)}^{2} \mathbb{P}^{\xi}(\dd{i_{k}}) \cdots
    \mathbb{P}^{\xi}(\dd{i_{0}}).
  \end{align*}
 Consequently, for $\mathbb{P}^{(X,Y)}$-a.e.\ $(x,y) \in \mathbb{R}^{n}
  \times \mathbb{R}^{n}$, we have 
  \begin{align*}
    X_{k}^{x} - X_{k}^{y} = x-y \quad \forall k \in \mathbb{N}
    \quad \mathbb{P}-\mbox{a.s.}
  \end{align*}
  So in particular for any $A \in \mathcal{B}(\mathbb{R}^{n})$
  \begin{align*}
    p^{k}(x,A) = \mathbb{P}(X_{k}^{x} \in A) = \mathbb{P}(X_{k}^{y}
    \in A-(x-y)) = p^{k}(y,A-(x-y))
  \end{align*}
  and hence, denoting $f_{h}=f(\cdot + h)$ and $\nu_{k}^{x} =
  \tfrac{1}{k} \sum_{j=1}^{k} p^{j}(x,\cdot)$, one also has for $f \in
  C_{b}(\mathbb{R}^{n})$ by Theorem \ref{cor:cesaroConvergenceRn}
  \begin{equation*}
    \nu_{k}^{y}f_{x-y} \to \pi^{y} f_{x-y} = \pi_{x-y}^{y} f \quad{and}\quad
    \nu_{k}^{x} f \to \pi^{x}f\mbox{ as }k\to\infty,
  \end{equation*}
  where $ \pi_{x-y} ^{y}:= \pi^{y} (\cdot -
  (x-y))$. So from $\nu_{k}^{y}f_{x-y} = \nu_{k}^{x} f$ for any $f \in
  C_{b}(\mathbb{R}^{n})$ and $k\in\mathbb{N}$ it follows 
  that $\pi_{x-y}^{y} =\pi^{x}$.
\end{proof}

We can now give the proof of the second main result.  
For a given $\mymap{h}{\mathbb{R}^{n} \times \mathbb{R}^{n}}{\mathbb{R}}$
  we will define sequences of functions $(\hhbar_{k})$ on $\mathbb{R}^{n} \times
  \mathbb{R}^{n}$ via
  \begin{align*}
    \hhbar_{k}(x,y) \equiv \mathbb{E}\left[h(X_{k}^{x}, X_{k}^{y}) \right], 
    \quad X_{k}^{z}:= T_{\xi_{k-1}} \cdots T_{\xi_{0}}z \mbox{ for any }z \in
  \mathbb{R}^{n}\quad (k \in \mathbb{N}).
  \end{align*}
    Note that, by continuity of
  $T_{i}$, $i \in I$ and Lebesgue's dominated convergence theorem,  
  $\hhbar_{k} \in C_{b}(\mathbb{R}^{n}\times \mathbb{R}^{n})$ for all
  $k \in \mathbb{N}$ whenever $h\in C_{b}(\mathbb{R}^{n}\times \mathbb{R}^{n})$.

\noindent 
\textbf{Proof of Theorem \ref{thm:a-firm convergence Rn}.}
  Let $x,y \in \mathbb{R}^{n}$, define $F(x,y)\equiv\norm{x-y}^2$ and the 
  corresponding sequence of functions 
  \begin{align*}
    \Fbar_{k}(x,y) \equiv \mathbb{E}\left[F(X_{k}^{x}, X_{k}^{y}) \right], 
    \quad X_{k}^{z}:= T_{\xi_{k-1}} \cdots T_{\xi_{0}}z \mbox{ for any }z \in
  \mathbb{R}^{n}\quad (k \in \mathbb{N}).
  \end{align*}
    By the remarks preceding this proof, 
  $\Fbar_{k} \in C_{b}(\mathbb{R}^{n}\times \mathbb{R}^n)$ for all $k \in \mathbb{N}$. 
  From the regularity of $T_{i}$, $i \in I$ and the 
  characterization \eqref{eq:paafne2}, we get that a.s.\ for all
  $k \in \mathbb{N}$
  \begin{align}
    \label{eq:averadness_suppPI}
    \norm{X_{k}^{x} - X_{k}^{y}}^{2} \ge \norm{X_{k+1}^{x} -
      X_{k+1}^{y}}^{2} + \tfrac{1-\alpha}{\alpha} \norm{(X_{k}^{x} -
      X_{k+1}^{x}) - (X_{k}^{y} - X_{k+1}^{y})}^{2}.
  \end{align}
  After computing the expectation, this is the same as
  \begin{align*}
    \Fbar_{k}(x,y) \ge \Fbar_{k+1}(x,y) + \tfrac{1-\alpha}{\alpha}
    \mathbb{E}\left[\norm{(X_{k}^{x} - X_{k+1}^{x}) - (X_{k}^{y} -
        X_{k+1}^{y})}^{2}\right].
  \end{align*}
  %%%Note: Conclusion follows from just nonexpansivity - simplify...!
  We conclude that $(\Fbar_{k}(x,y))$ is a monotonically nonincreasing
  sequence for any $x,y\in G$.\\
  Recall the notation $S_{\pi}\equiv \Supp\pi$ for some measure $\pi$.  
  Let $s, \tilde s \in S_{\pi}$ for the ergodic invariant measure $\pi \in \mathcal{E}$ and define
  the sequence of measures $\gamma_k$ by
  \begin{align*}
    \gamma_{k} f := \mathbb{E}\left[ f(X_{k}^{s}, X_{k}^{\tilde s})\right]
  \end{align*}
  for any measurable function $\mymap{f}{\mathbb{R}^{n} \times
    \mathbb{R}^{n}}{\mathbb{R}}$. 
   Note that due to nonexansiveness  the pair $(X_{k}^{s}, X_{k}^{\tilde s})$ a.s. takes
   values in $G_r:=\{(x,y) \in \mathbb{R}^{n} \times
    \mathbb{R}^{n}: ||x-y||^2 \leq r\}$ for $r =  ||s-\tilde{s}||^2$, so that  
  $\gamma_k$ is concentrated on this set. Since $(X_{k}^{s})$ is a tight
  sequence by Lemma \ref{lemma:tightnessOfNu_n}, and likewise for $(X_{k}^{\tilde{s}})$, 
  we know from  Lemma \ref{lemma:weakCVG_productSpace} that the sequence 
  $(\gamma_k)$ is tight as well.
  Let $\gamma$ be a cluster point of $(\gamma_{k}),$ which is again concentrated 
on $G_{ ||s-\tilde{s}||^2},$   and consider a subsequence
  $(\gamma_{k_j})$ such that $\gamma_{k_j} \rightarrow \gamma.$ By 
   Lemma \ref{lemma:weakCVG_productSpace} we also know that $\gamma \in C(\nu_{1}, 
\nu_{2})$
   where $\nu_1$ and $\nu_2$ are the distributions of the limit 
in convergence in distribution of $(X_{k_j}^{s})$ 
   and $(X_{k_j}^{\tilde{s}}).$
 For any $f \in C_b(\mathbb{R}^{n}\times \mathbb{R}^n)$ we have 
 $\gamma_{k_j} f \rightarrow \gamma f.$  So consider the case $f=F^M$ where 
 $F^{M} := \min(M,F)$ for  $M \in \mathbb{R}$.  
 Since $\norm{x-y}^2=F(x,y)=F^M(x,y)$ almost surely (with respect to $\gamma_{k_j}$ and $\gamma$) 
 for $M \ge \norm{s-\tilde s}^{2}$, we have  
 \begin{align*}
    \gamma_{k_j}F = \gamma_{k_j} F^{M} \rightarrow \gamma F^{M}= \gamma F.
  \end{align*}
However,  by the monotonicity in \eqref{eq:averadness_suppPI} we now also 
obtain convergence for the entire sequence:
 \begin{align*}
    \gamma_{k}F = \gamma_{k} F^{M}  \searrow \gamma F^{M}=\gamma F.
  \end{align*}
  Let $(X,Y) \sim \gamma$ and $(\tilde \xi_{k}) \indep (\xi_{k})$ be
  another i.i.d.\ sequence with $(X,Y) \indep (\tilde \xi_{k}),
  (\xi_{k})$. We use the notation 
  $\tilde X_{k}^{x} := T_{\tilde \xi_{k-1}} \cdots T_{\tilde\xi_{0}} x$, $x \in \mathbb{R}^{n}.$ 
  Define the sequence of functions 
  \[
   \Fbar^M_k(x,y) \equiv \mathbb{E}\left[F^M(X_{k}^{x}, X_{k}^{y}) \right] \quad (k \in \mathbb{N}),
  \]
  and note that  $\Fbar^{M}_{k}  \in C_b(\mathbb{R}^{n}\times \mathbb{R}^{n})$.  
  When $M \ge \norm{s-\tilde s}^{2}$ this yields 
  \begin{align*}
 \gamma \Fbar_k = \gamma \Fbar^{M}_{k} &= 
 \mathbb{E}\left[ \min \left(M,\norm{\tilde X_{k}^{X} - \tilde X_{k}^{Y}}^{2}\right)\right] 
 = \lim_{j \to \infty} \gamma_{ k_{j}} \Fbar^{M}_{k} \\ 
 &=  \lim_{j \to \infty} \mathbb{E}\left[ 
      \min\left(M,\norm{\tilde
            X_{k}^{X_{ k_{j}}^{s}} - 
            \tilde X_{k}^{X_{k_{j}}^{\tilde s}}}^{2}% 
            \right) 
                \right] \\
    &= \lim_{j \to \infty} \mathbb{E}\left[ \min\left(M,\norm{
          X_{k+ k_{j}}^{s} - X_{k+ k_{j}}^{\tilde s}}^{2}
      \right) \right] \\ 
      &= \lim_{j \to \infty} \gamma_{k+ k_{j}} F^{M} = 
      \gamma F^{M} = \gamma F.
  \end{align*}
This means that  for all $k \in \mathbb{N},$
  \begin{align*}
    \mathbb{E}\left[\norm{X_{k}^{X} - X_{k}^{Y}}^{2}\right] =
    \mathbb{E}[\norm{X-Y}^{2}]. %\qquad \forall k \in \mathbb{N}.
  \end{align*}
  For $\mathbb{P}^{(X,Y)}$-a.e.\ $(x,y)$ we have $x,y \in S_{\pi}$ and thus 
  $\pi^x=\pi^y=\pi$ where $\pi^x$ is the unique ergodic measure with $x\in 
S_{\pi^{x}}$
 (see Remark \ref{cor:weakCvgMetricSpace}).
 An application of Lemma \ref{lemma:const_dist} then yields  
 %for $\mathbb{P}^{(X,Y)}$-a.e.\ $(x,y)$ it holds that 
 $\pi(\cdot) = \pi(\cdot-(x-y))$, i.e.\ $x=y$. Hence $X=Y$ a.s. implying
  $\nu_{1}=\nu_{2}=:\nu$ and $\gamma F = 0$. That means
  \begin{align*}
    \gamma_{k}F = \mathbb{E}\left[\norm{X_{k}^{s}
        - X_{k}^{\tilde{s}}}^{2}\right] \to 0 \qquad \text{as } k \to \infty.
  \end{align*}
  Now Lemma \ref{lemma:prokhorovDist_properties} yields
  \begin{align*}
    \mathbb{P}\left(\norm{X_{k}^{s} - X_{k}^{\tilde{s}}} >
    \epsilon\right) \le \frac{\mathbb{E}\left[\norm{X_{k}^{s} -
    X_{k}^{\tilde{s}}}\right]}{\epsilon} \le \frac{\mathbb{E}\left[\sqrt{\norm{X_{k}^{s} -
    X_{k}^{\tilde{s}}}^{2}}\right]}{\epsilon} \to 0
  \end{align*}
  as $k \to \infty$ for any $\epsilon > 0$;  so this yields convergence 
  % $W_2$ Wasserstein metric
  of the corresponding probability  measures $\delta_{s} \mathcal{P}^{k}$ and 
  $\delta_{\tilde{s}} \mathcal{P}^{k}$  in the Prokhorov metric:
  \begin{align*}
    d_P(\delta_{s} \mathcal{P}^{k},\delta_{\tilde{s}} \mathcal{P}^{k}) 
    \rightarrow 0.
  \end{align*}
By the triangle inequality, therefore, if 
  $\delta_{s} \mathcal{P}^{k_j} \rightarrow \nu$,  then also
  $\delta_{\tilde{s}} \mathcal{P}^{k_j} \rightarrow \nu$ for any
  $\tilde{s} \in S_{\pi}$.  Hence 
  \begin{align*}
    d_{P}(\delta_{\tilde{s}}\mathcal{P}^{k_{j}},\nu) &\le
    d_{P}(\delta_{s}\mathcal{P}^{k_{j}},
    \delta_{\tilde{s}}\mathcal{P}^{k_{j}}) +
    d_{P}(\delta_{s}\mathcal{P}^{k_{j}}, \nu) \to 0,\quad \text{as
    } j \to \infty.
  \end{align*}
  By Lebesgue's dominated convergence theorem we conclude 
  that, for any $f \in C_{b}(\mathbb{R}^{n})$ and $\mu \in
  \mathscr{P}(S_{\pi})$,
  \begin{align*}
    \mu \mathcal{P}^{k_{j}} f = \int_{S_{\pi}}
    \delta_s \mathcal{P}^{k_{j}}f \mu(\dd{s}) \to \nu f, \quad
    \text{as } j \to \infty.
  \end{align*}
In particular, 
  $\mu \mathcal{P}^{k_{j}} \to \nu$ and taking $\mu=\pi$ yields  
  $\nu = \pi$.  Thus, all cluster points of
  $(\delta_{s}\mathcal{P}^{k})$ for all $s \in S_{\pi}$ have the same
  distribution $\pi$ and hence, because the sequence is tight, 
  $\delta_{s} \mathcal{P}^{k}=p^{k}(x,\cdot) \to \pi$.

  Now, let $\mu \in \mathscr{P}(S)$, where $S = \bigcup_{\pi \in
    \mathcal{E}} S_{\pi}$. By what  we have just shown we have for $x\in
  \Supp \mu$, that $p^{k}(x,\cdot) \to \pi^{x}$, where $\pi^{x}$ is
  unique ergodic measure with $x\in S_{\pi^{x}}$. Then, again by Lebesgue's
  dominated convergence theorem, one has for any $f \in C_{b}(\mathbb{R}^{n}),$
  \begin{align}
  \label{eq:muconv}
    \mu \mathcal{P}^{k} f = \int f(y) p^{k}(x, \dd{y}) \mu(\dd{x}) \to
    \int f(y) \pi^{x}(\dd{y}) \mu(\dd{x}) =:  \pi^{\mu} f   
    \mbox{ as }k \to \infty,  
  \end{align}
and the measure $\pi^{\mu} $ is again invariant
  for $\mathcal{P}$ by invariance of $\pi^{x}$ for all $x \in S.$
  Now, let $\mu = \delta_{x}$, $x \in \mathbb{R}^{n} \setminus S$. We
  obtain the tightness of $(\delta_{x}\mathcal{P}^{k})$ from the
  tightness of $(\delta_{s}\mathcal{P}^{k})$ for $s \in S$.  Indeed,  for
  $\epsilon >0$ there exists a compact
  $K_{\epsilon} \subset \mathbb{R}^{n}$ with
  $p^{k}(s,K_{\epsilon})> 1-\epsilon$ for all $k \in \mathbb{N}.$
  This together with the fact that $T_{i}$, $i \in I$ is nonexpansive implies that 
  $\norm{X_{k}^{x}-X_{k}^{s}}\le \norm{x-s}$ for all
  $k \in \mathbb{N}$ hence
  $p^{k}(x,\cb(K_{\epsilon},\norm{x-s}))>1-\epsilon$, where $p$ is the 
  transition kernel defined by \eqref{eq:trans kernel}. 
  Tightness  implies the existence of a cluster point $\nu$ of the sequence
  $(\delta_{x}\mathcal{P}^{k})$. From Theorem \ref{cor:cesaroConvergenceRn}
  we know that $\nu_{k}^{x} = \tfrac{1}{k} \sum_{j=1}^{k}
  \delta_{x}\mathcal{P}^{j} \to \pi^{x}$ for some $\pi^{x} \in \inv
  \mathcal{P}$ with $S_{\pi^x} \subset S.$  Furthermore, we have $\nu \in
  \mathscr{P}(S_{\pi^{x}}) \subset \mathscr{P}(S) $ by
  Lemma \ref{thm:invMeas_nonexpans_map}\eqref{item:nonExpProp_4}. So by 
  \eqref{eq:muconv} there exists $\pi^{\nu} \in \inv \mathcal{P}$ with
  $\nu\mathcal{P}^{k} \to \pi^{\nu}$.
  
  In order to complete the proof we have to show that $\nu = \pi^{x}$,
  i.e.\ $\pi^{x}$ is the unique cluster point of
  $(\delta_{x}\mathcal{P}^{k})$ and hence convergence follows by
  Proposition \ref{thm:cvg_subsequences}.  It suffices to  show that
  $\pi^{\nu} = \pi^{x}$, since then, as $k \to \infty$
  \begin{align*}
    d_{P}(\nu, \pi^{x}) = \lim_{k} d_{P}(\delta_{x}\mathcal{P}^{k},
    \pi^{x}) = \lim_{k} d_{P}(\delta_{x}\mathcal{P}^{k+j}, \pi^{x}) =
    d_{P}(\nu \mathcal{P}^{j}, \pi^{x}) = d_{P}(\nu \mathcal{P}^{j}, \pi^{\nu}) 
\to 0.
  \end{align*}
To begin, fix  $\pi \in
  \inv\mathcal{P}$.  For any $\epsilon>0$ let $A_{k} := \{X_{k}^{x} \in 
\cb(S_{\pi},
  \epsilon)\}.$  By nonexpansivity $A_{k} \subset A_{k+1}$ for $k \in
  \mathbb{N}$, since we have by Lemma \ref{lemma:support_invariant_distr}
  a.s.
  \begin{align*}
    \dist(X_{k+1}^{x}, S_{\pi}) \le \dist(X_{k+1}^{x}, T_{\xi_{k}}
    S_{\pi}) \le \dist(X_{k}^{x},S_{\pi}). %\le \epsilon.
  \end{align*}
  Hence $(p^{k}(x, \cb(S_{\pi}, \epsilon))) = (\mathbb{P}(A_{k}))$ is
  a monotonically increasing sequence and bounded from above and
  therefore the sequence converges to some $b_{\epsilon}^{x} \in
  [0,1]$ as $k\to \infty$. It
  follows 
  \begin{align}
  \label{eq:b_expconv}
  b_{\epsilon}^{x} =  \lim_{k} p^{k}(x,\cb(S_{\pi},\epsilon)) = \lim_{k}
    \frac{1}{k}\sum_{j=1}^{k} p^{j}(x,\cb(S_{\pi},\epsilon)).
  \end{align}
  and thus
  $\nu(\cb(S_{\pi},\epsilon)) = \pi^{x}(\cb(S_{\pi},\epsilon))$ %>0$.
  for all $\epsilon$, which make $\cb(S_{\pi},\epsilon)$ both
  $\nu$- and
  $\pi^{x}$-continuous. Note that there are at most countably many
  $\epsilon>0$ for which this may fail, see \cite[Chapter 3, Example
  1.3]{kuipers1974uniform}). %Hence, an application of \cref{rem:equality}
  %gives that $\nu(\cb(S_{\pi},\epsilon)) = \pi^{x}(\cb(S_{\pi},\epsilon)) > 0$. 
 
  With the same argument used for  \eqref{eq:b_expconv} we also obtain for any 
  $k \in \mathbb{N}$ that $\nu \mathcal{P}^{k}(\cb(S_{\pi},\epsilon)) =  
\pi^{x}(\cb(S_{\pi},\epsilon))$
  with only countably many $\epsilon$ excluded, 
 and so 
  %This should be o.k. but needs to be thought about some more...
  \begin{align*}
    \pi^{\nu}(\cb(S_{\pi},\epsilon)) = \pi^{x}(\cb(S_{\pi},\epsilon))
  \end{align*}
  also needs to hold for all except countably many $\epsilon$.  
  Since
  $\pi^{\nu} \in \inv \mathcal{P}$,  this implies 
  that $\pi^{\nu} = \pi^{x}$ by
  Lemma \ref{lemma:equalityBallsimpliesEqOfMeasures} combined with
  Remark \ref{rem:equality}.
  For a general initial measure $\mu_0 \in \mathscr{P}(\mathbb{R}^{n})$,
  one has, yet again by Lebesgue's dominated convergence theorem, that
  \begin{align*}
    \mu_0 \mathcal{P}^{k} f = \int f(y) p^{k}(x, \dd{y}) \mu_0(\dd{x}) \to
    \int f(y) \pi^{x}(\dd{y}) \mu_0(\dd{x}) =: \pi^{\mu_0} f,
  \end{align*}
  where $\pi^{x}$ denotes the limit of $(\delta_{x} \mathcal{P}^{k})$
  and the measure $ \pi^{\mu_0}$ is again invariant for $\mathcal{P}$.
  This completes the proof.
\hfill $\Box$

\begin{rem}[a.s.\ convergence]\label{rem:asCvgSequence}
  If we were to choose $X$ and $Y$ in \eqref{eq:averadness_suppPI} such that
  $\mathcal{L}(X),\mathcal{L}(Y) \in \mathscr{P}(S_{\pi})$, where $\pi
  \in \mathcal{E}$, then still $\gamma_{k} F \to \gamma F = 0$,
  where $\gamma \in C(\pi,\pi)$. For $(W,Z) \sim \gamma$ it
  still holds that $W=Z$ and hence
  \begin{align*}
    \norm{X_{k}^{X} - X_{k}^{Y}} \to 0 \qquad \text{a.s.}
  \end{align*}
  by monotonicity of $(\gamma_{k}F)$.
\end{rem}

\subsubsection{Structure of ergodic measures for $\alpha$-firmly 
nonexpansive mappings}
\label{sec:structure}

\begin{prop}[structure of ergodic measures]\label{thm:struOfErgMeas}
  Let $\mymap{T_{i}}{\mathbb{R}^{n}}{\mathbb{R}^{n}}$ be
  $\alpha$-fne with constant $\alpha_{i}\le \alpha<1$ ($i\in I$) and
  assume there exists an invariant probability distribution for
  $\mathcal{P}$.  Any two ergodic measures $\pi,\tilde \pi$ are
  shifted versions of each other, i.e.\ there exist $s \in \Supp {\pi}$
  and $\tilde s \in \Supp {\tilde \pi}$ with $\pi = \tilde \pi(\cdot -
  (s-\tilde s))$.
\end{prop}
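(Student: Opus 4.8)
The plan is to reduce the statement to Lemma~\ref{lemma:const_dist} by producing, for the two given ergodic measures $\pi$ and $\tilde\pi$, a pair of points $s\in S_\pi\equiv\Supp\pi$ and $\tilde s\in S_{\tilde\pi}\equiv\Supp\tilde\pi$ whose random function iterates maintain a constant separation. Since $\alpha$-fne mappings on $\mathbb{R}^{n}$ are in particular nonexpansive (by Lemma~\ref{lem:psi_f nonneg}, as $\mathbb{R}^n$ is CAT($0$)), I would first invoke Lemma~\ref{lemma:dist_supports_attained} to obtain $s\in S_\pi$ and $\tilde s\in S_{\tilde\pi}$ realizing the distance between the supports, i.e.\ $\|s-\tilde s\| = \dist(S_\pi, S_{\tilde\pi})$.

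Next I would show that this minimal separation is preserved along the iteration, namely
\begin{equation*}
\|X_k^s - X_k^{\tilde s}\| = \|s-\tilde s\| \qquad \text{a.s.,}\quad \forall k\in\Nbb.
\end{equation*}
The upper bound $\|X_k^s - X_k^{\tilde s}\|\le\|s-\tilde s\|$ a.s.\ is immediate from nonexpansivity of the $T_i$. For the matching lower bound I would use Lemma~\ref{lemma:support_invariant_distr}: continuity of the $T_i$ guarantees $X_k^s\in S_\pi$ and $X_k^{\tilde s}\in S_{\tilde\pi}$ a.s., whence $\|X_k^s-X_k^{\tilde s}\|\ge\dist(S_\pi,S_{\tilde\pi})=\|s-\tilde s\|$ a.s. This is precisely the constant-distance computation already carried out in the proof of Lemma~\ref{lem:char_ballsEW} (equation~\eqref{eq:const_dist1}). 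Taking squares and expectations gives $\mathbb{E}[\|X_k^s - X_k^{\tilde s}\|^2] = \|s-\tilde s\|^2 = \mathbb{E}[\|s-\tilde s\|^2]$ for every $k$.

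With this identity in hand I would apply Lemma~\ref{lemma:const_dist} with the deterministic initial data $X\sim\delta_s$ and $Y\sim\delta_{\tilde s}$ (trivially independent of $(\xi_k)$). Since an invariant measure exists by hypothesis, the lemma yields the translation relation $\pi^s(\cdot)=\pi^{\tilde s}(\cdot-(s-\tilde s))$ between the Ces\`aro limits of $(\delta_s\mathcal{P}^k)$ and $(\delta_{\tilde s}\mathcal{P}^k)$ (these limits exist by Theorem~\ref{cor:cesaroConvergenceRn}). Finally, because $s\in S_\pi$ and $\tilde s\in S_{\tilde\pi}$ with $\pi,\tilde\pi$ ergodic, Remark~\ref{cor:weakCvgMetricSpace} identifies these Ces\`aro limits as $\pi^s=\pi$ and $\pi^{\tilde s}=\tilde\pi$. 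Substituting gives $\pi(\cdot)=\tilde\pi(\cdot-(s-\tilde s))$, which is the claim.

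The substantive work is entirely front-loaded into the lemmas already established, so the proposition is essentially a corollary; it is worth stressing, however, that the shift structure genuinely requires the $\alpha$-fne hypothesis (via Lemma~\ref{lemma:const_dist}) and fails for merely nonexpansive mappings. The main point to get right is therefore the bookkeeping: verifying that the constant-separation hypothesis of Lemma~\ref{lemma:const_dist} is met by the minimizing pair $(s,\tilde s)$, and correctly matching the Ces\`aro limits $\pi^s,\pi^{\tilde s}$ to the ergodic measures $\pi,\tilde\pi$ through Remark~\ref{cor:weakCvgMetricSpace}. I expect no genuine obstacle beyond keeping the sign convention in the shift $(s-\tilde s)$ consistent between Lemma~\ref{lemma:const_dist} and the final statement.
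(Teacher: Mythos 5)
Your proposal is correct and follows essentially the same route as the paper: the closest pair $(s,\tilde s)$ from Lemma \ref{lemma:dist_supports_attained}, the constant-separation identity forced by nonexpansivity together with the support invariance of Lemma \ref{lemma:support_invariant_distr} (the paper's inequality \eqref{eq:structure1}), and then Lemma \ref{lemma:const_dist} with $X\sim\delta_s$, $Y\sim\delta_{\tilde s}$ plus the identification $\pi^s=\pi$, $\pi^{\tilde s}=\tilde\pi$ via Remark \ref{cor:weakCvgMetricSpace}. The paper's own proof is merely a terser version of this, leaving the appeal to Lemma \ref{lemma:const_dist} and the Ces\`aro-limit identification implicit, so your write-up just makes that bookkeeping explicit.
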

\begin{proof}
  Denote $\Supp \pi = S_{\pi}$.  Since we can find for any $s \in S_{\pi}$ a closest point $\tilde s
  \in S_{\tilde \pi}$, i.e.\ $\dist(S_{\pi}, S_{\tilde \pi}) =
  d(s,\tilde s)$, by Lemma \ref{lemma:dist_supports_attained}, the
  assertion follows from
  \begin{align}\label{eq:structure1}
    \dist(S_{\pi}, S_{\tilde \pi}) \le
    \sqrt{\mathbb{E}[\norm{X_{k}^{s} - X_{k}^{\tilde s}}^{2}]} \le
    \norm{s - \tilde s} \qquad \forall k \in \mathbb{N},
  \end{align}
  where we also used that $\Supp \mathcal{L}(X_{k}^{s}) \subset
  S_{\pi}$, and $\Supp \mathcal{L}(X_{k}^{\tilde s}) \subset S_{\tilde
    \pi}$.
\end{proof}

\begin{prop}[specialization to projectors]
  Let the mappings
  $\mymap{T_{i}=P_{i}}{\mathbb{R}^{n}}{\mathbb{R}^{n}}$ be projectors
  onto nonempty closed and convex sets ($i \in I$).  If there exist
  two ergodic measures $\pi_{1},\pi_{2}$, then there exist infinitely
  many ergodic measures $\pi_{\lambda}$ with $\pi_{\lambda} \equiv
  \pi_{2}(\cdot-\lambda a)$ for all $\lambda \in [0,1]$, where $a$ is
  the shift such that $\pi_1=\pi_2(\cdot-a)$.
\end{prop}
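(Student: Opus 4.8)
The plan is to build a one-parameter family of ergodic measures by transporting $\pi_2$ along the segment joining it to $\pi_1$, exploiting that projectors act \emph{affinely} on pairs of points whose mutual separation they preserve. First I would fix the geometry. By Proposition~\ref{thm:struOfErgMeas} together with Lemma~\ref{lemma:dist_supports_attained} I can choose $s\in\Supp\pi_1$ and $\tilde s\in\Supp\pi_2$ realizing $d(s,\tilde s)=\dist(\Supp\pi_1,\Supp\pi_2)$ and with $\pi_1=\pi_2(\cdot-a)$ for $a:=s-\tilde s$; since $\pi_1\neq\pi_2$, Corollary~\ref{th:singular measure char} forces $\Supp\pi_1\cap\Supp\pi_2=\emptyset$, so $a\neq 0$. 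The estimate \eqref{eq:structure1} in the proof of Proposition~\ref{thm:struOfErgMeas} then collapses to $\mathbb{E}[\norm{X_k^s-X_k^{\tilde s}}^2]=\norm{a}^2$ for every $k$, so Lemma~\ref{lemma:const_dist}, applied to the deterministic initial data $\delta_s,\delta_{\tilde s}$, yields the pathwise identity $X_k^s-X_k^{\tilde s}=a$ almost surely for all $k$.

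The heart of the argument is the following projector fact, which is exactly where the specialization is needed: if $P$ is the projector onto a nonempty closed convex set $C$ and $Pu-Pv=u-v$, then $P$ is affine on the segment $[u,v]$. Indeed, writing $w:=u-Pu=v-Pv$, for $z_\lambda:=(1-\lambda)u+\lambda v$ the candidate $(1-\lambda)Pu+\lambda Pv=z_\lambda-w$ lies in $C$, and the projection inequalities $\ip{w}{c-Pu}\le 0$ and $\ip{w}{c-Pv}\le 0$ (all $c\in C$) combine convexly to give $\ip{w}{c-(z_\lambda-w)}\le 0$, whence $Pz_\lambda=z_\lambda-w=(1-\lambda)Pu+\lambda Pv$. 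Applying this at each step with $u=X_k^s$, $v=X_k^{\tilde s}$, whose separation equals $\norm a$ at every $k$ by the previous paragraph, and arguing by induction produces, for $z_\lambda:=\tilde s+\lambda a=(1-\lambda)\tilde s+\lambda s$, the trajectory identity $X_k^{z_\lambda}=(1-\lambda)X_k^{\tilde s}+\lambda X_k^s=X_k^{\tilde s}+\lambda a$ almost surely, for all $k$ and all $\lambda\in[0,1]$.

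It then remains to read off the measures. The trajectory identity gives $\delta_{z_\lambda}\mathcal{P}^k=(\delta_{\tilde s}\mathcal{P}^k)(\cdot-\lambda a)$. Because each $P_i$ is $\tfrac12$-fne and $\tilde s\in\Supp\pi_2$ with $\pi_2$ ergodic, Theorem~\ref{thm:a-firm convergence Rn} gives $\delta_{\tilde s}\mathcal{P}^k\to\pi_2$, and translating by $\lambda a$ yields $\delta_{z_\lambda}\mathcal{P}^k\to\pi_\lambda:=\pi_2(\cdot-\lambda a)$. Since $\mathcal{P}$ is Feller (Proposition~\ref{thm:Feller}), convergence in distribution forces $\pi_\lambda\in\inv\mathcal{P}$. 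For ergodicity I would note that $z_\lambda\in\Supp\pi_2+\lambda a=\Supp\pi_\lambda\subseteq S$ by Proposition~\ref{prop:ergodic_decomp}, so $z_\lambda$ lies in the support of a unique ergodic $\pi'$; Theorem~\ref{thm:a-firm convergence Rn} then gives $\delta_{z_\lambda}\mathcal{P}^k\to\pi'$, and uniqueness of the limit identifies $\pi_\lambda=\pi'$ as ergodic.

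Finally, distinct values of $\lambda$ give distinct $\pi_\lambda$, since a probability measure on $\mathbb{R}^n$ cannot be invariant under the nonzero translation by $(\lambda-\lambda')a$; this produces infinitely many (indeed a continuum of) ergodic measures $\pi_\lambda=\pi_2(\cdot-\lambda a)$, $\lambda\in[0,1]$, as claimed. The main obstacle is the affine-action lemma of the second paragraph: it relies on the variational characterization of projections, which is precisely the structure that a general $\alpha$-fne map lacks, and this is why the statement is confined to projectors.
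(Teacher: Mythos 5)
Your proof is correct and follows essentially the same route as the paper's: closest support points and the shift $a$ via Proposition \ref{thm:struOfErgMeas}, the pathwise rigidity $X_k^{s}-X_k^{\tilde s}=a$ a.s.\ from Lemma \ref{lemma:const_dist} combined with \eqref{eq:structure1}, the variational characterization of convex projections to propagate the translation along the segment (the paper expresses this same step as $a\perp (s_i - P_\xi s_i)$ followed by $P_\xi(s_2+\lambda a)=P_\xi s_2+\lambda a$), and the Feller property to conclude that the limiting translates $\pi_2(\cdot-\lambda a)$ are invariant. You in fact go slightly beyond the paper's own proof, which stops at invariance: your verification that each $\pi_\lambda$ is ergodic (via Proposition \ref{prop:ergodic_decomp} and Theorem \ref{thm:a-firm convergence Rn}) and that distinct $\lambda$ yield distinct measures closes two small gaps the paper leaves implicit.
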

\begin{proof}
  For any pair $(s_{1},s_{2}) \in \Supp {\pi_{1}} \times \Supp {\pi_{2}}$ of
  closest neighbors it holds that $a = s_{1}-s_{2}$ by
  Proposition \ref{thm:struOfErgMeas}. 
  Lemma \ref{lemma:const_dist} and \eqref{eq:structure1} yield 
  $P_{\xi}s_{1} = P_{\xi}s_{2} + a$
  a.s. Hence, $a \perp (s_{i}-P_{\xi}s_{i})$, $i=1,2$, and then
  $P_{\xi}(s_{2} + \lambda a) = P_{\xi}s_{2} + \lambda a$ for $\lambda
  \in [0,1]$. Hence $X_{k}^{s_{2} + \lambda a} = X_{k}^{s_{2}} +
  \lambda a$ and $\lim_{k} \mathcal{L}(X_{k}^{s_{2} + \lambda a}) =
  \pi_{2}(\cdot - \lambda a)$. Note, that if $\mathcal{P}$ is Feller
  and the sequence $(\mu \mathcal{P}^{k})$ converges for some $\mu \in
  \mathscr{\mathbb{R}^{n}}$, then the limit is also an invariant measure.
\end{proof}

\subsection{Rates of Convergence}\label{sec:rates}
We now prove the third main result of this paper.  \\
\textbf{Proof of Theorem \ref{t:msr convergence}}. 
% By assumption
% \eqref{t:msr convergence c}   the Markov operator $\mathcal{P}$ is a
% self-mapping on $\mathscr{P}_{2}(G)$, hence
% $W_2(\mu,\mu\mathcal{P}) < \infty$, and for any 
% $\mu_1, \mu_2\in \mathscr{P}_{2}(G)$ the set of optimal couplings
% $C_*(\mu_1, \mu_2)$ is nonempty (see 
% Lemma \ref{lemma:WassersteinMetric_prop}).  
  First note that since $G$ is compact and $\inv\Pcal$ is nonempty, 
  there is at least one 
  $\pi \in\inv\mathcal{P}\cap\mathscr{P}_2(G)$ and one $\mu\in \mathscr{P}_{2}(G)$ with 
  $W_2(\mu,\pi)<\infty$ and  $\mu\mathcal{P}\in\mathscr{P}_{2}(G)$.
  The Markov operator $\mathcal{P}$ is therefore a
self-mapping on $\mathscr{P}_{2}(G)$, hence
$W_2(\mu,\mu\mathcal{P}) < \infty$, and for any 
$\mu_1, \mu_2\in \mathscr{P}_{2}(G)$ the set of optimal couplings
$C_*(\mu_1, \mu_2)$ is nonempty 
(see Lemma \ref{lemma:WassersteinMetric_prop}).  
Since $(H, d)$ is 
a Hadamard space and $G\subset H$, the function $\Psi(\mu)$ defined by 
\eqref{eq:Psi}
% \[ 
% \Psi(\mu)\equiv \inf_{\pi\in\inv\mathcal{P}}\inf_{\gamma\in C_*(\mu,\pi)}
% \left(\int_{G\times G}
% \mathbb{E}\left[\psi_2(x,y, T_\xi x, T_\xi y)\right]\ \gamma(dx, dy)\right)^{1/2}
% \]
is extended real-valued, nonnegative (see Lemma \ref{lem:psi_f nonneg}), 
and finite since $C_*(\mu, \pi)$ and 
$\inv\mathcal{P}$ are nonempty.
Moreover, by assumption 
\eqref{t:msr convergence b} and the definition of metric subregularity
(Definition \ref{d:(str)metric (sub)reg}) this  satisfies 
$\Psi(\pi)=0\iff \pi\in\inv\mathcal{P}$, hence $\Psi^{-1}(0)=\inv\mathcal{P}$
 and $\Psi(\pi)=0$ for all $\pi\in\inv\mathcal{P}$, and for all
$\mu\in\mathscr{P}_{2}(G)$ 
\begin{eqnarray}
\inf_{\pi\in\inv\mathcal{P}} W_2^{2}(\mu, \pi) &=& 
  \inf_{\pi\in\Psi^{-1}(0)}W_2^{2}(\mu, \pi)
  \nonumber\\
 &\le& 
\paren{\rho\paren{d_{\mathbb{R}}(0,\Psi(\mu))}}^2
 = \paren{\rho(\Psi(\mu))}^2.\nonumber
\end{eqnarray}
Rewriting this for the next step yields
\begin{equation}\label{eq:rate step 1}
   \tfrac{1-\alpha}{\alpha}
   \paren{\rho^{-1}\paren{%
   \inf_{\pi\in\inv\mathcal{P}}W_2(\mu, \pi)}}^{2}
   \leq \tfrac{1-\alpha}{\alpha}\Psi^2(\mu).
\end{equation}
On the other hand, by assumption 
\eqref{t:msr convergence a} and 
Proposition \ref{thm:Tafne in exp 2 pafne of P} 
(which applies because we are on a separable Hadamard space) we
have 
% after taking the infimum over all $\pi \in \inv \mathcal{P}$,
\begin{eqnarray}
  \tfrac{1-\alpha}{\alpha}\Psi^2(\mu)
   &\leq& 
   \int_{G\times G}\mathbb{E} \left[\psi_2(x,y, T_\xi x, T_\xi y)\right]\ \gamma(dx, dy)
   \nonumber\\
   &\leq&
   % W_2^{2}(\mu, \pi) - W_2^{2}(\mu\mathcal{P},\pi\mathcal{P})
  %        \nonumber\\
  % &=&
     (1+\epsilon) W_2^{2}(\mu,\pi) - W_2^{2}(\mu\mathcal{P},\pi)
      \quad\forall \pi\in\inv\mathcal{P}, 
      \forall \mu\in\mathscr{P}_{2}(G).
      \label{eq:rate step 2}
\end{eqnarray}
Incorporating \eqref{eq:rate step 1} into 
\eqref{eq:rate step 2} and rearranging the inequality yields
\begin{eqnarray*}
W_2^{2}(\mu\mathcal{P},\pi)
   \!&\leq&\! % W_2^{2}(\mu, \pi) - W_2^{2}(\mu\mathcal{P},\pi\mathcal{P})
  %        \nonumber\\
  % &=&
      (1+\epsilon) W_2^{2}(\mu,\pi) - 
         \tfrac{1-\alpha}{\alpha}
   \paren{\rho^{-1}\paren{%
   \inf_{\pi'\in\inv\mathcal{P}}W_2(\mu, \pi')}}^{2}
      \quad\forall \pi\in\inv\mathcal{P}, 
      \forall \mu\in\mathscr{P}_{2}(G).
%      \label{eq:rate step 3}
\end{eqnarray*}
Since this holds at {\em any} $\mu\in\mathscr{P}_2(G)$, it certainly 
holds at the iterates $\mu_k$ with initial distribution $\mu_0\in\mathscr{P}_2(G)$
since $\mathcal{P}$ is a 
self-mapping on $\mathscr{P}_2(G)$.   Therefore 
\begin{eqnarray}\label{eq:gauge convergence 0}
% \inf_{\pi\in\inv\mathcal{P}}
&& W_2\paren{\mu_{k+1},\, \pi}
\leq\\ 
&&\qquad \sqrt{(1+\epsilon) W_2^2\paren{\mu_{k},\, \pi} - 
\frac{1-\alpha}{\alpha}
\paren{\rho^{-1}\paren{%
\inf_{\pi'\in\inv\mathcal{P}}W_2\paren{\mu_{k},\, \pi'}
}
}^2
} 
\quad\forall \pi\in\inv\mathcal{P}, ~
\forall k \in \mathbb{N}.\nonumber
\end{eqnarray}%
% To see \cref{eq:gauge convergence}

Equation \eqref{eq:gauge convergence 0} simplifies.  
Indeed, by Lemma \ref{lemma:invMeasuresClosed}, $\inv \mathcal{P}$ is closed 
with respect to convergence 
in distribution.  
% For every $k\in\Nbb$  the infimum in  
% \eqref{eq:gauge convergence 0} is attained  at some $\pi_k$. 
% To see this, note that $\pi\in \mathscr{P}_2(G)\cap \inv\mathcal{P}$ whenever 
% $W_2(\mu,\pi)<\infty$ for any $\mu\in \mathscr{P}_2(G)$ (this follows from a short
% argument applying the triangle inequality on $\Rn$).  
% Let $(\nu_j)_{j\in\Nbb}$ be a sequence of distributions on $\inv\mathcal{P}$ with 
% $d_{W_2}(\mu_k, \inv\Pcal) = \lim_{j\to\infty} W_2(\mu_k, \nu_j)$.  A routine
% argument shows that $(\nu_j)_{j\in\Nbb}$ is tight, and so possesses cluster points
% at which the distance $d_{W_2}(\mu_k, \inv\Pcal)$ is attained.  
% Letting $\pi_k$ be one such cluster point yields
Moreover, since $G$ is assumed to be compact, 
$\mathscr{P}_2(G)$ is locally compact (\cite[Remark 7.19]{AmbGigSav2005}
%which is implied by convergence in $W_2$, 
so, for every $k\in\Nbb$  the infimum in  \eqref{eq:gauge convergence 0}
is attained  at some $\pi_k$. 
This yields
\begin{equation}
W_2^{2}(\mu_{k+1},\pi_{k+1} ) \leq 
W_2^{2}(\mu_{k+1},\pi_{k} ) \leq 
(1+\epsilon) W_2^{2}(\mu_k, \pi_k)  - 
\tfrac{1-\alpha}{\alpha}\paren{\rho^{-1}
\paren{ W_2(\mu_k, \pi_k)}}^2
\quad\forall k \in \mathbb{N}.
\label{eq:gauge convergence intermed}
\end{equation}
Taking the square root and recalling \eqref{eq:theta} and \eqref{eq:gauge}
yields \eqref{eq:gauge convergence}.

To obtain convergence, note that for $\mu_0\in\mathscr{P}_{2}(G)$
satisfying $W_2(\mu_0,\pi)<\infty$ and  $\mu_0\mathcal{P}\in\mathscr{P}_{2}(G)$
(exists by compactness of $G$
% assumption \eqref{t:msr convergence c}
), the triangle inequality and  
\eqref{eq:gauge convergence intermed} yield
\begin{eqnarray*}
 W_2(\mu_{k+1},\mu_k )&\leq&  W_2(\mu_{k+1},\pi_{k} )
 + W_2(\mu_{k},\pi_{k} )\\
 &\leq &\theta\paren{W_2\paren{\mu_k, \pi_k}}+ W_2(\mu_{k},\pi_{k} ).
\end{eqnarray*}
Using \eqref{eq:gauge convergence} 
and continuing by backwards induction yields
\[
 W_2(\mu_{k+1},\mu_k )\leq \theta^{k+1}\paren{ d_0}+ 
 \theta^k\paren{ d_0}
\]
where $d_0\equiv \inf_{\pi\in\inv\mathcal{P}} W_2\paren{\mu_0,\pi}$.
Repeating this argument, for any $k<m$ 
\[
 W_2(\mu_{m},\mu_k )\leq \theta^{m}\paren{ d_0} + 
 2\sum_{j=k+1}^{m-1}\theta^j\paren{ d_0}
 +\theta^k\paren{ d_0}.
\]
By assumption, $\theta$ satisfies \eqref{eq:theta}, so for any $\delta>0$
\begin{eqnarray*}
 W_2(\mu_{m},\mu_k )&\leq& 
 \theta^{m}\paren{ d_0} + 
 2\sum_{j=k+1}^{m-1}\theta^j\paren{ d_0}
 +\theta^k\paren{ d_0}\nonumber\\
& \leq& 2\sum_{j=k+1}^{\infty}\theta^j\paren{ d_0}
+ \theta^k\paren{ d_0}
 <\delta
\end{eqnarray*}
for all $k, m$ large enough;  that is the sequence $(\mu_k)_{k\in\Nbb}$ is a Cauchy sequence
in $(\mathscr{P}_{2}(G), W_2)$ -- a separable complete metric space 
(Lemma \ref{lemma:WassersteinMetric_prop}
\eqref{lemma:WassersteinMetric_prop iii}) -- 
and therefore convergent to some probability measure 
$\pi^{\mu_0}\in \mathscr{P}_{2}(G)$. 
By Proposition \ref{thm:Feller} the Markov operator $\mathcal{P}$ is Feller since $T_i$ is continuous, and 
by Proposition \ref{thm:construction_inv_meas} when a Feller Markov chain converges
in distribution, it does so to an invariant measure:  $\pi^{\mu_0} \in \inv \mathcal{P}$.
\hfill $\Box$

\textbf{Proof of Corollary \ref{t:msr convergence - linear}}. 
In the case that the gauge $\rho$ is linear with constant $\kappa'$, 
then $\theta(t)$ is linear with constant 
\[
c=\sqrt{1+ \epsilon  - \frac{1-\alpha}{\kappa^2\alpha}}<1,  
\]
where $\kappa\geq \kappa'$ satisfies $\kappa^2\geq (1-\alpha)/(\alpha(1+\epsilon))$.
Specializing the argument in the proof above to this particular $\theta$ 
shows that, for any
$k$ and $m$ with $k<m$, we have
\begin{equation}\label{DR.3}
    \begin{aligned}
W_2\paren{\mu_m,\mu_{k}} &\,\le\, 
d_0 c^m + 2d_0\sum_{j=k+1}^{m-1}c^j+ d_0c^k.\\
    \end{aligned}
\end{equation}
Letting $m\to \infty$ in \eqref{DR.3} yields R-linear convergence
(Definition \ref{d:q-r-lc})
with rate $c$ given above and leading constant 
$\beta= \frac{1+c}{1-c}d_0$.

If, in addition, $\inv\mathcal{P}$ is a singleton, then 
$\{\pi^{\mu_0}\} = \inv\mathcal{P}$
in the above and convergence is actually Q-linear, which completes the proof.
\hfill $\Box$

% The fourth and final main result is a consequence of 
% \cref{thm:Tafne in exp 2 pafne of P}, 
% \cref{thm:a-firm convergence Rn}
% and follows the same pattern as \cref{t:msr convergence}.
% 
% \textbf{Proof of \cref{thm:a-firm convergence rates Rn}} 
% \todo{How difficult can it be?  Need to generalize Theorem \ref{thm:Tafne in exp 2 pafne of P}
% at least to the Prokhorov-Levy distance.}
% \hfill $\Box$

\section{Examples: Stochastic Optimization and Inconsistent Nonconvex Feasibility}
\label{sec:incFeas}
% The example of gradient descent with error discussed in Section \ref{sec:consist RFI} 
% is just one example of the common practice of modelling inexact 
% computation or uncertainty with affine noise models. 
% This yields the RFI  $X_{k+1} = T_{\xi}X_{k} := T 
% X_{k} + \xi_{k}$ for $\mymap{T}{\mathbb{R}}{\mathbb{R}}$ and 
% an i.i.d. sequence $(\xi_{k})$, $\xi_{k} \overset{\mathrm{d}}{=}
%   \xi$, $k\in \mathbb{N}$.  If the mapping $T$ is a {\em contraction}, 
%   then convergence in distribution of the sequence can be obtained
%   whenever the noise is finite.  If
%   $T$ is just nonexpansive, then even for bounded noise it is possible to 
%   have an unbounded sequence.  For example, the nonexpansive operator $x
%   \mapsto T_{\xi}x = x+\xi$ that sums the quantity 
%   $0.1$ on a computer $N$-times and subtracts $N/10$;  this
%   sequence would diverge for $N \to \infty$, due to the non-exact
%   binary representation of $0.1$.

To fix our attention we focus on the following optimization problem
  \begin{equation}\label{eq:opt prob}
  \underset{\mu\in \mathscr{P}_2(\Rn)}{\mbox{minimize}}
  \int_{\Rn}\mathbb{E}_{\xi}[f_{\xi^f}(x) + g_{\xi^g}(x)]\mu(dx). 
  \end{equation}
It is assumed throughout that $\mymap{f_{i}}{\mathbb{R}^{n}}{\mathbb{R}}$ is continuously 
  differentiable for all $i \in I_f$ and that $\mymap{g_{i}}{\mathbb{R}^{n}}{\mathbb{R}}$
  is proper and lower semi-continuous for all $i \in I_g$.   
The random variable with values on $I_f\times I_g$ will be denoted $\xi = (\xi^f, \xi^g)$. 
This model covers deterministic composite optimization as a special case: $I_f$ and $I_g$ consist
of single elements and the measure $\mu$ is a point mass.  

The algorithms reviewed in this section rely on resolvents of the functions
$f_i$ and $g_i$, denoted $\Jcal_{f_i}$ and $\Jcal_{g_i}$.   The resolvent of a 
subdifferentially regular function 
$\mymap{f}{G\subset\Rn}{\mathbb{R}}$ (the epigraph of $f$ is Clarke regular \cite{VA}) 
is defined by $\Jcal_f(x)\equiv \paren{\Id+\partial f}^{-1}(x)\equiv \set{z\in G}{x=z+\partial f(z)}$.  
For proper, lower semicontinuous convex functions $\mymap{f}{\Rn}{\mathbb{R}\cup\{+\infty\}}$, 
this is equivalent to the proximal mapping \cite{Moreau65} 
defined by
\begin{equation}\label{eq:prox}
 \prox_{f}(x)\equiv\argmin_{y}\{f(y)+ \tfrac{1}{2}\norm{y-x}^2\}.
\end{equation}
In general one has
\begin{equation}\label{eq:resolvent}
 \prox_{f}(x) \subset \Jcal_f(x)
\end{equation}
whenever the subdifferential is defined.

\subsection{Stochastic (nonconvex) forward-backward splitting}\label{ex:spg ncvx}
We begin with a general prescription of the forward-backward splitting algorithm together with 
abstract properties of the corresponding fixed point mapping, and then specialize this to 
more concrete instances.   
\begin{algorithm}    
\SetKwInOut{Output}{Initialization}
  \Output{Set $X_{0} \sim \mu_0 \in \mathscr{P}_2(G)$, 
  $X_0\sim \mu$,  ${t}>0$, 
   and $(\xi_{k})_{k\in\Nbb}$ 
  another i.i.d.\ sequence with values on $I_f\times I_g$ and  
  $X_0 \indep (\xi_{k})$.}
    \For{$k=0,1,2,\ldots$}{
            { 
            \begin{equation}\label{eq:spcd}
                X_{k+1}= T^{FB}_{\xi_k}X_k\equiv \Jcal_{g_{\xi^g_k}}\paren{X_{k}-{t}\nabla f_{\xi^f_k}(X_{k})}
            \end{equation}
            }\\
    }
  \caption{Stochastic Forward-Backward Splitting}\label{algo:sfb}
\end{algorithm}

When $f_{\xi^f}(x) = f(x) + \xi^f\cdot x$ and $g_{\xi^g}$ is the zero function, then this is just 
 steepest descents with linear noise discussed in Section \ref{sec:consist RFI}.  More generally, 
 \eqref{eq:spcd} with $g_{\xi^g}$ the zero function models stochastic gradient 
 descents, which is a central algorithmic template in many applications.  We show how the 
 approach developed above opens the door to an analysis of this basic algorithmic paradigm for 
 {\em nonconvex} problems.  

\begin{prop}\label{t:sfb}
On the Euclidean space $(\Rn, \|\cdot\|)$ suppose the following hold:
\begin{enumerate}[(a)]
 \item for all $i\in I_f$, $\nabla f_i$ is Lipschitz continuous with constant $L$ on $G\subset\Rn$ and
 {\em hypomonotone} on $G$ with violation $\tau_f>0$  on $G\subset\Rn$:
\begin{equation}
 \label{e:hypomonotone} 
    -\tau_f\norm{x-y}^2\leq \ip{\nabla f_i(x)-\nabla f_i(y)}{x-y}
 \qquad \forall x, y\in G.
\end{equation}
\item there is a $\tau_g$ such that for all $i\in I_g$, the (limiting) 
subdifferential $\partial  g_i$ satisfies
\begin{equation}\label{e:submonotone}
 -\tfrac{\tau_g}{2}\norm{(x^++z)-(y^++w)}^{2}\leq \ip{z-w}{x^+-y^+}.
\end{equation}%
at all points $(x^+, z)\in \gph\partial  g_i$ and  $(y^+, w)\in \gph\partial  g_i$
where $z = x-x^+$ for $\{x^+\}= \Jcal_{g_i}(x)$ for any 
$x\in \bigcup_{i\in I_f}\paren{\Id - t\nabla f_i}(G)$ and 
where $w = y-y^+$ for $\{y^+\}= \Jcal_{g_i}(y)$ for any 
$y\in \bigcup_{i\in I_f}\paren{\Id - t\nabla f_i}(G)$.
\item $T^{FB}_i$ is a self-mapping on $G\subset\Rn$ for all $i$. 
\end{enumerate}
 Then  the following hold.
 \begin{enumerate}[(i)]
  \item\label{ex:spg ncvx i} $T^{FB}_i$ is a$\alpha$-fne on $G$ with constant 
  $\alpha = 2/3$ and violation at most 
  \begin{equation}\label{eq:sfb violation}
    \epsilon = \max\{0, (1+2\tau_g)\paren{1+t(2\tau_f+2tL^2)} - 1\}
  \end{equation}
  for all $i\in I$.
  \item\label{ex:spg ncvx ii} $\Phi(x, i):=T_{i}x$ is a$\alpha$-fne in expectation 
  on $G$ with 
  constant $\alpha = 2/3$ and violation at most $\epsilon$ given in \eqref{eq:sfb violation}.
  \item\label{ex:spg ncvx iv} The Markov operator $\mathcal{P}$ corresponding to 
  \eqref{eq:spcd} is a$\alpha$-fne in 
  measure on $\mathscr{P}_2(G)$ with constant $\alpha=2/3$ and violation no 
  greater than $\epsilon$ given in \eqref{eq:sfb violation}, 
  i.e. it  satisfies \eqref{eq:alphfne meas}.
  \item\label{ex:spg ncvx iii} Suppose that 
  assumption (a) holds with condition \eqref{e:hypomonotone} being 
  satisfied for  
  $\tau_f<0$ (that is, $\nabla f_i$ is strongly monotone for all $i$), and that  
  condition \eqref{e:submonotone} holds with $\tau_g=0$ (for instance, when $g_i$ is convex).  
  Then,  whenever there exists an invariant measure 
  for the Markov operator $\mathcal{P}$ corresponding to \eqref{eq:spcd} 
  for all step lengths $t\in \left(0,\tfrac{|\tau_f|}{L^2}\right]$, 
  the distributions of the sequences of random variables
  converge to an invariant measure in the Prokhorov-L\`evy metric.  
  \item\label{ex:spg ncvx v} Let $G$ be compact and  $G\cap\inv\mathcal{P}\ne \emptyset$.  
  %   and assume that there is at least one $\pi\in \inv \Pcal$ and one 
  %   $\mu\in \mathscr{P}_2(G)$ 
  %   with $W_2(\mu, \pi)<\infty$ and $\mu\Pcal\in\mathscr{P}_2(G)$.  
  If  $\Psi$ given by \eqref{eq:Psi} 
 takes the value $0$ only at points in $\inv\mathcal{P}$ and 
 is metrically subregular for 
  $0$ on $\mathscr{P}_2(G)$ with gauge $\rho$ given by \eqref{eq:gauge} 
  with $\tau=1/2$, $\epsilon$ satisfying 
  \eqref{eq:sfb violation}, and  $\theta$
  satisfying \eqref{eq:theta},
 then the Markov chain converges to an invariant distribution with rate given by \eqref{eq:gauge convergence}.  
 \end{enumerate} 
\end{prop}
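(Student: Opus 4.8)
The plan is to prove \eqref{ex:spg ncvx i} by hand --- it carries essentially all of the analytical content --- and then obtain \eqref{ex:spg ncvx ii}--\eqref{ex:spg ncvx v} by feeding it into machinery already in place. The starting point is the factorization $T^{FB}_i = \Jcal_{g_i}\circ(\Id - t\nabla f_i)$ together with the Hilbert-space characterization \eqref{eq:paafne2} of a$\alpha$-fne mappings. First I would verify that the forward step $F_i \equiv \Id - t\nabla f_i$ is a$\tfrac12$-fne on $G$: since $(x - F_i x) - (y - F_i y) = t(\nabla f_i(x) - \nabla f_i(y))$, expanding $\norm{F_i x - F_i y}^2$ and rearranging reduces the target inequality \eqref{eq:paafne2} (with $\alpha = 1/2$, so $\tfrac{1-\alpha}{\alpha}=1$) to
\[
-2t\ip{\nabla f_i(x) - \nabla f_i(y)}{x-y} + 2t^2\norm{\nabla f_i(x) - \nabla f_i(y)}^2 \le \epsilon_F \norm{x-y}^2,
\]
which is settled by hypomonotonicity \eqref{e:hypomonotone} for the cross term and the Lipschitz bound for the quadratic term, giving $\epsilon_F = t(2\tau_f + 2tL^2)$.

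Next I would show the resolvent $\Jcal_{g_i}$ is a$\tfrac12$-fne with violation controlled by $\tau_g$: writing $x^+ = \Jcal_{g_i}(x)$ and $z = x - x^+ \in \partial g_i(x^+)$ (and likewise $y^+, w$), the decomposition $x - y = (x^+ - y^+) + (z - w)$ turns the a$\tfrac12$-fne inequality into a lower bound on $\ip{z-w}{x^+-y^+}$, which is exactly the one-sided subdifferential hypothesis \eqref{e:submonotone} evaluated at forward-step outputs (this is precisely why assumption (b) is phrased over $\bigcup_i(\Id - t\nabla f_i)(G)$). Assumption (c) makes the composition a self-map of $G$. Invoking the composition calculus for almost $\alpha$-firmly nonexpansive operators from \cite{LukNguTam18} --- the composition of two a$\tfrac12$-fne maps is a$\alpha$-fne with $\alpha = \tfrac{2}{3}$ and violation dominated by the combination of the two constituent violations --- then delivers \eqref{ex:spg ncvx i} with the upper bound \eqref{eq:sfb violation}.

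With \eqref{ex:spg ncvx i} in hand, \eqref{ex:spg ncvx ii} follows from Proposition \ref{r:nonneg psi_Phi}: Euclidean space is CAT($0$), so taking the expectation over $\xi$ of the pointwise inequality preserves the a$\alpha$-fne structure in expectation, and \eqref{ex:spg ncvx iv} follows from \eqref{ex:spg ncvx ii} by Proposition \ref{thm:Tafne in exp 2 pafne of P}, which lifts a$\alpha$-fne in expectation to a$\alpha$-fne in measure (that is, to \eqref{eq:alphfne meas}), all with $\alpha = 2/3$ and the same violation. For \eqref{ex:spg ncvx iii} I would substitute $\tau_f < 0$ and $\tau_g = 0$ into \eqref{eq:sfb violation}: for $t \in (0, \lvert\tau_f\rvert/L^2]$ one has $\tau_f + tL^2 \le 0$, hence $2\tau_f + 2tL^2 \le 0$ and $\epsilon = 0$, so that each $T^{FB}_i$ is genuinely $\alpha$-fne; Theorem \ref{thm:a-firm convergence Rn} then yields convergence in the Prokhorov--L\`evy metric provided an invariant measure exists.

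Finally, \eqref{ex:spg ncvx v} is a direct application of Theorem \ref{t:msr convergence}: compactness of $G$ and $G \cap \inv\mathcal{P} \neq \emptyset$ supply hypothesis \eqref{t:msr convergence c} there, \eqref{ex:spg ncvx ii} supplies the a$\alpha$-fne-in-expectation hypothesis \eqref{t:msr convergence a}, and the metric subregularity assumption on $\Psi$ supplies \eqref{t:msr convergence b}, with the gauge parameter $\tau = \tfrac{1-\alpha}{\alpha} = 1/2$ matching because $\alpha = 2/3$; the rate \eqref{eq:gauge convergence} is then inherited verbatim. The main obstacle is the violation bookkeeping in \eqref{ex:spg ncvx i} --- extracting the resolvent's a$\tfrac12$-fne bound from the one-sided inequality \eqref{e:submonotone} and tracking how the two constituent violations combine under the composition calculus to reach the stated bound \eqref{eq:sfb violation}; everything downstream is a bookkeeping-free invocation of the lifting propositions and the two convergence theorems.
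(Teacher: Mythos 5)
Your proposal is correct and takes essentially the same route as the paper: the paper disposes of part \eqref{ex:spg ncvx i} by simply citing Proposition 3.7 of \cite{LukNguTam18}, and your forward-step/resolvent decomposition with the composition calculus is exactly the content of that citation, with the bookkeeping coming out right (your $\epsilon_F = t(2\tau_f+2tL^2)$ for $\Id - t\nabla f_i$ and the resolvent violation bounded by $2\tau_g$ combine multiplicatively to give \eqref{eq:sfb violation}, and for $\tau_f<0$, $\tau_g=0$, $t\in\left(0,|\tau_f|/L^2\right]$ the violation indeed vanishes). Parts \eqref{ex:spg ncvx ii}--\eqref{ex:spg ncvx v} are handled exactly as in the paper, via Proposition \ref{r:nonneg psi_Phi}, Proposition \ref{thm:Tafne in exp 2 pafne of P}, Theorem \ref{thm:a-firm convergence Rn} and Theorem \ref{t:msr convergence} respectively, with your matching of the hypotheses (including $\tau=(1-\alpha)/\alpha=1/2$) being accurate.
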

Before proving the statement, some background for conditions \eqref{e:hypomonotone} and \eqref{e:submonotone} 
might be helpful.  The inequality \eqref{e:hypomonotone} is satisfied by functions $f$ that are 
{\em prox-regular} \cite{PolRock96a}.  This traces back to Federer's study of 
curvature measures \cite{Federer59} where such functions would be called functions whose epigraphs have 
{\em positive reach}. Inequality \eqref{e:submonotone} is equivalent to the property 
that $\Jcal_{g_i}$ is a$\alpha$-fne with constant $\alpha_i=1/2$ and 
violation $\tau_g$ on $G$ \cite[Proposition 2.3]{LukNguTam18}.  Any differentiable function $g_i$ with 
gradient satisfying \eqref{e:hypomonotone} with constant $\tau_g/(2(1+\tau_g))$ 
will satisfy \eqref{e:submonotone} with constant $\tau_g$.  
In the present setting, if 
$g_i$ is {\em prox-regular} on $G$, then  
$\partial {g_i}$ is hypomonotone on $G$ and therefore 
satisfies \eqref{e:submonotone} \cite{LukNguTam18}.  Of course, convex functions 
are trivially hypomonotone with constant $\tau=0$.

\begin{proof}
\eqref{ex:spg ncvx i}.  This is \cite[Proposition 3.7]{LukNguTam18}. 

\eqref{ex:spg ncvx ii}. This follows immediately from Part \eqref{ex:spg ncvx i} above and 
Proposition \ref{r:nonneg psi_Phi}.

\eqref{ex:spg ncvx iv}.  This follows immediately from Part \eqref{ex:spg ncvx ii} above and 
Proposition \ref{thm:Tafne in exp 2 pafne of P}.

\eqref{ex:spg ncvx iii}.  Inserting the assumptions with 
their corresponding constants into the expression for the violation \eqref{eq:sfb violation} shows that 
the $\epsilon$ is zero for all step-lengths $t\in \left(0,\tfrac{|\tau_f|}{L^2}\right]$.  So by part 
\eqref{ex:spg ncvx i} we have that, for all step lengths small
enough, the mappings $T^{FB}_i$ are $\alpha$-fne with constant $\alpha=2/3$.  
(Steps sizes up to twice the upper bound considered here can also be taken, but then the 
constant $\alpha$ approaches $1$.)
If the corresponding 
Markov operator possesses invariant measures, then convergence in distribution of the 
corresponding Markov chain follows from Theorem \ref{thm:a-firm convergence Rn}. 

\eqref{ex:spg ncvx v}.  This follows from Part \eqref{ex:spg ncvx iv} and Theorem \ref{t:msr convergence}.
\end{proof}
The assumptions of Proposition \ref{t:sfb}\eqref{ex:spg ncvx iii} are not unusual.  
What is new is the generality of global convergence.  The compactness assumption on 
$G$ in part \eqref{ex:spg ncvx v} is just to permit the application of Theorem \ref{t:msr convergence}.  
As noted in Remark \ref{r:G compact} this assumption can be dropped for mappings $T_i$ on 
Euclidian space. 
The result narrows the work of proving convergence of stochastic forward-backward 
algorithms to verifying existence of $\inv \Pcal$.
The next corollary shows how this
is done for the special case of stochastic gradient descent. 
\begin{cor}[stochastic gradient descent] 
  In problem \eqref{eq:opt prob} let  $g_i(x)\equiv 0$ for all $i$ at each $x$.
  In addition to assumptions of Proposition \ref{t:sfb}, assume that 
  \begin{enumerate}[(i)]
   \item the expectation $\mathbb{E}\ecklam{f_{\xi }(x)}$ 
  attains a minimum at $\xbar\in G\subset \Rn$ with value 
  $\mathbb{E}\ecklam{f_{\xi }(\xbar)}= \bar p$;
  \item $\mathbb{E}\left[\norm{X_{0} -\bar x}^{2}\right]$ exists where $X_0$ is 
  a random variable with distribution $\mu_0\in \mathscr{P}_2(G)$;
  \item $\nabla f_i$ is strongly 
    monotone with constant $|\tau_f|$ for all $i$ 
    (that is condition \eqref{e:hypomonotone} is satisfied with 
    $\tau_f<0$).  
  \end{enumerate}
  Then Algorithm \ref{algo:sfb} is the stochastic gradient descent algorithm and, 
  for a fixed step length 
  $t\in \left(0,\tfrac{|\tau_f|}{L^2}\right]$, when initialized with $\mu_0$,
  the distributions of the iterates coverge in the Prokhorov-L\`evy 
  metric to an invariant measure of the 
  corresponding Markov operator.  Moreover, whenever $\Psi$ given by \eqref{eq:Psi} 
  takes the value $0$ only at points in $\inv\mathcal{P}$ and 
  is metrically subregular for 
  $0$ on $\mathscr{P}_2(G)$ with gauge $\rho$ given by \eqref{eq:gauge} 
  with $\tau=1/2$, $\epsilon_i\leq \epsilon$ for all $i$ with $\epsilon$ 
  given by \eqref{eq:sfb violation} and  $\theta$
  satisfying \eqref{eq:theta},
  then the Markov chain converges to a point in $\inv\Pcal$ 
  with rate given by \eqref{eq:gauge convergence}.  
\end{cor}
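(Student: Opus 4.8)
The plan is to exploit the fact that, with $g_i\equiv 0$, the backward (resolvent) step is trivial, so the iteration collapses to ordinary stochastic gradient descent and \emph{all} regularity conclusions of Proposition \ref{t:sfb} become available. The only genuinely new work is to certify that $\inv\Pcal\neq\emptyset$; once existence is in hand, both assertions of the corollary follow by quoting Proposition \ref{t:sfb}\eqref{ex:spg ncvx iii} and \eqref{ex:spg ncvx v}. Concretely, $g_i\equiv 0$ forces $\Jcal_{g_i}=\Id$, so $T^{FB}_i(x)=x-t\nabla f_i(x)$ in \eqref{eq:spcd}, which is exactly the stochastic gradient descent update. Taking $\tau_g=0$ (admissible since each $g_i$ is convex) and $\tau_f=-|\tau_f|<0$ in \eqref{eq:sfb violation} gives $\epsilon=\max\{0,\,2t(tL^2-|\tau_f|)\}=0$ for every $t\in(0,|\tau_f|/L^2]$, so by Proposition \ref{t:sfb}\eqref{ex:spg ncvx i}--\eqref{ex:spg ncvx iv} each $T^{FB}_i$, the update $\Phi$, and the Markov operator $\Pcal$ are $\alpha$-fne with violation $0$ and $\alpha=2/3$.

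The heart of the argument is therefore existence of an invariant measure. Here I would use that strong monotonicity of $\nabla f_i$ with constant $|\tau_f|$ together with $L$-Lipschitz continuity makes each $T^{FB}_i$ a genuine contraction with modulus independent of $i$: expanding $\norm{(\Id-t\nabla f_i)x-(\Id-t\nabla f_i)y}^2$ and using \eqref{e:hypomonotone} with $\tau_f<0$ yields $\norm{T^{FB}_ix-T^{FB}_iy}\le r\,\norm{x-y}$ with $r\equiv\sqrt{1-2t|\tau_f|+t^2L^2}<1$ on the whole range $t\in(0,|\tau_f|/L^2]$ (note $|\tau_f|\le L$, so $r$ is real). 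Hence $\Phi$ is a contraction in expectation.

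Since $\xbar$ minimizes $\mathbb{E}[f_\xi]$ it is \emph{not} a fixed point of the individual $T^{FB}_i$, so I would split $T^{FB}_{\xi}x-\xbar=\big(T^{FB}_{\xi}x-T^{FB}_{\xi}\xbar\big)-t\nabla f_{\xi}(\xbar)$ into a contractive part and a zero-mean gradient-noise part. Writing $d_k\equiv(\mathbb{E}[\norm{X_k-\xbar}^2])^{1/2}$ and $\sigma\equiv(\mathbb{E}[\norm{\nabla f_\xi(\xbar)}^2])^{1/2}$, Minkowski's inequality together with the independence of $\xi_k$ from $X_k$ gives the one-step recursion $d_{k+1}\le r\,d_k+t\sigma$, whence $d_k\le r^k d_0+t\sigma/(1-r)$ is uniformly bounded; here $d_0<\infty$ by assumption (ii) and $\sigma<\infty$ is the finite-gradient-variance condition that makes $\mathbb{E}[\norm{\Phi(\xbar,\xi)-\xbar}^2]=t^2\sigma^2<\infty$. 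With this bound, $\sup_k\mathbb{E}[\norm{X_k}]\le\norm{\xbar}+\sup_k d_k<\infty$, so existence (indeed uniqueness, and $\pi\in\mathscr{P}_{2}(G)$) follows either from Proposition \ref{thm:ex_inRn} or, more directly, from Theorem \ref{thm:contraInExpec}\eqref{thm:contraInExpec i} applied to the contraction-in-expectation $\Phi$.

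Finally I would conclude by feeding existence and the $\alpha$-fne property of $\Pcal$ into Proposition \ref{t:sfb}\eqref{ex:spg ncvx iii} to obtain convergence of $(\mathcal{L}(X_k))$ in the Prokhorov--L\`evy metric, and, under the stated metric subregularity of $\Psi$ from \eqref{eq:Psi}, into Proposition \ref{t:sfb}\eqref{ex:spg ncvx v} (equivalently Theorem \ref{t:msr convergence}, whose compactness hypothesis may be dropped on $\Rn$ by Remark \ref{r:G compact}) to obtain convergence to the point of $\inv\Pcal$ at the rate \eqref{eq:gauge convergence}. The main obstacle is exactly the moment bound in the previous paragraph: because $\xbar$ is a fixed point of the \emph{averaged} map but not of the individual $T^{FB}_i$, controlling $\mathbb{E}[\norm{X_k}]$ requires isolating the irreducible gradient noise $t\nabla f_\xi(\xbar)$ and assuming (or deriving) its finite second moment $\sigma^2$; everything else is bookkeeping against Proposition \ref{t:sfb}.
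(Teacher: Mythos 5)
Your reduction is the same as the paper's: with $g_i\equiv 0$ each $T^{FB}_i=\Id-t\nabla f_i$, the violation \eqref{eq:sfb violation} with $\tau_g=0$ and $\tau_f=-|\tau_f|$ vanishes for $t\in\left(0,|\tau_f|/L^2\right]$, and once $\inv\Pcal\neq\emptyset$ is established both conclusions follow by citing Proposition \ref{t:sfb}\eqref{ex:spg ncvx iii} and \eqref{ex:spg ncvx v} --- the paper's proof makes exactly this reduction. Your contraction computation is also correct: \eqref{e:hypomonotone} with $\tau_f<0$ together with $L$-Lipschitz continuity gives $\norm{T^{FB}_ix-T^{FB}_iy}^2\le(1-2t|\tau_f|+t^2L^2)\norm{x-y}^2$ with modulus $r<1$ on the stated step-size range, so $\Phi$ is a contraction in expectation. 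Where you genuinely diverge from the paper is in how existence of an invariant measure is obtained, and that is where the proof has a hole.

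The gap is precisely the point you flag but never close: your recursion $d_{k+1}\le r\,d_k+t\sigma$ --- and equally the appeal to Theorem \ref{thm:contraInExpec}, whose hypothesis ``$\exists\, y$ with $\mathbb{E}[\norm{\Phi(y,\xi)-y}^2]<\infty$'' is the very same condition --- requires $\sigma^2=\mathbb{E}[\norm{\nabla f_\xi(\xbar)}^2]<\infty$. This is not among the corollary's hypotheses and cannot be derived from them: take $f_i(x)=\tfrac12\norm{x}^2-\ip{a_i}{x}$ on $\Rn$ with $\mathbb{E}[\norm{a_\xi}]<\infty$ but $\mathbb{E}[\norm{a_\xi}^2]=\infty$; then every $\nabla f_i$ is $1$-Lipschitz and $1$-strongly monotone, $\mathbb{E}[f_\xi(x)]=\tfrac12\norm{x}^2-\ip{\mathbb{E}[a_\xi]}{x}$ is finite and attains its minimum at $\xbar=\mathbb{E}[a_\xi]$, yet $\sigma^2=\mathbb{E}[\norm{\xbar-a_\xi}^2]=\infty$, so your one-step estimate and Theorem \ref{thm:contraInExpec} are both unavailable. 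The paper's proof sidesteps the gradient noise altogether: it expands $\norm{X_{k+1}-\xbar}^2$, invokes the descent lemma for $L$-Lipschitz gradients and the strong-convexity inequality $\ip{\nabla\mathbb{E}[f_\xi](X_k)}{X_k-\xbar}\ge\mathbb{E}[f_\xi(X_k)]-\bar p+\tfrac{|\tau_f|}{2}\norm{X_k-\xbar}^2$, and arrives at $\mathbb{E}[\norm{X_{k+1}-\xbar}^2]\le(1-t|\tau_f|)\mathbb{E}[\norm{X_k-\xbar}^2]+2t\bar p$; the uniform moment bound thus comes from the \emph{function value} $\bar p$ of assumption (i), not from a second moment of $\nabla f_\xi(\xbar)$, and existence then follows from Proposition \ref{thm:ex_inRn}. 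If you are willing to add $\sigma<\infty$ as a hypothesis (or $\mathbb{E}[\inf f_\xi]>-\infty$, which gives $\sigma^2\le\tfrac{2L^2}{|\tau_f|}\left(\bar p-\mathbb{E}[\inf f_\xi]\right)$ by strong convexity), your route is not only valid but proves strictly more than the corollary claims --- uniqueness of the invariant measure and Q-linear convergence in $W_2$ via Theorem \ref{thm:contraInExpec} --- but as written it does not establish the statement under its stated hypotheses.
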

\begin{proof}
  In this case $T^{FB}_i\equiv \Id - t\nabla f_i$.   
  If we can show that the 
  corresponding Markov operator posseses invariant measures, then the statement 
  follows from  
  Proposition \ref{t:sfb}.
  
  To establish existence of invariant distributions, 
  note that  
  \begin{align*}
    \norm{X_{k+1} - \bar x}^{2} =& 
    \norm{X_{k} - {t} \nabla f_{\xi _{k}}(X_{k}) - \bar x}^{2} - 
      \norm{X_{k+1} - 
    X_{k} - {t} \nabla f_{\xi_{k}}(X_{k})}^{2} \\
    &= \norm{X_{k}-\bar x}^{2} - \norm{X_{k+1}-X_{k}}^2 
    -2{t}\act{\nabla f_{\xi_{k}}(X_{k}), X_{k+1} -X_{k}+X_{k}-\bar x}.
  \end{align*}
  For functions with Lipschitz continuous gradients the following {\em growth condition}
  holds%  continuity of $\nabla f_{i}$ that
  \begin{align*}
    \act{\nabla f_{\xi_{k}}(X_{k}),X_{k+1}-X_{k}} \ge
    f_{\xi_{k}}(X_{k+1})-f_{\xi_{k}}(X_{k}) 
-\frac{L}{2}\norm{X_{k+1}-X_{k}}^{2}.
  \end{align*}
  Interchanging the gradient and the expectation in $\mathbb{E}\ecklam{\nabla f_{\xi}(x)}$ 
  together with strong monotonicity of the gradients $\nabla f_i$ yields
  \begin{align*}
    \act{\nabla \mathbb{E}\ecklam{f_{\xi}(X_{k}), X_{k} - \bar x}} \ge 
    \mathbb{E}\ecklam{f_{\xi}(X_{k})}-\bar p + %\mathbb{E}\ecklam{f_{\xi}((\bar x))} +
    \frac{|\tau_f|}{2}\norm{X_{k}-\bar x}^{2}.
  \end{align*}
  It follows that 
  \begin{align*}
    \mathbb{E}\left[\norm{X_{k+1} - \bar x}^{2}\right] &\le
    (1-{t}|\tau_f|)\mathbb{E}\left[\norm{X_{k} - \bar x}^{2} \right] -
    (1-{t} L) \mathbb{E}\left[\norm{X_{k+1} - X_{k}}^{2}\right] -
    2{t} (\mathbb{E}[f_{\xi_{k}}(X_{k+1})] - \bar p \\ % F(\bar x)) \\
    & \le     (1-{t}|\tau_f|)\mathbb{E}\left[\norm{X_{k} - \bar x}^{2}
    \right] +2 {t} \bar p.
  \end{align*}
  This yields
  \begin{align*}
    \mathbb{E}\left[\norm{X_{k} - \bar x}^{2} \right]
    \le (1-{t}|\tau_f|)^{k} \mathbb{E}\left[\norm{X_{0} - \bar x}^{2}
    \right] + 2|\tau_f| \bar p  \sum_{i=0}^{k-1} (1-{t} |\tau_f|)^{i} \le
    \mathbb{E}\left[\norm{X_{0} - \bar x}^{2} \right] + 
    \frac{2\bar p }{|\tau_f|}.
  \end{align*}
  So the sequence $\left( \mathbb{E}\left[\norm{X_{k} - \bar x}^{2} \right]
  \right)_{k\in\Nbb}$ is bounded if $\mathbb{E}\left[\norm{X_{0} -
      \bar x}^{2}\right]$ exists and hence the existence of an
  invariant measure follows from Theorem \ref{thm:ex_inRn}.
\end{proof}
  Note that the step size in the stochastic gradient could be large enough that the 
  gradient descent mapping is {\em expansive}, even though the gradient is 
  assumed to be strongly monotone.  This explains the frequent observation that  
  descent methods perform well even for step sizes larger than the usual analysis
  recommends.

\subsection{Stochastic Douglas-Rachford and X-FEL Imaging}  
Another prevalent algorithm for nonconvex problems is the 
Douglas-Rachford algorithm \cite{LionsMercier79}.  This is
based on compositions of {\em reflected resolvents}:
\begin{equation}\label{eq:Rprox}
 R_{f}\equiv 2\Jcal_f -\Id.
\end{equation}

  \begin{algorithm}    
\SetKwInOut{Output}{Initialization}
  \Output{Set $X_{0} \sim \mu_0 \in \mathscr{P}_2(G)$, 
  $X_0\sim \mu$,   
   and $(\xi_{k})_{k\in\Nbb}$ 
  another i.i.d.\ sequence with $\xi_{k} = (\xi^f_{k}, \xi^g_{k})$ 
  taking values on $I_f\times I_g$ and  
  $X_0 \indep (\xi_{k})$.}
    \For{$k=0,1,2,\ldots$}{
            { 
            \begin{equation}\label{eq:sdr}
                X_{k+1}= T^{DR}_{\xi_k}X_k\equiv 
                \frac{1}{2}\paren{R_{f_{\xi^f_k}}\circ R_{g_{\xi^g_k}} + \Id}(X_{k})
            \end{equation}
            }\\
    }
  \caption{Stochastic Douglas-Rachford Splitting}\label{algo:sdr}
\end{algorithm}
Algorithm \ref{algo:sdr} has been studied for solving large-scale, convex optimization 
and monotone inclusions (see for example \cite{Pesquet19, Cevher2018}). The result below 
opens the analysis to nonconvex, nonmonotone problems. 
\begin{prop}\label{t:sdr}
On the Euclidean space $(\Rn, \|\cdot\|)$, suppose the following hold:
\begin{enumerate}[(a)]
%  \item for all $i\in I_f$, $f_i$ is convex;
 \item there is a $\tau_g$ such that for all $i\in I_g$, the (limiting) 
subdifferential $\partial  g_i$ satisfies
\begin{equation}\label{e:hypomonotone g}
 -\tfrac{\tau_g}{2}\norm{(x^++z)-(y^++w)}^{2}\leq \ip{z-w}{x^+-y^+}.
\end{equation}%
at all points $(x^+, z)\in \gph\partial  g_i$ and  $(y^+, w)\in \gph\partial  g_i$
where $z = x-x^+$ for $\{x^+\}= \Jcal_{g_i}(x)$ for any $x\in G\subset \Rn$ and 
where $w = y-y^+$ for $\{y^+\}= \Jcal_{g_i}(y)$ for any $y\in G$.
 \item there is a $\tau_f$ such that for all $i\in I_f$, the (limiting) 
subdifferential $\partial  f_i$ satisfies
\begin{equation}\label{e:hypomonotone f}
 -\tfrac{\tau_f}{2}\norm{(x^++z)-(y^++w)}^{2}\leq \ip{z-w}{x^+-y^+}.
\end{equation}%
at all points $(x^+, z)\in \gph\partial  f_{i}$ and  $(y^+, w)\in \gph\partial  f_{i}$
where $z = x-x^+$ for $\{x^+\}= \Jcal_{f_{i}}(x)$ for any $x\in \bigcup_{j\in I_g}\{\Jcal_{g_{j}}(G)\}$ and 
where $w = y-y^+$ for $\{y^+\}= \Jcal_{f_{i}}(y)$ for any $y\in \bigcup_{j\in I_g}\Jcal_{g_{j}}(G)$.
\item $T^{DR}_i$ is a self-mapping on $G\subset\Rn$ for all $i$. 
\end{enumerate}
 Then  the following hold.
 \begin{enumerate}[(i)]
  \item\label{ex:sdr ncvx i} For all $i\in I_f\times I_g$ the mapping $T^{DR}_i$ 
  defined by \eqref{eq:sdr} 
  is a$\alpha$-fne on $G$ with constant 
  $\alpha = 1/2$ and violation at most 
  \begin{equation}\label{eq:sdr violation}
    \epsilon = \tfrac{1}{2}\paren{(1+ 2\tau_g)(1+2\tau_f) - 1}
  \end{equation}
  on $G$.
  \item\label{ex:sdr ncvx ii} $\Phi(x, i):=T^{DR}_{i}x$ is a$\alpha$-fne in expectation with 
  constant $\alpha = 1/2$ and violation at most $\epsilon$ given by \eqref{eq:sdr violation}.
  \item\label{ex:sdr ncvx iv} The Markov operator $\mathcal{P}$ corresponding to 
  \eqref{eq:sdr} is a$\alpha$-fne in 
  measure with constant $\alpha=1/2$ and violation no greater than $\epsilon$ given by 
  \eqref{eq:sdr violation}, 
  i.e. it  satisfies \eqref{eq:alphfne meas}.
  \item\label{ex:sdr ncvx iii} Suppose that 
  assumptions (a) and (b) hold with conditions \eqref{e:hypomonotone g} and \eqref{e:hypomonotone f} 
  being satisfied for  
  $\tau_g=\tau_f=0$ (i.e., when $f_i$ and $g_i$ are convex for all $i$).  
  Then,  whenever there exists an invariant measure 
  for the Markov operator $\mathcal{P}$ corresponding to \eqref{eq:sdr}, 
  the distributions of the sequences of random variables
  converge to an invariant measure in the Prokhorov-L\`evy metric.  
  \item\label{ex:sdr ncvx v} Let $G$ be compact and  $G\cap\inv\mathcal{P}\ne \emptyset$.
  %   Let $G\cap\inv\mathcal{P}\ne \emptyset$,  
  %   and assume that there is at least one $\pi\in \inv \Pcal$ and one 
  %   $\mu\in \mathscr{P}_2(G)$ 
  %   with $W_2(\mu, \pi)<\infty$ and $\mu\Pcal\in\mathscr{P}_2(G)$.  
  If $\Psi$ given by \eqref{eq:Psi} 
 takes the value $0$ only at points in $\inv\mathcal{P}$ and 
 is metrically subregular for 
  $0$ on $\mathscr{P}_2(G)$ with gauge $\rho$ given by \eqref{eq:gauge} 
  with $\tau=1/2$, $\epsilon$ satisfying 
  \eqref{eq:sdr violation}, and  $\theta$
  satisfying \eqref{eq:theta},
 then the Markov chain converges to an invariant distribution with rate given by \eqref{eq:gauge convergence}.  
 \end{enumerate} 
\end{prop}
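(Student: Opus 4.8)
The plan is to follow the proof of Proposition \ref{t:sfb} line by line, since the only genuinely new work is establishing the regularity of the Douglas-Rachford fixed point operator \eqref{eq:sdr}; once part \eqref{ex:sdr ncvx i} is in hand, the remaining four parts are applications of lifting results already proved.

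For part \eqref{ex:sdr ncvx i} I would track the violation through the construction of $T^{DR}_i$. Assumptions (a) and (b), inequalities \eqref{e:hypomonotone g} and \eqref{e:hypomonotone f}, are exactly the statements that the resolvents $\Jcal_{g_i}$ and $\Jcal_{f_i}$ are a$\alpha$-fne with constant $\alpha=1/2$ and violations $\tau_g$ and $\tau_f$ respectively on the domains relevant to the composition, by \cite[Proposition 2.3]{LukNguTam18}. Using the Hilbert space characterization \eqref{eq:paafne2} with $\alpha=1/2$, the reflected resolvent $R_{g_i}=2\Jcal_{g_i}-\Id$ is almost nonexpansive with violation $2\tau_g$, and likewise $R_{f_i}$ with violation $2\tau_f$. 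The composition $R_{f_i}\circ R_{g_i}$ is then almost nonexpansive with violation $(1+2\tau_f)(1+2\tau_g)-1$, and since $T^{DR}_i=\tfrac{1}{2}\paren{R_{f_i}\circ R_{g_i}+\Id}$ averages this composition with the identity, it is a$\alpha$-fne with $\alpha=1/2$ and violation at most $\tfrac{1}{2}\paren{(1+2\tau_f)(1+2\tau_g)-1}$, which is \eqref{eq:sdr violation}. This is the Douglas-Rachford counterpart of \cite[Proposition 3.7]{LukNguTam18}, and I would invoke the corresponding result there rather than reproduce the algebra.

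The remaining parts then follow as in Proposition \ref{t:sfb}. Part \eqref{ex:sdr ncvx ii} is immediate from part \eqref{ex:sdr ncvx i} and Proposition \ref{r:nonneg psi_Phi}, which lifts pointwise a$\alpha$-fne of each $T^{DR}_i$ to a$\alpha$-fne in expectation of $\Phi$ on the (Euclidean, hence CAT($0$)) space, the transport discrepancy $\psi_2$ being nonnegative there by Lemma \ref{lem:psi_f nonneg}. Part \eqref{ex:sdr ncvx iv} follows from part \eqref{ex:sdr ncvx ii} and Proposition \ref{thm:Tafne in exp 2 pafne of P}, lifting the in-expectation property to a$\alpha$-fne in measure for $\mathcal{P}$. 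For part \eqref{ex:sdr ncvx iii}, substituting $\tau_f=\tau_g=0$ into \eqref{eq:sdr violation} gives $\epsilon=0$, so each $T^{DR}_i$ is genuinely $\alpha$-fne with $\alpha=1/2$, and Theorem \ref{thm:a-firm convergence Rn} yields convergence of the distributions in the Prokhorov-L\`evy metric whenever an invariant measure exists. Finally, part \eqref{ex:sdr ncvx v} is a direct application of Theorem \ref{t:msr convergence}, with part \eqref{ex:sdr ncvx iv} supplying the a$\alpha$-fne-in-measure hypothesis and the stated metric subregularity of $\Psi$ supplying the remaining hypothesis.

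The only step requiring genuine care is part \eqref{ex:sdr ncvx i}, namely the bookkeeping of how the violation propagates from the resolvents through the reflections, the composition, and the final averaging. The composition step is the subtle one: unlike nonexpansive maps, whose composition preserves the constant, almost-nonexpansive violations multiply as $(1+2\tau_f)(1+2\tau_g)$, and one must confirm that the full transport-discrepancy inequality \eqref{eq:paafne2} with constant $\alpha=1/2$ -- not merely the Lipschitz estimate -- survives these operations. Since this is exactly the content of the Douglas-Rachford regularity analysis in \cite{LukNguTam18}, I would lean on that citation for the calculation and reserve the present argument for the assembly of the five parts.
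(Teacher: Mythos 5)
Your proposal is correct and follows essentially the same route as the paper's proof: part (i) via the regularity calculus of \cite{LukNguTam18} (resolvent regularity from assumptions (a)--(b), then reflection, composition, and averaging with the identity), and parts (ii)--(v) exactly as in the forward--backward case, i.e.\ by Proposition \ref{r:nonneg psi_Phi}, Proposition \ref{thm:Tafne in exp 2 pafne of P}, Theorem \ref{thm:a-firm convergence Rn} with $\epsilon=0$, and Theorem \ref{t:msr convergence} respectively. The one place where your explicit bookkeeping adds something is worth noting: your accounting -- violation $\tau_g$ for $\Jcal_{g_j}$ (by the equivalence of \eqref{e:hypomonotone g} with \eqref{eq:paafne2} at $\alpha=1/2$), doubling under reflection, multiplication under composition, halving under the final averaging -- is precisely the chain that produces \eqref{eq:sdr violation}, whereas the paper's proof assigns violation $2\tau_g$ to the resolvent $\Jcal_{g_j}$ itself, a figure consistent with the final formula only if it is read as the violation of the reflected resolvent $R_{g_j}$.
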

\begin{proof}
\eqref{ex:sdr ncvx i}.  By \cite[Proposition 3.7]{LukNguTam18} for all $j\in I_g$, 
$\Jcal_{g_j}$ is a$\alpha$-fne 
with constant $\alpha=1/2$ and violation $\epsilon_g = 2\tau_g$ on $G$.  
Likewise, for all $i\in I_f$,  $\Jcal_{f_i}$  
is a$\alpha$-fne 
with constant $\alpha=1/2$ and violation $\epsilon_f = 2\tau_f$ on 
$\bigcup_{j\in I_g}\{\Jcal_{g_{j}}(G)\}$.
By \cite[Propositions 2.3-2.4]{LukNguTam18}, for all 
$i\in I_f\times I_g$ the Douglas-Rachford mapping $T^{DR}_i$ is therefore 
a$\alpha$-fne with constant $\alpha=1/2$ and 
violation at most $\tfrac{1}{2}\paren{(1+2\tau_g)(1+2\tau_f) - 1}$ 
on $G$.  

\eqref{ex:sdr ncvx ii} - \eqref{ex:sdr ncvx v} follow in the same way as their 
conterparts in Proposition \ref{t:sfb}.
\end{proof}
Here as in Proposition \ref{t:sfb} the compactness assumption on $G$ 
in part \eqref{ex:sdr ncvx v} can be dropped
since $T_i$ is a mapping on $\Rn$ (see Remark \ref{r:G compact}). 

\subsubsection{Application to X-FEL Imaging}
For nonconvex problems, the Douglas-Rachford algorithm is popular because  
its set of fixed points is often smaller than other 
popular algorithms  \cite[Theorem 3.13]{LukMar20}.  We briefly discuss its application to 
the problem of X-ray free electron laser imaging, for which the analytical 
framework establish here is intended.  

Here, a high-energy X-ray pulse illuminates molecules suspended in fluid.  A 
two dimensional, low-count diffraction image is recorded for each pulse.  
The goal is to reconstruct the three-dimensional electron density of the target molecules 
from the observed two-dimensional diffraction images (on the order of $10^9$).  This is a stochastic 
tomography problem with a nonlinear model for the data - stochastic because the molecule 
orientations are random, and uniformly distributed on SO(3).  
Computed tomography with random orientations has been studied for more than two decades 
\cite{Bresler2000a,Bresler2000b} and 
been successfully applied for inverting the Radon transform (a linear operator) with unknown orientations 
\cite{Panaretos09, SingerWu13}.  
The model for the data in X-FEL imaging is nonlinear and nonconvex: Fraunhoffer diffraction with missing phase 
\cite{Born, ArdGru20}.  The problem of recovering a 2-dimensional slice of the object from diffraction 
intensity data is the optical {\em phase retrieval} problem \cite{Rayleigh92, RobSal20}.  The most successful 
and widely applied methods for solving this problem are fixed point algorithms where the 
fixed point mappings consist of compositions and averages of projection mappings onto nonconvex sets 
\cite{LukSabTeb18}.  A theoretical framework for unifying and extending the first proofs of local convergence 
of these methods, with rates, was established in \cite{LukNguTam18}.  This analysis accommodates mappings
that not only are not contractions, but are actually {\em expansive}.  Moreover, unlike many other approaches, 
the framework does not require that the constituent mappings have common fixed points.  
This has been applied to prove, for the first time, local linear convergence of 
a wide variety of fundamental algorithms for phase retrieval \cite{LukMar20, Thao18, HesseLuke13, LukNguTam18}. 

The goal of X-FEL imaging is to determine the {\em electron density} of a molecule from 
experimental samples of its scattering probability distribution.  
The physical model for the experiment is
	\begin{equation} \label{eq:physical model}
		\left\|\paren{\Fcal(\rho)}_{i}\right\| = \phi_{i}, \quad 
		\forall \,\, i = 1 , 2 , \ldots , n.
	\end{equation}
	Here $\mymap{\Fcal}{\mathbb{C}^{n}}{\mathbb{C}^{n}}$ is a unitary linear operator accounting for the propagation 
	of an electromagnetic wave, $\rho \in \mathbb{C}^{n}$ is the unknown electron density that interacts with the 
	wave at one end (the {\em pupil or object plane}) of the instrument, and 
	$\phi_{i} \in \mathbb{R}_{+}$ is the probability of observing a scattered photon in the 
	$i$'th {\em voxel} ($i = 1 , 2 ,\ldots , n$) of the imaging volume. The problem is to determine 
	$\rho$ from $\phi_{\cdot}$.  The set of possible vectors satisfying such measurements 
	is given by
	\begin{equation}\label{eq:C}
		C \equiv \set{\rho \in \mathbb{C}^{n}}{\left\|\paren{\Fcal(\rho)}_{i}\right\| = 
		\phi_{i}, \quad \forall \,\, i = 1 , 2 , \ldots , n}.
	\end{equation}
	Although this set is nonconvex, it is prox-regular \cite{Luke12}. 
	To give an idea of the size of this problem, 
	in a typical experiment $n=O(10^9)$.  
	
	For this model, we have measurements $Y_\xi$ where $\xi$ is a uniformly distributed random variable 
	which takes values on SO(3).  In an X-FEL experiment, 
	$Y_\xi$ is a two-dimensional measurement of photon counts (so, real and nonnegative) 
	on a plane $H_\xi$ passing through the origin in the domain of  $\phi$ with orientation $\xi$.  
	The value of $\xi$ is in fact not observable, however, by observing three-electron correlations, 
	this can be estimated \cite{ArdGru, ArdGru20}.  The set $C$ above is then replaced by the random set 
    \begin{equation}\label{eq:CYhat}
     C(\xi)\equiv  \set{\rho \in \mathbb{C}^{n}}{\left\|\paren{\Fcal(\rho)}_{i}\right\| = 
     (Y_\xi)_{i}, \quad \mbox{at voxels $i$ intersecting } H_\xi}.
    \end{equation}
	In addition to the random sets generated by the data, there are certain a priori qualitative constraints that can 
	(and should) be added depending on the type of experiment that has been conducted. Often these are 
	support constraints, or real-valuedness, or nonnegativity.  All of these are convex constraints for which
	we reserve the set $C_{0}$ for the qualitative constraints.
	
    The problem is a specialization of \eqref{eq:opt prob} where $I_f=\{1\}$, 
    $I_g=SO(3)$, $\xi^f_k=1$ for all $k$, $\xi^g_k$ is a 
	uniformly distributed random variable on SO(3) for all $k$, and 
	\begin{eqnarray} 
	 (\forall k)\quad f_{\xi^f_k}(\rho)&\equiv& \tfrac{\lambda}{2(1-\lambda)}\dist^2(\rho, C_0)\nonumber\\
	 g_{\xi^g_k}(\rho)&\equiv& \iota_{C(\xi^g_k)}(\rho)\equiv \begin{cases} 0&\mbox{ if } \rho\in C(\xi^g_k)\\
	                                                     +\infty& \mbox{ otherwise}.
	                                                    \end{cases}
	\end{eqnarray}

	The algorithm we propose for this problem is Algorithm \ref{algo:sdr}.	
	Assumptions (a) and (b) of Proposition \ref{t:sdr} are easily verified, and in fact, for this 
	application $\tau_f=0$ since $C_0$ is convex.  In \cite{LukMar20} the fixed points of the 
	deterministic version of Algorithm \ref{algo:sdr} for the simpler, two-dimensional 
	{\em phase retrieval problem} have been characterized, and metric subregularity of the
	transport discrepancy \eqref{eq:delta} has been determined for geometries applicable 
	to {\em cone and sphere} problems \cite{LukSabTeb18} such as this.  So 
	for a majority of relevant instances, there is good reason to expect that 
	Proposition \ref{t:sdr} can be  
	applied provably to X-FEL measurements.  The determination of the domain $G$ in condition (c) 
	of Proposition \ref{t:sdr}
	is therefore key.   
	There are some unresolved cases, however, that 
	are relevant for optical phase retrieval (see \cite[Example 5.4]{LukMar20}), 
	and this needs further study.  
	
% The first examples illustrate the theory developed above in the context of known 
% results, and offer some new insights.  
% 
% \begin{example}[AR(1) process, affine noise]\label{exmaple:ar1}
% This example illustrates random function iterations that converge with geometric 
% rate in distribution, but do not converge almost surely.  
%   Let $r \in (0,1)$ and $\mymap{T}{\mathbb{R}}{\mathbb{R}}$, $Tx = r
%   x$. Let $\xi$ be a real-valued random variable with $\Supp\xi =
%   [a,b]$ for some $a,b \in \mathbb{R}$. Consider the contraction
%   operator with affine noise $T_{\xi} x := T x +\xi$. Then there exists a unique
%   distribution $\pi$ on $\mathbb{R}$ with $\mathbb{P}^{X_{k}} = \mu
%   \mathcal{P}^{k} \to \pi$ for all $\mu \in \mathscr{P}_{1}(G)$.  In
%   fact, convergence is geometric, i.e.\ $W_1(\mu
%   \mathcal{P}^{k},\pi) \le r^{k} W_1(\mu,\pi)$. In particular for all
%   Dirac's delta distributions $\delta_{x}$, $x \in \mathbb{R}$ the
%   sequence of distributions of the iterates $\mathbb{P}^{X_{k}} =
%   \delta_{x} \mathcal{P}^{k}$ converges to $\pi$.\\ Furthermore in
%   this special case for the structure of $T_{\xi}$, we find for the
%   variances of the iterates $\Var X_{k} = \tfrac{1-r^{2k}}{1-r^{2}}
%   \Var \xi$, since $X_{k} = r^{k} x + \sum_{j=0}^{k-1} r^{j}
%   \xi_{k-1-j}$, where $(\xi_{j})$ is an
%   i.i.d. sequence with $\xi_{j} \overset{\mathrm{d}}{=} \xi$, $j\in \Nbb$. 
%   Hence, $\Var \pi = \tfrac{1}{1-r^{2}} \Var \xi$. Note that this is still
%   true for any selection rule $\xi$ not necessarily having compact
%   support.
% \end{example}
% 

\subsection{Inconsistent set feasibility}\label{ex:2linearSpaces_withInvMeas}
We conclude this study with our explanation for the numerical behavior
observed in Fig. \ref{fig:Axb}.  This is an affine feasibility 
problem:
\begin{equation}\label{eq:feas}
 \mbox{Find}\quad x\in L\equiv \cap_{j\in I} \{x~|~\ip{a_j}{x}=b_j\}.
\end{equation}
When the intersection is empty
we say that the problem is {\em inconsistent}.  
Consistent or not, we apply the method of cyclic projections \eqref{eq:CP}. 
Even though the projectors onto the corresponding problems have an analytic 
expression, this representation can only be evaluated to finite precision.  The 
trick here is to view the algorithm not as inexact cyclic projections onto
deterministic hyperplanes, but rather as {\em exact} projections onto 
randomly selected hyperplanes.  

Indeed, consider the following generalized affine noise model for a single
  affine subspace: $H^{(\xi,\zeta)}_\xbar = 
 \mysetc{x \in \mathbb{R}^{n}}{\act{a+\xi, x-\xbar} = \zeta}$, 
 where $a \in \mathbb{R}^n$  
    and $\xbar$ satisfies $A\xbar=b$ for a given $b \in \mathbb{R}$ and noise 
  $(\xi,\zeta) \in \mathbb{R}^{n}\times\mathbb{R}$ is independent.  
  The key conceptual distinction is that 
  the analysis proceeds with {\em exact} projections onto randomly selected
  hyperplanes $H^{(\xi,\zeta)}_\xbar$, rather than working with inexact projections 
  onto deterministic hyperplanes.    

\begin{prop}\label{t:linear convergence affine feas}
Given $a\in \Rn$, $b\in \mathbb{R}$, define the hyperplane $H = \set{y}{\ip{a}{y}=b}$ and  
fix $\xbar\in H$.  Define the random mapping 
   $\mymap{T_{(\xi,\zeta)}}{\Rn}{\Rn}$ by 
     \begin{align*}
    T_{(\xi,\zeta)}x\equiv P_{H^{(\xi,\zeta)}_\xbar}x &= x - 
    \frac{\act{a+\xi,x-\xbar} - \zeta}{\norm{a+\xi}^{2}} (a+\xi)
  \end{align*}
  where $(\xi,\zeta) \in \mathbb{R}^{n}\times\mathbb{R}$  
  is a vector of independent random variables satisfying 
\begin{subequations}\label{eq:noise assump}
  \begin{align}
    d&:= \mathbb{E} \left[
      \frac{(b+\zeta)^{2}}{\norm{a + \xi}^{2}}\right] < \infty, 
      \label{eq:noise assump zeta}\\
    c&:= \inf_{\substack{z \in \mathbb{S}}}
    \mathbb{E}\left[\frac{\act{a+\xi, z}^{2}}{\|a+\xi\|^2} \right] > 0
    \label{eq:noise assump xi}
  \end{align}
  where $\mathbb{S}$ is the set of unit vectors in $\Rn$. 
\end{subequations}
Algorithm \eqref{algo:RFI} with this random function initialized with any 
$\Rn$-valued random variable $X_0$ with distribution 
   $\mu^0\in\mathscr{P}(\Rn)$ converges Q-linearly to a unique invariant distribution.  
\end{prop}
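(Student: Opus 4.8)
The plan is to recognize this as an instance of the contraction-in-expectation framework and to invoke Theorem~\ref{thm:contraInExpec} directly. Writing the update function $\Phi(x,(\xi,\zeta)) = T_{(\xi,\zeta)}x$ associated with the transition kernel \eqref{eq:trans kernel}, it suffices to verify on the Hilbert space $(\Rn,\norm{\cdot})$ the two hypotheses of that theorem: first, that $\Phi$ is a contraction in expectation with some constant $r<1$; and second, that there is a single point at which the expected squared displacement $\mathbb{E}[\norm{\Phi(y,(\xi,\zeta))-y}^2]$ is finite. Granting both, Theorem~\ref{thm:contraInExpec}\eqref{thm:contraInExpec i} immediately yields a unique invariant measure $\pi\in\mathscr{P}_2(\Rn)$ together with the bound $W_2(\mu^0\Pcal^n,\pi)\le r^n W_2(\mu^0,\pi)$, that is, Q-linear convergence in $W_2$ to $\pi$ from any initial distribution of finite second moment.

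The contraction estimate is the heart of the matter, and I would begin with a crucial cancellation: since the offset $\zeta$ and the anchor $\xbar$ enter $T_{(\xi,\zeta)}$ only through the scalar $\ip{a+\xi}{\,\cdot\,-\xbar}-\zeta$, they drop out of any difference, leaving
\begin{equation*}
T_{(\xi,\zeta)}x - T_{(\xi,\zeta)}y = (x-y) - \frac{\ip{a+\xi}{x-y}}{\norm{a+\xi}^2}(a+\xi),
\end{equation*}
which is precisely the orthogonal projection of $x-y$ onto the hyperplane through the origin with normal $a+\xi$. The Pythagorean identity then gives $\norm{T_{(\xi,\zeta)}x - T_{(\xi,\zeta)}y}^2 = \norm{x-y}^2 - \ip{a+\xi}{x-y}^2/\norm{a+\xi}^2$, and taking expectations while factoring out $\norm{x-y}^2$ (with unit vector $z=(x-y)/\norm{x-y}$ for $x\ne y$), the averaging condition \eqref{eq:noise assump xi} yields
\begin{equation*}
\mathbb{E}\!\left[\norm{T_{(\xi,\zeta)}x - T_{(\xi,\zeta)}y}^2\right] = \norm{x-y}^2\paren{1 - \mathbb{E}\!\left[\tfrac{\ip{a+\xi}{z}^2}{\norm{a+\xi}^2}\right]} \le (1-c)\,\norm{x-y}^2.
\end{equation*}
Since $\ip{a+\xi}{z}^2/\norm{a+\xi}^2\le 1$ always, one has $c\in(0,1]$, so $r:=\sqrt{1-c}\in[0,1)$ is a legitimate contraction constant.

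For the finiteness hypothesis I would evaluate $\Phi$ at the anchor $\xbar$, where the scalar term collapses to $-\zeta$ and $T_{(\xi,\zeta)}\xbar - \xbar = \zeta(a+\xi)/\norm{a+\xi}^2$, so that $\mathbb{E}[\norm{\Phi(\xbar,(\xi,\zeta))-\xbar}^2] = \mathbb{E}[\zeta^2/\norm{a+\xi}^2]$; the second-moment assumption \eqref{eq:noise assump zeta} is exactly what controls this displacement and renders it finite. With both hypotheses in hand, Theorem~\ref{thm:contraInExpec} applies and delivers the claim, the self-mapping property of $\Pcal$ on $\mathscr{P}_2(\Rn)$ being automatic from the contraction bound as in the proof of that theorem.

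The main obstacle is the uniform strict contraction, and it is concentrated entirely in hypothesis \eqref{eq:noise assump xi}: the constant $c$ equals $\min_{z\in\mathbb{S}} z^\top M z = \lambda_{\min}(M)$ for the fixed positive-semidefinite matrix $M = \mathbb{E}[(a+\xi)(a+\xi)^\top/\norm{a+\xi}^2]$, so that continuity of the quadratic form makes the infimum over the compact sphere attained, and the real content is that $M$ be positive definite. Geometrically this requires the randomly perturbed normals to probe every direction on average; were the law of $\xi$ to leave some direction $z_0$ essentially unexcited (so that $a+\xi\perp z_0$ with probability near one), the operator would fail to contract along $z_0$ and neither uniqueness nor a linear rate could survive. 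Verifying $c>0$ is therefore the one genuinely model-dependent step, whereas the cancellation bookkeeping above is routine.
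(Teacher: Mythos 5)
Your proposal is correct, and its decisive mechanism is exactly the one the paper ultimately relies on: the same cancellation/Pythagorean identity (the paper's \eqref{eq:cosine}) shows $\Phi$ is a contraction in expectation with constant $\sqrt{1-c}$, and Theorem \ref{thm:contraInExpec} then delivers existence, uniqueness, and Q-linear convergence in $W_2$. The structural difference is that the paper takes a longer route: it first shows that each $T_{(\xi,\zeta)}$ is $\alpha$-fne (being a projector), lifts this to the Markov operator, and proves that the Markov transport discrepancy $\Psi$ of \eqref{eq:Psi} is linearly metrically subregular with constant $1/\sqrt{c}$, which via Corollary \ref{t:msr convergence - linear} yields only R-linear convergence \emph{conditional} on $\inv\mathcal{P}\neq\emptyset$; the contraction argument is then appended to settle existence and uniqueness and to upgrade the rate to Q-linear. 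You omit that subregularity detour entirely, which is logically legitimate — the contraction route subsumes it — though it is evidently the paper's expository point, illustrating how the a$\alpha$-fne/subregularity framework of Theorem \ref{t:msr convergence} applies to a concrete example. On one point you are more careful than the paper: Theorem \ref{thm:contraInExpec} requires a point $y$ with $\mathbb{E}\left[\norm{\Phi(y,(\xi,\zeta))-y}^2\right]<\infty$, which the paper never verifies, and you check it at $\xbar$, where the displacement is $\zeta(a+\xi)/\norm{a+\xi}^2$. Note only that $\mathbb{E}\left[\zeta^2/\norm{a+\xi}^2\right]$ is not literally the quantity $d$ of \eqref{eq:noise assump zeta}; to deduce its finiteness from $d<\infty$ you should invoke the independence of $\zeta$ and $\xi$, giving $d=\mathbb{E}\left[(b+\zeta)^2\right]\mathbb{E}\left[\norm{a+\xi}^{-2}\right]$, so that (outside the degenerate case $\zeta\equiv-b$) both factors are finite, hence $\mathbb{E}\left[\zeta^2\right]\mathbb{E}\left[\norm{a+\xi}^{-2}\right]<\infty$ as well. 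This one-line repair is where assumption \eqref{eq:noise assump zeta} — stated in the proposition but never actually used in the paper's own argument — does its work.
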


\begin{proof}
  Each mapping $T_{(\xi,\zeta)}$ is the orthogonal projector onto the 
  hyperplane $H^{(\xi,\zeta)}_\xbar$, and so 
  is $\alpha$-fne with constant $\alpha=1/2$ (no violation).   
  It follows immediately from the definition, then, that this is both 
  nonexpansive in expectation and   $\alpha$-fne in 
  expectation with $\alpha=1/2$.  
  By Proposition \ref{thm:Tafne in exp 2 pafne of P}
  the corresponding Markov operator $\mathcal{P}$ satisfies 
  \eqref{eq:alphfne meas}, provided $\inv\mathcal{P}\neq\emptyset$.
  We will show below, that there do indeed exist invariant measures, but 
  for the moment, let us just assume this holds.
  
Since the randomly selected projectors are 
mappings on a Hilbert space, using the identity 
\eqref{eq:nice ineq} the surrogate function $\Psi$
on the space of measures $\mathscr{P}(\Rn)$ defined by 
\eqref{eq:Psi} can be written as
\[
 \Psi(\mu) = \inf_{\pi\in\inv\mathcal{P}}\inf_{\gamma\in C_*(\mu,\pi)}
 \paren{\int_{\Rn\times \Rn}\mathbb{E}_{(\xi,\zeta)}
 \left[ \norm{(x-T_{(\xi,\zeta)x})-(y-T_{(\xi,\zeta)}y)}^2 \right]
 \gamma(dx, dy)}^{1/2}. 
\]
where, recall,  
$C_*(\mu,\pi_\mu)$ is the set of optimal couplings, that is the set of couplings where  
$W_2(\mu,\pi_\mu)$ is attained.  An elementary calculation shows that 
\begin{equation}\label{eq:cosine}
 \norm{(x-T_{(\xi,\zeta)}x)-(y-T_{(\xi,\zeta)}y)}^2 = \left\langle \tfrac{x-y}{\|x-y\|}, \tfrac{a+\xi}{\|a+\xi\|}\right\rangle^2\|x-y\|^2.
\end{equation}
Now, we use the assumptions on the random variables $\xi$ and 
$\zeta$ in \eqref{eq:noise assump}.
Condition \eqref{eq:noise assump xi} is satisfied for example when $\xi$ is isotropic or
  radially symmetric; condition \eqref{eq:noise assump zeta} when $\zeta$ has bounded variance
  and $\norm{\xi}$ is bounded away from $1$.  (Physically, you would interpret
  this as the noise being bounded away from the signal in energy.)  
Taking the expectation and using the
  assumption on the noise yields
  \begin{eqnarray}
 \Psi^2(\mu) &=& \int_{\Rn\times \Rn}\mathbb{E}_{(\xi,\zeta)}
 \left[ \norm{(x-T_{(\xi,\zeta)x})-(y-T_{(\xi,\zeta)}y)}^2 \right]
 \gamma(dx, dy)\quad \gamma\in C_*(\mu,\pi_\mu)\nonumber\\
 &\geq&c  \int_{\Rn\times \Rn} \norm{x-y}^2
 \gamma(dx, dy)\quad \gamma\in C_*(\mu,\pi_\mu)\nonumber\\
 &=& c W^2_2(\mu, \pi_\mu).
 \label{eq:msr cp}  
  \end{eqnarray}
With this, we have shown that $\Psi$ is linearly metrically subregular
for $0$ on $\mathscr{P}(\Rn)$ with constant $1/\sqrt{c}$ where 
$c$ is given by \eqref{eq:noise assump xi}.  By Remark \ref{r:G compact}, 
we can then apply Corollary \ref{t:msr convergence - linear} to conclude that, 
if an invariant measure exists, the random function iteration of 
repeatedly projecting onto the randomly selected affine subspaces 
converges R-linearly with respect to the Wasserstein metric 
to an invariant measure. 
  
 It remains to show that an invariant measure exists and is unique.  This follows
 from the observation that a much stronger property holds, namely that
 $\Phi$ is a contraction in expectation.  Indeed, the same 
 calculation behind \eqref{eq:cosine} shows that  
  \begin{align*}
    \norm{T_{(\xi,\zeta)}x - T_{(\xi,\zeta)}y}^{2} = 
    \left(1-\cos^{2}\left(\tfrac{a+\xi}{\|a+\xi\|},\tfrac{x-y}{\|x-y\|}\right)\right) \norm{x-y}^{2}.
  \end{align*}
Again, taking the expectation over $(\xi, \zeta)$ yields 
  \begin{align*}
    \mathbb{E}\left[\norm{T_{(\xi,\zeta)} x - T_{(\xi,\zeta)}
        y}^{2}\right] \le (1-c) \norm{x-y}^{2}.
  \end{align*}
  From Theorem \ref{thm:contraInExpec} we get that there exists a
  {\em unique} invariant measure $\pi_0$ for $\mathcal{P}$ (even $\pi_0 \in
  \mathscr{P}_{2}$) and that it satisfies
  \begin{align*}
    W_{2}^{2}(\mu\mathcal{P}^{k},\pi_0) \le (1-c)^{k}
    W_{2}^{2}(\mu,\pi_0).
  \end{align*}
  Convergence is therefore Q-linear, not just R-linear.  
  \end{proof}
  
  Note that the noise satisfying \eqref{eq:noise assump} depends implicitly on the 
  point $\xbar$, which 
  will determine the concentration of the invariant distribution of the Markov operator.  This 
  corresponds to the fact that the exact projection, while unique, depends on the 
  point being projected.  One would expect the invariant distribution to be concentrated
  on the exact projection.  
  
  Extending this model to finitely many distorted
  affine subspaces as illustrated in Fig. \ref{fig:Axb} 
  (i.e.\ we are given $m$ normal vectors $a_{1},
  \ldots, a_{m} \in \mathbb{R}^{n}$ and displacement vectors $b_{1},
  \ldots, b_{m}$) yields a stochastic 
  version of cyclic projections \eqref{eq:CP} which converges Q-linearly 
  (geometrically) in the Wasserstein metric to a unique invariant measure for 
  the given noise model. 
  
  Indeed, for a collection of (not necessarily distinct) points 
  $\xbar_j\in H_j\equiv \set{y}{\ip{a_j}{y}=b_j}$ ($j=1,2,\dots,m$) denote  by 
$P^{j}_{(\xi_j,\zeta_j)}$ the  exact projection onto the
  $j$-th random affine subspace centered on $\xbar_j$, i.e.\
  \begin{align*}
    P^{j}_{(\xi_j,\zeta_j)} x= x - \frac{\act{a_{j}+\xi_{j},x-\xbar_j} -
   \zeta_{j}}{\norm{a_{j}+\xi_{j}}^{2}} (a_{j}+\xi_{j}),
  \end{align*}
  where $(\xi_{i})_{i=1}^{m}$ and $(\zeta_{i})_{i=1}^{m}$ are
  i.i.d.\ and $(\xi_{i}) \indep (\zeta_{i})$. The stochastic cyclic projection
  mapping is 
\begin{align*}
    T_{(\xi,\zeta)} x = P^{m}_{(\xi_m,\zeta_m)} \circ \ldots 
    \circ P^{1}_{(\xi_1,\zeta_1)} x, \qquad x \in
    \mathbb{R}^{n}
  \end{align*}
  where 
  $(\xi, \zeta) = ((\xi_m, \xi_{m-1}, \dots,\xi_1),(\zeta_m, \zeta_{m-1},\dots,\zeta_1))$.
Following the same pattern of proof as Proposition \ref{t:linear convergence affine feas} 
we see that $T_{(\xi,\zeta)}$
is a contraction in expectation:
  \begin{align*}
    \mathbb{E}\left[ \norm{T_{(\xi,\zeta)} x - T_{(\xi,\zeta)}
        y}^{2}\right] \le (1-c)^{m} \norm{x-y}^{2}
  \end{align*}
  where 
  \begin{align}
    c&:= \min_{j=1,\dots,m}\inf_{\substack{z \in \mathbb{S}}}
    \mathbb{E}\left[ \frac{\act{a_j+\xi_j,z}^{2}}%
    {\norm{a_j + \xi_j}^{2}} \right] > 0.
    \label{eq:noise assump xi2}
  \end{align}

  Hence, there exists a unique invariant measure and $(\mu\mathcal{P}^{k})$ 
  converges geometrically to it in the $W_{2}$ metric.

\appendix

\section{Appendix}
\label{sec:toolbox}

\begin{prop}[Convergence with subsequences]\label{thm:cvg_subsequences}
  Let $(G,d)$ be a metric space. Let $(x_{k})$ be a sequence on $G$
  with the property that any subsequence has a convergent subsequence
  with the same limit $x\in G$. Then $x_{k} \to x$.
\end{prop}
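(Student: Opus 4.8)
The plan is to argue by contradiction. Suppose the conclusion fails, i.e.\ $x_k\not\to x$. Negating the definition of convergence, there is some $\epsilon>0$ such that for every $N\in\Nbb$ there exists an index $k\geq N$ with $d(x_k,x)\geq\epsilon$. Applying this repeatedly, choosing at each stage an index larger than the one previously selected, I would extract a strictly increasing sequence of indices $k_1<k_2<\cdots$ and hence a subsequence $(x_{k_j})_{j\in\Nbb}$ along which the sequence stays uniformly bounded away from $x$, namely $d(x_{k_j},x)\geq\epsilon$ for all $j$.

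Next I would apply the hypothesis to this very subsequence. By assumption, every subsequence of $(x_k)$ admits a further subsequence converging to $x$; in particular $(x_{k_j})$ has a subsequence $(x_{k_{j_l}})_{l\in\Nbb}$ with $x_{k_{j_l}}\to x$ as $l\to\infty$. This means $d(x_{k_{j_l}},x)\to 0$ as $l\to\infty$.

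The contradiction is then immediate: on the one hand $d(x_{k_{j_l}},x)\geq\epsilon>0$ for all $l$, since each index $k_{j_l}$ is among the $k_j$ and therefore inherits the separation bound; on the other hand $d(x_{k_{j_l}},x)\to 0$. These two statements are incompatible, so the assumption $x_k\not\to x$ cannot hold, whence $x_k\to x$.

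This argument is entirely elementary and invokes nothing beyond the definition of a limit and the stated hypothesis, so there is no genuine obstacle to anticipate. The only step demanding a small amount of care is the extraction in the first paragraph of the subsequence bounded away from $x$; this follows directly from the negated convergence statement by taking $N$ strictly larger than the most recently chosen index at each stage, guaranteeing that the selected indices are distinct and increasing. Everything that follows is a direct application of the hypothesis and of the meaning of convergence.
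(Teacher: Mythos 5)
Your proof is correct and follows essentially the same route as the paper's: negate convergence to extract a subsequence staying at distance at least $\epsilon$ from $x$, then apply the hypothesis to that subsequence to obtain a further subsequence converging to $x$, a contradiction. Your write-up merely makes explicit the index bookkeeping that the paper's terser version leaves implicit.
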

\begin{proof}
  Assume that $x_{k} \not \to x$, i.e.\ there exists $\epsilon>0$ such
  that for all $N \in \mathbb{N}$ there is $k=k(N) \ge N$ with $d(x_{k},x)
  \ge \epsilon$. But by assumption the subsequence $(x_{k(N)})_{N \in
    \mathbb{N}}$ has a convergent subsequence with limit $x$, which is
  a contradiction and hence the assumption is false.
\end{proof}
\begin{rem}
  In a compact metric space, it is enough, that all cluster points are
  the same, because then every subsequence has a convergent
  subsequence.
\end{rem}

\begin{lemma}
  Let $(G,d)$ be a separable complete metric space and let the metric 
$d_{\times}$ on  $G\times G$ satisfy
  \begin{align}\label{eq:prodMetricCondition}
    d_{\times}\left( \icol{x_{k}\\ y_{k}}, \icol{x \\ y}\right)
    \to 0  && \Leftrightarrow && d(x_{k},x) \to 0 \quad\text{ and }\quad
    d(y_{k},y) \to 0.
  \end{align}
Then $\mathcal{B}(G\times G) = \mathcal{B}(G) \otimes
  \mathcal{B}(G)$.
\end{lemma}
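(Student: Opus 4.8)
The plan is to establish the two inclusions $\mathcal{B}(G)\otimes\mathcal{B}(G)\subseteq\mathcal{B}(G\times G)$ and $\mathcal{B}(G\times G)\subseteq\mathcal{B}(G)\otimes\mathcal{B}(G)$ separately; only the second will use separability. First I would extract the topological content of the hypothesis: condition \eqref{eq:prodMetricCondition} says precisely that convergence in $d_{\times}$ is equivalent to coordinatewise convergence. Since every metric space is sequential, this identifies the topology induced by $d_{\times}$ with the product topology on $G\times G$, and in particular makes the coordinate projections $\pi_1,\pi_2:G\times G\to G$ continuous.

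For the easy inclusion I would recall that $\mathcal{B}(G)\otimes\mathcal{B}(G)$ is by definition the $\sigma$-algebra generated by the measurable rectangles $A\times B$ with $A,B\in\mathcal{B}(G)$, equivalently the smallest $\sigma$-algebra rendering both projections measurable. As continuous maps between metric spaces, $\pi_1$ and $\pi_2$ are Borel measurable with respect to $\mathcal{B}(G\times G)$, so $A\times B=\pi_1^{-1}(A)\cap\pi_2^{-1}(B)\in\mathcal{B}(G\times G)$ for all Borel $A,B$. Passing to the generated $\sigma$-algebra yields $\mathcal{B}(G)\otimes\mathcal{B}(G)\subseteq\mathcal{B}(G\times G)$. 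This direction uses no separability.

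For the reverse inclusion I would invoke separability to obtain second countability. If $\{q_n\}_{n\in\mathbb{N}}$ is a countable dense subset of $G$, then the balls $\mathbb{B}(q_n,r)$ with $n\in\mathbb{N}$ and $r\in\mathbb{Q}_{>0}$ form a countable base $\{U_k\}_{k\in\mathbb{N}}$ for the topology of $G$, so the countable family $\{U_k\times U_\ell:k,\ell\in\mathbb{N}\}$ is a base for the product topology, hence for the topology induced by $d_{\times}$. Any open set $O\subseteq G\times G$ is therefore a \emph{countable} union $O=\bigcup_{(k,\ell)\in N}U_k\times U_\ell$ for some $N\subseteq\mathbb{N}\times\mathbb{N}$. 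Each $U_k\times U_\ell$ lies in $\mathcal{B}(G)\otimes\mathcal{B}(G)$ since its sides are open, hence Borel, and a countable union of members of a $\sigma$-algebra stays in that $\sigma$-algebra, so $O\in\mathcal{B}(G)\otimes\mathcal{B}(G)$. Because $\mathcal{B}(G\times G)$ is generated by the open sets, this gives $\mathcal{B}(G\times G)\subseteq\mathcal{B}(G)\otimes\mathcal{B}(G)$, and combining the two inclusions completes the proof.

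The only genuine subtlety — the step I would single out — is the passage from the sequential hypothesis \eqref{eq:prodMetricCondition} to the topological statement that $d_{\times}$ induces the product topology, together with the resulting second-countable base; everything else is routine bookkeeping with generators of $\sigma$-algebras. Separability enters exactly once, precisely where it is needed, namely to guarantee that an arbitrary open set is a \emph{countable} rather than arbitrary union of measurable rectangles.
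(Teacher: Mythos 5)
Your proof is correct and follows essentially the same route as the paper's: both arguments reduce the hypothesis \eqref{eq:prodMetricCondition} to the statement that $d_{\times}$ induces the product topology (via a sequential argument), and both then use separability to write an arbitrary $d_{\times}$-open set of $G\times G$ as a \emph{countable} union of open rectangles lying in $\mathcal{B}(G)\otimes\mathcal{B}(G)$. The only cosmetic difference is in the easy inclusion, where you use continuity of the coordinate projections while the paper observes that products of closed sets are $d_{\times}$-closed; both are routine verifications of the same fact.
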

\begin{proof}
  First we note that for $A,B \subset G$ it holds that $A\times B$ is
  closed in $(G\times G,d_{\times})$ if and only if $A,B$ are closed
  in $(G,d)$ by \eqref{eq:prodMetricCondition}. Since the
  $\sigma$-algebra $\mathcal{B}(G) \otimes \mathcal{B}(G)$ is
  generated by the family $\mathcal{A}:=\mysetc{A_{1}\times
    A_{2}}{A_{1},A_{2}\subset G \text{ closed}}$. One has 
    $\mathcal{B}(G\times G) \supset \mathcal{B}(G) \otimes
  \mathcal{B}(G)$

  For the other direction, note
  that any metric $d_{\times}$ with the property
  \eqref{eq:prodMetricCondition} has the same open and closed sets. If
  $A$ is closed in $(G\times G,d_{\times})$ and $\tilde d_{\times}$ is
  another metric on $G\times G$ satisfying
  \eqref{eq:prodMetricCondition}, then for $(a_{k},b_{k})\in A$ with
  $(a_{k},b_{k}) \to (a,b) \in G\times G$ w.r.t.\ $\tilde d_{\times}$
  it holds that $d(a_{k},a)\to 0$ and $d(b_{k},b)\to 0$ and hence $
  d_{\times}( (a_{k},b_{k}),(a,b)) \to 0$ as $k\to\infty$, i.e.\ $(a,b)\in A$, so $A$
  is closed in $(G\times G,\tilde d_{\times})$. It follows that all
  open sets in $(G\times G, d_{\times})$ are the same for any metric
 that satisfies \eqref{eq:prodMetricCondition}. 
 So, without loss of generality,  let 
  \begin{align}\label{eq:productMaxNorm}
    d_{\times}\left( \icol{x_{1}\\y_{1}}, \icol{x_{2}\\ y_{2}} \right) &=
    \max(d(x_{1},x_{2}),d(y_{1},y_{2})).
  \end{align}
Moreover, 
  separability of $G\times G$ yields that any open set is the
  countable union of balls: there exists a sequence $(u_{k})$
 on $U$ that is dense for $U \subset G\times G$ open. We can find a sequence of 
constants  $\epsilon_{k}>0$ with $\bigcup _{k} \mathbb{B}(u_{k},\epsilon_{k})
  \subset U$. If there exists $x \in U$, which is not covered by any
  ball, then we may enlarge a ball, so that $x$ is covered: since
  there exists $\epsilon>0$ with $\mathbb{B}(x,\epsilon) \subset U$
  and there exists $m \in \mathbb{N}$ with $d(x,u_{m})< \epsilon/2$ by
  denseness, we may set $\epsilon_{m} = \epsilon/2$ to get $x \in
  \mathbb{B}(u_{m},\epsilon_{m}) \subset \mathbb{B}(x,\epsilon)\subset
  U$.  Now to continue the proof, let $d_{\times}$ be given by
  \eqref{eq:productMaxNorm}. Then for any open $U \subset G\times G$
  there exists a sequence $(u_{k})$ on $U$ and  a corresponding sequence of 
  positive constants $(\epsilon_{k})$ such that $U =
  \bigcup_{k} \mathbb{B}(u_{k},\epsilon_{k})$.  This together with 
  the fact that 
  \begin{align*}
    \mathbb{B}(u_{k},\epsilon_{k}) = \mathbb{B}(u_{k,1},\epsilon_{k})
    \times \mathbb{B}(u_{k,2},\epsilon_{k}) \in \mathcal{B}(G) \otimes
    \mathcal{B}(G) \quad (u_{k}=(u_{k,1},u_{k,2}) \in G\times G) 
  \end{align*}
yields
      $\mathcal{B}(G\times G) \subset \mathcal{B}(G) \otimes
  \mathcal{B}(G),$
 which establishes equality of the $\sigma$-algebras.
\end{proof}

\begin{lemma}[couplings]\label{lemma:couplings}
  Let $G$ be a Polish space and let $\mu,\nu \in
  \mathscr{P}(G)$. Let $\gamma \in C(\mu,\nu)$, where
  \begin{align}
    \label{eq:couplingsDef}
    C(\mu,\nu) := \mysetc{\gamma \in \mathscr{P}(G\times G)}{ \gamma(A
      \times G) = \mu(A), \, \gamma(G\times A) = \nu(A) \quad \forall A
      \in \mathcal{B}(G)},
  \end{align}
  then
  \begin{enumerate}[(i)]
  \item\label{item:couplings1} $\Supp \gamma \subset \Supp \mu \times \Supp 
\nu$,
  \item\label{item:couplings2} $\overline{\mysetc{x}{(x,y) \in \Supp \gamma}} = 
\Supp \mu$.
  \end{enumerate}
\end{lemma}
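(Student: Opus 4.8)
The plan is to work throughout with the elementary characterization of the support on a separable metric space: a point $z$ lies in $\Supp \rho$ if and only if $\rho(U)>0$ for every open neighborhood $U$ of $z$, and, since $G$ (and hence the Polish space $G\times G$) is separable, the complement of the support is a countable union of open null sets, so that $\gamma\paren{(G\times G)\setminus \Supp\gamma}=0$. These two facts are all the argument needs, and the whole proof reduces to pushing the two marginal identities $\gamma(A\times G)=\mu(A)$ and $\gamma(G\times A)=\nu(A)$ through this characterization.

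For part \eqref{item:couplings1} I would argue by contraposition. Suppose $(x,y)\notin \Supp\mu\times\Supp\nu$; then $x\notin\Supp\mu$ or $y\notin\Supp\nu$, and without loss of generality assume the former. By the support characterization there is an open $U\ni x$ with $\mu(U)=0$. Then $U\times G$ is an open neighborhood of $(x,y)$ in $G\times G$, and the first marginal identity gives $\gamma(U\times G)=\mu(U)=0$. Hence $(x,y)$ has an open neighborhood of zero $\gamma$-measure, so $(x,y)\notin\Supp\gamma$. The symmetric case $y\notin\Supp\nu$ uses $G\times V$ and the second marginal. This yields $\Supp\gamma\subset\Supp\mu\times\Supp\nu$.

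For part \eqref{item:couplings2}, write $\Lambda\equiv\mysetc{x}{(x,y)\in\Supp\gamma}$ for the projection of the support onto the first coordinate. The inclusion $\overline{\Lambda}\subset\Supp\mu$ is immediate from part \eqref{item:couplings1}: that inclusion gives $\Lambda\subset\Supp\mu$, and $\Supp\mu$ is closed, so its closure is contained in $\Supp\mu$. For the reverse inclusion let $x\in\Supp\mu$ and let $U$ be any open neighborhood of $x$. Then $\gamma(U\times G)=\mu(U)>0$ by the first marginal and the support characterization. Since $\gamma\paren{(G\times G)\setminus\Supp\gamma}=0$, we have $\gamma\paren{(U\times G)\cap\Supp\gamma}=\gamma(U\times G)>0$, so $(U\times G)\cap\Supp\gamma\neq\emptyset$. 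Any point $(x',y')$ in this intersection has $x'\in U\cap\Lambda$, so every neighborhood of $x$ meets $\Lambda$, i.e.\ $x\in\overline{\Lambda}$. Combining the two inclusions gives $\overline{\Lambda}=\Supp\mu$.

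The argument is essentially bookkeeping with open neighborhoods, so there is no serious obstacle. The one point that must not be glossed over is the claim $\gamma\paren{(G\times G)\setminus\Supp\gamma}=0$ invoked in part \eqref{item:couplings2}: this is exactly where separability of $G\times G$ (guaranteed because $G$ is Polish) enters, since it is what makes the support both well defined and of full measure. I would record this as a preliminary standard fact and cite it rather than reprove it.
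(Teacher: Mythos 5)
Your proof is correct and follows essentially the same route as the paper: both arguments rest on the neighborhood characterization of the support, the marginal identities $\gamma(A\times G)=\mu(A)$ and $\gamma(G\times A)=\nu(A)$, and the fact (via separability) that $\Supp\gamma$ has full $\gamma$-measure. The only cosmetic differences are that you prove part (i) by contraposition where the paper argues directly with balls, and you make explicit the easy inclusion $\overline{\mysetc{x}{(x,y)\in\Supp\gamma}}\subset\Supp\mu$, which the paper leaves implicit as a consequence of part (i).
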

\begin{proof}
  We let the product space be equipped with the metric in
  \eqref{eq:productMaxNorm} (constituting a separable complete metric space 
since 
  $G$ is Polish).
  \begin{enumerate}[(i)]
  \item Suppose $(x,y) \in \Supp \gamma$ and let $\epsilon>0$, then
    \begin{align*}
      \mu(\mathbb{B}(x,\epsilon)) = \gamma (\mathbb{B}(x,\epsilon)
      \times G) \ge \gamma(\mathbb{B}(x,\epsilon)\times
      \mathbb{B}(y,\epsilon)) = \gamma(\mathbb{B}( (x,y),\epsilon))>0.
    \end{align*}
    Analogously, we have $\nu(\mathbb{B}(y,\epsilon))>0$. So
    $(x,y)\in\Supp \mu \times \Supp \nu$.
  \item Suppose $x \in \Supp \mu$, then $\gamma(\mathbb{B}(x,\epsilon)
    \times G) >0$ for all $\epsilon>0$.  Since $G$ is Polish, the support of the measure is 
    nonempty whenever the measure is nonzero, and (again, since $G$ is Polish) 
    the support of the measure is closed, 
    % By \cref{thm:supp_measure}
    there either exists $y \in G$ with $(x,y) \in \Supp \gamma$ or
    there exists a sequence $(x_{k},y_{k})$  on $\Supp \gamma$ with
    $x_{k} \to x$ as $k\to \infty$. Hence the assertion follows. \qedhere
  \end{enumerate}
\end{proof}

\begin{lemma}[convergence in product space]\label{lemma:weakCVG_productSpace}
  Let $G$ be a Polish space and suppose $(\mu_{k}),(\nu_{k})
  \subset \mathscr{P}(G)$ are tight sequences. Let $X_{k} \sim \mu
  _{k}$ and $Y_{k} \sim \nu_{k}$ and denote by $\gamma_{k} =
  \mathcal{L}( (X_{k},Y_{k}))$ the joint law of $X_{k}$ and
  $Y_{k}$. Then $(\gamma_{k})$ is tight.\\
  If furthermore, $\mu_{k} \to \mu \in \mathscr{P}(G)$ and $\nu_{k}
  \to \nu \in \mathscr{P}(G)$, then cluster points of $(\gamma_{k})$
  are in $C(\mu,\nu)$, where the set of couplings $C(\mu,\nu)$ is
  defined in \eqref{eq:couplingsDef} in Lemma \ref{lemma:couplings}.
\end{lemma}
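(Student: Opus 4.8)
The plan is to handle the two assertions separately, first establishing tightness of the joint laws and then identifying their cluster points as couplings. For tightness, I would fix $\epsilon > 0$ and exploit tightness of each marginal sequence: there exist compact sets $K_1, K_2 \subset G$ with $\mu_k(K_1) > 1 - \epsilon/2$ and $\nu_k(K_2) > 1 - \epsilon/2$ for all $k \in \mathbb{N}$. Since $G$ is Polish, the product $K_1 \times K_2$ is compact in $G \times G$, and a union bound gives
\[
\gamma_k(K_1 \times K_2) = \mathbb{P}(X_k \in K_1,\, Y_k \in K_2) \geq 1 - \mathbb{P}(X_k \notin K_1) - \mathbb{P}(Y_k \notin K_2) > 1 - \epsilon
\]
uniformly in $k$, which is exactly tightness of $(\gamma_k)$. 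Implicitly this uses that $\gamma_k$ is genuinely a probability measure on $\mathcal{B}(G \times G) = \mathcal{B}(G) \otimes \mathcal{B}(G)$, which is guaranteed by the preceding lemma on the coincidence of the product Borel $\sigma$-algebra.

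For the second assertion, suppose $\mu_k \to \mu$ and $\nu_k \to \nu$ in distribution and let $\gamma$ be any cluster point of $(\gamma_k)$, so that $\gamma_{k_j} \to \gamma$ in distribution on $G \times G$ along some subsequence (such cluster points exist by the tightness just established and Prokhorov's theorem). The idea is to test $\gamma$ against bounded continuous functions depending on a single coordinate. Precisely, for $f \in C_{b}(G)$ set $\tilde f(x,y) := f(x)$, which belongs to $C_{b}(G \times G)$; then convergence in distribution of $\gamma_{k_j}$, together with the fact that the first marginal of $\gamma_{k_j}$ is $\mu_{k_j}$, yields
\[
\int_{G \times G} f(x)\, \gamma(\dd{(x,y)}) = \lim_{j} \int_{G \times G} f(x)\, \gamma_{k_j}(\dd{(x,y)}) = \lim_{j} \int_G f\, \dd{\mu_{k_j}} = \int_G f\, \dd{\mu}.
\]
Thus the first marginal of $\gamma$ agrees with $\mu$ when integrated against every $f \in C_{b}(G)$, and since probability measures on a Polish space are determined by such integrals, the first marginal equals $\mu$. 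The identical argument applied to $(x,y) \mapsto f(y)$ shows the second marginal equals $\nu$, whence $\gamma \in C(\mu,\nu)$.

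Both steps are essentially soft consequences of Prokhorov's theorem and the definition of weak convergence, so I do not expect a genuine obstacle. The points requiring the most care are the measurability bookkeeping — ensuring $\gamma_k$ is well defined on $\mathcal{B}(G \times G)$, which the cited product-$\sigma$-algebra lemma supplies — and the standard passage from equality of integrals against $C_{b}(G)$ to equality of measures. The most error-prone detail is the indexing: one must remember that convergence of the full marginal sequences $(\mu_k)$ and $(\nu_k)$ automatically passes to the subsequence $(k_j)$ extracted for $\gamma$, so that the limits on the right-hand side above are indeed $\mu f$ and $\nu f$ rather than limits of the subsequential marginals only.
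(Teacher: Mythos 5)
Your proposal is correct and follows essentially the same route as the paper's proof: tightness via a union bound over product compact sets, then Prokhorov's theorem and testing cluster points against bounded continuous functions of a single coordinate (the paper writes these as $f(x,y)=g(x)\mathds{1}_{G}(y)$) to identify the marginals. Your closing remark about the subsequence indexing is exactly the right point of care, and it is handled the same way implicitly in the paper.
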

\begin{proof}
%   For the proof idea see \cite[Theorem 3.12]{Rudolf2017}. 
  By tightness
  of $(\mu_{k})$ and $(\nu_{k})$, there exists for any $\epsilon> 0$ a
  compact set $K \subset G$ with $\mu_{k}(G\setminus K) < \epsilon/2$
  and $\nu_{k}(G\setminus K) < \epsilon/2$ for all $n \in \mathbb{N}$,
  so also
  \begin{align*}
    \gamma_{k}(G\times G \setminus K\times K) &\le \gamma_{k}(
    (G\setminus K)\times G) + \gamma_{k}( G \times (G\setminus K))
    \\ &= \mu_{k}(G\setminus K) + \nu_{k}(G\setminus K) \\ &< \epsilon
  \end{align*}
  for all $k \in \mathbb{N}$, implying tightness of $(\gamma_{k})$.
  By Prokhorov's Theorem, every subsequence of $(\gamma_{k})$ has
  a convergent subsequence $\gamma_{k_{j}} \to \gamma$ as
  $j\to\infty$ where $\gamma \in \mathscr{P}(G \times G)$. 
  
  It remains to show that $\gamma \in C(\mu,\nu)$.  Indeed,  since
  for every $f \in C_{b}(G \times G)$ we have $\gamma_{n_{k}} f \to
  \gamma f$, we can choose $f(x,y) = g(x) \1_{G}(y)$ with $g \in
  C_{b}(G)$.  Also,  
  \begin{align*}
    \mu g \leftarrow \mu_{n_{k}} g = \gamma_{n_{k}} f \to \gamma f =
    \gamma(\cdot\times G) g,
  \end{align*}
which  implies the equality $\mu = \gamma(\cdot\times G)$. Similarly
  $\nu = \gamma(G\times \cdot)$ and hence $\gamma \in C(\mu,\nu)$.
\end{proof}

\begin{lemma}[properties of the Prokhorov-L\`evy 
distance]\label{lemma:prokhorovDist_properties}
  Let $(G,d)$ be a separable complete metric space. 
  \begin{enumerate}[(i)]
  \item\label{item:prokLeviRep} The Prokhorov-L\`evy distance (Definition \ref{d:PL})
  has the representation
    \begin{align*}
      d_{P}(\mu,\nu) = \inf\mysetc{\epsilon>0}{ \inf_{\mathcal{L}(X,Y)
          \in C(\mu,\nu)} \mathbb{P}(d(X,Y) > \epsilon) \le \epsilon},
    \end{align*}
    where the set of couplings $C(\mu,\nu)$ is defined in
    \eqref{eq:couplingsDef} in Lemma \ref{lemma:couplings}.  Furthermore,
    the inner infimum for fixed $\epsilon>0$ is attained and the outer
    infimum is also attained.
  \item $d_{P}(\mu,\nu) \in [0,1]$.
  \item $d_{P}$ metrizes convergence in distribution, i.e.\ 
for $\mu_{k}, \mu
    \in \mathscr{P}(G)$, $k \in \mathbb{N}$ the sequence $\mu_{k}$ converges  to 
    $\mu$ in distribution if and only if $d_{P}(\mu_{k},\mu) \to 0$ 
as $k \to \infty$.
  \item $(\mathscr{P}(G), d_{P})$ is a separable complete metric space.
  \item\label{item:prokhorov5} For $\mu_{j},\nu_{j} \in \mathscr{P}(G)$ and 
  $\lambda_{j} \in [0,1]$,
    $j=1, \ldots, m$ with $\sum_{j=1}^{m}\lambda_{j} = 1$ we have 
    \begin{align*}
      % \min_{j} d_{P}(\mu_{j},\nu_{j}) \le
      d_{P}(\sum_{j}\lambda_{j}\mu_{j}, \sum_{j} \lambda_{j}\nu_{j}) \le 
\max_{j}
      d_{P}(\mu_{j}, \nu_{j}).
    \end{align*}
  \end{enumerate}
\end{lemma}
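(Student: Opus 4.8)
The plan is to treat part (i) as the crux --- it is Strassen's coupling theorem together with an attainment argument --- and then to derive (ii) and (v) directly from it, while invoking standard references for (iii) and (iv). For the representation in (i), I would cite Strassen's theorem (see, e.g., \cite{Billingsley}), which for separable metric spaces identifies the distance of Definition \ref{d:PL} with the stated coupling formula: $d_{P}(\mu,\nu) \le \epsilon$ if and only if there is a coupling $\gamma \in C(\mu,\nu)$ with $\mathbb{P}(d(X,Y) > \epsilon) \le \epsilon$. The remaining content of (i) is the attainment of the two infima. For the inner infimum at fixed $\epsilon$, the set $C(\mu,\nu)$ is tight by Lemma \ref{lemma:weakCVG_productSpace} applied to the constant sequences $\mu_k = \mu$ and $\nu_k = \nu$, hence relatively compact by Prokhorov's theorem, and closed under convergence in distribution (again Lemma \ref{lemma:weakCVG_productSpace}), so it is compact. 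Since $d$ is continuous, $\{d > \epsilon\}$ is open, so $\gamma \mapsto \gamma(\{d > \epsilon\})$ is lower semicontinuous for convergence in distribution by the portmanteau theorem, and a lower semicontinuous function on a compact set attains its infimum.

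For the outer infimum, I would set $\epsilon_0 := d_{P}(\mu,\nu)$ and take $\epsilon_n \downarrow \epsilon_0$ together with couplings $\gamma_n$ satisfying $\gamma_n(\{d > \epsilon_n\}) \le \epsilon_n$ (possible by the inner attainment just established). By compactness extract a subsequence $\gamma_{n_k} \to \gamma \in C(\mu,\nu)$ converging in distribution. For each fixed $\delta > 0$ one has $\epsilon_{n_k} < \epsilon_0 + \delta$ eventually, whence $\{d > \epsilon_0 + \delta\} \subset \{d > \epsilon_{n_k}\}$; since $\{d > \epsilon_0 + \delta\}$ is open, the portmanteau inequality gives $\gamma(\{d > \epsilon_0 + \delta\}) \le \liminf_k \gamma_{n_k}(\{d > \epsilon_{n_k}\}) \le \epsilon_0$. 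Letting $\delta \downarrow 0$ and using continuity from below of $\gamma$ along $\{d > \epsilon_0 + \delta\} \uparrow \{d > \epsilon_0\}$ yields $\gamma(\{d > \epsilon_0\}) \le \epsilon_0$, so the outer infimum is attained at $\gamma$.

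The remaining parts are then short. For (ii), taking $\epsilon = 1$ in Definition \ref{d:PL} satisfies the defining inequalities trivially, since $\mu(A) \le 1 \le \nu(\mathbb{B}(A,1)) + 1$ for every $A$, so $d_{P} \le 1$; and $d_{P} \ge 0$ is immediate from the infimum over positive $\epsilon$. Parts (iii) and (iv) are classical facts: because $G$ is separable the Prokhorov--L\`evy metric metrizes convergence in distribution, and because $G$ is Polish the space $(\mathscr{P}(G), d_{P})$ is again Polish, for which I would cite \cite{Billingsley}. Finally, (v) follows from (i): choose for each $j$ an optimal coupling $\gamma_j \in C(\mu_j,\nu_j)$ with $\gamma_j(\{d > \epsilon_j\}) \le \epsilon_j$, where $\epsilon_j := d_{P}(\mu_j,\nu_j)$, and set $\epsilon^* := \max_j \epsilon_j$. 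The mixture $\gamma := \sum_j \lambda_j \gamma_j$ lies in $C(\sum_j \lambda_j \mu_j, \sum_j \lambda_j \nu_j)$ by linearity of the marginals, and since $\{d > \epsilon^*\} \subset \{d > \epsilon_j\}$ we get $\gamma(\{d > \epsilon^*\}) = \sum_j \lambda_j \gamma_j(\{d > \epsilon^*\}) \le \sum_j \lambda_j \epsilon_j \le \epsilon^*$; the representation (i) then delivers $d_{P}(\sum_j \lambda_j \mu_j, \sum_j \lambda_j \nu_j) \le \epsilon^*$.

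The main obstacle is the attainment of the infima in (i). The representation itself is Strassen's theorem off the shelf, but knitting together tightness and compactness of $C(\mu,\nu)$, lower semicontinuity via the portmanteau theorem, and the limiting argument along $\epsilon_n \downarrow \epsilon_0$ requires care to keep the open-set inclusions pointing in the correct direction; everything else is either classical or an immediate corollary of the coupling representation.
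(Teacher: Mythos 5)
Your proposal is correct, and for the crux --- part (i) --- it follows essentially the same route as the paper: the representation is quoted from Strassen, the inner infimum is attained by combining tightness of couplings (Lemma \ref{lemma:weakCVG_productSpace}) with lower semicontinuity of $\gamma \mapsto \gamma(\{d>\epsilon\})$ for open sets (portmanteau), and the outer infimum by a monotone limit along $\epsilon_n \downarrow \epsilon_0$. In fact your outer-infimum argument is spelled out more carefully than the paper's, which works with a single coupling $\gamma$ and the increasing union $U_{\epsilon_0} = \bigcup_k U_{\epsilon_k}$ without making explicit that this $\gamma$ must itself be produced as a cluster point of the couplings optimal for the several $\epsilon_k$; your subsequence-plus-portmanteau-plus-$\delta$-limit argument fills exactly that small gap. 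The genuine divergence is in part (v): you deduce it from the coupling representation by mixing optimal couplings, $\gamma = \sum_j \lambda_j \gamma_j$, which leans on the attainment statement of (i) (and, strictly speaking, wants a one-line remark for the degenerate case $\max_j d_P(\mu_j,\nu_j)=0$), whereas the paper argues directly from Definition \ref{d:PL}: if $\epsilon$ satisfies the defining inequalities for every pair $(\mu_j,\nu_j)$, then summing those inequalities with weights $\lambda_j$ shows that $\epsilon$ also works for the mixtures, so neither attainment nor part (i) is needed. Both are valid; the paper's route for (v) is more elementary, while yours illustrates how the coupling formulation makes such convexity-type estimates mechanical. Your treatment of (ii) directly from the definition and your citations for the classical facts (iii)--(iv) are unobjectionable (the paper cites Prokhorov's original paper for (iv) rather than Billingsley).
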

\begin{proof}
  \begin{enumerate}[(i)]
  \item See \cite[Corollary to Theorem 11]{Strassen65} for the first assertion. To
    see that the infimum is attained, let $\gamma_{k} \in C(\mu,\nu)$
    be a minimizing sequence, i.e.\ for $(X_{k},Y_{k}) \sim
    \gamma_{k}$ it holds that  $\mathbb{P}(d(X_{k},Y_{k}) > \epsilon) =
    \gamma_{k}(U_{\epsilon}) \to \inf_{(X,Y) \in C(\mu,\nu)}
    \mathbb{P}(d(X,Y) >\epsilon)$, where
    $U_{\epsilon}:=\mysetc{(x,y)}{d(x,y) > \epsilon} \subset G \times
    G$ is open. The sequence $(\gamma_{k})$ is tight and for a
    cluster point $\gamma$ we have $\gamma \in C(\mu,\nu)$ by
    Lemma \ref{lemma:weakCVG_productSpace}. From \cite[Theorem
    36.1]{Parthasarathy} it follows that $\gamma(U_{\epsilon}) \le
    \liminf_{j} \gamma_{k_{j}}(U_{\epsilon})$.\\
    To see, that the outer infimum is attained, let $(\epsilon_{k})$
    be a minimizing sequence, chosen to be monotonically nonincreasing
    with limit $\epsilon \ge 0$. One has that $U_{\epsilon} =
    \bigcup_{k} U_{\epsilon_{k}}$ where $U_{\epsilon_{k}} \supset
    U_{\epsilon_{k+1}}$ and hence $\gamma(U_{\epsilon}) = \lim_{k}
    \gamma(U_{\epsilon_{k}}) \le \lim_{k} \epsilon_{k} = \epsilon$.
  \item Clear by \eqref{item:prokLeviRep}.
  \item See \cite{Billingsley}.
  \item See \cite[Lemma 1.4]{Prokh56}.% \cite[Theorem 9.4]{Gaans2003}.
  \item If $\epsilon>0$ is such that $\mu_{j}(A) \le
    \nu_{j}(\mathbb{B}(A,\epsilon))+\epsilon$ and $\nu_{j}(A)\le
    \mu_{j}(\mathbb{B}(A,\epsilon))+\epsilon$ for all $j =1,\ldots,m$ and all
    $A \in \mathcal{B}(G)$, then also $\sum_{j} \lambda_{j} \mu_{j}(A)
    \le \sum_{j} \lambda_{j} \nu_{j}(\mathbb{B}(A,\epsilon)) +\epsilon$ as well
    as $\sum_{j} \lambda_{j} \nu_{j}(A) \le \sum_{j} \lambda_{j}
    \mu_{j}(\mathbb{B}(A,\epsilon)) +\epsilon$.
  \end{enumerate}
\end{proof}

% \begin{thm}[Kantorovich-Rubinshtein or Fortet-Mourier metric]\label{thm:fortetMourier}
%   Let $G$ be a Polish space. Define for $\mu, \nu \in \mathscr{P}(G)$
%   the Kantorovich-Rubinshtein or Fortet-Mourier metric
%   \begin{align*}
%     d_{0}(\mu,\nu) = \sup \mysetc{ \mu f - \nu f }{ f \in \Lip_{1}(G)
%     , \, \norm{f}_{\infty} \le 1},
%   \end{align*}
%   where $\Lip_{1}(G) :=
%   \mysetc{\mymap{f}{G}{\mathbb{R}}}{\abs{f(x)-f(y)} \le d(x,y)\,
%     \forall x,y \in G}$. Then $d_{0}$ metrizes weak convergence, i.e.\
%   for $\mu_{n}, \mu \in \mathscr{P}(G)$, $n \in \mathbb{N}$ it holds
%   that $\mu_{n} \to \mu$ if and only if $d_{0}(\mu_{n},\mu) \to 0$ as
%   $n \to \infty$. Furthermore, $(\mathscr{P}(G),d_{0})$ is a Polish
%   space.
% \end{thm}
% \begin{proof}
%   See \cite[Section 8.3]{bogachev2007measurePt2}.
% \end{proof}

%\begin{definition}[Wasserstein  distance]\label{d:Wasserstein}
%   Let $(G,d)$ be a separable complete metric space. 
% The Wasserstein metric
%  $\mymap{W_{p}}{\mathscr{P}(G) \times \mathscr{P}(G)}{[0,\infty]}$
%  for $p \in [0,\infty)$ and $\mu,\nu \in \mathscr{P}(G)$ is defined by
%  \begin{align*}
%    W_{p}(\mu,\nu) = \left( \inf_{\gamma \in C(\mu,\nu)} \int
%      d^{p}(x,y) \gamma(\dd{x},\dd{y}) \right)^{\tfrac{1}{p}},
%  \end{align*}
%  where the set of couplings $C(\mu,\nu)$ is defined in
%  \cref{eq:couplingsDef} in \cref{lemma:couplings}. 
%\end{definition}

\begin{lemma}[properties of the Wasserstein metric]\label{lemma:WassersteinMetric_prop}
Recall  $\mathscr{P}_{p}(G)$ and $W_{p}$ from Definition \ref{d:PL}.
%Define the
%  subsets $\mathscr{P}_{p}(G) \subset \mathscr{P}(G)$ by
%  \begin{align*}
%    \mathscr{P}_{p}(G) = \mysetc{\eta \in \mathscr{P}(G)}{ \exists x
%      \in G \,:\, \int d^{p}(x,y) \eta(\dd{y}) < \infty}.
%  \end{align*}
  \begin{enumerate}[(i)]
  \item The representation of $\mathscr{P}_{p}(G)$ is independent of
    $x$ and for $\mu,\nu \in \mathscr{P}_{p}(G)$ the 
    distance $W_{p}(\mu,\nu)$ is finite.
  \item\label{lemma:WassersteinMetric_prop ii} 
  The distance $W_{p}(\mu,\nu)$ is attained when it is finite. 
  \item\label{lemma:WassersteinMetric_prop iii} 
  The metric space $(\mathscr{P}_{p}(G),W_{p}(G))$ is complete and separable.
  \item\label{lemma:WassersteinMetric_prop iv} 
  If $W_{p}(\mu_{k},\mu) \to 0$ as $k\to\infty$ for the sequence 
  $(\mu_{k})$ on 
  $\mathscr{P}(G)$, then $\mu_{k} \to \mu$ as $k\to\infty$.
  \end{enumerate}
\end{lemma}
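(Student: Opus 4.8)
The plan is to establish the four assertions separately, using the direct method of the calculus of variations for the existence of optimal couplings and otherwise invoking the standard optimal transport toolbox (\cite{AmbGigSav2005}) for the metric-space structure. For (i), I would fix a reference point $x_0\in G$ and combine the elementary inequality $(a+b)^p\le 2^{p-1}(a^p+b^p)$ with the triangle inequality $d(x,y)\le d(x,x_0)+d(x_0,y)$. This shows that $\int d^p(x,y)\,\mu(\dd y)<\infty$ holds for some $x$ if and only if it holds for every $x$, so the condition defining $\mathscr{P}_p(G)$ in \eqref{eq:p-probabiliy measures} is independent of $x$. Finiteness of $W_p(\mu,\nu)$ then follows by testing the infimum in \eqref{eq:Wasserstein} against the product coupling $\mu\otimes\nu\in C(\mu,\nu)$ and bounding $\int d^p(x,y)\,(\mu\otimes\nu)(\dd x,\dd y)\le 2^{p-1}\paren{\int d^p(x,x_0)\,\mu(\dd x)+\int d^p(x_0,y)\,\nu(\dd y)}<\infty$, both integrals being finite since $\mu,\nu\in\mathscr{P}_p(G)$.

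For (ii), I would argue by the direct method. Since $\mu$ and $\nu$ are tight as probability measures on a Polish space, the set $C(\mu,\nu)$ is tight by Lemma \ref{lemma:weakCVG_productSpace}, hence relatively compact for convergence in distribution by Prokhorov's theorem; moreover any cluster point of couplings is again a coupling, again by Lemma \ref{lemma:weakCVG_productSpace}. A minimizing sequence $(\gamma_k)$ for \eqref{eq:Wasserstein} therefore has a cluster point $\gamma\in C(\mu,\nu)$. Because $(x,y)\mapsto d^p(x,y)$ is nonnegative and lower semicontinuous, the cost functional $\gamma\mapsto\int d^p\,\dd\gamma$ is lower semicontinuous for convergence in distribution: writing $\int\min(M,d^p)\,\dd\gamma=\lim_k\int\min(M,d^p)\,\dd\gamma_k\le\liminf_k\int d^p\,\dd\gamma_k$ (the integrand being bounded and continuous) and letting $M\to\infty$ by monotone convergence yields $\int d^p\,\dd\gamma\le\liminf_k\int d^p\,\dd\gamma_k=W_p^p(\mu,\nu)$, so the infimum is attained at $\gamma$.

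For (iv), I would note that $W_1\le W_p$ by Jensen's inequality applied to any coupling, and that for bounded Lipschitz $f$ and an optimal coupling $\gamma_k\in C_*(\mu_k,\mu)$ (which exists by (ii)) one has $\abs{\mu_k f-\mu f}=\abs{\int\paren{f(x)-f(y)}\,\gamma_k(\dd x,\dd y)}\le\Lip(f)\int d(x,y)\,\gamma_k(\dd x,\dd y)=\Lip(f)\,W_1(\mu_k,\mu)\le\Lip(f)\,W_p(\mu_k,\mu)\to 0$; since bounded Lipschitz functions determine convergence in distribution on a metric space, $\mu_k\to\mu$. For (iii), separability follows by exhibiting the countable dense family of finitely supported measures with rational weights concentrated on a countable dense subset of $G$, and verifying density in $W_p$. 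Completeness is the delicate point and the main obstacle: given a $W_p$-Cauchy sequence I would extract a fast subsequence with $\sum_k W_p(\mu_{n_{k+1}},\mu_{n_k})<\infty$, glue successive optimal couplings via the gluing lemma to build a coupled sequence of random variables that is Cauchy in $L^p$, pass to its almost-sure limit, and check that the limiting law lies in $\mathscr{P}_p(G)$ and realizes the $W_p$ limit. This gluing/moment-control argument is the part requiring genuine care, and I would cite \cite{AmbGigSav2005} for it; everything else is elementary once the existence of optimal couplings from (ii) is available.
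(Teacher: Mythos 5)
Your proposal is correct. On part (ii) --- the only item the paper proves in detail rather than cites --- your argument coincides with the paper's: a minimizing sequence of couplings is tight by Lemma \ref{lemma:weakCVG_productSpace}, any cluster point is again a coupling, and lower semicontinuity of the transport cost finishes the proof; the only difference is that you establish the semicontinuity by truncation (the function $\min(M,d^p)$ is bounded and continuous, then monotone convergence in $M$), while the paper cites \cite[Theorem 9.1.5]{stroock2010probability} for that step. For parts (i), (iii) and (iv) the paper defers entirely to \cite{Villani2008}, whereas you supply the standard self-contained arguments: the product coupling together with $(a+b)^p\le 2^{p-1}(a^p+b^p)$ for (i); the comparison $W_1\le W_p$ via Jensen plus bounded Lipschitz test functions for (iv); and, for (iii), finitely supported measures with rational weights for separability and the fast-subsequence/gluing construction for completeness, with the delicate step still outsourced to \cite{AmbGigSav2005}. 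Your route buys self-containedness at the cost of length; the paper's buys brevity at the cost of relying on the optimal transport literature. One cosmetic point in (iv): the equality $\int d(x,y)\,\gamma_{k}(\dd{x},\dd{y}) = W_1(\mu_k,\mu)$ holds only if $\gamma_k$ is $W_1$-optimal, not $W_p$-optimal as you chose it; either take $\gamma_k$ to be $W_1$-optimal, or replace the equality by the inequality
\begin{equation*}
\int d(x,y)\,\gamma_{k}(\dd{x},\dd{y}) \le \paren{\int d^{p}(x,y)\,\gamma_{k}(\dd{x},\dd{y})}^{1/p} = W_p(\mu_k,\mu),
\end{equation*}
which is all you need for the conclusion.
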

\begin{proof}
  \begin{enumerate}[(i)]
  \item See \cite[Remark after Definition 6.4]{Villani2008}.
  \item From Lemma \ref{lemma:weakCVG_productSpace} we know that a 
  minimizing sequence $(\gamma_{k})$ for $W_{p}(\mu,\nu)$  is tight and hence there is a
    cluster point $\gamma \in C(\mu,\nu)$. By continuity of the metric
    $d$ it follows that $d$ is lower semi-continuous and bounded from
    below and from \cite[Theorem 9.1.5]{stroock2010probability}
    it follows that $\gamma d \le \liminf_{j} \gamma_{k_{j}} d =
    W_{p}(\mu,\nu)$.
  \item See \cite[Theorem 6.9]{Villani2008}.
  \item See \cite[Theorem 6.18]{Villani2008}.
  \end{enumerate}
\end{proof}
Note that the converse to 
Lemma \ref{lemma:WassersteinMetric_prop}
\eqref{lemma:WassersteinMetric_prop iv}
does not hold.  

% \begin{lemma}[properties of the TV-norm]\label{lemma:tvnorm}
%   Let $(G,d)$ be a metric space and let $p$ be a transition
%   kernel. Define the TV-norm of two probability measures as
%   \begin{align*}
%     \normtv{\mu-\nu} := \sup_{A \in \mathcal{B}(G)} \abs{\mu(A)-\nu(A)}.
%   \end{align*}
%   Then
%   \begin{enumerate}[(i)]
%   \item $\normtv{\mu-\nu} = \sup_{\mymap{f}{G}{[0,1]}}\abs{ \mu f -
%       \nu f}$.
%   \item $(\mathscr{P}(G), \normtv{\cdot})$ is complete.
%   \item $\sup_{x,y \in D} \normtv{p(x,\cdot) - p(y,\cdot)} =
%     \sup_{\substack{\mu,\nu \in \mathscr{P}(D)}\\ \mu \neq \nu}
%     \frac{\normtv{\mu\mathcal{P} - \nu\mathcal{P}}}{\normtv{\mu -\nu}}$ for $D
%     \subset G$.
%   \end{enumerate}
% \end{lemma}
% \begin{proof}
%   \begin{enumerate}[(i)]
%   \item See \cite[Proposition 3]{RobRos04}.
%   \item See \cite[Theorem 4.28]{Hairer2006}.
%   \item This proof was given in \cite[Corollary 3.50]{Rudolf2017}.
%   \end{enumerate}
% \end{proof}
% 
% \bibliographystyle{plain}
% \bibliography{lit}
% \bibliography{master_citations}

\end{document}